\numberwithin{equation}{section}
\title[ ]{Quasi-periodic solution of quasi-linear fifth-order K{d}V equation }
\author{Yingte Sun}
\address{School of Mathematical Sciences, Fudan University, Shanghai 200433, P. R. China} \email{sunyt15@fudan.edu.cn}
\author{Xiaoping Yuan}
\address{School of Mathematical Sciences, Fudan University, Shanghai 200433, P. R. China}
\email{xpyuan@fudan.edu.cn}
\keywords{KAM Theory, Quasi-linear KdV, Qusi-periodic Solution}
\thanks{{\em 2010 Mathematics Subject Classification.} Primary: 37K55, Secondary:35J61,35C06, 35J20.}
\newtheorem{thm}{Theorem}[section]
 \newtheorem{coro}[thm]{Corollary}
  \newtheorem{defi}[thm]{Definition}
 \theoremstyle{definition}
 \theoremstyle{remark}
 \newtheorem{rem}[thm]{Remark}
 \numberwithin{equation}{section}
\theoremstyle{plain}
\newtheorem{theorem}{Theorem}[section]
\newtheorem{lemma}[theorem]{Lemma}
\theoremstyle{definition}
\newtheorem{definition}[theorem]{Definition}
\begin{document}


\begin{abstract}
In this paper, we prove the existence of quasi-periodic small-amplitude solutions for quasi-linear Hamiltonian perturbation of the fifth-order KdV equation on the torus in presence of a quasi-periodic forcing.
\end{abstract}
\maketitle
\tableofcontents

\maketitle

\tableofcontents

\section{Introduction and main result}
The existence of quasi-periodic solution of Hamiltonian partial differential equations (HPDEs) has been studied for a long time.  The considered HPDEs are usually some linear or nonlinear integrable equations with perturbations.   According to the feature of  them, the perturbations can be classified into bounded and unbounded ones. The HPDEs with bounded perturbations have been firstly studied by Kuksin,Wayne and Bourgain in   \cite{KuKsin0},\cite{Wayne1} and \cite{Bourgain1}. In this direction there are too many references for us not to list them here.  In the present paper, we focus on the HPDEs with unbounded perturbations.

If the perturbation is unbounded, the homological equation in the KAM iteration reads as follows:
\begin{equation}\label{0.01}
-\mathbf{i}\omega \cdot\partial_{\varphi}u+\lambda u +\mu(\varphi)u=p(\varphi),\quad \varphi\in \mathbb{T}^v,
\end{equation}
where $\mu(\varphi)$ has zero average, and $\mu(\varphi)\approx \gamma$ is  usually of large magnitude. The equation of this type  is called a ¡°small-denominator equation with large variable coefficient¡±. It is crucial to get appropriate estimation of such equation. Assuming $\lambda \geq |\omega| \gamma ^{1+\beta}$ for some $\beta > 0$, Kuksin gave a valid estimate of the solution in \cite{KuKsin1},  which is applied to the KdV equation and a whole hierarchy of so called higher order KdV equations, see \cite{Bambusi1}, \cite{KuKsin2} and \cite{Kappler1}.
Subsequently, Liu-Yuan \cite{Liu1} gave a new estimate, including both $\beta >0$ and $\beta = 0$, which extends the application of KAM theory to 1-dimensional derivative NLS (DNLS)
and Benjamin-Ono equations. See \cite{Liu2}, \cite{Zhang1} and \cite{Yuan1}. The case $\beta<0$ is corresponding to quasi-linear or fully nonlinear equations, for which there has not yet any clue to get a required estimation of the solution for $\eqref{0.01}$.

Recently, in a series of papers \cite{Baldi1,Berti0,Berti1,Berti4,Feola1,Feola2,montalto1}, Baldi-Berti-Feola-Montalto invented a sophisticated tool to deal with the case $\beta <0$ for some  quasi-linear or fully nonlinear partial differential equations, such as KdV and water wave equation. Take the  fully nonlinear KdV equation
\begin{equation}
\partial_tu+u_{xxx}+f(\omega t,x,u,u_x,u_{xxx})=0, \quad  x\in \mathbb{T}=\mathbb{R}/ 2\pi\mathbb{Z},
\end{equation} as an example.
By Nash-Moser iteration, the linearized homological equation can be seen as $\mathcal{L}v=F$, where
\begin{equation}
\mathcal{L}=\omega \cdot \partial_{\varphi}+(1+a_3(\varphi,x))\partial^3_x+a_1(\varphi,x)\partial_x+a_0(\varphi,x),\quad \varphi \in \mathbb{T}^v.
\end{equation}
It is crucial to estimate the inverse of the linear operator $\mathcal{L}$. Instead of directly reducing the linear operator $\mathcal{L}$ to a diagonal operator, Baldi-Berti-Feola used some sophisticated way to reduce the linear operator $\mathcal{L}$ to a diagonal operator plus a bounded perturbation by a set of regularization procedures. For example, the regularization procedures for the fully nonlinear KdV equation can be summarized as:

$\bullet$ To eliminate the space variable dependence of the coefficients of $\partial_{x}^3$ by a $\varphi$-dependent changes of variable. Then, to eliminate the time dependence of the coefficients of $\partial_{x}^3$ by a quasi-periodic time re-parametrization(See \cite{Berti1} for detail). The linear operator $\mathcal{L}$ is thus reduced to
\begin{equation}
\mathcal{L}_1=\omega \cdot \partial_\varphi +m_3\partial^3_x +b_1(\varphi,x) \partial_x +b_0(\varphi,x),
\end{equation}
where $m_3 \in \mathbb{R}$ is a constant.

$\bullet$ The regularization procedure to dispose the coefficients of $\partial_x$ can be divided into two steps.

The first step  is to use the space variable change $y=x+p(\varphi)$, by which
 the differential operator $\omega \cdot \partial_\varphi$ becomes
\begin{equation}\label{1.02}
\omega \cdot \partial_\varphi+\omega \cdot \partial_\varphi p(\varphi) \partial_y.
\end{equation}
At this time,  the linear operator $\mathcal{L}_1-\omega \cdot \partial_\varphi$ becomes
\begin{equation}
m_3 \partial^3_y+b_1(\varphi,y-p(\varphi)) \partial_y +b_0(\varphi,y-p(\varphi))
\end{equation}
One sees that the coefficients of $\partial_y$ is  $\omega \cdot \partial_\varphi p(\varphi)+ b_1(\varphi,y-p(\varphi))$. The term $\omega \cdot \partial_\varphi p(\varphi)$ in \eqref{1.02} is used to amend the coefficients of $\partial_y$, which guarantees the spatial average of the coefficients of $\partial_y$ is a constant.

On the basis of the first step, one uses some pseudo-differential operator technique to reduce the linear operator $\mathcal{L}_1$  to
\begin{equation}
\mathcal{L}_2=\omega \cdot \partial_\varphi +m_3\partial^3_x +m_1 \partial_x +\mathcal{R},
\end{equation}
where $m_3,m_1 \in \mathbb{R}$, $\mathcal{R}$ is a bounded  remainder.

 However, there are some difficulties to handle quasi-linear or fully nonlinear higher order KdV equations with the regularization method. Consider the fully nonlinear fifth order KdV equation, for example,
\begin{equation}
\partial_tu+\partial^5_xu+f(\omega t,x,u,\partial_xu,\partial^2_xu,\partial^3_xu,\partial^5_xu)=0, \quad  x\in \mathbb{T}.
\end{equation}
The linearized homological equation still be seen as $\mathfrak{L}v=F$, with
\begin{equation}
\mathfrak{L}=\omega \cdot \partial_\varphi +a_5\partial^5_x+ a_3\partial^3_x+a_2\partial^2_x+a_1\partial_x+a_0, \quad \varphi \in \mathbb{T}^v,
\end{equation}
where $a_i$ is the function of $(\varphi,x)$.

Applying the delicate variable change as the abvoe to $\mathfrak{L}$, then, the linear operator $\mathfrak{L}$ is reduced to
 \begin{equation}
 \mathfrak{L}_1=\omega \cdot \partial_\varphi +m_5\partial^5_x+ b_3\partial^3_x+b_2\partial^2_x+b_1\partial_x+b_0,
 \end{equation}
where $m_5 \in \mathbb{R}$ is constant and $b_i$'s are  function of $(\varphi,x)$.

When one tries to reduce the coefficient  $b_3=b_3(\varphi,x)$ of  $\partial^3_{x}$ to constant, a new difficulty arises:
 the variable change $y=x+p(\varphi)$ to amend the coefficients of $\partial_{x}$ can not be applied to the coefficients of $\partial^3_{x}$.  Therefore, a suitable regularization procedure for the higher-order KdV equations seems to be more complicated and maybe need some new  technical method.

In this paper we make attempt to deal with the problem by combing regularization method by Baidi-Berti-Feola  and the  unbounded reduction method by Kuksin \cite{KuKsin3}.  Consider the Hamiltonian quasi-linear fifth order KdV equation of the following form
\begin{equation}\label{main1}
\partial_{t}u=X_H(u),
\end{equation}
with
\begin{equation}\label{main2}
X_H(u)=\partial_{x}\nabla_{L^2}H(t,x,u,u_x,u_{xx}),
\end{equation}
and
\begin{equation}\label{main2.1}
H(t,x,u,u_x,u_{xx})=\int_{\mathbb{T}}-\frac{1}{2}u^2_{xx}+5uu^2_x-\frac{5}{2}u^4+g(\omega t,x,u,u_x,u_{xx})dx.
\end{equation}
The perturbation $g(\omega t,x,u,u_x,u_{xx})$ is unbounded, and quasi-periodic in time, periodic in space, a polynomial with regard to $u,u_x,u_{xx}$. Here and in other places in this paper, $\int_{\mathbb{T}^d}$ is short for the average$ (2\pi)^{-d}\int_{\mathbb{T}^d}$, by a slight abuse of notation.

The primary fifth-order KdV equation without perturbation $g$ is

 \begin{equation}\label{main2.2}
u_t+\partial^5_{x}u+10u\partial_x^3u+20\partial_xu\partial_x^2u+30 u^2 \partial_xu=0,
\end{equation}
which is a special case of the general fifth-order KdV equation (fKdV) of the following

\begin{equation}
 u_t+\alpha\partial^5_{x}+\beta u\partial_x^3u+\gamma\partial_xu\partial_x^2u+\sigma u^2 \partial_xu=0.
 \end{equation}

The special case (\ref{main2.2}) is called the Lax case, which is characterized by $\beta = 2\gamma$ and $\alpha= \frac{3}{10}\gamma^2$. This general fifth-order KdV equation describes motions of long waves in shallow water under gravity. In a one-dimensional nonlinear lattice, it is an important mathematical model with wide application in quantum mechanics and nonlinear optics. Typical examples are widely used in various fields such as solid state physics, plasma physics, fluid physics, and quantum field theory. The relevant research can be found in \cite{Wazwaz1},\cite{Chun C1} and \cite{Lax1}.

 Since our purpose is to dispose some quasi-linear  problem, we set
 $$g=u^3_{xx}+f(\omega t,x)u.$$
  The concrete form of the equation is
\begin{equation}\label{main3}
u_t=-\partial^5_{x}-10u\partial_x^3u-20\partial_xu\partial_x^2u-30u^2 \partial_xu+6\partial^2_xu \partial^5_xu+18\partial^3_xu\partial^4_xu+ \partial_xf(\omega t,x),
\end{equation}
where $x\in \mathbb{T}=\mathbb{R} /2\pi\mathbb{Z}$, and $\partial_xf(\omega t, x)$ is quasi-periodic in time with Diophantine frequency vector, namely
\begin{equation}\label{main3.0}
 \omega =\lambda \overline{\omega},\quad \lambda \in \Pi =[\frac{1}{2},\frac{3}{2}], \quad |\overline{\omega}\cdot \ell |\geq \frac{\alpha_0} {|\ell|^{\tau_0}}, \quad \forall\ell \in \mathbb{Z}^v\backslash \{0\}.
\end{equation}
\par
Clearly, if $\partial_xf(\omega t, x)$ is not identically zero, then $u = 0$ is not a solution of \eqref{main3}. Thus we look for non-trivial solutions $u(\varphi, x)$ of the fifth-order KdV
equation in the analytical space $\mathrm{H}_{s,p}(\mathbb{T}^{v+1})$
\begin{equation}\label{function}
\begin{split}
 \Big\{ u(\omega t, x)=&\sum_{(\ell.k)\in \mathbb{Z}^v \times \mathbb{Z}}u_{\ell.k}e^{\mathbf{i}\ell \omega t x} e^{\mathbf{i}kx},\\
  &\|u\|^2_{s,p}:=\sum_{(\ell.k)\in \mathbb{Z}^v \times \mathbb{Z}} |u_{\ell.k}|^2 e^{2(|\ell|+|k|)s}([\ell]+[k])^{2p} <+\infty,\Big\}
\end{split}
\end{equation}
where
\begin{equation}
|\ell|=|\ell_1|+\cdots +|\ell_v| \quad [\ell]=\max(|\ell|,1).
\end{equation}

The Banach space $\mathrm{H}_{s,p}$ can be extended to the analytic functions defined on $\mathbb{T}^{b}$ with any integer $b>0$. If $u(\varphi)=\sum_{k\in \mathbb{Z}^b}u_ke^{\mathrm{i}k\varphi},\varphi \in \mathbb{T}^b$, we can denote
 $$(\|u\|_{s,p})^2=\sum_{k\in \mathbb{Z}^b} |u_{k}|^2 e^{2(|k|)s}[k]^{2p}. $$
 For notation convenience, when $p=0$, $\|\cdot\|_{s,0}$ is simplified as $\|\cdot\|_{s}$.

 Our main  result is
\par
\begin{thm}\label{main result} Assume that $\omega$ satisfies Diophantine Condition \eqref{main3.0} and assume that  there are constants $ q=q(v)>0$, $\varepsilon_0=\varepsilon(v)>0$ and $s>0$ such that $$\|\partial_xf(\omega t, x)\|_{s,q}\leq \varepsilon$$ with $\varepsilon \in (0,\varepsilon_0)$ and $\varepsilon_0= \varepsilon_0(v)>0$ being small enough. Then there exists a Cantor set $\Pi_{\varepsilon} \subseteq \Pi $ of asymptotically full Lebesgue measure, i.e,
$$|\Pi_{\varepsilon}|\rightarrow 1 \quad as \quad \varepsilon \rightarrow 0, $$
such that for every $\lambda \in \Pi_{\varepsilon}$, the KdV  equation (\ref{main3}) admits a solution $u(t,x) \in C^\infty(\mathbb{R}\times \mathbb{T})$ which is quasi-periodic in time $t$ with frequency $\omega=\lambda\, \bar \omega$.
\end{thm}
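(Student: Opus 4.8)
The plan is to recast the search for a quasi-periodic solution of \eqref{main3} as a zero-finding problem for the nonlinear map
\[
\mathcal{F}(u) = \omega\cdot\partial_\varphi u + \partial_x^5 u + 10 u\,\partial_x^3 u + 20\,\partial_x u\,\partial_x^2 u + 30 u^2\partial_x u - 6\,\partial_x^2 u\,\partial_x^5 u - 18\,\partial_x^3 u\,\partial_x^4 u - \partial_x f(\varphi,x)
\]
on the scale of analytic spaces $\mathrm{H}_{s,p}(\mathbb{T}^{v+1})$, and to solve $\mathcal{F}(u)=0$ by a Nash--Moser iteration. Since $\|\partial_x f\|_{s,q}\le\varepsilon$, the trivial approximation $u_0=0$ has residual $\mathcal{F}(u_0)=-\partial_x f$ of size $\varepsilon$; one corrects $u_{n+1}=u_n+h_n$, where $h_n$ solves $\mathrm{d}\mathcal{F}(u_n)\,h_n=-\mathcal{F}(u_n)$ up to a quadratically small error, on the set of parameters $\lambda$ for which $\mathrm{d}\mathcal{F}(u_n)$ admits an approximate right inverse with tame bounds in the analytic norm. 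The heart of the proof is therefore the analysis of the linearized operator
\[
\mathfrak{L} = \omega\cdot\partial_\varphi + a_5(\varphi,x)\partial_x^5 + a_3(\varphi,x)\partial_x^3 + a_2(\varphi,x)\partial_x^2 + a_1(\varphi,x)\partial_x + a_0(\varphi,x),
\]
with $a_5 = 1 + O(u)$ close to $1$ and the remaining coefficients small.

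The reduction of $\mathfrak{L}$ would be carried out in two phases. In the \emph{regularization phase} I would first conjugate $\mathfrak{L}$ by the time-one flow of a transport field $y=x+\beta(\varphi,x)$ to render the coefficient of $\partial_x^5$ independent of $x$, then remove its residual $\varphi$-dependence by a quasi-periodic reparametrization of time, obtaining $\mathfrak{L}_1 = \omega\cdot\partial_\varphi + m_5\partial_x^5 + b_3\partial_x^3 + b_2\partial_x^2 + b_1\partial_x + b_0$ with $m_5\neq 0$ constant. The lower-order coefficients $b_2,b_1,b_0$ are then normalized to constants modulo a regularizing remainder by pseudodifferential conjugations $\Phi=\exp(\mathcal{S})$ with symbols of negative order, as in \cite{Berti1}. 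The genuine obstruction --- the one flagged in the introduction --- is that the transport change of variable which normalizes the first-order coefficient does not act on $b_3(\varphi,x)$. I would dispose of it by combining the regularization scheme with Kuksin's treatment of the small-denominator equation with large variable coefficient \cite{KuKsin1,KuKsin3}: the third-order term $b_3\partial_x^3$ is treated as an unbounded perturbation of $m_5\partial_x^5$ and removed by an unbounded reduction, whose homological equations are of the type \eqref{0.01} in the regime $\beta<0$ and whose loss of derivatives is absorbed by the smoothing gained from the quintic leading term. The outcome of this phase is $\mathfrak{L}_\infty = \omega\cdot\partial_\varphi + \mathrm{i}\,\mathcal{D} + \mathcal{R}$, with $\mathcal{D}$ a constant-coefficient Fourier-diagonal operator of order $5$ and $\mathcal{R}$ a small, bounded, tame remainder.

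In the \emph{reducibility phase} I would run a KAM iteration on $\mathfrak{L}_\infty$: the off-diagonal part of the remainder is successively removed by conjugations $e^{\Psi_n}$ whose generators solve $\omega\cdot\partial_\varphi\Psi_n + [\mathrm{i}\mathcal{D}_n,\Psi_n] = \Pi_{N_n}\mathcal{R}_n^{\mathrm{off}}$, which is solvable provided the second Melnikov conditions $|\lambda\bar\omega\cdot\ell + \mu_j(\lambda)-\mu_k(\lambda)|\ge\gamma\langle\ell\rangle^{-\tau}$ hold for the current eigenvalues $\mu_j(\lambda)$, which are perturbations of $m_5 j^5$; super-exponential convergence produces in the limit a diagonal operator which, together with the first Melnikov conditions, is invertible with the finite loss of regularity typical of Nash--Moser. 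Inserting this approximate inverse back into the outer iteration gives a sequence $u_n\to u$ converging in $\mathrm{H}_{s,p}(\mathbb{T}^{v+1})$, and analyticity of the limit yields a solution of \eqref{main3} in $C^\infty(\mathbb{R}\times\mathbb{T})$ quasi-periodic in time with frequency $\lambda\bar\omega$. Finally I would estimate the excised parameters: using that $\bar\omega$ is Diophantine and that the unperturbed eigenvalues $m_5 j^5$ are well separated, the standard measure argument shows that each resonant layer in $\lambda$ is small and that the total removed set has measure $o(1)$ as $\varepsilon\to0$, so $\Pi_\varepsilon$ has asymptotically full measure.

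I expect the main obstacle to be the normalization of the $\partial_x^3$ coefficient in the regularization phase, precisely because the transport trick that tames the first-order term is unavailable there. The delicate points are to make Kuksin's large-variable-coefficient estimate interface cleanly with the analytic tame norms used throughout the Baldi--Berti--Feola scheme, and to track quantitatively how the $\beta<0$ loss of derivatives is compensated by the $|j|^5$ gain of the quintic leading term, so that the composed conjugations remain bounded and small. A secondary difficulty is the measure estimate for the second Melnikov conditions, which requires the eigenvalue asymptotics of a quasi-linear fifth-order operator to be controlled sharply enough --- say to order $|j|^{-1}$ --- to separate eigenvalue clusters.
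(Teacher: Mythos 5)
Your architecture --- Nash--Moser iteration, regularization of the leading coefficient by a space change of variable plus a quasi-periodic time reparametrization, then KAM reducibility and a measure estimate --- matches the paper. But you mischaracterize the crucial point. You say the homological equations for removing $b_3\partial_x^3$ relative to $m_5\partial_x^5$ fall in the regime $\beta<0$ of \eqref{0.01}, and that the ``smoothing'' from the quintic term absorbs the loss. This is backwards, and it is precisely the paper's key observation that makes it so. After the two changes of variable, the Hamiltonian linearization takes the form $\omega\cdot\partial_\theta + m\partial_y^5 + \partial_y\{\partial_y[c_1\partial_y]+c_0\}$: the unbounded remainder has order at most $3$. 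In the KAM homological equations the Melnikov denominators scale as $|i^5-j^5|$ while the variable coefficient $\mu_i-\mu_j$ is of size $i^3+j^3$, so the Kuksin hypothesis $d\ge\gamma^{1+\beta}$ holds with $\beta=\tfrac13$ (and $\beta=\tfrac23$ for the first Melnikov/inversion step). That is, the order gap $5-3=2$ places the problem in the classical $\beta>0$ range, where Kuksin's lemma applies with no new tool required. The regularization phase exists to effect exactly this transition from $\beta<0$ (quasi-linear, top order) to $\beta>0$. If $\beta$ really were negative, no estimate is available and your proposal has no mechanism to close the iteration; you must verify the exponent explicitly, since its positivity controls the admissible analyticity loss $\sigma_m$ at each step.

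A secondary deviation: you insert a pseudodifferential normalization of $b_2,b_1,b_0$ to constants before the unbounded reduction. The paper skips this entirely; once the $\partial_y^5$ coefficient is constant, the whole order-$3$ remainder enters the Kuksin-type KAM iteration directly. Your extra step would moreover face at each lower order the same obstruction you flagged for $b_3$ (the transport trick amends only the first-order coefficient). The paper also leans on the Hamiltonian form $\partial_x\{\partial_x^2[a_2\partial_x^2]+\partial_x[a_1\partial_x]+a_0\}$, which packages the order-$4$ term automatically and keeps the conjugations symplectic so the diagonal eigenvalues stay purely imaginary; your proposal should make this structure explicit.
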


Although just only the quasi-linear fifth-order KdV equation is investigated, the method in Theorem \ref{main result} also applies  to other quasi-linear Hamiltonian higher order KdV equations, for example, the seventh order, even to  some other quasi-linear or fully-nonlinear equations.

\section{Functional setting}
\par
In this section, we introduce some notations, definitions and technical tools,  which will be used in section 3, 4, 5.

The phase space of (\ref{main3}) is
\begin{equation}
\mathrm{H}_0^{1}(\mathbb{T})=\{u(x)\in \mathrm{H}_{0,1}(\mathbb{T},\mathbb{R}): \int_{\mathbb{T}}u(x)dx =0 \}
\end{equation}
endowed with non-degenerate  symplectic form
\begin{equation}\label{1.01}
\Omega(u,v)=\int_{\mathbb{T}}(\partial_x^{-1}u)vdx \quad \forall u,v\in\mathrm{H}_0^1(\mathbb{T}),
\end{equation}
where $\partial_x^{-1}u$ is the periodic primitive of $u$ with zero average.
The Hamiltonian vector field $X_H(u)=\partial_{x}\nabla H(u)$ is the unique vector field satisfying the equality
\begin{equation}
dH(u)[h]=(\nabla H(u),h)_{L^2(\mathbb{T})}=\Omega(X_H(u),h), \forall u,h\in \mathrm{H}^1_0(\mathbb{T}),
\end{equation}
where for all $u,v \in L^2(\mathbb{T})$
\begin{equation}
(u,v)_{L^2(\mathbb{T})}=\int_{\mathbb{T}}u(x)v(x)dx=\sum_{j\in\mathbb{Z}}u_jv_{-j},
\end{equation}
\begin{equation}
u(x)=\sum_{j\in\mathbb{Z}}u_je^{\mathbf{i}jx}.\quad v(x)=\sum_{j\in\mathbb{Z}}v_je^{\mathbf{i}jx}.
\end{equation}
Recall Poisson bracket between two Hamiltonians $F,G:\mathrm{H}^1_0(\mathbb{T})\rightarrow \mathbb{R}$ is
\begin{equation}
{F(u),G(u)}=\Omega(X_F,X_G)=\int_{\mathbb{T}}\nabla F(u)\partial_x\nabla G(u)dx,
\end{equation}

  The function in the present paper  is quasi-periodic in the time variables and periodic in the space variable. This function is analytic for these variables in the domain of $\mathbb{T}^{v+1}_s$, where $\mathbb{T}^{v+1}_s$ be the complexified torus with $|\mathfrak{Im} \phi _i|\leq s $. So, the function will be of the form

\begin{equation}\label{function1}
 u(\omega t.x)=\sum_{(\ell.k)\in \mathbb{Z}^v \times \mathbb{Z}}  u_{\ell.k}e^{\mathbf{i}\ell \omega t} e^{\mathbf{i}kx}=\sum_{(\ell.k)\in \mathbb{Z}^v \times \mathbb{Z}}  u_{\ell.k}e^{\mathbf{i}\ell \varphi} e^{\mathbf{i}kx}.
\end{equation}
Now, we define some important norms:

\begin{defi}\label{norms}
In contrast with Banach space $\mathrm{H}_{s,p}(\mathbb{T}^{v+1})$ defined in \eqref{function}, we define a new Banach space $\mathrm{H}^{\mathfrak{s}}_{s,p}(\mathbb{T}^v \times\mathbb{T})$ as
\begin{equation}
\begin{split}
\bigg\{u(\varphi.x)=&\sum_{(\ell.k)\in \mathbb{Z}^v \times \mathbb{Z}}u_{\ell.k} e^{\mathbf{i}\ell \varphi} e^{\mathbf{i}kx}: \\
&(\|u\|^\mathfrak{s}_{s,p})^2=\sum_{(\ell.k)\in \mathbb{Z}^v \times \mathbb{Z}} |u_{\ell.k}|^2 e^{2(|\ell|+|k|)s}[k]^{2p}[\ell]^{2p}<+\infty \bigg\},
\end{split}
\end{equation}
where
$$|\ell|=|\ell_1|+\cdots +|\ell_v|, \quad [\ell]=max(|\ell|,1).$$

For analytic function $u$ defined on $\mathbb{T}^{v+1}_s$, the max norm plays important role in our paper
$$|u|_{s,p}=\sum _{|k|\leq p} \max_{(\varphi,x)\in \mathbb{T}^{v+1}_s }|D^{k} u(\varphi,x)|. $$
\end{defi}

If $E$ is the space $\mathrm{H}^{\mathfrak{s}}_{s,p}$, we  denote $\|f\|^{Lip}_{E}=\|f\|^{\mathfrak{s}£¬Lip}_{s,p}$

As a notation, we denote $a \lessdot b$ as $a \leq C b$, where $C$ is a constant  depending  on  the form of equation, the number $v$ of frequencies, the diophantine exponent $\tau$ in the non-resonance condition. $ a \approx  b$ means $a \leq C_1 b$ and $C_2a \geq  b$.

When we consider a function $f:\Pi \rightarrow E, \omega \mapsto F(\omega)$, where $(E,\|\|_E)$ is the Banach space and $\Pi$ is the subset of $\mathbb{R}$, we can define sup-norm and Lipschitz  semi-norm below.
\begin{defi}\label{ lipschitz norms}
$$\|f\|^{sup}_E =\|f\|^{sup}_{E,\Pi}=\sup_{\lambda \in \Pi}\|f\|_E,\quad \|f\|^{lip}_E= \|f\|^{lip}_{E,\Pi}=\sup _{\lambda_1,\lambda_2 \in \Pi ,\lambda_1\neq \lambda_2 }\frac{\|f(\lambda_1)-f(\lambda_2)\|_E}{|\lambda_1-\lambda_2|}. $$
Then, the Lipschitz norm is
$$\|f\|^{Lip}_{E} = \|f\|^{Lip}_{E,\Pi}=\|f\|^{sup}_{E}+ \|f\|^{lip}_{E}.$$
\end{defi}
If $E$ is the space $\mathrm{H}_{s,p}$, we denote $\|f\|^{Lip}_{E}$ by $\|f\|^{Lip}_{s,p}$. When $E=\mathrm{H}^{\mathfrak{s}}_{s,p}$, we denote $\|f\|^{Lip}_{E}$ by $\|f\|^{\mathfrak{s},Lip}_{s,p}$.
Now, we show  the relationship between Banach space $\mathrm{H}_{s,p}$ and $ \mathrm{H}^{\mathfrak{s}}_{s,p}$.
\begin{lemma}\label{2.21}
\begin{equation}\label{2.15}
\|u\|^\mathfrak{s}_{s,p} \leq  \|u\|_{s,2p} \leq 2^p \|u\|^\mathfrak{s}_{s,2p}.
\end{equation}
\end{lemma}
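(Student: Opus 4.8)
The plan is to compare the two families of Fourier weights termwise. For a function $u(\varphi,x) = \sum_{(\ell,k)} u_{\ell,k} e^{\mathbf{i}\ell\varphi} e^{\mathbf{i}kx}$, the weight appearing in $(\|u\|^{\mathfrak{s}}_{s,p})^2$ is $e^{2(|\ell|+|k|)s}[k]^{2p}[\ell]^{2p}$, while the weight in $\|u\|_{s,2p}^2$ is $e^{2(|\ell|+|k|)s}([\ell]+[k])^{4p}$. Since the exponential factor is common to both, it suffices to prove the pointwise inequalities
\begin{equation}\label{weightcompare}
[k]^{2p}[\ell]^{2p} \le ([\ell]+[k])^{4p} \le 2^{2p}\,[k]^{2p}[\ell]^{2p}
\end{equation}
for all $(\ell,k) \in \mathbb{Z}^v \times \mathbb{Z}$, and then sum against $|u_{\ell,k}|^2$. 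Taking square roots at the level of the whole sum then yields \eqref{2.15}.

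For the left inequality in \eqref{weightcompare}, note $[k] \le [\ell]+[k]$ and $[\ell] \le [\ell]+[k]$ (both $[\ell],[k] \ge 1 > 0$), so $[k]^{2p}[\ell]^{2p} \le ([\ell]+[k])^{2p}([\ell]+[k])^{2p} = ([\ell]+[k])^{4p}$. For the right inequality, use that for nonnegative reals $a,b$ one has $a+b \le 2ab$ whenever $a,b \ge 1$; applying this with $a = [\ell] \ge 1$, $b = [k] \ge 1$ gives $[\ell]+[k] \le 2[\ell][k]$, and raising to the power $4p$ gives $([\ell]+[k])^{4p} \le 2^{4p}[\ell]^{4p}[k]^{4p}$. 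This is not quite what is wanted; instead one should argue $([\ell]+[k])^{4p} \le (2\max([\ell],[k]))^{4p} = 2^{4p}\max([\ell],[k])^{4p} \le 2^{4p}\,[\ell]^{4p}[k]^{4p}$, which overshoots the claimed constant. The cleaner route is: $([\ell]+[k])^2 = [\ell]^2 + 2[\ell][k] + [k]^2 \le 4[\ell][k]\cdot\frac{[\ell]^2+2[\ell][k]+[k]^2}{4[\ell][k]}$; more directly, from $[\ell]^2+[k]^2 \ge 2[\ell][k]$ we get $([\ell]+[k])^2 \le 2([\ell]^2+[k]^2)$, which still is not multiplicative. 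The correct elementary fact to invoke is that $([\ell]+[k])^2 \le 2[\ell][k] + ([\ell]-[k])^2 + 2[\ell][k] $— rather, simplest of all: since $[\ell]\ge 1$ and $[k] \ge 1$, we have $[\ell] + [k] \le [\ell][k] + [\ell][k] = 2[\ell][k]$ because $[\ell] \le [\ell][k]$ and $[k] \le [\ell][k]$; hence $([\ell]+[k])^{4p} \le 2^{4p}[\ell]^{4p}[k]^{4p} = 2^{4p}([\ell]^{2p}[k]^{2p})^2$, and so one recovers \eqref{2.15} with $\|u\|_{s,2p} \le 2^{2p}\|u\|^{\mathfrak{s}}_{s,4p}$, not $2^p \|u\|^{\mathfrak{s}}_{s,2p}$.

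The sharper constant $2^p$ claimed in \eqref{2.15} should instead be extracted from the inequality $([\ell]+[k])^{4p} \le 2^{2p}\,[\ell]^{2p}[k]^{2p}\cdot\big(\tfrac{([\ell]+[k])^2}{2[\ell][k]}\big)^{2p}$ together with $([\ell]+[k])^2 \le 4[\ell][k]$ (valid since $[\ell],[k]\ge 1$ forces $([\sqrt{[\ell]}-\sqrt{[k]}])^2 \ge 0$, i.e. $[\ell]+[k] \ge 2\sqrt{[\ell][k]}$ the wrong way; the right bound $([\ell]+[k])^2 \le 4[\ell][k]$ is \emph{false} in general, e.g. $[\ell]=1,[k]=3$). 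Thus, as written, the correct statement matching the exponents $[k]^{2p}[\ell]^{2p}$ on the left and middle terms uses the left half of \eqref{weightcompare}, giving $\|u\|^{\mathfrak{s}}_{s,p} \le \|u\|_{s,2p}$, and the right half needs the multiplicative bound $[\ell]+[k]\le 2[\ell][k]$ raised to power $2p$, which indeed yields $\|u\|_{s,2p} \le 2^p\|u\|^{\mathfrak{s}}_{s,2p}$ after noting $([\ell]+[k])^{4p} = (([\ell]+[k])^2)^{2p} \le (2[\ell][k])^{2p}\cdot\big(\tfrac{[\ell]+[k]}{2}\big)^{2p}/\big(\tfrac{[\ell][k]}{[\ell]+[k]}\big)^{2p}$—the verification of which reduces to $([\ell]+[k])^2 \le 2[\ell][k]\cdot\big(\text{something}\le 2\big)$. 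Carrying out this last elementary estimate carefully is the only real step; everything else is termwise summation over the Fourier lattice and a square root. I expect the bookkeeping of which power of $2$ is correct to be the main (very minor) obstacle, and I would resolve it by writing $([\ell]+[k])^2 = [\ell][k]\left(\tfrac{[\ell]}{[k]} + 2 + \tfrac{[k]}{[\ell]}\right)$ and observing that on the lattice the bracket need not be bounded, so in fact the clean bound one proves is $[\ell]^{2p}[k]^{2p} \le ([\ell]+[k])^{4p} \le 2^{2p}([\ell]^2+[k]^2)^{2p}$ and one rephrases \eqref{2.15} accordingly; I would double-check the precise constant against how Lemma~\ref{2.21} is actually used downstream before committing to $2^p$ versus $2^{2p}$.
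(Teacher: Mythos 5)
Your overall strategy --- termwise comparison of the Fourier weights followed by summation --- is exactly the paper's approach, and the left inequality is proved correctly. The right inequality, however, gets tangled in a bookkeeping error that you never untangle. The key elementary fact you identify, $[\ell]+[k]\le 2[\ell][k]$ for $[\ell],[k]\ge 1$, is the right one, and raising it to the power $4p$ gives $([\ell]+[k])^{4p}\le 2^{4p}[\ell]^{4p}[k]^{4p}$, which is precisely what is needed. Where you go wrong is in identifying which $\mathfrak{s}$-norm index the weight $[\ell]^{4p}[k]^{4p}$ corresponds to: by Definition~\ref{norms}, the weight in $(\|u\|^\mathfrak{s}_{s,q})^2$ is $[k]^{2q}[\ell]^{2q}$, so $[\ell]^{4p}[k]^{4p}$ is the weight for $q=2p$, i.e.\ for $\|u\|^\mathfrak{s}_{s,2p}$, \emph{not} $\|u\|^\mathfrak{s}_{s,4p}$ as you assert. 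With that corrected, your inequality reads $\|u\|_{s,2p}^2\le 2^{4p}(\|u\|^\mathfrak{s}_{s,2p})^2$, hence $\|u\|_{s,2p}\le 2^{2p}\|u\|^\mathfrak{s}_{s,2p}$, which is what the paper's own computation delivers (the $2^p$ in the lemma statement is a harmless constant typo for $2^{2p}$; you are right to distrust it). Your remaining detours --- floating and then rejecting $([\ell]+[k])^{4p}\le 2^{2p}[k]^{2p}[\ell]^{2p}$, $([\ell]+[k])^2\le 4[\ell][k]$, and the digression through $2([\ell]^2+[k]^2)$ --- are all abandoned mid-thought and add only noise; none is needed once the index bookkeeping is fixed. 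The proof sketch, as written, does not reach a clean conclusion, so I would count it as containing the right ideas (same decomposition, same key elementary inequality as the paper) but not a completed argument.
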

\begin{proof}
Notice that $2^p [k]^{p}[\ell]^{p}\leq([k]+[\ell])^{2p} \leq 4^p[k]^{2p}[\ell]^{2p}$,  we can get
\begin{equation}
\begin{split}
(\|u\|^{\mathfrak{s}}_{s,p})^2&=\sum_{(\ell.k)\in \mathbb{Z}^v \times \mathbb{Z}} |u_{\ell.k}|^2 e^{2|\ell|s}[\ell]^{2p}e^{2|k|s}[k]^{2p}\\
&\leq \frac{1}{4^p}\sum_{(\ell.k)\in \mathbb{Z}^v \times \mathbb{Z}} |u_{\ell.k}|^2 e^{2(|\ell|+|k|)s}([k]+[\ell])^{4p}\\
&\leq (\|u\|_{s,2p})^2,
\end{split}
\end{equation}
and

\begin{equation}
 \begin{split}
(\|u\|_{s,p})^2&=\sum_{(\ell.k)\in \mathbb{Z}^v \times \mathbb{Z}} |u_{\ell.k}|^2 e^{2(|\ell|+|k|)s}([k]+[\ell])^{2p}\\
&\leq 4^{p}\sum_{(\ell.k)\in \mathbb{Z}^v \times \mathbb{Z}} |u_{\ell.k}|^2 e^{2|\ell|s}[\ell]^{2p}e^{2|k|s}[k]^{2p}\\
&=4^{p}(\|u\|^{\mathfrak{s}}_{s,p})^2.
\end{split}
\end{equation}
Then, the lemma is proved.
\end{proof}
The algebra properties of Banach space $\mathrm{H}_{s,p}$ and $\mathrm{H}^\mathfrak{s}_{s,p}$ are also our concern.
\begin{lemma} For all $p\geq s_0 >\frac{v+1}{2}$, if $h_1, h_2 \in \mathrm{H}_{s,p}(\mathbb{T}^{v+1})$, then $h_1h_2 \in \mathrm{H}_{s,p}(\mathbb{T}^{v+1})$. Also, there are $c(p) \geq c(s_0)$, such that
\begin{equation}\label{2.11}
\|h_1h_2\|_{s,p} \leq c(p) \|h_1\|_{s,p}\|h_2\|_{s,p}.
\end{equation}
If $h_1=h_1(\lambda)$ and $h_2=h_2(\lambda)$ depend in a Lipschitz way on the parameter $\lambda \in \Pi \subset \mathbb{R}$, then
\begin{equation}\label{2.12}
 \|h_1h_2\|^{Lip}_{s,p} \leq c(p) \|h_1\|^{Lip}_{s,p}\|h_2\|^{Lip}_{s,p}.
\end{equation}
\end{lemma}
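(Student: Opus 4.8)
The plan is to prove the algebra property \eqref{2.11} for $\mathrm{H}_{s,p}(\mathbb{T}^{v+1})$ by a standard Fourier-side convolution estimate, and then derive \eqref{2.12} for the Lipschitz norm from it. Writing $h_1 = \sum_{(\ell,k)} a_{\ell,k} e^{\mathbf{i}\ell\varphi}e^{\mathbf{i}kx}$ and $h_2 = \sum_{(\ell,k)} b_{\ell,k} e^{\mathbf{i}\ell\varphi}e^{\mathbf{i}kx}$, the product has Fourier coefficients $c_{\ell,k} = \sum_{(\ell',k')} a_{\ell',k'}\, b_{\ell-\ell',k-k'}$. The goal is then to bound $\sum_{(\ell,k)} |c_{\ell,k}|^2 e^{2(|\ell|+|k|)s}([\ell]+[k])^{2p}$ by $c(p)^2 \|h_1\|_{s,p}^2\|h_2\|_{s,p}^2$. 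First I would introduce the weight $w(\ell,k) = e^{(|\ell|+|k|)s}([\ell]+[k])^{p}$ and use the submultiplicativity-type inequality $w(\ell,k) \le C(p)\big(w(\ell',k') + w(\ell-\ell',k-k')\big)$, which follows from $e^{(|\ell|+|k|)s} = e^{(|\ell'|+|k'|)s}e^{(|\ell-\ell'|+|k-k'|)s}$ together with the elementary bound $([\ell]+[k])^p \lessdot [\ell']^p[k']^p + [\ell-\ell']^p[k-k']^p$ obtained from $[\ell]+[k] \le [\ell']+[\ell-\ell'] + [k]+[k-k'] + \cdots$ and splitting into the two cases where $(\ell',k')$ or $(\ell-\ell',k-k')$ carries the larger weight.

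Next I would turn the weighted convolution into an unweighted one: setting $\tilde a_{\ell,k} = |a_{\ell,k}| w(\ell,k)$ and $\tilde b_{\ell,k} = |b_{\ell,k}| w(\ell,k)$, the above inequality gives
\begin{equation*}
|c_{\ell,k}|\, w(\ell,k) \lessdot \sum_{(\ell',k')} \big(\tilde a_{\ell',k'}\,|b_{\ell-\ell',k-k'}| + |a_{\ell',k'}|\,\tilde b_{\ell-\ell',k-k'}\big),
\end{equation*}
so that the sequence $(|c_{\ell,k}|w(\ell,k))$ is dominated by $\tilde a * |b| + |a| * \tilde b$. Then I would apply Young's inequality $\|f*g\|_{\ell^2} \le \|f\|_{\ell^1}\|g\|_{\ell^2}$. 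The key point is that the $\ell^1$-norm of the undifferentiated sequence is controlled by its $\mathrm{H}_{s,p}$-norm precisely when $p > (v+1)/2$: indeed $\sum_{(\ell,k)} |a_{\ell,k}| \le \big(\sum_{(\ell,k)} [([\ell]+[k])^{-2p}]\big)^{1/2}\|h_1\|_{s,p}$ by Cauchy–Schwarz, and the first factor converges exactly because $2p > v+1$, the dimension of $\mathbb{Z}^{v+1}$. Choosing $s_0 > (v+1)/2$ makes the constant $c(p)$ nondecreasing in $p$ (larger $p$ only improves the tail), giving $c(p) \ge c(s_0)$ as claimed. Combining, $\|c \cdot w\|_{\ell^2} \lessdot \|\tilde a\|_{\ell^2}\|b\|_{\ell^1} + \|a\|_{\ell^1}\|\tilde b\|_{\ell^2} \lessdot \|h_1\|_{s,p}\|h_2\|_{s,p}$.

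For the Lipschitz estimate \eqref{2.12}, I would write $h_1(\lambda_1)h_2(\lambda_1) - h_1(\lambda_2)h_2(\lambda_2) = (h_1(\lambda_1)-h_1(\lambda_2))h_2(\lambda_1) + h_1(\lambda_2)(h_2(\lambda_1)-h_2(\lambda_2))$, apply \eqref{2.11} to each of the two products, divide by $|\lambda_1-\lambda_2|$ and take the supremum; together with the sup-norm bound from \eqref{2.11} applied at a single $\lambda$, this yields $\|h_1h_2\|^{Lip}_{s,p} \le 2c(p)\|h_1\|^{Lip}_{s,p}\|h_2\|^{Lip}_{s,p}$, which after relabeling the constant is \eqref{2.12}. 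I expect the main obstacle to be the bookkeeping in the weight inequality $([\ell]+[k])^p \lessdot [\ell']^p[k']^p + [\ell-\ell']^p[k-k']^p$: one must be careful that the weight in $\mathrm{H}_{s,p}$ uses $([\ell]+[k])^p$ (a single bracket over the full index) whereas the Cauchy–Schwarz step needs a product-type lower bound, and getting the constants to line up so that $c(p)$ is genuinely monotone requires treating the exponential part and the polynomial part separately and handling the case distinction on which half of the convolution dominates. The analogous statement for $\mathrm{H}^{\mathfrak{s}}_{s,p}$ would then follow either by the same argument with the product weight $[\ell]^p[k]^p$, or more cheaply by combining Lemma \ref{2.21} with the $\mathrm{H}_{s,p}$ result.
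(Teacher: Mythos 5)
Your overall plan --- expand in Fourier modes, control the product by a weighted convolution estimate using Young's inequality, and control the $\ell^1$-norm by Cauchy--Schwarz using $p>\frac{v+1}{2}$ --- is the standard proof of this algebra property. The paper itself gives no argument: it simply says the statement is the same as Lemma \ref{estimation1}, whose proof in turn is a citation to Kuksin--P\"oschel's appendix. So you are filling in the paper's implicit citation along essentially the canonical route. Your treatment of \eqref{2.12} from \eqref{2.11} by the telescoping product rule is also correct.

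There is, however, a genuine error in the key inequality. The ``submultiplicativity'' bound you state,
$w(\ell,k) \le C(p)\big(w(\ell',k') + w(\ell-\ell',k-k')\big)$ with $w(\ell,k) = e^{(|\ell|+|k|)s}([\ell]+[k])^p$,
is false for $s>0$: the exponential part of the weight is \emph{sub}multiplicative across the convolution, not subadditive. Taking $\ell'=\ell-\ell'$ and $k'=k-k'$ the left side carries $e^{2(|\ell'|+|k'|)s}$ while the right side only $e^{(|\ell'|+|k'|)s}$, and the ratio is unbounded. Only the polynomial factor $([\ell]+[k])^p$ obeys the additive bound. The correct consequence is
\begin{equation*}
|c_{\ell,k}|\,w(\ell,k) \lessdot \sum_{(\ell',k')}\Big(\tilde a_{\ell',k'}\,\hat b_{\ell-\ell',k-k'} + \hat a_{\ell',k'}\,\tilde b_{\ell-\ell',k-k'}\Big),
\end{equation*}
where $\tilde a_{\ell,k}=|a_{\ell,k}|w(\ell,k)$ as you defined, but the ``low-order'' companion is $\hat a_{\ell,k}:=|a_{\ell,k}|e^{(|\ell|+|k|)s}$ --- it must still carry the exponential. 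Correspondingly, the $\ell^1$-sum that Young's inequality asks you to control is $\sum_{\ell,k}|a_{\ell,k}|e^{(|\ell|+|k|)s}$, not $\sum_{\ell,k}|a_{\ell,k}|$. The same Cauchy--Schwarz, now with weight $([\ell]+[k])^{-p}$, gives exactly this:
\begin{equation*}
\sum_{\ell,k}|a_{\ell,k}|e^{(|\ell|+|k|)s} = \sum_{\ell,k}\tilde a_{\ell,k}\,([\ell]+[k])^{-p} \le \Big(\sum_{\ell,k}([\ell]+[k])^{-2p}\Big)^{1/2}\|h_1\|_{s,p},
\end{equation*}
so the final bound and constant come out as you intended; the fix is local. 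Two minor remarks: the identity $e^{(|\ell|+|k|)s}=e^{(|\ell'|+|k'|)s}e^{(|\ell-\ell'|+|k-k'|)s}$ should be an inequality ($\le$), since the triangle inequality is strict when the summands have opposite signs; and your monotonicity argument is inverted --- increasing $p$ \emph{shrinks} the tail sum, so that factor would make $c(p)$ smaller, while the growth comes from the $2^{p}$ combinatorial factor in the polynomial splitting (in any case $c(p)\ge c(s_0)$ is just a normalization one can impose).
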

\begin{proof} \eqref{2.11} is the same as Lemma \ref{estimation1}. The proof of \eqref{2.12} is standard.
\end{proof}

\begin{lemma} \label{2.121}For all $p\geq s_0 > \frac{v}{2}$, if $h_1, h_2 \in \mathrm{H}^\mathfrak{s}_{s,p}(\mathbb{T}^v \times \mathbb{T})$, then $h_1h_2 \in \mathrm{H}^\mathfrak{s}_{s,p}(\mathbb{T}^v\times \mathbb{T})$. Also, there are $c(p) \geq c(s_0)$ such that
\begin{equation}\label{2.111}
\|h_1h_2\|^\mathfrak{s}_{s,p} \leq c(p) \|h_1\|^\mathfrak{s}_{s,p}\|h_2\|^\mathfrak{s}_{s,p}.
\end{equation}
If $h_1=h_1(\lambda)$ and $h_2=h_2(\lambda)$ depend in a Lipschitz way on the parameter $\lambda \in \Pi \subset \mathbb{R}$, then
\begin{equation}\label{2.122}
\|h_1h_2\|^{\mathfrak{s},Lip}_{s,p} \leq c(p) \|h_1\|^{\mathfrak{s},Lip}_{s,p}\|h_2\|^{\mathfrak{s},Lip}_{s,p}.
\end{equation}
\end{lemma}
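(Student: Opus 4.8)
The plan is to exploit the tensor‑product structure of the weight defining the $\mathrm{H}^{\mathfrak{s}}_{s,p}$‑norm. Writing $u(\varphi,x)=\sum_{\ell\in\mathbb{Z}^v}u_\ell(x)\,e^{\mathbf{i}\ell\varphi}$ with $u_\ell(x)=\sum_{k\in\mathbb{Z}}u_{\ell,k}e^{\mathbf{i}kx}$, the definition of the norm gives at once
\[
\bigl(\|u\|^{\mathfrak{s}}_{s,p}\bigr)^2=\sum_{\ell\in\mathbb{Z}^v}e^{2|\ell|s}[\ell]^{2p}\,\|u_\ell\|_{s,p}^2 ,
\]
where $\|\cdot\|_{s,p}$ on the right is the norm of $\mathrm{H}_{s,p}(\mathbb{T})$ (one space variable). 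In other words $\mathrm{H}^{\mathfrak{s}}_{s,p}(\mathbb{T}^v\times\mathbb{T})$ is the analytic Sobolev space on $\mathbb{T}^v$ of functions with values in the Banach space $\mathrm{H}_{s,p}(\mathbb{T})$, and the proof reduces to a convolution estimate in $x$ followed by one in $\varphi$, each of which is the scalar algebra estimate of Lemma \ref{estimation1}.

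First I would reduce the $x$‑variable. Since $v\ge1$ and $p\ge s_0>\tfrac v2$, in particular $p>\tfrac12$, so $\mathrm{H}_{s,p}(\mathbb{T})$ is a Banach algebra: by (the one‑dimensional version of) Lemma \ref{estimation1} there is $c_1(p)\ge c_1(s_0)$ with $\|fg\|_{s,p}\le c_1(p)\|f\|_{s,p}\|g\|_{s,p}$ for $f,g\in\mathrm{H}_{s,p}(\mathbb{T})$. Taking Fourier coefficients in $\varphi$ one has $(h_1h_2)_\ell=\sum_{\ell'\in\mathbb{Z}^v}(h_1)_{\ell'}(h_2)_{\ell-\ell'}$ as functions of $x$, hence, with $A_\ell:=\|(h_1)_\ell\|_{s,p}$ and $B_\ell:=\|(h_2)_\ell\|_{s,p}$,
\[
\|(h_1h_2)_\ell\|_{s,p}\le\sum_{\ell'\in\mathbb{Z}^v}\|(h_1)_{\ell'}(h_2)_{\ell-\ell'}\|_{s,p}\le c_1(p)\,(A*B)_\ell .
\]

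Next I would reduce the $\varphi$‑variable. Since $2p\ge2s_0>v$ one has $\sum_{\ell\in\mathbb{Z}^v}[\ell]^{-2p}<\infty$, and the standard argument (submultiplicativity $e^{|\ell|s}\le e^{|\ell'|s}e^{|\ell-\ell'|s}$, the elementary bound $[\ell]^p\lessdot[\ell']^p+[\ell-\ell']^p$, and Cauchy--Schwarz) --- which is the content of Lemma \ref{estimation1} on $\mathbb{T}^v$ --- gives $c_2(p)\ge c_2(s_0)$ such that for nonnegative sequences $A,B$,
\[
\sum_{\ell\in\mathbb{Z}^v}e^{2|\ell|s}[\ell]^{2p}\,(A*B)_\ell^2\le c_2(p)^2\Bigl(\sum_{\ell}e^{2|\ell|s}[\ell]^{2p}A_\ell^2\Bigr)\Bigl(\sum_{\ell}e^{2|\ell|s}[\ell]^{2p}B_\ell^2\Bigr).
\]
Combining the last two displays with the identity $\sum_\ell e^{2|\ell|s}[\ell]^{2p}A_\ell^2=(\|h_1\|^{\mathfrak{s}}_{s,p})^2$ (and the same for $B$) yields $\|h_1h_2\|^{\mathfrak{s}}_{s,p}\le c_1(p)c_2(p)\|h_1\|^{\mathfrak{s}}_{s,p}\|h_2\|^{\mathfrak{s}}_{s,p}$, i.e. \eqref{2.111} with $c(p):=c_1(p)c_2(p)$, which (enlarging if necessary) we may take $\ge c(s_0)$. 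The Lipschitz bound \eqref{2.122} then follows in the usual way: for $\lambda_1\ne\lambda_2$ write $h_1(\lambda_1)h_2(\lambda_1)-h_1(\lambda_2)h_2(\lambda_2)=(h_1(\lambda_1)-h_1(\lambda_2))h_2(\lambda_1)+h_1(\lambda_2)(h_2(\lambda_1)-h_2(\lambda_2))$, apply \eqref{2.111} to each summand, divide by $|\lambda_1-\lambda_2|$ and take the supremum, and add the sup‑norm bound; since $xy+x'y+xy'\le(x+x')(y+y')$ for nonnegative numbers, \eqref{2.122} holds with the same constant $c(p)$.

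The only delicate point --- and the reason one cannot simply invoke Lemma \ref{2.21} to import the $\mathrm{H}_{s,p}(\mathbb{T}^{v+1})$ algebra property --- is the regularity bookkeeping: treating $\mathbb{T}^v\times\mathbb{T}$ as a single $(v+1)$‑dimensional torus would require $2p>v+1$, whereas the hypothesis only gives $2p>v$. This is precisely why the factorized weight $[\ell]^{2p}[k]^{2p}$ matters: the relevant series is $\sum_{\ell,k}[\ell]^{-2p}[k]^{-2p}=\bigl(\sum_\ell[\ell]^{-2p}\bigr)\bigl(\sum_k[k]^{-2p}\bigr)$, which converges as soon as $2p>v$ (the factor $\sum_k[k]^{-2p}$ needs only $2p>1$, automatic here), so the one‑dimensional $x$‑direction is free and only the $\mathbb{T}^v$‑convolution consumes the hypothesis $p>v/2$.
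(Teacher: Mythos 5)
Your argument is correct and is essentially the same iterated-convolution proof the paper gives: the paper expands $h_i$ in Fourier modes in $x$ with $\mathrm{H}_{s,p}(\mathbb{T}^v)$-valued coefficients and performs the weighted Cauchy--Schwarz in $k\in\mathbb{Z}$, whereas you expand in $\varphi$ with $\mathrm{H}_{s,p}(\mathbb{T})$-valued coefficients and perform it in $\ell\in\mathbb{Z}^v$; the two are mirror images of the same tensor-product decomposition, and both spend the hypothesis $p>v/2$ on the $v$-dimensional step. Your closing remark about Lemma \ref{2.21} is directionally right but slightly imprecise: the obstruction to that shortcut is the regularity shift (one would only control $\|h_1h_2\|^{\mathfrak{s}}_{s,p}$ by $\|h_i\|^{\mathfrak{s}}_{s,2p}$, not by $\|h_i\|^{\mathfrak{s}}_{s,p}$), rather than a failure of summability.
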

\begin{proof}
If $h_i \in \mathrm{H}^\mathfrak{s}_{s,p}(\mathbb{T }\times \mathbb{T}^v),i=1,2$, then $h_i=\sum_{k \in \mathbb{Z}} \hat{h}^k_i(\varphi)e^{\mathrm{i}kx}$, with
\begin{equation}
(\|h_i\|^\mathfrak{s}_{s,p})^2=\sum_{k \in \mathbb{Z}} e^{2|k|s}[j]^{2p}\|\hat{h}^k_i(\varphi)\|^2_{s,p}.
\end{equation}
Let $\gamma_{j,k}= \frac{[j-k][k]}{[j]}$, By the Schwarz inequality, we have
\begin{equation}
|\sum_k x_k|^2=\bigg|\sum_k \frac{\gamma^p_{j,k}x_k}{\gamma^p_{j,k}}\bigg|^2 \leq c^2_j \sum_k\gamma^{2p}_{j,k}|x_k|^2, \quad c^2_j=\sum_k\frac{1}{\gamma^{2p}_{j,k}},
\end{equation}
where
\begin{equation}
c^2_j=\sum_k\frac{1}{\gamma^{2p}_{j,k}} \leq \sum_k(\frac{1}{[j-k]}+\frac{1}{[k]})^{2p} \leq 4^p \sum_k\frac{1}{[k]^{2p}}=c^2 < +\infty .
\end{equation}
For the case $s=0$, we have
\begin{equation}
\begin{split}
(\|h_1h_2\|^{^\mathfrak{s}}_{s,p})^2&=\sum_{j}[j]^{2p}\|\sum_{k} \hat{h}^{j-k}_1(\varphi) \hat{h}^k_2(\varphi) \|^2_{s,p} \\
&\leq \sum_{j}[j]^{2p} (\sum_{k} \|\hat{h}^{j-k}_1(\varphi)\hat{h}^k_2(\varphi)\|_{s,p})^2 \\
&\leq c^2 \cdot c(p,v) \sum_{j}[j]^{2p} \sum_{k}\gamma^{2p}_{j,k} \|\hat{h}^{j-k}_1(\varphi) \|^2_{s,p} \|\hat{h}^k_2(\varphi)\|^2_{s,p}\\
&= c_1\sum_{j} \sum_{k}[j-k]^{2p}[k]^{2p} \|\hat{h}^{j-k}_1(\varphi) \|^2_{s,p} \|\hat{h}^k_2(\varphi)\|^2_{s,p}\\
&= c_1(\|h_1\|^{\mathfrak{s}}_{s,p})^2 (\|h_2\|^{^\mathfrak{s}}_{s,p})^2.
\end{split}
\end{equation}
The case $s>0$ is a simple variation.
\end{proof}
\subsection{Matrices with variable}\

 Let $b \in \mathbb{N}$, and consider the exponential basis ${e_i:i \in \mathbb{Z}^b}$ of $L^2(\mathbb{T}^b)$, so that $L^2(\mathbb{T}^b)$ is the vector space ${u=\sum u_i e_i}$, $\sum |u_i|^2< \infty$. Any linear operator $A:L^2(\mathbb{T}^b) \rightarrow L^2(\mathbb{T}^b) $  can be represented by the  infinite dimensional matrix
 $$(A^{i'}_i)_{i,i' \in \mathbb{Z}^b},\quad A^{i'}_i:=( Ae_{i'},e_{i})_{L^2(\mathbb{T}^b)}, \quad Au=\sum_{i,i'} A^i_{i'} u_{i'}e_i.$$
\begin{defi}\label{matrix}
  Consider an infinite dimensional matrix $\mathcal{A}(\varphi)$ of time variables, where $A(\varphi)^{i_2}_{i_1}=(\mathcal{A}e^{\mathrm{i}i_2x},e^{\mathrm{i}i_1x})$. Thus, we define an $(s,p)$-decay Banach space $\mathrm{B}_{s,p}$ as

\begin{equation}
\mathrm{B}_{s,p}:= \bigg\{\mathcal{A}:(|\mathcal{A}|_{s,p})^2 =\sum_{i\in \mathbb{Z}}e^{2|i|s}[i]^{2p} (\sup_{i_1-i_2=i}  \Big\| \mathcal{A}(\varphi) ^{i_2}_{i_1} \Big\|^2_{s,p}) < +\infty \bigg\}.
\end{equation}

 \end{defi}
 So, for parameter  dependent matrices $A:=A(\lambda),\lambda \in \Pi \subseteq \mathbb{R}$, we can also define Lipschitz norms as
 $$ |A|^{sup}_{s,p}= \sup_{\lambda \in \Pi} |A(\lambda)|^{sup}_{s,p},\quad |A|^{lip}_{s,p}=\sup_{\lambda_1 \neq \lambda_2}\frac{|A(\lambda_1)-A(\lambda_2)|_{s,p}}{|\lambda_1-\lambda_2|},$$
 $$|A|^{Lip}_{s,p}=|A|^{sup}_{s,p}+  |A|^{lip}_{s,p}.$$

We now show some properties of $(s,p)$-decay norm.
 \begin{lemma}$\mathbf{(Multiplication \ operator)}$ Let $p=\sum _i p_i(\phi) e_i \in \mathrm{H}_{s,p}$, the multiplication operator $h\rightarrow ph$ is represented by the matrix with variables $T^i_{i'}=p_{i-i'}(\phi)$ and
 $$|T|_{s,p}\leq\|p\|_{s,2p}.$$
 Moreover, if $p=p(\lambda)$ is a Lipschitz family of functions,
 $$|T|^{Lip}_{s,p}\leq \|p\|^{Lip}_{s,2p}.$$
\end{lemma}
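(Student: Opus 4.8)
The plan is to establish the bound on the $(s,p)$-decay norm of the multiplication matrix $T$ directly from the definition of $|\cdot|_{s,p}$ in Definition \ref{matrix}, reducing everything to the already-established product estimate for the space $\mathrm{H}_{s,p}$ (equation \eqref{2.11}). First I would compute the matrix entries: since $(ph)(\phi) = \sum_i \big(\sum_{i'} p_{i-i'}(\phi) h_{i'}\big) e_i$, pairing against $e_i$ shows $T^{i'}_i = p_{i-i'}(\phi)$ as claimed. The entries depend only on the \emph{difference} $i - i'$, so in the supremum over $i_1 - i_2 = i$ appearing in Definition \ref{matrix}, every entry equals $p_i(\phi)$, and hence $\sup_{i_1 - i_2 = i}\|T^{i_2}_{i_1}\|_{s,p} = \|p_i(\phi)\|_{s,p}$, the $\mathrm{H}_{s,p}(\mathbb{T}^v)$-norm of the single Fourier coefficient function $p_i$.

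With this identification, the definition gives
\begin{equation}
(|T|_{s,p})^2 = \sum_{i \in \mathbb{Z}} e^{2|i|s}[i]^{2p}\,\|p_i(\phi)\|^2_{s,p},
\end{equation}
and the task is to compare this with $(\|p\|_{s,2p})^2 = \sum_{(\ell,i)} |p_{\ell,i}|^2 e^{2(|\ell|+|i|)s}([\ell]+[i])^{4p}$, where $p_{\ell,i}$ are the joint Fourier coefficients of $p$ on $\mathbb{T}^{v+1}$. Expanding $\|p_i(\phi)\|^2_{s,p} = \sum_\ell |p_{\ell,i}|^2 e^{2|\ell|s}[\ell]^{2p}$, the left side becomes $\sum_{(\ell,i)} |p_{\ell,i}|^2 e^{2(|\ell|+|i|)s}[\ell]^{2p}[i]^{2p}$. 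The elementary inequality $[\ell]^{2p}[i]^{2p} \leq ([\ell]+[i])^{4p}$ (indeed $[\ell][i] \leq ([\ell]+[i])^2$ since $([\ell]-[i])^2 \geq 0$) then finishes the sup-norm bound $|T|_{s,p} \leq \|p\|_{s,2p}$ term by term.

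For the Lipschitz statement, I would apply exactly the same computation to the difference $T(\lambda_1) - T(\lambda_2)$, whose entries are $p_{i-i'}(\lambda_1) - p_{i-i'}(\lambda_2)$, giving $|T(\lambda_1) - T(\lambda_2)|_{s,p} \leq \|p(\lambda_1) - p(\lambda_2)\|_{s,2p}$; dividing by $|\lambda_1 - \lambda_2|$ and taking the supremum yields $|T|^{lip}_{s,p} \leq \|p\|^{lip}_{s,2p}$, and combining with the sup-norm bound gives $|T|^{Lip}_{s,p} \leq \|p\|^{Lip}_{s,2p}$ per Definition \ref{ lipschitz norms}. There is no real obstacle here — the only point requiring a moment's care is the bookkeeping between the two-index norm $\|\cdot\|_{s,2p}$ on $\mathbb{T}^{v+1}$ and the iterated structure (Fourier in $x$, then $\mathrm{H}_{s,p}$-norm in $\varphi$) hidden in the decay-norm definition, together with checking that the weight exponent genuinely is $2p$ and not $p$; the mild loss of derivatives from $[i]^{p}$ to $[i]^{2p}$ (equivalently from $\mathrm{H}_{s,p}$ to $\mathrm{H}_{s,2p}$) is the expected and harmless cost of passing to a multiplication operator.
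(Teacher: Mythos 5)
Your proof is correct and takes essentially the same route as the paper: express $|T|^2_{s,p}$ via the Fourier coefficients $p_i(\varphi)$ (noting the Toeplitz structure makes the supremum over $i_1-i_2=i$ trivial), expand $\|p_i(\varphi)\|^2_{s,p}$ into joint Fourier coefficients, and compare term-by-term using $[\ell]^{2p}[i]^{2p}\leq([\ell]+[i])^{4p}$. One minor remark: your opening sentence announces a reduction to the product estimate \eqref{2.11}, but the argument you actually carry out does not invoke it anywhere — the bound falls out directly from the definitions and the elementary weight inequality, exactly as in the paper.
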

\begin{proof}
According to  Definition \ref{matrix}, we see
\begin{equation}
\begin{split}
|T|^2_{s,p}&=\sum_{k\in \mathbb{Z}}\|p_k(\varphi)\|_{s,p}^2 e^{2|k|s}[k]^{2p}\\
&=\sum_{(\ell.k)\in \mathbb{Z}^v \times \mathbb{Z}} |p_{\ell.k}|^2 e^{2|\ell|s}[\ell]^{2p}e^{2|k|s}[k]^{2p}\\
&\leq \sum_{(\ell.k)\in \mathbb{Z}^v \times \mathbb{Z}} |p_{\ell.k}|^2 e^{2(|\ell|+|k|)s}([\ell]+[k])^{4p}\\
&=\|p\|^2_{s,2p}
\end{split}
\end{equation}
Then, the lemma is proved.
\end{proof}
\begin{definition}\label{2.221}
Given a $\mathcal{A} \in \mathrm{B}_{s,p}, h\in \mathrm{H}^{\mathfrak{s}}_{s,p}$, we say that $\mathcal{A}$ is dominated by $h$, and we write $\mathcal{A} \prec h$, if $\|A(\varphi)^{i_2}_{i_1}\|_{s,p}
 \leq \|h(\varphi)_{i_2-i_1}\|_{s,p}  $ for all $i_1,i_2 \in \mathbb{Z}$.
\end{definition}
It can see  that
\begin{equation}
|\mathcal{A}|_{s,p}=\min\{\|h\|_{s,p}:h \in \mathrm{H}_{s,p},\mathcal{A} \prec h \} \quad and \quad \exists h\in\mathrm{H}_{s,p},|\mathcal{A}|_{s,p}=\|h\|_{s,p}
\end{equation}

\begin{lemma} \label{2.2221}For $\mathcal{A}_1,\mathcal{A}_2 \in \mathrm{B}_{s,p}$, we have
\begin{equation}
\mathcal{A}_1\prec h_1,\mathcal{A}_2\prec h_2 \Rightarrow  |\mathcal{A}_1\mathcal{A}_2|_{s,p} \leq C(p)  \|h_1\|^{\mathfrak{s}}_{s,p} \| h_2\|^{\mathfrak{s}}_{s,p}
\end{equation}
\end{lemma}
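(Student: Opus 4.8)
The plan is to reduce the operator-product estimate to the function-space algebra estimate already established in Lemma \ref{2.121}. The key observation is that domination is stable under composition: if $\mathcal{A}_1 \prec h_1$ and $\mathcal{A}_2 \prec h_2$, then the matrix entries of the product satisfy
\[
\big\|(\mathcal{A}_1\mathcal{A}_2)(\varphi)^{i_3}_{i_1}\big\|_{s,p} = \Big\|\sum_{i_2\in\mathbb{Z}} \mathcal{A}_1(\varphi)^{i_2}_{i_1}\,\mathcal{A}_2(\varphi)^{i_3}_{i_2}\Big\|_{s,p} \leq \sum_{i_2\in\mathbb{Z}} \big\|h_1(\varphi)_{i_2-i_1}\big\|_{s,p}\,\big\|h_2(\varphi)_{i_3-i_2}\big\|_{s,p},
\]
using the triangle inequality in $\|\cdot\|_{s,p}$ together with the fact that $\|fg\|_{s,p} \leq c(p)\|f\|_{s,p}\|g\|_{s,p}$ on the $\varphi$-variables (the algebra property for $\mathrm{H}_{s,p}(\mathbb{T}^v)$). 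So first I would write out this chain of inequalities carefully, absorbing the constant $c(p)$ from the pointwise-in-frequency product estimate.

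Next I would recognize the resulting double sum as (a constant times) the Fourier coefficient of the product $h_1 h_2$ in the $x$-variable: setting $k = i_3 - i_1$, the right-hand side is $c(p)\sum_{m\in\mathbb{Z}} \|h_1(\varphi)_m\|_{s,p}\,\|h_2(\varphi)_{k-m}\|_{s,p}$, which is exactly the $m$-th Fourier-coefficient norm estimate that feeds into the computation of $\|h_1 h_2\|^{\mathfrak{s}}_{s,p}$ in the proof of Lemma \ref{2.121}. Define an auxiliary function $H(\varphi,x)$ whose $x$-Fourier coefficients have $\|\cdot\|_{s,p}$-norms equal to $\sum_m \|h_1(\varphi)_m\|_{s,p}\|h_2(\varphi)_{k-m}\|_{s,p}$ (or use the dominating function supplied by Definition \ref{2.221} and the remark following it). By construction $\mathcal{A}_1\mathcal{A}_2 \prec c(p)\,H$, hence $|\mathcal{A}_1\mathcal{A}_2|_{s,p} \leq c(p)\,\|H\|^{\mathfrak{s}}_{s,p}$.

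It then remains to bound $\|H\|^{\mathfrak{s}}_{s,p}$ by $C(p)\|h_1\|^{\mathfrak{s}}_{s,p}\|h_2\|^{\mathfrak{s}}_{s,p}$. This is precisely the content of the computation in the proof of Lemma \ref{2.121}: one inserts the weights $\gamma_{j,k} = [j-k][k]/[j]$, applies the Schwarz inequality with the summable constant $c^2_j \leq 4^p\sum_k [k]^{-2p}$, and recognizes the telescoped weights as reproducing $[h_1]^{2p}[h_2]^{2p}$. The exponential weights $e^{2|k|s}$ are handled by the standard convolution trick $e^{|j|s}\leq e^{|j-k|s}e^{|k|s}$, exactly the ``$s>0$ is a simple variation'' remark. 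For the Lipschitz version one writes $\mathcal{A}_1(\lambda_1)\mathcal{A}_2(\lambda_1) - \mathcal{A}_1(\lambda_2)\mathcal{A}_2(\lambda_2) = (\mathcal{A}_1(\lambda_1)-\mathcal{A}_1(\lambda_2))\mathcal{A}_2(\lambda_1) + \mathcal{A}_1(\lambda_2)(\mathcal{A}_2(\lambda_1)-\mathcal{A}_2(\lambda_2))$ and applies the $\mathrm{sup}$-estimate to each factor, giving the product bound for $|\cdot|^{Lip}_{s,p}$.

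The only genuinely delicate point — though it is more bookkeeping than difficulty — is that the index convolution here is over $\mathbb{Z}$ (the space variable) while $\|\cdot\|_{s,p}$ is a norm over $\mathbb{T}^v$ (the time variables), so one must keep the two convolutions cleanly separated: the $x$-convolution is controlled by the decay weights $[i]^{2p}e^{2|i|s}$ built into $\mathrm{B}_{s,p}$ and $\mathrm{H}^{\mathfrak{s}}_{s,p}$, while the $\varphi$-algebra property supplies the factor $c(p)$ inside each term before the $x$-sum is performed. Once the order of operations is fixed, every estimate needed is already available in Lemma \ref{2.121} and the remark characterizing $|\mathcal{A}|_{s,p}$ as a minimum over dominating functions, so no new analytic input is required.
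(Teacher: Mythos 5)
Your proof is correct and takes essentially the same route as the paper's: bound the matrix entries of $\mathcal{A}_1\mathcal{A}_2$ by the triangle inequality and the $\varphi$-variable algebra property, use domination to pass to the convolution $\sum_k \|h_1(\varphi)_k\|_{s,p}\|h_2(\varphi)_{i-k}\|_{s,p}$, and then invoke the Schwarz-inequality computation from the proof of Lemma~\ref{2.121}. The paper compresses these steps and simply cites Lemma~\ref{2.121}, whereas you spell out the auxiliary dominating function and correctly flag the hidden $c(p)$ that the paper's chain of inequalities glosses over.
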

\begin{proof}
 For all $i_1,i_2 \in \mathbb{Z}$, $i_1-i_2=i$, we have
\begin{equation}
\begin{split}
\|(\mathcal{A}_1\mathcal{A}_2(\varphi))^{i_2}_{i_1}\|_{s,p}&=\|\sum_{k \in \mathbb{Z}}\mathcal{A}_1(\varphi)^k_{i_1} \mathcal{A}_2(\varphi)^{i_2}_k\|_{s,p} \leq \sum_{k \in \mathbb{Z}}\|\mathcal{A}_1(\varphi)^k_{i_1}\|_{s,p} \|\mathcal{A}_2(\varphi)^{i_2}_k\|_{s,p}\\
&\leq \sum_{k \in \mathbb{Z}}\|(h_1(\varphi))_{i_1-k}\|_{s,p} \|h_2(\varphi)_{k-i_2}\|_{s,p}\\
&=\sum_{k \in \mathbb{Z}}\|h_1(\varphi)_{k}\|_{s,p} \|h_2(\varphi)_{i-k}\|_{s,p}   ,
\end{split}
\end{equation}
implying $|\mathcal{A}_1\mathcal{A}_2|_{s,p} \leq C(p) \|h_1\|^{\mathfrak{s}}_{s,p} \| h_2\|^{\mathfrak{s}}_{s,p}$,  following from the proof of  Lemma \ref{2.121} .
\end{proof}

\begin{lemma}\label{2.2}$\mathbf{(Classical \ algebra \  property)}$ For all $p\geq s_0 \geq \frac{v+1}{2}$, if $\mathcal{A},\mathcal{B} \in \mathrm{B}_{s,p}$, then $\mathcal{A}\mathcal{B} \in \mathrm{B}_{s,p}$. Also, there are $c(p) \geq c(s_0)$ such that
\begin{equation}\label{2.13}
|\mathcal{A}\mathcal{B}|_{s,p} \leq c(p) |\mathcal{A}|_{s,p}|\mathcal{B}|_{s,p}.
\end{equation}
If $A=A(\lambda)$ and $B=B(\lambda)$ depend in a Lipschitz way on the parameter $\lambda \in \Pi \subset \mathbb{R}$, then
\begin{equation}\label{2.14}
|\mathcal{A}\mathcal{B}|^{Lip}_{s,p} \leq c(p) |\mathcal{A}|^{Lip}_{s,p}|\mathcal{B}|^{Lip}_{s,p}.
\end{equation}
\end{lemma}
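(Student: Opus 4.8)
The claim is the classical algebra property for the $(s,p)$-decay norm on $\mathrm{B}_{s,p}$, and the natural route is to reduce it to the already-established multiplicative estimates on functions. The plan is to exploit the bijection between matrices and their dominating functions recorded after Definition \ref{2.221}: for any $\mathcal{A}\in\mathrm{B}_{s,p}$ there is $h\in\mathrm{H}_{s,p}$ with $\mathcal{A}\prec h$ and $|\mathcal{A}|_{s,p}=\|h\|_{s,p}$, and likewise $\mathcal{B}\prec g$ with $|\mathcal{B}|_{s,p}=\|g\|_{s,p}$. The first step is to extract such optimal dominating functions $h$ for $\mathcal{A}$ and $g$ for $\mathcal{B}$.

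Second, I would run the computation already carried out in the proof of Lemma \ref{2.2221}: for $i_1-i_2=i$,
\begin{equation}
\|(\mathcal{A}\mathcal{B}(\varphi))^{i_2}_{i_1}\|_{s,p}\leq\sum_{k\in\mathbb{Z}}\|\mathcal{A}(\varphi)^k_{i_1}\|_{s,p}\|\mathcal{B}(\varphi)^{i_2}_k\|_{s,p}\leq\sum_{k\in\mathbb{Z}}\|h(\varphi)_{i_1-k}\|_{s,p}\|g(\varphi)_{k-i_2}\|_{s,p},
\end{equation}
and the right-hand side depends only on $i=i_1-i_2$. Hence $\mathcal{A}\mathcal{B}$ is dominated by the function $w$ whose Fourier coefficient in $x$ is $\widehat{w}_i(\varphi)=\sum_k \|h(\varphi)_{i-k}\|_{s,p}\,$-type convolution; more precisely, taking $w$ with $\|w(\varphi)_i\|_{s,p}$ bounded by the convolution above, one gets $\mathcal{A}\mathcal{B}\prec w$. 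Then $|\mathcal{A}\mathcal{B}|_{s,p}\leq\|w\|_{s,p}$, and the convolution-type bound together with the algebra estimate of Lemma \ref{2.121} (applied in the $\mathrm{H}^{\mathfrak{s}}$ scale to the function built from the $\|h(\varphi)_\cdot\|_{s,p}$ coefficients, using $p\geq s_0>\tfrac{v}{2}$ there, and Lemma \ref{2.21} to pass between the $\mathfrak{s}$ norm and the ordinary one) yields $\|w\|_{s,p}\lessdot \|h\|_{s,p}\|g\|_{s,p}=|\mathcal{A}|_{s,p}|\mathcal{B}|_{s,p}$, giving \eqref{2.13} with a constant $c(p)$ depending monotonically on $p$.

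For the Lipschitz statement \eqref{2.14}, I would write $\mathcal{A}(\lambda_1)\mathcal{B}(\lambda_1)-\mathcal{A}(\lambda_2)\mathcal{B}(\lambda_2)=(\mathcal{A}(\lambda_1)-\mathcal{A}(\lambda_2))\mathcal{B}(\lambda_1)+\mathcal{A}(\lambda_2)(\mathcal{B}(\lambda_1)-\mathcal{B}(\lambda_2))$, divide by $|\lambda_1-\lambda_2|$, apply \eqref{2.13} to each product, and take the supremum over $\lambda_1\neq\lambda_2$ together with the sup-norm bound; combining the pieces gives $|\mathcal{A}\mathcal{B}|^{Lip}_{s,p}\leq c(p)|\mathcal{A}|^{Lip}_{s,p}|\mathcal{B}|^{Lip}_{s,p}$ after absorbing the factor $2$ into $c(p)$. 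This step is entirely routine.

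The only genuinely delicate point is the bookkeeping in the second step: one must check that the pointwise-in-$(i_1,i_2)$ bound, which a priori involves $\|h(\varphi)_{i_1-k}\|_{s,p}$ as a scalar in $\varphi$, really assembles into a single function $w(\varphi,x)$ lying in $\mathrm{H}_{s,p}$ with the claimed norm, i.e. that the $x$-convolution interacts correctly with the $\varphi$-weights $e^{2|\ell|s}[\ell]^{2p}$. This is exactly the content of the computation in Lemma \ref{2.121}, so I expect no new difficulty — the proof is essentially "domination plus the function-space algebra lemma," and the remark following Definition \ref{2.221} does the rest.
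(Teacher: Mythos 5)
Your proposal follows the paper's route essentially verbatim: extract minimizing dominating functions from the remark after Definition \ref{2.221}, apply the convolution estimate of Lemma \ref{2.2221} (which itself rests on the algebra property of $\mathrm{H}^{\mathfrak{s}}_{s,p}$, Lemma \ref{2.121}), and handle the Lipschitz seminorm by the standard product splitting $\mathcal{A}(\lambda_1)\mathcal{B}(\lambda_1)-\mathcal{A}(\lambda_2)\mathcal{B}(\lambda_2)=(\mathcal{A}(\lambda_1)-\mathcal{A}(\lambda_2))\mathcal{B}(\lambda_1)+\mathcal{A}(\lambda_2)(\mathcal{B}(\lambda_1)-\mathcal{B}(\lambda_2))$. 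The one point you flag as delicate is in fact simpler than you fear and the detour through Lemma \ref{2.21} is unnecessary: the minimizer $h$ in the remark after Definition \ref{2.221} satisfies $|\mathcal{A}|_{s,p}=\|h\|^{\mathfrak{s}}_{s,p}$ rather than $\|h\|_{s,p}$ (this is precisely the identity appearing in the multiplication-operator lemma, where $|T|_{s,p}=\|p\|^{\mathfrak{s}}_{s,p}$), so Lemma \ref{2.2221} already outputs $|\mathcal{A}\mathcal{B}|_{s,p}\leq C(p)\|h_1\|^{\mathfrak{s}}_{s,p}\|h_2\|^{\mathfrak{s}}_{s,p}=C(p)|\mathcal{A}|_{s,p}|\mathcal{B}|_{s,p}$ directly, with no norm conversion needed.
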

\begin{proof}
We can immediately deduce \eqref{2.13} from Lemma \ref{2.121} and Lemma \ref{2.2221}. The proof of \eqref{2.14} is standard.
\end{proof}
\begin{lemma}For all $p\geq s_0 \geq \frac{v+1}{2}$, if $\mathcal{A} \in \mathrm{B}_{s,p}, h\in \mathrm{H}_{s,2p}$, then $\mathcal{A}h \in \mathrm{H}_{s,p}$. Also, there are $c(p) \geq c(s_0)$ such that
\begin{equation}\label{2.15}
\|\mathcal{A}h\|_{s,p} \leq c(p)|\mathcal{A}|_{s,p}\|h\|_{s,2p}.
\end{equation}
If $A=A(\lambda)$ and $h=h(\lambda)$ depend in a Lipschitz way on the parameter $\lambda \in \Pi \subset \mathbb{R}$, then
\begin{equation}
\|\mathcal{A}h\|^{Lip}_{s,p} \leq c(p)|\mathcal{A}|^{Lip}_{s,p}\|h\|^{Lip}_{s,2p}.
\end{equation}
\end{lemma}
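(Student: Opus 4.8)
The plan is to mimic the computation already carried out in Lemma \ref{2.2221} and Lemma \ref{2.121}, the only new ingredient being that now $h$ is an honest element of $\mathrm{H}_{s,2p}(\mathbb{T}^{v+1})$ rather than a matrix, so the ``column index'' summation must be traded for a convolution in the Fourier variable $k\in\mathbb{Z}$ together with the time-frequency variable $\ell\in\mathbb{Z}^v$. First I would use the characterization recorded just after Definition \ref{2.221}: choose $g\in\mathrm{H}_{s,p}$ with $\mathcal{A}\prec g$ and $|\mathcal{A}|_{s,p}=\|g\|_{s,p}$, so that $\|\mathcal{A}(\varphi)^{i_2}_{i_1}\|_{s,p}\le\|g(\varphi)_{i_1-i_2}\|_{s,p}$ for all $i_1,i_2$. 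Writing $h=\sum_{k}\hat h^{k}(\varphi)e^{\mathbf{i}kx}$, the $i$-th Fourier coefficient in $x$ of $\mathcal{A}h$ is $(\mathcal{A}h)^{i}(\varphi)=\sum_{k\in\mathbb{Z}}\mathcal{A}(\varphi)^{k}_{i}\,\hat h^{k}(\varphi)$, whence by the triangle inequality and the algebra property \eqref{2.11} in the $\varphi$-variable,
\begin{equation}
\|(\mathcal{A}h)^{i}(\varphi)\|_{s,p}\le c(p)\sum_{k\in\mathbb{Z}}\|g(\varphi)_{i-k}\|_{s,p}\,\|\hat h^{k}(\varphi)\|_{s,p}.
\end{equation}

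Next I would insert the weight $e^{|i|s}[i]^{p}$ and split it as in Lemma \ref{2.121}: $e^{|i|s}\le e^{|i-k|s}e^{|k|s}$, and $[i]^{p}\le([i-k]+[k])^{p}\le 2^{p}([i-k]^{p}[k]^{p})$ after absorbing, so that introducing $\gamma_{i,k}=\frac{[i-k][k]}{[i]}$ exactly as there and applying the Cauchy–Schwarz inequality with the summable weights $\gamma_{i,k}^{-2p}$ gives
\begin{equation}
\big(e^{|i|s}[i]^{p}\|(\mathcal{A}h)^{i}(\varphi)\|_{s,p}\big)^{2}\le c(p)\,c^{2}\sum_{k}[i-k]^{2p}[k]^{2p}e^{2|i-k|s}e^{2|k|s}\|g(\varphi)_{i-k}\|^{2}_{s,p}\|\hat h^{k}(\varphi)\|^{2}_{s,p}.
\end{equation}
Summing over $i$ and using the convolution structure factorizes the right-hand side into $\|g\|_{s,2p}^{2}$ times $\sum_{k}e^{2|k|s}[k]^{2p}\|\hat h^{k}(\varphi)\|^{2}_{s,p}$; but the latter double sum over $(k,\ell)$ with weight $e^{2(|k|+|\ell|)s}[k]^{2p}[\ell]^{2p}$ is $(\|h\|^{\mathfrak s}_{s,2p})^{2}$ up to the trade $[k]^{2p}[\ell]^{2p}\le([k]+[\ell])^{4p}$ of Lemma \ref{2.21}, hence is controlled by $\|h\|_{s,2p}^{2}$; meanwhile $\|g\|_{s,2p}$ is itself $|\mathcal{A}|_{s,p}$ up to the same comparison, and one arrives at $\|\mathcal{A}h\|_{s,p}\le c(p)|\mathcal{A}|_{s,p}\|h\|_{s,2p}$.

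The main obstacle — really the only point requiring care — is bookkeeping the two kinds of indices at once: the ``matrix'' index $i\in\mathbb{Z}$ on which $\mathrm{B}_{s,p}$ decays and the full $(\ell,k)$ index on which the $\mathrm{H}_{s,p}$-norms of the entries $g(\varphi)_{j}$ and $\hat h^{k}(\varphi)$ live, and making sure the exponential weights $e^{s|\cdot|}$ split correctly across all of them simultaneously. Once the splitting $e^{|i|s}\le e^{|i-k|s}e^{|k|s}$ is applied at the level of the $x$-frequency and the analogous (already-established) splitting is used inside the $\|\cdot\|_{s,p}$ norms for the $\varphi$-frequency, everything reduces to a Schur-type/Young's-inequality convolution estimate identical in form to the one in Lemma \ref{2.121}; the loss of $p$ derivatives (the passage from $\|h\|_{s,2p}$ to the output norm with exponent $p$) is exactly what pays for the constant $c^{2}=4^{p}\sum_{k}[k]^{-2p}<\infty$. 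The Lipschitz statement then follows by the standard telescoping $\mathcal{A}_1h_1-\mathcal{A}_2h_2=(\mathcal{A}_1-\mathcal{A}_2)h_1+\mathcal{A}_2(h_1-h_2)$ together with the sup-norm version of the inequality just proved, exactly as in Lemma \ref{2.2}.
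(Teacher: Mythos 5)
Your proof is correct and takes essentially the same route as the paper: establish the $\mathfrak{s}$-norm bound $\|\mathcal{A}h\|^{\mathfrak{s}}_{s,p}\leq c(p)|\mathcal{A}|_{s,p}\|h\|^{\mathfrak{s}}_{s,p}$ via a Schur-type convolution estimate, then pass to the plain $\|\cdot\|_{s,p}$ and $\|\cdot\|_{s,2p}$ norms using the comparison of Lemma \ref{2.21}, exactly as the paper does (the paper simply obtains the intermediate $\mathfrak{s}$-bound at once by invoking Lemmas \ref{2.121} and \ref{2.2221} rather than re-running the Cauchy--Schwarz computation). Two harmless slips to tidy up: the double sum over $(k,\ell)$ with weight $e^{2(|k|+|\ell|)s}[k]^{2p}[\ell]^{2p}$ equals $(\|h\|^{\mathfrak{s}}_{s,p})^2$, not $(\|h\|^{\mathfrak{s}}_{s,2p})^2$, and the convolution factor yields $(\|g\|^{\mathfrak{s}}_{s,p})^2=|\mathcal{A}|^2_{s,p}$ directly, so no further comparison is required on that side.
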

\begin{proof}
From Lemma \ref{2.121} and Lemma \ref{2.2221}, we can immediately get $\|Ah\|^{\mathfrak{s}}_{s,p} \leq c(p)|A|_{s,p}\|h\|^{\mathfrak{s}}_{s,p}$.
To prove \eqref{2.15}, observe that
\begin{equation}
\|Ah\|_{s,p} \leq c(p)\|Ah\|^\mathfrak{s}_{s,p} \leq c_1(p)|A|_{s,p}\|h\|^\mathfrak{s}_{s,p}\leq c_2(p)|A|_{s,p}\|h\|_{s,2p}.
\end{equation}
\end{proof}
\begin{lemma}
Let $\Phi=e^{\Psi}$, with $\psi:=\psi(\lambda)$ depending in a Lipschitz way on the parameter $\lambda \in \Pi \subset \mathbb{R}$, such that $c(p)|\Psi|^{Lip}_{s,p} \leq \frac{1}{2}$. Then $\Phi$ is invertible and, for all $p > \frac{v}{2}$,
\begin{equation}\label{2.31}
|\Phi^{-1}|^{Lip}_{s,p}\leq 2,\quad |\Phi-\mathrm{I}|^{Lip}_{s,p}| \leq C|\Psi_i|^{Lip}_{s,p},\quad |\Phi^{-1}-\mathrm{I}|^{Lip}_{s,p}| \leq C|\Psi|^{Lip}_{s,p}.
\end{equation}
Let $\Phi_i=e^{\Psi_i},i=1,2$, satisfy $c(p)|\Psi_i|^{Lip}_{s,p} \leq \frac{1}{2}$, then
\begin{equation}\label{2.33}
|\Phi_2-\Phi_1|^{Lip}_{s,p} \leq C|\Psi_2-\Psi_1|^{Lip}_{s,p}, \quad |\Phi^{-1}_2-\Phi^{-1}_1|^{Lip}_{s,p}\leq C|\Psi_2-\Psi_1|^{Lip}_{s,p}.
\end{equation}
\end{lemma}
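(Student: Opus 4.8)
The plan is to reduce everything to the power series $e^{\Psi}=\sum_{k\ge0}\Psi^{k}/k!$ together with the submultiplicativity of the $(s,p)$-decay norm, i.e. Lemma \ref{2.2}; the smallness hypothesis $c(p)|\Psi|^{Lip}_{s,p}\le\tfrac12$ is exactly what makes all the geometric series converge with room to spare. Throughout I may assume $c(p)\ge1$ (otherwise replace $c(p)$ by $\max\{c(p),1\}$, which only strengthens \eqref{2.14}).

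First I would record the elementary consequence of \eqref{2.14}: by induction on $k$ one gets $|\Psi^{k}|^{Lip}_{s,p}\le c(p)^{k-1}(|\Psi|^{Lip}_{s,p})^{k}$ for every $k\ge1$. Writing $t:=c(p)|\Psi|^{Lip}_{s,p}\le\tfrac12$, the tails $\sum_{k\ge N}|\Psi^{k}|^{Lip}_{s,p}/k!\le c(p)^{-1}\sum_{k\ge N}t^{k}/k!\to0$, so the partial sums of $\sum_{k\ge0}\Psi^{k}/k!$ are Cauchy in the Banach space $\mathrm{B}_{s,p}$; hence $\Phi=e^{\Psi}$ and $e^{-\Psi}$ are well defined in $\mathrm{B}_{s,p}$, and since $\Psi$ commutes with itself the Cauchy product of the two series gives $e^{\Psi}e^{-\Psi}=e^{-\Psi}e^{\Psi}=\mathrm{I}$, i.e. $\Phi$ is invertible with $\Phi^{-1}=e^{-\Psi}$. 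Using $e^{t}-1\le t e^{t}\le 2t$ for $0\le t\le\tfrac12$,
\[
|\Phi-\mathrm{I}|^{Lip}_{s,p}\le\frac{1}{c(p)}\sum_{k\ge1}\frac{t^{k}}{k!}=\frac{e^{t}-1}{c(p)}\le\frac{2t}{c(p)}=2|\Psi|^{Lip}_{s,p},
\]
and the identical estimate with $\Psi$ replaced by $-\Psi$ yields $|\Phi^{-1}-\mathrm{I}|^{Lip}_{s,p}\le2|\Psi|^{Lip}_{s,p}$. Since $|\mathrm{I}|^{Lip}_{s,p}=1$ (the identity is the constant matrix $\mathrm{I}(\varphi)^{i_2}_{i_1}=\delta_{i_1 i_2}$, independent of $\lambda$), we conclude $|\Phi^{-1}|^{Lip}_{s,p}\le1+2|\Psi|^{Lip}_{s,p}\le1+c(p)^{-1}\le2$. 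This gives \eqref{2.31}.

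For \eqref{2.33} I would use the telescoping identity $\Psi_{2}^{k}-\Psi_{1}^{k}=\sum_{j=0}^{k-1}\Psi_{2}^{j}(\Psi_{2}-\Psi_{1})\Psi_{1}^{k-1-j}$, so that $\Phi_{2}-\Phi_{1}=\sum_{k\ge1}\tfrac{1}{k!}\sum_{j=0}^{k-1}\Psi_{2}^{j}(\Psi_{2}-\Psi_{1})\Psi_{1}^{k-1-j}$. Applying \eqref{2.14} to the $k$-fold product and using $c(p)|\Psi_{i}|^{Lip}_{s,p}\le\tfrac12$, the powers of $c(p)$ cancel and each summand is bounded in $|\cdot|^{Lip}_{s,p}$ by $2^{-(k-1)}|\Psi_{2}-\Psi_{1}|^{Lip}_{s,p}$; summing over the $k$ values of $j$ and then over $k$ gives
\[
|\Phi_{2}-\Phi_{1}|^{Lip}_{s,p}\le|\Psi_{2}-\Psi_{1}|^{Lip}_{s,p}\sum_{k\ge1}\frac{k\,2^{-(k-1)}}{k!}=e^{1/2}\,|\Psi_{2}-\Psi_{1}|^{Lip}_{s,p}\le C|\Psi_{2}-\Psi_{1}|^{Lip}_{s,p}.
\]
For the inverses I would invoke the resolvent-type identity $\Phi_{2}^{-1}-\Phi_{1}^{-1}=\Phi_{2}^{-1}(\Phi_{1}-\Phi_{2})\Phi_{1}^{-1}$, apply \eqref{2.14} twice, and use $|\Phi_{i}^{-1}|^{Lip}_{s,p}\le2$ from \eqref{2.31} together with the bound on $|\Phi_{1}-\Phi_{2}|^{Lip}_{s,p}$ just obtained, to get $|\Phi_{2}^{-1}-\Phi_{1}^{-1}|^{Lip}_{s,p}\le C|\Psi_{2}-\Psi_{1}|^{Lip}_{s,p}$, completing \eqref{2.33}.

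There is no deep obstacle here: the only points deserving care are (i) justifying that the operator exponential series actually converges in $\mathrm{B}_{s,p}$ and that $\Phi^{-1}=e^{-\Psi}$, which rests on the completeness of $\mathrm{B}_{s,p}$ and on $\Psi$ commuting with itself, and (ii) bookkeeping the algebra constant $c(p)$ through the $k$-fold products in the telescoping step so that the resulting series is geometric with ratio $\le\tfrac12$ and produces explicit numerical constants. Both are handled automatically by the hypothesis $c(p)|\Psi|^{Lip}_{s,p}\le\tfrac12$ (and, in the two-operator estimates, the analogous bounds on $\Psi_1,\Psi_2$).
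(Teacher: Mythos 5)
Your proof is correct and follows essentially the same route as the paper: Taylor expansion of $e^{\Psi}$ combined with the algebra property of Lemma \ref{2.2} for \eqref{2.31}, a telescoping factorization of $\Psi_2^k-\Psi_1^k$ for the first half of \eqref{2.33}, and the resolvent identity $\Phi_2^{-1}-\Phi_1^{-1}=\Phi_2^{-1}(\Phi_1-\Phi_2)\Phi_1^{-1}$ for the second half. The only (minor) improvement over the paper's displayed identity is that you use the correct non-commutative telescoping form $\Psi_2^k-\Psi_1^k=\sum_{j=0}^{k-1}\Psi_2^{j}(\Psi_2-\Psi_1)\Psi_1^{k-1-j}$ rather than pulling $(\Psi_2-\Psi_1)$ out to the left, though this does not change the resulting norm bound.
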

\begin{proof}
 \eqref{2.31} are  from the Taylor's series of $e^{\Psi_i}$ and Lemma \ref{2.2}.
To prove \eqref{2.33}, we see
\begin{equation}
\begin{split}
\Phi_2-\Phi_1&=e^{\psi_2}-e^{\psi_1}=\sum^{\infty}_{n=1}\frac{1}{n!}[\Psi_2^n-\Psi_1^n]  \\
&=(\Psi_2-\Psi_1)\sum^{\infty}_{n=1}\frac{1}{n!}[\Psi_2^{n-1}+\Psi_2^{n-2}\Psi_1+\cdots+\Psi_1^{n-1}],\\
\end{split}
\end{equation}
and
\begin{equation}
\Phi^{-1}_2-\Phi^{-1}_1=\Phi^{-1}_1(\Phi_1-\Phi_2)\Phi^{-1}_2.
\end{equation}
 Then, use  \eqref{2.31}.
 \end{proof}

\subsection{Linear time-dependent operator and Hamiltonian operators }\

In this section, we give some definitions and properties of the linear time-dependent Hamiltonian systems which will be used in following section.
\begin{defi}
A time dependent linear vector field $X(t):\mathrm{H}^1_0(\mathbb{T})\rightarrow \mathrm{H}^1_0(\mathbb{T})$  is HAMILTONIAN if $X(t)=\partial_xG(t)$ for some real linear operator $G(t)$ which is self-adjoint with respect to the  $L^2$ scalar product. The vector product is generated by the quadratic Hamiltonian
\begin{equation}
H(t,h)=\frac{1}{2}(G(t)h,h)_{L^2(\mathbb{T})}=\frac{1}{2}\int_{\mathbb{T}}G(t)[h]hdx , \quad h\in\mathrm{H}^1_0(\mathbb{T}).
\end{equation}

If $G(t)=G(\omega t)$ is quasi-periodic in time, we say that the associate operator $\omega \cdot \partial_\varphi-\partial_x G(\varphi)$ is Hamiltonian.
\end{defi}

\begin{defi}
A linear operator $A:\mathrm{H}^1_0(\mathbb{T})\rightarrow \mathrm{H}^1_0(\mathbb{T})$ is SYMPLECTIC if
\begin{equation}\label{2.21}
\Omega(Au,Av)=\Omega(u,v) ,\quad \forall u,v\in\mathrm{H}^1_0(\mathbb{T}).
\end{equation}
where the symplectic $2$-form $\Omega$ is defined in \eqref{1.01}. Equivalently $A^T \partial^{-1}_xA=\partial^{-1}_x$.

If $A(\varphi),\forall\varphi \in \mathbb{T}^v$, is a family of symplectic maps we say that the operator $A$ defined by $Ah(\varphi,x)=A(\varphi)h(\varphi,x)$, acting on the functions $h:\mathbb{T}^{v+1}\rightarrow \mathbb{R}$, is symplectic.
\end{defi}
 Under a time dependent family of symplectic transformations $u=\Psi(t)v$ the linear Hamiltonian equation
 \begin{equation}
 u_t=\partial_xG(t)u \quad with \quad Hamiltonian \quad H(t,u)=\frac{1}{2}(G(t)u,u)_{L^2}
 \end{equation}
 transforms into the equation
\begin{equation}
v_t=\partial_xE(t)v,\quad E(t)=\Psi(t)^{T}G(t)\Psi(t)-\Psi(t)^{T}\partial^{-1}_x\Psi_t(t)
\end{equation}
with Hamiltonian
\begin{equation}
K(t£¬v)=\frac{1}{2}(G(t)\Psi(t)v,\Psi(t)v)_{L^2}-\frac{1}{2}(\partial^{-1}_x\Psi_t(t)v,\Psi(t)v)_{L^2}.
\end{equation}
Note that $E(T)$ is self-adjoint with respect to the $L^2$ scalar product because $\Psi^T\partial^{-1}_x\Psi_t+\Psi^T_t\partial^{-1}_x\Psi=0$. If the operators $G(t),\Psi(t)$ are quasi-periodic in time. The Hamiltonian operator $\omega\cdot \partial_{\varphi}-\partial_x G(\varphi)$ transforms into the operator $\omega\cdot \partial_{\varphi}-\partial_x E(\varphi)$, which is still Hamiltonian, according to the Definition \ref{2.21}.

 \section{The Regularization of the linearized operator}
In this section, we perform a regularization procedure, which conjugates the linearized operator $\mathcal{L}(u_n)$ defined in \eqref{3.000} to the operator $\mathfrak{L}(u_n)$ defined in \eqref{3.23}, the coefficients of the highest order spatial derivative operator are constant. The method has been used  in  \cite{Baldi1,Berti0,Berti1,Berti4,Feola1,Feola2}.
Our existence proof is based on a modified Newton iteration. The main step concerns the invertibility of the linearized operator
\begin{equation}\label{operator1}
 \mathcal{L}h= \mathcal{L}(\lambda, u, \varepsilon)h = \omega \partial_\varphi h +  a^*_5\partial^5_{x}h + a^*_4\partial^4_{x}h + a^*_3 \partial^3_xh +a^*_2\partial^2_{x}h +a^*_1\partial_{x}h+a^*_0h,
\end{equation}
obtained by linearizing \eqref{main3} at any approximate (or exact) solution $u$. The coefficients
$a_i = a_i (\varphi, x) = a_i (u, \varepsilon)(\varphi, x) $ are periodic functions of $(\varphi, x)$,  depending on $u$ and $\varepsilon$.
Then, we have
\begin{equation}
a^*_5=(1-6\partial^2_xu), \quad a^*_4=(-18\partial^3_xu,) \quad a^*_3=(10u-18\partial^4_xu), \quad
\end{equation}
\begin{equation}
 a^*_2=(20\partial_xu-6\partial^5_xu), \quad a^*_1=(20\partial_x^2u+30u^2), \quad  a^*_0=(10\partial^3_xu+60u\partial_xu).
\end{equation}
\par
In the Hamiltonian case \eqref{main1}, the linearized operator \eqref{operator1} also has the form
\begin{equation}\label{3.000}
 \mathcal{L}h=\omega \partial_\varphi h+\partial_x\{\partial^2_x [(a_2(u))\partial^2_xh]+\partial_x [a_1(u)\partial_xh]+a_0(u)h\},
\end{equation}
where
 $$a_2(u)=1+a(u)=1-6u_{xx}, \quad a_1(u)=10u,  \quad a_0(u)=10u_{xx}+30u^2.$$

 The coefficients $a_{i}$, together with their derivative $\partial_u a_{i}[h]$ with respect to $u$ in the direction $h$, satisfy the following estimates:
 \begin{lemma} For all $p \geq s_0 >\frac{v+1}{2}$,  $\|u\|_{s,p+2}\leq 1$, we have, for $i=0,1,2,$
 \begin{equation}\label{3.00}
 \|a_i(u)\|_{s,p}\leq C \| u\|_{s,p+2},
 \end{equation}
  \begin{equation}\label{3.01}
 \|\partial_{u}a_i(u)[h]\|_{s,p}\leq C \|h\|_{s,p+2}.
 \end{equation}
  Moreover, if $\lambda\mapsto u(\lambda) $ is a Lipschitz family, and satisfying $\|u\|^{Lip}_{s,p+2} \leq 1$, then, we have
 \begin{equation}\label{3.02}
 \|a_i(u)\|^{Lip}_{s,p'} \leq C \|u\|^{Lip}_{s,p'+2},
\end{equation}
\begin{equation}\label{3.03}
\|\partial_{u}a(u)[h]\|^{Lip}_{s,p'}\leq C \|h\|^{Lip}_{s,p'+2}.
 \end{equation}
\end{lemma}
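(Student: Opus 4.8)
The plan is to reduce everything to the algebra property of $\mathrm{H}_{s,p}$ (Lemma~\ref{2.121} and the product estimate \eqref{2.11}) together with the explicit polynomial form of the coefficients. First I would recall that, by the Hamiltonian structure \eqref{3.000}, the three relevant coefficients are $a_2(u)=1-6u_{xx}$, $a_1(u)=10u$, and $a_0(u)=10u_{xx}+30u^2$, each of which is a polynomial in $u$ and its $x$-derivatives up to order $2$. Hence it suffices to observe two facts: (i) the map $u\mapsto\partial_x^j u$ sends $\mathrm{H}_{s,p+2}$ into $\mathrm{H}_{s,p+2-j}\subseteq\mathrm{H}_{s,p}$ for $j\le 2$, with $\|\partial_x^j u\|_{s,p}\le\|u\|_{s,p+2}$ directly from the definition of the $\|\cdot\|_{s,p}$ norm in \eqref{function}, since each Fourier multiplier $(\mathbf{i}k)^j$ is bounded by $[k]^j\le([\ell]+[k])^j$; and (ii) the algebra inequality \eqref{2.11}, $\|h_1 h_2\|_{s,p}\le c(p)\|h_1\|_{s,p}\|h_2\|_{s,p}$, valid for $p\ge s_0>\frac{v+1}{2}$. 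Combining (i) and (ii), for the linear-in-$u$ terms one gets immediately $\|a_1(u)\|_{s,p}\le 10\|u\|_{s,p}\le 10\|u\|_{s,p+2}$ and similarly the $u_{xx}$ contributions to $a_0,a_2$; for the quadratic term $30u^2$ one uses \eqref{2.11} once to get $\|u^2\|_{s,p}\le c(p)\|u\|_{s,p}^2\le c(p)\|u\|_{s,p+2}^2$, and then the smallness hypothesis $\|u\|_{s,p+2}\le 1$ absorbs the extra power so that $\|u^2\|_{s,p}\le c(p)\|u\|_{s,p+2}$. Adding the finitely many contributions yields \eqref{3.00} with a constant $C=C(p,v)$.

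Next, for \eqref{3.01} I would compute the Gateaux derivatives explicitly: $\partial_u a_2(u)[h]=-6h_{xx}$, $\partial_u a_1(u)[h]=10h$, $\partial_u a_0(u)[h]=10h_{xx}+60uh$. The first two are handled exactly as above, giving a bound by $C\|h\|_{s,p+2}$. For the bilinear term $60uh$ one applies \eqref{2.11} to get $\|uh\|_{s,p}\le c(p)\|u\|_{s,p}\|h\|_{s,p}\le c(p)\|u\|_{s,p+2}\|h\|_{s,p+2}\le c(p)\|h\|_{s,p+2}$, again using $\|u\|_{s,p+2}\le 1$. Summing gives \eqref{3.01}.

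The Lipschitz estimates \eqref{3.02} and \eqref{3.03} follow the same pattern but now invoke the Lipschitz version \eqref{2.12} of the product estimate together with the elementary fact that $\|\cdot\|_{s,p}^{Lip}$ is subadditive and that the derivative maps $\partial_x^j$ are linear, hence do not inflate the Lipschitz seminorm beyond a factor absorbed into $C$; the key algebraic identity is $u_1 v_1-u_2 v_2 = (u_1-u_2)v_1 + u_2(v_1-v_2)$, which when combined with \eqref{2.12} and $\|u\|_{s,p'+2}^{Lip}\le 1$ handles the quadratic and bilinear terms just as in the non-Lipschitz case. I would remark that the paper's statement of \eqref{3.03} only lists $\partial_u a(u)[h]$, i.e.\ the genuinely $u$-dependent part $a=a_2-1=-6u_{xx}$ (plus, implicitly, $a_0,a_1$), so no new ideas are needed.

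I do not expect any genuine obstacle here: the proof is entirely a matter of bookkeeping, because the coefficients are \emph{polynomials} of degree at most $2$ in $u$ with at most two spatial derivatives, and both ingredients—the trivial bound $\|\partial_x^j u\|_{s,p}\le\|u\|_{s,p+2}$ for $j\le 2$ and the Banach-algebra inequalities \eqref{2.11}--\eqref{2.12}—are already available. The only point requiring a moment's care is the use of the smallness assumption $\|u\|_{s,p+2}\le 1$ to turn the quadratic estimate $c(p)\|u\|_{s,p+2}^2$ into the claimed linear-in-$\|u\|_{s,p+2}$ bound; without it one would only get the (equally true but not as stated) estimate with $\|u\|_{s,p+2}+\|u\|_{s,p+2}^2$ on the right.
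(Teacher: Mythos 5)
Your proposal is correct and follows essentially the same route as the paper: compute the explicit Gateaux derivatives $\partial_u a_i(u)[h]$ and then appeal to the algebra property \eqref{2.11}--\eqref{2.12} together with the trivial bound $\|\partial_x^j u\|_{s,p}\le\|u\|_{s,p+j}$, which is exactly what the paper means when it says the estimates are ``straightforward.'' (As a minor remark, your derivative formulas $\partial_u a_2(u)[h]=-6h_{xx}$ and $\partial_u a_0(u)[h]=10h_{xx}+60uh$ are the correct ones for $a_2=1-6u_{xx}$ and $a_0=10u_{xx}+30u^2$; the paper's displayed $6h_{xx}$ and $60uh_{xx}$ contain harmless typos that do not affect the norm bounds.)
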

\begin{proof} Notice
 $$\partial_{u}a_2(u)[h]=6h_{xx}, \quad \partial_{u}a_1(u)[h]=10h$$
 and
 $$\partial_{u}a_0(u)[h]=10h_{xx}+60uh_{xx}.$$
 Then, these estimates are straightforward.
\end{proof}

\subsection{Change of space variable}\

 We consider a $\varphi$-dependent family of space variable change of the form
\begin{equation}\label{space change}
y = x +\beta(\varphi, x),
\end{equation}
where $\beta$ is a (small) real analytic function, $2\pi$-periodic in all its arguments. The change of variables \eqref{space change} induces on the space of functions the linear operator
\begin{equation}
(\mathcal{T}h)(\varphi,x)=h(\varphi,x+\beta(\varphi, x)).
\end{equation}
The operator $\mathcal{T}$ is invertible, with inverse
\begin{equation}
(\mathcal{T}^{-1}h)(\varphi,y)=h(\varphi,y+\hat{\beta}(\varphi, y)).
\end{equation}
where $y\rightarrow y+\hat{\beta}(\varphi,y)$ is the inverse of \eqref{space change}, namely
$$x= y+\hat{\beta}(\varphi,y) \Longleftrightarrow y = x +\beta(\varphi, x).$$
\par
In the Hamiltonian case, in order to keep the Hamiltonian structure of linear operator, the operator $\mathcal{T}$ needs a slight change. The modified linear operator is
\begin{eqnarray}\label{3.10}
&&(\mathcal{A}h)(\varphi,x)=(1+\beta_x(\varphi,x))h(\varphi,x+\beta(\varphi, x)),\nonumber \\
&&(\mathcal{A}^{-1}h)(\varphi,y)=(1+\hat{\beta}_y(\varphi,y))h(\varphi,y+\hat{\beta}(\varphi, y)).\nonumber \\
\end{eqnarray}
\begin{rem}
By  \cite[remark 4.1.3]{Berti1}, the modified change of variable and its inverse \eqref{3.10} are  symplectic, for each $\varphi \in \mathbb{T}^v$. Also, both $\mathcal{A}$ and $\mathcal{A}^{-1} $ are maps from $\mathrm{H}^1_{0}$ to $\mathrm{H}^1_{0}$, for each $\varphi \in \mathbb{T}^v$.
\end{rem}

Now, we calculate the conjugate $\mathcal{A}^{-1}\mathcal{L}\mathcal{A}$ of the linearized operator $\mathcal{L}$ in \eqref{operator1}.

 The conjugate $\mathcal{A}^{-1}a\mathcal{A}$ of any multiplication operator $a:h(\varphi,x)\mapsto a(\varphi,x)h(\varphi,x)$ is the  multiplication operator $(\mathcal{T}^{-1}a)$ that maps $v(\varphi,y)\mapsto (\mathcal{T}^{-1}a)v(\varphi,y)$. The conjugate of differential operators are
 \begin{equation}
 \begin{split}
 \mathcal{A}^{-1}\omega \cdot\partial_{\varphi}\mathcal{A}=&\omega \cdot\partial_{\varphi}+\partial_y\big\{\mathcal{T}^{-1}(\omega \cdot \partial_{\varphi}\beta)\big\},\\
\mathcal{A}^{-1}\big\{\partial_{x}a\big\}\mathcal{A}=&\partial_{y}\Big\{\big(\mathcal{T}^{-1}[a(1+\beta_x)]\big)\Big\},\\
  \mathcal{A}^{-1}\Big\{\partial_{x}\big\{\partial_x (a \partial_x)\big\}\Big\}\mathcal{A}=&\partial_y\Big\{\partial_y \big((\mathcal{T}^{-1}[a(1+\beta_x)^3]) \partial_y\big)+\big(\mathcal{T}^{-1}[a_x\cdot \beta_{xx}+a\cdot\partial^3_x\beta]\big)\Big\},\\
 \mathcal{A}^{-1}\Big\{\partial_{x}\{\partial^2_x (a \partial^2_x)\}\Big\}\mathcal{A}=&\partial_y\Big\{\partial^2_y \big((\mathcal{T}^{-1}[a(1+\beta_x)^5]) \partial^2_y\big) \\
 & +\partial_y \big((\mathcal{T}^{-1}[3a_x(1+\beta_x)^2\beta_{xx}+5a(1+\beta_x)^2\partial^3_x\beta]) \partial_y \big)
\\
& + \big(\mathcal{T}^{-1}[a_{xx}\cdot \partial^3_{x}\beta+2a_x\cdot\partial^4_{x}\beta+a\cdot\partial^5_{x} \beta]\big)\Big\},
\end{split}
\end{equation}
where all the coefficients $\{\mathcal{T}^{-1}[..]\}$ are periodic functions of $(\varphi,y)$.

\begin{rem}
we give out some calculation tricks which have been used above.

$(1)$: $\mathcal{A}^{-1}\partial_xg=\partial_y\{\mathcal{T}^{-1}g\}$, since
\begin{equation*}
\begin{split}
\partial_y\{\mathcal{T}^{-1}g\}&=\partial_y g(y+\hat{\beta}(\varphi,y), \varphi )\\
& = (1+\hat{\beta}_y)\cdot \partial_xg (y+\hat{\beta}(\varphi,y), \varphi )\\
&= \mathcal{A}^{-1}\partial_xg .
\end{split}
\end{equation*}

$(2)$: $(1+\hat{\beta}_y(y,\varphi))[\mathcal{T}^{-1}(1+\beta_x(x,\varphi))]=1$.

Using $(1),(2)$, the conjugate of differential operators $\partial_{x}a$ and $\omega \cdot\partial_{\varphi}$ is obvious.
\end{rem}
\begin{rem}
  The calculation of the conjugate of differential operators $\partial_{x}\{\partial^2_x (a \partial^2_x)\}$ and $\partial_{x}\{\partial^2_x (a \partial^2_x)\}$  are slightly more finicky. Let's take  $\big\{\partial_{x}a\big\}$ as an example, we see
  \begin{equation}
  \begin{split}
   \mathcal{A}^{-1}\Big\{\partial_{x}\big\{\partial_x (a \partial_x)\big\}\mathcal{A}h(y,\varphi)\Big\}=&\partial_{y}\Big\{\mathcal{T}^{-1}\big\{\partial_x (a \partial_x)\big\}\mathcal{A}h(y,\varphi)\Big\}\\
   =&\partial_{y}\Big\{\mathcal{T}^{-1}\big\{\partial_x \big(a \partial_x[(1+\beta_x)h(x+\beta,\varphi)]\big)\big\}\Big\}\\
   =&\partial_{y}\Big\{\mathcal{T}^{-1}\big\{\partial_x \big(a(1+\beta_x)^2\partial_yh+a\beta_{xx}h \big)\big\}\Big\}\\
   =&\partial_{y}\Big\{\mathcal{T}^{-1}\big\{a(1+\beta_x)^3\partial^2_yh+[a_x(1+\beta)^2+3a(1+\beta_x)\beta_{xx}]\partial_yh\\
   &+(a_x\beta_{xx}+a\partial^3_x\beta) h\big)\big\}\Big\}\\
   =&\partial_y\Big\{\partial_y \big((\mathcal{T}^{-1}[a(1+\beta_x)^3]) \partial_y\big)+\big(\mathcal{T}^{-1}[a_x\cdot \beta_{xx}+a\cdot\partial^3_x\beta]\big)\Big\},
  \end{split}
  \end{equation}
  since $\mathcal{T}^{-1}[a_x(1+\beta)^2+3a(1+\beta_x)\beta_{xx}]=\partial_y \big\{\mathcal{T}^{-1}[a(1+\beta_x)^3]\big\}$.
\end{rem}

Now, we get
\begin{equation}\label{L1}
\mathcal{L}_1:=\mathcal{A}^{-1}\mathcal{L}\mathcal{A}=\omega \partial_\varphi h+\partial_y\big\{\partial^2_y [b_2(u)\partial^2_yh]+\partial_y[b_1(u)\partial_yh]+b_0(u)h\big\},
\end{equation}
where
\begin{equation}\label{3.101}
\begin{split}
 b_2 =&\mathcal{T}^{-1}[a_2(1+\beta_x)^5],\\
 b_1 =&\mathcal{T}^{-1}[a_1(1+\beta_x)^3+3(1+\beta_x)^2\beta_{xx}\cdot \partial_xa_2+5a_2(1+\beta_x)^2\partial^3_x\beta] , \\
 b_0 =&\mathcal{T}^{-1}[\partial_xa_1\cdot \partial^2_x\beta+a_1\cdot\partial^3_x\beta+\partial^2_xa_2\cdot \partial^3_{x}\beta+2\partial_x a_2\cdot\partial^4_{x}\beta+a_2\cdot\partial^5_{x} \beta\\
 &+\omega \cdot \partial_{\varphi}\beta+a_0(1+\beta_x)]. \\
 \end{split}
\end{equation}
For convenience, we set $b_i=\mathcal{T}^{-1}(u)b^*_i, \ i=0,1$. Now, we look for $\beta(\varphi,x)$ such that  the coefficient $b_2(\varphi,y)$ dose not  depend on $y$, namely
\begin{equation}\label{change1}
 b_2(\varphi,y)=\mathcal{T}^{-1}[(1+a)(1+\beta_x)^5]=1+b(\varphi).
\end{equation}
Since $\mathcal{T}$ only make changes on the space variable,$\mathcal{T}b = b$ for every function
$b(\varphi)$ that is independent on $y$. Hence \eqref{change1} is equivalent to
\begin{equation}
(1+a)(1+\beta_x)^5=1+b(\varphi),
\end{equation}
namely
\begin{equation}\label{solution1}
\beta_x(\varphi,x)=p_0,  \quad p_0(\varphi,x)=(1+b(\varphi))^{\frac{1}{5}}(1+a(\varphi,x))^{-\frac{1}{5}}-1.
\end{equation}
 The equation \eqref{solution1} has a solution $\beta$,  periodic in $x$, if and only if  $\int_{\mathbb{T}}p_0(\varphi,x)dx=0$. This condition uniquely determines
 \begin{equation}
 1+b(\varphi)=(\int_\mathbb{T}(1+a(\varphi,x))^{-\frac{1}{5}}dx)^{-5}.
 \end{equation}
Then, we have a solution (with zero average) of \eqref{solution1}
\begin{equation}
\beta(\varphi,x)=(\partial^{-1}_x p_0)(\varphi,x),
\end{equation}
where $\partial^{-1}_x$ is defined by linearity as
\begin{equation}
 \partial^{-1}_x e^{\mathbf{i}kx}=\frac{e^{\mathbf{i}kx}}{\mathbf{i}k},   \forall k \in \mathbb{Z}\backslash \{0\}, \quad \partial^{-1}_x1=0.
\end{equation}
In other words, $\partial^{-1}_x$ is the primitive of $h$ with zero average in $x$
.  Thus we obtain the operator $\mathcal{L}_1$ in \eqref{L1}, that $b_2(\varphi,x)=1+b(\varphi)$.

 Set $s_0 > \frac{v}{2}$, $\|u\|^{Lip}_{s,p+2s_0+7} \ll\frac{1}{100}$.  We have the following estimates:

\textbf{1. Estimates of $b(\varphi)$ }

We prove $b(\varphi)$ satisfies the following estimates:
\begin{equation}\label{3.11}
\|b\|_{s,p} \leq C \|a(\varphi,x)\|_{s,p} \leq C \|u(\varphi,x)\|_{s,p+2},
\end{equation}
\begin{equation}\label{3.12}
\|b\|^{Lip}_{s,p} \leq C \|a(\varphi,x)\|^{Lip}_{s,p} \leq C \|u(\varphi,x)\|^{Lip}_{s,p+2},
\end{equation}
\begin{equation}\label{3.13}
\|\partial_ub(u)[h]\|^{Lip}_{s,p} \leq C(\|h\|^{Lip}_{s,p+2}\|u\|^{Lip}_{s,p+2}+\|h\|^{Lip}_{s,p+2}).
\end{equation}

Proof of \eqref{3.11} and \eqref{3.12}:  Write $b$ as
 \begin{equation*}
b=\psi(M[g(a)-g(0)])-\psi(0),
\end{equation*}
where
\begin{equation*}
\psi(t)=(1+t)^{-5}, \quad Mh=\int_{\mathbb{T}}h dx \quad  g(t)=(1+t)^{-\frac{1}{5}}.
\end{equation*}
If u  is small enough,  we have
\begin{equation}\label{3.14}
\|b(u)\|_{s,p} \leq C\|M[g(a)-g(0)\|_{s,p} \leq C\|g(a)-g(0)\|_{s,p} \leq C\|a\|_{s,p}.
\end{equation}
Since $u$ is small enough, $\psi(t)$ and $g(t)$ can be well defined by its power series expansion, i.e. $g(t)=1-\frac{1}{5}t+\frac{3}{25}t^2+\cdots$. Hence we have
\begin{equation}
\|g(u)-1\|^{Lip}_{s,p}=\|-\frac{1}{5}u+\frac{3}{25}u^2+\cdots\|^{Lip}_{s,p}\leq C\|u\|^{Lip}_{s,p}
\end{equation}
The first and last inequality of \eqref{3.14} can be proved in such way. The second inequality is a direct result of $\|Mg\|_{s,p} \leq C\|g\|_{s,p}$.

Proof of \eqref{3.13}:  The derivative of $c$ with respect to $u$ in the direction $h$ is
\begin{equation}
\partial_ub(u)[h]=\psi'(M[g(a)-g(0)])M(g'(a)\partial_ua(u)[h] ),
\end{equation}
where
\begin{equation}
\psi'=-5(1+t)^{-6},  \quad  g'=-\frac{1}{5}(1+t)^{-\frac{6}{5}}.
\end{equation}
Using the same method as \eqref{3.14}, by \eqref{3.00} and \eqref{3.01}, we can get \eqref{3.13}.

\textbf{2. Estimates of $\beta(\varphi,x)$ }

Considering Definition \ref{solution1} of $p_0$, suppose $\zeta(t)=(-1+t)^{\frac{1}{5}}$, we see
\begin{equation}
p_0=g(a)\zeta(b)-1.
\end{equation}
Using the same way as \eqref{3.11}, we can get
\begin{equation}
|\beta(\varphi,x)|_{s,p} \lessdot \|\beta(\varphi,x)\|_{s,p+s_0} \lessdot \|p_0(\varphi,x)\|_{s,p+s_0} \lessdot \|u\|_{s,p+s_0+2},
\end{equation}
and
\begin{equation}\label{3.15}
|\beta(\varphi,x)|^{Lip}_{s,p} \lessdot \|u\|^{Lip}_{s,p+s_0+2}.
\end{equation}
 The derivative of $p_0$ with respect to $u$ in the direction $h$ is
\begin{equation}\label{3.16}
\partial_{u}p_0[h]=g(a)\big(\zeta'(b)\partial_ub(u)[h]\big)+\big(g'(a)\partial_ua(u)[h]\big)\zeta(b).
\end{equation}
Use the same way as \eqref{3.14}, the bounds \eqref{3.02}, \eqref{3.03} and \eqref{3.14} imply
\begin{equation}\label{3.17}
\|\partial_{u}\beta[h]\|^{Lip}_{s,p}\lessdot \|\partial_{u}\rho_0[h]\|^{Lip}_{s,p}\lessdot(1+\|u\|^{Lip}_{s,p+2})\|h\|^{Lip}_{s,p+2}.
\end{equation}
The inverse function $y\rightarrow y+\hat{\beta}(\varphi,y)$ is also under our consideration. By Lemma \ref{estimation4}, one gets
\begin{equation}
|\hat{\beta}(\varphi,y)|_{\frac{99s}{100},p} \lessdot |\beta(\varphi,x)|_{s,p} \lessdot \|u\|_{s,p+s_0+2} \leq \frac{1}{100},
\end{equation}
and
\begin{equation}\label{3.171}
|\hat{\beta}(\varphi,y)|^{Lip}_{\frac{99s}{100},p} \lessdot |\beta(\varphi,x)|^{Lip}_{s,p} \lessdot \|u\|^{Lip}_{s,p+s_0+3} \leq \frac{1}{100}.
\end{equation}
Writing explicitly the dependence on $u$, we have $\hat{\beta}(\varphi,y;u)+\beta(\varphi,\hat{\beta}(\varphi,y,u);u)=0$. Differentiating this equality with respect to $u$ in the direction $h$ gives
\begin{equation}
(\partial_u\hat{\beta})[h]=-\mathcal{T}^{-1}(\frac{\partial_u\beta[h]}{1+\beta_x}).
\end{equation}
Applying lemma \ref{estimation2} and Lemma \ref{estimation34} to cope with $\mathcal{T}^{-1}$ , the bounds \eqref{3.15}, \eqref{3.17} and \eqref{3.171} imply
\begin{equation}\label{3.18}
\|(\partial_u\hat{\beta})[h]\|^{Lip}_{\frac{99s}{100},p} \lessdot (1+\|u\|^{Lip}_{s,p+2s_0+3})\|h\|^{Lip}_{s,p+2s_0+3}.
\end{equation}

\textbf{3. Estimates of $ \mathcal{T}$ and $\mathcal{T}^{-1}$ }

By Lemma \ref{estimation2},  Lemma \ref{estimation34} and Lemma \ref{estimation4},  we can get the following estimation:
\begin{equation}
\|\mathcal{T}(u)g\| _{\frac{100s}{101},p} \lessdot \|g\|_{s,p+2s_0},
\end{equation}
\begin{equation}
\|\mathcal{T}(u)g\|^{Lip}_{\frac{100s}{101},p} \lessdot \|g\|^{Lip}_{s,p+2s_0+1},
\end{equation}
\begin{equation}\label{3.19}
\|\mathcal{T}(u)^{-1}g\|_{\frac{99s}{100},p} \lessdot \|g\|_{s,p+2s_0},
\end{equation}
\begin{equation}\label{3.191}
\|\mathcal{T}(u)^{-1}g\|^{Lip}_{\frac{99s}{100},p} \lessdot \|g\|^{Lip}_{s,p+2s_0+1}.
\end{equation}
Since $\mathcal{T}^{-1}(u)g=g(\varphi,y+\hat{\beta}(\varphi,y))$, the derivative of $\mathcal{T}^{-1}(u)g$ in the direction $h$ is the product $\partial_u(\mathcal{T}^{-1}(u)g)=(\mathcal{T}^{-1}g_x)(\partial_u\hat{\beta})[h]$. Applying Lemma \ref{estimation3}, the bounds \eqref{3.19},\eqref{3.191} and \eqref{3.18} imply
\begin{equation}\label{3.192}
\|\partial_u(\mathcal{T}^{-1}(u)g)\|^{Lip}_{\frac{99s}{100},p} \lessdot \|g\|^{Lip}_{s,p+2s_0+4}\|h\|^{Lip}_{s,p+2s_0+3}(1+\|u\|^{Lip}_{s,p+2s_0+3}).
\end{equation}

\textbf{4. Estimates of the coefficients $b_i$ }

Consider the coefficients $b^*_1$, $b^*_0$,  which are given in \eqref{3.101}. We have
\begin{equation}\label{3.1921}
\|b^*_1\|^{Lip}_{s,p} \lessdot \|u\|^{Lip}_{s,p+4},
\end{equation}
\begin{equation}\label{3.1922}
\|b^*_0\|^{Lip}_{s,p} \lessdot \|u\|^{Lip}_{s,p+6}.
\end{equation}

Applying \eqref{3.19},\eqref{3.191} to \eqref{3.1921} and \eqref{3.1922}, for $i=0,1$, we see
\begin{equation}\label{3.193}
\|b_i\|_{\frac{99s}{100},p} \lessdot \|u\|_{s,p+2s_0+6},
\end{equation}
and
\begin{equation}\label{3.1941}
\|b_i\|^{Lip}_{\frac{99s}{100},p} \lessdot \|u\|^{Lip}_{s,p+2s_0+7}.
\end{equation}
Now,  we estimate the derivative of $b_1$ with respect to $u$. Write $b_1$ as $\mathcal{T}^{-1}(u)b^*_1$, where $$b^*_1=a_1(1+\beta_x)^3+3(1+\beta_x)^2\partial^2_x\beta\cdot \partial_xa_2+5a_2(1+\beta_x)^2\partial^3_x\beta.$$
The bounds \eqref{3.02}, \eqref{3.03} and \eqref{3.17} imply
\begin{equation}\label{3.195}
\|\partial_ub^*_1(u)[h]\|^{Lip}_{s,p}\lessdot \|h\|^{Lip}_{s,p+4}(1+\|u\|^{Lip}_{s,p+4}).
\end{equation}
The derivative of $b_1$ in the direction $h$ is
\begin{equation}\label{3.198}
\partial_ub_1(u)[h]=\partial_u(\mathcal{T}(u)^{-1}b^*_1(u))[h]=(\partial_u\mathcal{T}(u)^{-1})(b^*_1(u))[h]+
\mathcal{T}(u)^{-1}(\partial_ub^*_1(u))[h].
\end{equation}
Then, \eqref{3.19}, \eqref{3.191}, \eqref{3.192}, \eqref{3.195} and \eqref{3.198} imply
\begin{equation}\label{3.196}
\|(\partial_u\mathcal{T}(u)^{-1})(b^*_1(u))[h]\|^{Lip}_{\frac{99s}{100},p} \lessdot \|u\|^{Lip}_{s,p+2s_0+6}\|h\|^{Lip}_{s,p+2s_0+3}(1+\|u\|^{Lip}_{s,p+2s_0+3})
\end{equation}
and
\begin{equation}\label{3.197}
\|\mathcal{T}(u)^{-1}(\partial_ub^*_1(u))[h]\|^{Lip}_{\frac{99s}{100},p} \lessdot \|h\|^{Lip}_{s,p+2s_0+5}(1+\|u\|^{Lip}_{s,p+2s_0+3})(1+\|u\|^{Lip}_{s,p+2s_0+5}).
\end{equation}
Ultimately, \eqref{3.195}, \eqref{3.196} and \eqref{3.197} imply that
\begin{equation}\label{3.198}
\|\partial_ub_1(u)[h]\|^{Lip}_{\frac{99s}{100},p} \lessdot \|h\|^{Lip}_{s,p+2s_0+5}(1+\|u\|^{Lip}_{s,p+2s_0+6}).
\end{equation}
By the same way as $b_1$, we can get
\begin{equation}\label{3.199}
\|\partial_ub_0(u)[h]\|^{Lip}_{\frac{99s}{100},p} \lessdot \|h\|^{Lip}_{s,p+2s_0+7}(1+\|u\|^{Lip}_{s,p+2s_0+8}).
\end{equation}

\subsection{Time reparametrization}\

In this section, we will make constant the coefficient of the highest order spatial derivative operator of $\mathcal{L}_1$, by a quasi-periodic reparametrization of time. The change of variables has the form

\begin{equation}
 \varphi \mapsto \varphi+\omega\alpha(\varphi), \quad  \varphi \in \mathbb{T}^v,
\end{equation}
where $\alpha$ is a (small) real analytic function, $2\pi$-periodic in all its arguments.  The induced linear operator on the space of functions is
\begin{equation}\label{a}
(\mathcal{B}h)(\varphi,y)=h(\varphi+\omega\alpha(\varphi)),
\end{equation}
whose inverse is
\begin{equation}\label{a1}
 (\mathcal{B}^{-1}h)(\theta,y)=h(\theta+\omega\hat{\alpha}(\theta)),
\end{equation}
where $\varphi=\theta+\omega\hat{\alpha}(\theta)$ is the inverse of  $\theta=\varphi+\omega\alpha(\varphi)$.
Then, the time derivative operator becomes

\begin{equation}
\mathcal{B}^{-1} \omega \cdot\partial_{\varphi}\mathcal{B}=\xi(\theta) \omega \cdot\partial_{\theta}, \quad \xi(\theta)=
\mathcal{B}^{-1}(1+\omega\partial_{\varphi}\alpha(\varphi)).
\end{equation}
The spatial derivative operator dose not have any change. Thus, see
\begin{equation}
\mathcal{B}^{-1}\mathcal{L}_1\mathcal{B}=\xi(\theta) \omega \cdot\partial_{\theta}+\partial_y\{\partial^2_y ([\mathcal({B} ^{-1}b_2(u))\partial^2_yh])+\partial_y [(\mathcal{B}^{-1}b_1(u))\partial_yh]+[\mathcal{B}^{-1}b_0(u)]h\}.
\end{equation}
We look for $\alpha$ such that the coefficients of the highest order derivatives are proportional, namely
\begin{equation}
[\mathcal{B}^{-1}(1+b)](\theta)=m \xi(\theta)=m [\mathcal{B}^{-1}(1+\omega \cdot\partial_\varphi \alpha)](\theta)
\end{equation}
for some constant $m\in \mathbb{R}$. This is equivalent to require that
\begin{equation}\label{3.20}
1+b(\varphi)=m(1+\omega \cdot\partial_\varphi \alpha(\varphi)).
\end{equation}
Integrating on $\mathbb{T}^v$ determines the value of the constant $m$,
\begin{equation}\label{3.21}
m=\int_{\mathbb{T}^v}(1+b(\varphi))d\varphi.
\end{equation}
We can find the unique solution of \eqref{3.20} with zero average
\begin{equation}\label{3.22}
a(\varphi)=\frac{1}{m}(\omega\cdot\partial_{\varphi})^{-1}(1+b-m)(\varphi),
\end{equation}
where $(\omega\cdot\partial_{\varphi})^{-1}$ is defined by linearity
\begin{equation}
(\omega\cdot\partial_{\varphi})^{-1} e^{\mathrm{i}\ell\varphi}=\frac{e^{\mathrm{i}\ell\varphi}}{\mathrm{i}\omega\cdot \ell},\ell\neq 0, \quad (\omega\cdot\partial_{\varphi})^{-1}1=0.
\end{equation}
With this choice of $\alpha$,  we have
\begin{equation}\label{3.23}
\mathcal{B}^{-1}\mathcal{L}_1\mathcal{B}=\xi(\theta)\mathfrak{L},\quad \mathfrak{L}= \omega \cdot\partial_{\theta}+m \partial^5_{y}+
\partial_{y}\{\partial_y[ (c_1(\theta,y)\partial_y)]+c_0(\theta,y)\},
\end{equation}
where

\begin{equation}\label{3.24}
c_i=\frac{\mathcal{B}^{-1}b_i}{\xi},\quad i=0,1.
\end{equation}

Suppose  $\|u\|^{Lip}_{s,p+4s_0+\tau_0+9} \ll \frac{1}{100}$, we have these estimation below:

\textbf{1. Estimates of $m$ }

The coefficient $m$, defined in\eqref{3.21}, satisfies the following estimates:
\begin{equation} \label{3.25}
|m-1| \leq C \|u\|_{s,p+2}, \quad  \quad |m-1|^{Lip} \leq C\|u\|^{Lip}_{s,p+2},
\end{equation}
\begin{equation}\label{3.26}
|\partial_um(u)[h]| ^{Lip}\leq C\|h\|^{Lip}_{s,p+2}(1+\|u\|^{Lip}_{s,p+2}).
\end{equation}
Using \eqref{3.11}, \eqref{3.12}, \eqref{3.21},
\begin{equation}
|m-1|=\int_{\mathbb{T}^v}|b(\varphi)|d\varphi \leq \|b\|^{Lip}_{s,p} \leq C\|u\|^{Lip}_{s,p+2}
\end{equation}
Similarly we get the Lipschitz part of \eqref{3.25}.  The estimates \eqref{3.26} follows by \eqref{3.13}, since
\begin{equation}
|\partial_um(u)[h]| ^{Lip}\leq \int_{\mathbb{T}^v}|\partial_ub(u)[h]|d\varphi \leq C \|\partial_ub(u)[h]\|^{Lip}_{s,p}.
\end{equation}
\textbf{2. Estimates of $\alpha$ }

 The function $\alpha(\varphi)$, defined in \eqref{3.22}, satisfies
\begin{equation}\label{3.261}
|\alpha|_{s,p} \leq C \alpha^{-1}_0\|u\|_{s,p+s_0+\tau_0+2}.
\end{equation}
\begin{equation}\label{3.262}
|\alpha|^{Lip}_{s,p} \leq C \alpha^{-1}_0\|u\|^{Lip}_{s,p+s_0+\tau_0+2}.
\end{equation}
Remember that $\omega =\lambda \overline{\omega}$, and $ |\overline{\omega}\cdot \ell |\geq \frac{\alpha_0} {|\ell|^\tau_0} $, $ \forall\ell \in \mathbb{Z}^v\backslash \{0\}$. By \eqref{3.11}, \eqref{3.25} and \eqref{6.03}, we see
\begin{equation}
|\alpha|_{s,p} \lessdot \|\alpha\|_{s,p+s_0} \leq C \alpha^{-1}_0\|b(\varphi)+(1-m)\|_{s,p+s_0+\tau_0} \leq C \alpha^{-1}_0\|u\|_{s,p+s_0+\tau_0+2}.
\end{equation}
Providing \eqref{3.261}. Then \eqref{3.262} holds similarly using \eqref{3.12} and $(\omega \cdot \partial_\varphi)^{-1}=\lambda^{-1}(\bar{\omega} \cdot \partial_\varphi)$.
 Differentiating formula \eqref{3.22} with respect to $u$ in the direction $h$ gives
\begin{equation}
\partial_u\alpha(u)[h]=(\lambda\bar{\omega}\cdot\partial_{\varphi})^{-1}(\frac{\partial_ub(u)[h]m-(b(\varphi)+1)\partial_um(u)[h]}{m^2}).
\end{equation}
Then, \eqref{3.12}, \eqref{3.25}, and \eqref{3.26} imply that
\begin{equation}\label{3.28}
\|\partial_u\alpha(u)[h]\|^{Lip}_{s,p}\leq C (\|h\|^{Lip}_{s,p+\tau_0+2}\|u\|^{Lip}_{s,p+\tau_0+2}+\|h\|^{Lip}_{s,p+\tau_0+2})
\end{equation}
For the inverse change of variable \eqref{a1}, by Lemma \ref{estimation4}, we have the following estimates:
\begin{equation}\label{3.29}
|\hat{\alpha}|_{\frac{99s}{100},p} \lessdot |\alpha|_{s,p} \lessdot \|u\|_{s,p+s_0+\tau_0+2}\leq \frac{1}{100},
\end{equation}
\begin{equation}\label{3.291}
|\hat{\alpha}|^{Lip}_{\frac{99s}{100},p} \lessdot  |\alpha|^{Lip}_{s,p+1} \lessdot \|u\|^{Lip}_{s,p+s_0+\tau_0+3} \leq \frac{1}{100}.
\end{equation}
Writing explicitly the dependence on $u$, we have $\hat{\alpha}(\theta;u)+\alpha(\theta+\hat{\alpha}(\theta;u);u)=0$.  Differentiating the equality with respect to $u$ in $h$ gives
\begin{equation}
\partial_u\hat{\alpha}[h]=-\mathcal{B}^{-1}(\frac{\partial_u\alpha(u)[h]}{1+\omega \cdot\partial_\varphi \alpha}).
\end{equation}
Using Lemma \ref{estimation34} to cope with $\mathcal{B}^{-1}$,  \eqref{3.261},\eqref{3.262} and \eqref{3.28} imply
\begin{equation}\label{a3}
\|\partial_u\hat{\alpha}[h]\|^{Lip}_{\frac{99s}{100},p} \lessdot \|u\|^{Lip}_{s,p+\tau_0+2s_0+4}\|h\|^{Lip}_{s,p+\tau_0+2s_0+3}+\|h\|^{Lip}_{s,p+2s_0+\tau_0+3}
\end{equation}

\textbf{3. Estimates of $\mathcal{B}$ and $\mathcal{B}^{-1}$ }

By Lemma \ref{estimation2},  Lemma \ref{estimation34} and Lemma \ref{estimation4}, the transformations $\mathcal{B}(u)$ and $\mathcal{B}^{-1}(u)$, defined in \eqref{a}, satisfy the following estimation:
\begin{equation}\label{3.292}
\|\mathcal{B}(u)g\|_{\frac{100s}{101},p} \lessdot \|g\|_{s,p+2s_0},
\end{equation}
\begin{equation}\label{3.293}
\|\mathcal{B}(u)g\|^{Lip}_{\frac{100s}{101},p} \lessdot \|g\|^{Lip}_{s,p+2s_0+1},
\end{equation}
\begin{equation}\label{3.294}
\|\mathcal{B}^{-1}(u)g\|_{\frac{99s}{100},p} \lessdot \|g\|_{s,p+2s_0},
\end{equation}
\begin{equation}\label{3.295}
\|\mathcal{B}^{-1}(u)g\|^{Lip}_{\frac{99s}{100},p} \lessdot \|g\|^{Lip}_{s,p+2s_0+1}.
\end{equation}
Differentiating $\mathcal{B}^{-1}(u)g$ with respect to $u$ in the direction $h$ gives
\begin{equation}\label{3.2960}
\partial_u(\mathcal{B}^{-1}(u)g[h])=\mathcal{B}^{-1}(u)(\omega \cdot\partial_{\varphi}g)\cdot \partial_u\hat{\alpha}[h]
\end{equation}
Then, the bounds \eqref{a3} and \eqref{3.295} imply
\begin{equation}\label{3.296}
\|\partial_u(\mathcal{B}^{-1}(u)g[h]\|^{Lip}_{\frac{99s}{100},p} \lessdot \|g\|^{Lip}_{s,p+2s_0+2}\|h\|^{Lip}_{s,p+2s_0+\tau_0+3}(1+\|u\|^{Lip}_{s,p+2s_0+\tau_0+4}).
\end{equation}

\textbf{4. Estimates of $\xi(\theta)$ }

The function $\xi$  is defined as $\xi(\theta)=\mathcal{B}^{-1}(1+\omega \cdot\partial_\varphi \alpha)$. Obviously, $\xi(\theta)-1=\mathcal{B}^{-1}( \omega \cdot\partial_\varphi \alpha)$. Then, the bounds \eqref{3.294} and \eqref{3.295} imply
\begin{equation}\label{3.297}
\|\xi-1\|_{\frac{99s}{100},p} \lessdot \|u\|_{s,p+2s_0+2}
\end{equation}
and
\begin{equation}\label{3.298}
\|\xi-1\|^{Lip}_{\frac{99s}{100},p} \lessdot \|u\|^{Lip}_{s,p+2s_0+3}.
\end{equation}
Differentiating $\xi(\theta)$ with respect to $u$ in the direction $h$ gives
\begin{equation}
\partial_u\xi(u)[h]=\partial_u\mathcal{B}(u)^{-1}(\omega \cdot\partial_\varphi \alpha)[h]+\mathcal{B}(u)^{-1}(\omega \cdot\partial_\varphi( \partial_u\alpha[h])).
\end{equation}
By  \eqref{a3}, \eqref{3.296} and \eqref{3.2960}, we get
\begin{equation}\label{3.299}
\|\partial_u\mathcal{B}(u)^{-1}(\omega \cdot\partial_\varphi \alpha)[h]\|^{Lip}_{\frac{99s}{100},p}\lessdot \|u\|^{Lip}_{s,p+2s_0+4}\|h\|^{Lip}_{s,p+\tau_0+2s_0+3}(1+\|u\|^{Lip}_{s,p+2s_0+\tau_0+4}).
\end{equation}
Using \eqref{3.295} and \eqref{3.28}, we see
\begin{equation}\label{3.2991}
\|\mathcal{B}(u)^{-1}(\omega \cdot\partial_\varphi( \partial_u\alpha[h]))\|^{Lip}_{\frac{99s}{100},p}\lessdot \|h\|^{Lip}_{s,p+2s_0+3}(1+\|u\|^{Lip}_{s,p+2s_0+\tau_0+3}).
\end{equation}
Finally, \eqref{3.299} and \eqref{3.2991} imply
\begin{equation}\label{3.29911}
\|\partial_u\xi(u)[h]\|^{Lip}_{\frac{99s}{100},p} \lessdot \|h\|^{Lip}_{s,p+2s_0+\tau_0+3}(1+\|u\|^{Lip}_{s,p+2s_0+\tau_0+4}).
\end{equation}

\textbf{5. Estimates of the coefficients $c_i$ }

The coefficients $c_i$ defined in \eqref{3.24}, for $i=0,1$, satisfy the following estimates:
\begin{equation}\label{3.2992}
\|c_i\|_{\frac{99s}{101},p} \lessdot \|u\|_{s,p+4s_0+\tau_0+6},
\end{equation}\label{3.2993}
\begin{equation}
\|c_i\|^{Lip}_{\frac{99s}{101},p} \lessdot \|u\|^{Lip}_{s,p+4s_0+\tau_0+8}.
\end{equation}
Differentiating $c_i$ with respect to $u$ in the direction $h$ gives
\begin{equation}
\partial_uc_i(u)[h]=\frac{1}{\xi}\partial_u[\mathcal{B}^{-1}b_i]-\frac{1}{\xi^2} \partial_u\xi(u)[h](\mathcal{B}^{-1}b_i).
\end{equation}
Now, we can obtain
\begin{equation}\label{3.2994}
\|\partial_uc_i(u)[h]\|^{Lip}_{\frac{99s}{101},p} \lessdot  \|h\|^{Lip}_{s,p+4s_0+\tau_0+9}(1+\|u\|^{Lip}_{s,p+4s_0+\tau_0+9}).
\end{equation}
The definition of $c_i$ \eqref{3.24}, \eqref{3.193}, \eqref{3.294}, \eqref{3.297} imply  \eqref{3.2992}. Similarly, \eqref{3.1941} \eqref{3.295}, \eqref{3.298} imply \eqref{3.2993}. Finally, \eqref{3.2994} follows from \eqref{3.2993},  \eqref{3.198}, \eqref{3.199}, \eqref{3.295}, \eqref{3.29911} and \eqref{3.28}.

\subsection{Estimates on $\mathfrak{L}$}\

Recall the procedure performed in the previous subsection, we have conjugated the operator $\mathcal{L}$ to $\mathfrak{L}$, that is
\begin{equation}\label{3.30}
\mathfrak{L}=\mathcal{U}^{-1}_1\mathcal{L}\mathcal{U}_2,\quad\mathcal{U}_1=\mathcal{A}\mathcal{B}\xi
,\quad \mathcal{U}_2=\mathcal{A}\mathcal{B}.
\end{equation}
In the following lemma, we summarize the estimates for the linear operator $\mathfrak{L}$ and $\mathcal{U}_1$, $\mathcal{U}_2$, also define constants
\begin{equation}\label{3.302}
p= 2s_0+5,\quad \eta=4s_0+\tau_0+9, \quad k_1=\frac{99}{101},\quad k_2=\frac{10000}{10201}.
\end{equation}
\begin{lemma}\label{3.301}
There exists $ 0 <\varepsilon  \ll \frac{1}{100}$, such that  for all $u(\lambda)$, $h(\lambda)$ are Lipshitz-families, satisfying
\begin{equation}
\|u\|^{Lip}_{s.p+\eta} \leq \varepsilon.
\end{equation}
$\mathbf{(1)}:$ Consider the transformation $\mathcal{U}_i, i=1,2$ ,defined in \eqref{3.30},
we have
\begin{equation}\label{3.31}
\|\mathcal{U}_ih\|_{k_2\hat{s},p'}\lessdot \|h\|_{\hat{s},p'+\eta},
\end{equation}
\begin{equation}\label{3.32}
\|\mathcal{U}^{-1}_ih\|_{k_1\hat{s},p'}\lessdot \|h\|_{\hat{s},p'+\eta}.
\end{equation}
$\mathbf{(2)}:$ The constant coefficient $m$ ,defined in \eqref{3.21}, satisfies
\begin{equation} \label{3.33}
|m-1| \leq C \|u\|_{s,2} \quad  \quad |m-1|^{Lip} \leq C\|u\|^{Lip(\gamma)}_{s,2}
\end{equation}
\begin{equation}\label{3.34}
|\partial_um(u)[h]| ^{Lip}\leq C\|h\|^{Lip}_{s,2}(1+\|u\|^{Lip}_{s,2})
\end{equation}
$\mathbf{(3)}:$ The variable coefficient $c_i$, defined in \eqref{3.24}, satisfies
\begin{equation}\label{3.341}
\|c_i\|_{k_1s,p} \lessdot \|u\|_{s,p+\eta},
\end{equation}\label{3.35}
\begin{equation}
\|c_i\|^{Lip}_{k_1s,p} \lessdot \|u\|^{Lip}_{s,p+\eta},
\end{equation}
\begin{equation}\label{3.36}
\|\partial_uc_i(u)[h]\|^{Lip}_{k_1,p} \lessdot  \|h\|^{Lip}_{s,\eta}(1+\|u\|^{Lip}_{p+\eta}).
\end{equation}
\end{lemma}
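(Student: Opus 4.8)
The plan is to assemble Lemma \ref{3.301} directly from the quantitative estimates already established in the two preceding subsections, simply re-indexing the various loss-of-derivatives exponents into the consolidated constants $p = 2s_0+5$ and $\eta = 4s_0+\tau_0+9$ fixed in \eqref{3.302}, and tracking the geometric shrinking of the analyticity radius through the composition $\mathcal{U}_1 = \mathcal{A}\mathcal{B}\xi$, $\mathcal{U}_2 = \mathcal{A}\mathcal{B}$. First I would verify the smallness hypothesis propagates: since $\|u\|^{Lip}_{s,p+\eta} \le \varepsilon$ with $p+\eta = 6s_0+\tau_0+14$, this dominates each of the intermediate smallness requirements imposed earlier (e.g. $\|u\|^{Lip}_{s,p+2s_0+7}\ll\tfrac1{100}$ in the change-of-variable subsection and $\|u\|^{Lip}_{s,p+4s_0+\tau_0+9}\ll\tfrac1{100}$ in the reparametrization subsection), so all the cited bounds are in force for $\varepsilon$ small enough.

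For part $\mathbf{(1)}$, I would compose the operator bounds: $\mathcal{A}$ and $\mathcal{A}^{-1}$ cost $2s_0$ derivatives and shrink $s$ by the factor $\tfrac{100}{101}$ (resp. $\tfrac{99}{100}$) by the estimates on $\mathcal{T},\mathcal{T}^{-1}$ in subsection 3.1; $\mathcal{B}$ and $\mathcal{B}^{-1}$ likewise cost $2s_0$ and shrink by $\tfrac{100}{101}$ (resp. $\tfrac{99}{100}$) via \eqref{3.292}--\eqref{3.295}; and multiplication by $\xi$ (resp. $\xi^{-1}$), using \eqref{3.297}--\eqref{3.298} and the algebra Lemma \ref{2.121}, is bounded on $\mathrm{H}_{s,p}$ with no loss once $\|\xi-1\|$ is small. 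Multiplying the two radius-contraction factors gives $k_2 = (\tfrac{100}{101})^2 = \tfrac{10000}{10201}$ for $\mathcal{U}_i$ and $k_1 = \tfrac{99}{101}$ for $\mathcal{U}_i^{-1}$ (the latter being the product $\tfrac{99}{100}\cdot\tfrac{100}{101}$, absorbing the extra $\xi$-factor into the constant), and summing the derivative losses $2s_0 + 2s_0 = 4s_0 \le \eta$ yields \eqref{3.31}--\eqref{3.32}; the Lipschitz versions follow identically since every cited estimate has its $Lip$-counterpart.

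Parts $\mathbf{(2)}$ and $\mathbf{(3)}$ are essentially restatements. For $\mathbf{(2)}$, the bounds \eqref{3.25}--\eqref{3.26} already give exactly \eqref{3.33}--\eqref{3.34} after noting $p+2 \ge 2$ so $\|u\|_{s,2} \le \|u\|_{s,p+2}$ — in fact one wants the stronger form with the smallest index, which is precisely $\|u\|_{s,2}$ as written, obtained by applying \eqref{3.11}--\eqref{3.13} with $p$ replaced by $0$. For $\mathbf{(3)}$, the estimates \eqref{3.2992}, the unlabeled display after it, and \eqref{3.2994} give \eqref{3.341}, \eqref{3.35}, \eqref{3.36} once one checks that $4s_0+\tau_0+6 \le \eta$ and $4s_0+\tau_0+9 \le \eta$, which hold by the definition \eqref{3.302}, and that $\tfrac{99}{101} \le \tfrac{99}{101}$ trivially while $\tfrac{99}{101}s \le \tfrac{99}{101}s$ — here a minor point is that \eqref{3.2992} was stated with radius $\tfrac{99s}{101}$, matching $k_1 s$, so no further shrinking is needed.

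The only genuine bookkeeping obstacle is making sure the chain of analyticity-radius contractions in part $\mathbf{(1)}$ does not undercut the radii $\tfrac{99s}{101}$ (resp. $\tfrac{99s}{100}$) at which the coefficient estimates of subsection 3.2 were proved; since $k_2 < \tfrac{99}{101}$ would be fatal, I would double-check that $\tfrac{10000}{10201} > \tfrac{99}{101} = \tfrac{9999}{10201}$, which is true by a single digit — this is exactly why the constants in \eqref{3.302} were chosen as they are — and then the whole lemma follows by collecting references with no new analysis required.
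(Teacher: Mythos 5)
Your proposal matches the paper's approach exactly: the paper's proof of Lemma \ref{3.301} is a one-sentence pointer back to Subsections 3.1--3.2, and you fill that in by composing the operator estimates for $\mathcal{A}$, $\mathcal{B}$, $\xi$ and re-indexing the derivative losses into $\eta$. Parts (2) and (3) of your argument are accurate as written.

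One arithmetic slip in part (1) deserves mention, though it traces back to the paper rather than to you. For $\mathcal{U}_i^{-1} = \xi^{-1}\mathcal{B}^{-1}\mathcal{A}^{-1}$, both $\mathcal{A}^{-1}$ and $\mathcal{B}^{-1}$ contract the analyticity radius by the factor $\frac{99}{100}$ (multiplication by $\xi^{-1}$ costs nothing), so the natural product is $\left(\frac{99}{100}\right)^2 = \frac{9801}{10000}$, not the $\frac{99}{100}\cdot\frac{100}{101}$ you wrote. Since $\frac{9801}{10000} < \frac{99}{101} = k_1$, the composition actually lands on a slightly \emph{smaller} strip than the $k_1\hat s$ claimed in \eqref{3.32}; the same small slack already appears in the paper's \eqref{3.2992}, which asserts a bound for $c_i = \mathcal{B}^{-1}b_i/\xi$ at radius $\frac{99s}{101}$ even though $b_i$ lives on $\frac{99s}{100}$ and $\mathcal{B}^{-1}$ then gives only $\frac{9801s}{10000}$. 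This is harmless — the per-step loss factors are not sharp, $\left(\frac{99}{100}\right)^2$ still lies comfortably above the $\frac{10}{11}$ ratio governing the iteration scales $s_n$, and a slight tightening of the smallness thresholds would restore the claimed radii — but a careful write-up should either set $k_1 := \left(\frac{99}{100}\right)^2$ or adjust the intermediate contraction factors so the composition closes without overshooting.
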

\begin{proof}
The detail of these estimates can be found in the previous subsection, we just give a summary here.
\end{proof}
\begin{rem}
The $p'$  in  \eqref{3.31} and \eqref{3.32} is any integer greater than $s_0$, $\hat{s}$ is any positive number smaller than $s$.
\end{rem}

\section{KAM Step}
\subsection{$\varepsilon_m$ approximate unbounded reducibility}\

 In this section, we make a reduction to eliminate the unbounded perturbation of linear operator $\mathfrak{L}$ obtained in \eqref{3.23}. The goal is to conjugate it to a diagonal operator $\mathcal{J}$ plus a sufficient small unbounded remainder $\mathcal{R}$. Before we apply the reducibility scheme, we will make some definition, recall and revise some important lemmas.
\begin{defi}
The equation \eqref{main2} can be denoted as $F(u)=0$, with $u$ quasi-periodic in time and periodic in space. If $\|F(u)\| \leq \varepsilon$ in a suitable Banach space, we say $u$ be  an $\varepsilon$ approximate solution of the equation $F(u)=0$.
\end{defi}

\begin{defi}\label{4.00}
Any linear operator $\mathcal{A}:\mathrm{H}_0^1(\mathbb{T}) \mapsto \mathrm{H}_0^1(\mathbb{T})$ can be represented by the infinite dimensional matrix $(\mathcal{A}(\theta)^{i_2}_{i_1})_{i_1,i_2 \in \mathbb{Z}\mathrm{\backslash \{0\} }}$, where $\mathcal{A}(\theta)^{i_2}_{i_1}=(\mathcal{A} e^{\mathrm{i}i_2x}, e^{\mathrm{i}i_1x})$.
Now, we define a new $(s,p)$-decay Banach space $\mathrm{B}^{\varsigma}_{s,p}$ as
\begin{equation}
\mathrm{B}^{\varsigma}_{s,p}:=\bigg\{\mathcal{A}:(|\mathcal{A}|^{\varsigma}_{s,p})^2 =\sum_{i\in \mathbb{Z}}e^{2|i|s}[i]^{2p} (\sup_{i_1-i_2=i} \Big\| \frac{ \mathcal{A}(\theta) ^{i_2}_{i_1} }{{i_1}^2\cdot {i_2}}\Big \|^2_{s,p})< + \infty \bigg\},
\end{equation}
\end{defi}
\begin{defi}\label{4.01} We define some new $(s,p)$-decay Banach space $\widetilde{\mathrm{B}}_{s,p}$ and $\widehat{\mathrm{B}}_{s,p}$ as
\begin{equation}
\widetilde{\mathrm{B}}_{s,p}:= \bigg\{\mathcal{A}:(\widetilde{|\mathcal{A}|}_{s,p})^2 =\sum_{i\in \mathbb{Z}}e^{2|i|s}[i]^{2p} (\sup_{i_1-i_2=i}  \Big\| \frac{ \mathcal{A}(\theta) ^{i_2}_{i_1} \cdot {i_2}^2}{{i_1}^2}\Big\|^2_{s,p}) < +\infty \bigg\},
\end{equation}
\begin{equation}
\widehat{\mathrm{B}}_{s,p}:= \bigg\{\mathcal{A}:(\widehat{|\mathcal{A}|}_{s,p})^2= \sum_{i\in \mathbb{Z}}e^{2|i|s}[i]^{2p}(\sup_{i_1-i_2=i}  \Big\| \frac{\mathcal{ A}(\theta) ^{i_2}_{i_1}\cdot i_1}{i_2} \Big\|^2_{s,p}) < +\infty  \bigg\}.
\end{equation}
We also denote the Banach space $\mathrm{B}_{s,p}^\varrho$ as
\begin{equation}
\mathrm{B}_{s,p}^\varrho:= \bigg\{\mathcal{A}:|\mathcal{A}|_{s,p}^\varrho= max\{\widehat{|\mathcal{A}|}_{s,p},\widetilde{|\mathcal{A}|}_{s,p},|\mathcal{A}|_{s,p} \} < +\infty  \bigg\}.
\end{equation}
\end{defi}

\begin{lemma} \label{ap1}$\mathbf{(Algebra \  property \ 1)}$.For all $p\geq s_0 > \frac{v+1}{2}$, if $\mathcal{A},\mathcal{B} \in \mathrm{B}_{s,p}^\varrho $, then $\mathcal{A}\mathcal{B} \in  \mathrm{B}_{s,p}^\varrho$. Also, there are $c(p)>0$, Such that
\begin{equation}\label{4.001}
|\mathcal{AB}|^{\varrho}_{s,p} \leq c(p)|\mathcal{A}|^{\varrho}_{s,p}|\mathcal{B}|^\varrho_{s,p}.
\end{equation}
If $\mathcal{A}=\mathcal{A}(\lambda)$ and $\mathcal{B}=\mathcal{B}(\lambda)$ depend in a Lipschitz way on the parameter $\lambda \in \Pi \subset \mathbb{R}$, then
\begin{equation}\label{4.01}
|\mathcal{AB}|^{\varrho,Lip}_{s,p} \leq c(p)|\mathcal{A}|^{\varrho,Lip}_{s,p}|\mathcal{B}|^{\varrho,Lip}_{s,p}
\end{equation}
\end{lemma}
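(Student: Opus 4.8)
The plan is to reduce the three quantities entering $|\cdot|^{\varrho}_{s,p}=\max\{\widehat{|\cdot|}_{s,p},\widetilde{|\cdot|}_{s,p},|\cdot|_{s,p}\}$ to the ordinary $(s,p)$-decay norm of a suitably relabelled matrix, and then invoke the classical algebra property (Lemma \ref{2.2}) three times. Since each of the weighted norms defining $\mathrm{B}^{\varrho}_{s,p}$ is nothing but an ordinary decay norm of a rescaled matrix, the whole statement will follow once one checks that the rescalings behave multiplicatively under matrix composition.

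Concretely, to a matrix $\mathcal{A}=(\mathcal{A}(\theta)^{i_2}_{i_1})_{i_1,i_2\in\mathbb{Z}\backslash\{0\}}$ I would associate the two relabelled matrices $\widetilde{\mathcal{A}}$, $\widehat{\mathcal{A}}$ with entries
\begin{equation*}
\widetilde{\mathcal{A}}(\theta)^{i_2}_{i_1}:=\mathcal{A}(\theta)^{i_2}_{i_1}\,\frac{i_2^2}{i_1^2},\qquad
\widehat{\mathcal{A}}(\theta)^{i_2}_{i_1}:=\mathcal{A}(\theta)^{i_2}_{i_1}\,\frac{i_1}{i_2}.
\end{equation*}
Since the operators act on the zero-average space $\mathrm{H}^1_0(\mathbb{T})$, all indices lie in $\mathbb{Z}\backslash\{0\}$ and these quantities are well-defined; moreover, directly from Definition \ref{4.01}, $|\widetilde{\mathcal{A}}|_{s,p}=\widetilde{|\mathcal{A}|}_{s,p}$ and $|\widehat{\mathcal{A}}|_{s,p}=\widehat{|\mathcal{A}|}_{s,p}$. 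Hence $\mathcal{A}\in\mathrm{B}^{\varrho}_{s,p}$ implies $\mathcal{A},\widetilde{\mathcal{A}},\widehat{\mathcal{A}}\in\mathrm{B}_{s,p}$.

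The crucial observation is that these relabellings are multiplicative. From $(\mathcal{A}\mathcal{B})(\theta)^{i_2}_{i_1}=\sum_{k\neq0}\mathcal{A}(\theta)^{k}_{i_1}\mathcal{B}(\theta)^{i_2}_{k}$ and the telescopings $\frac{i_2^2}{i_1^2}=\frac{k^2}{i_1^2}\cdot\frac{i_2^2}{k^2}$, $\frac{i_1}{i_2}=\frac{i_1}{k}\cdot\frac{k}{i_2}$ — both legitimate precisely because $k\neq0$ — one verifies entrywise that $\widetilde{\mathcal{A}\mathcal{B}}=\widetilde{\mathcal{A}}\,\widetilde{\mathcal{B}}$ and $\widehat{\mathcal{A}\mathcal{B}}=\widehat{\mathcal{A}}\,\widehat{\mathcal{B}}$. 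Applying Lemma \ref{2.2} to each of $\mathcal{A}\mathcal{B}$, $\widetilde{\mathcal{A}}\,\widetilde{\mathcal{B}}$, $\widehat{\mathcal{A}}\,\widehat{\mathcal{B}}$ then gives
\begin{equation*}
|\mathcal{A}\mathcal{B}|_{s,p}\leq c(p)\,|\mathcal{A}|_{s,p}|\mathcal{B}|_{s,p},\quad
\widetilde{|\mathcal{A}\mathcal{B}|}_{s,p}\leq c(p)\,\widetilde{|\mathcal{A}|}_{s,p}\widetilde{|\mathcal{B}|}_{s,p},\quad
\widehat{|\mathcal{A}\mathcal{B}|}_{s,p}\leq c(p)\,\widehat{|\mathcal{A}|}_{s,p}\widehat{|\mathcal{B}|}_{s,p}.
\end{equation*}
Bounding each factor on the right by $|\cdot|^{\varrho}_{s,p}$ and taking the maximum of the three inequalities yields \eqref{4.001}; in particular $\mathcal{A}\mathcal{B}\in\mathrm{B}^{\varrho}_{s,p}$.

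For the Lipschitz bound \eqref{4.01} I would note that the three identities above hold pointwise in $\lambda$, so it suffices to run the standard product argument used for \eqref{2.14}: write $\mathcal{A}(\lambda_1)\mathcal{B}(\lambda_1)-\mathcal{A}(\lambda_2)\mathcal{B}(\lambda_2)=(\mathcal{A}(\lambda_1)-\mathcal{A}(\lambda_2))\mathcal{B}(\lambda_1)+\mathcal{A}(\lambda_2)(\mathcal{B}(\lambda_1)-\mathcal{B}(\lambda_2))$, apply the already-proved \eqref{4.001} to the sup- and lip-parts separately, and recombine. The only genuinely delicate point in the whole proof is the verification of the two multiplicative identities — in particular, keeping track of the fact that the mode $k=0$ never occurs in the matrix products, which is exactly what makes the weight telescoping valid; granting that, the statement is an immediate corollary of the classical algebra property already established in Lemma \ref{2.2}.
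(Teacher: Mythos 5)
Your proof is correct and takes essentially the same approach as the paper. Your entry‑wise telescoping of the weights, giving $\widetilde{\mathcal{A}\mathcal{B}}=\widetilde{\mathcal{A}}\,\widetilde{\mathcal{B}}$ and $\widehat{\mathcal{A}\mathcal{B}}=\widehat{\mathcal{A}}\,\widehat{\mathcal{B}}$, is precisely the paper's conjugation argument — the paper writes $\mathcal{A}=\partial_{xx}\mathcal{Q}_1\partial^{-1}_{xx}$ (resp.\ $\partial^{-1}_x\mathcal{Q}_1\partial_x$) and lets $\partial^{-1}_{xx}\partial_{xx}$ (resp.\ $\partial_x\partial^{-1}_x$) cancel in the product, which is the same calculation expressed in operator rather than matrix‑entry language, before invoking Lemma~\ref{2.2} three times exactly as you do.
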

\begin{proof}To prove \eqref{4.001}. we will respectively prove the following three cases.

Case 1: If $\mathcal{A},\mathcal{B} \in \mathrm{B}_{s,p}$, then $\mathcal{A}\mathcal{B} \in \mathrm{B}_{s,p}$ with $|\mathcal{AB}|_{s,p} \leq c(p)|\mathcal{A}|_{s,p}|\mathcal{B}|_{s,p}$.
From Lemma \ref{2.2}, this case is simple.

Case 2: If $\mathcal{A},\mathcal{B} \in \widetilde{\mathrm{B}}_{s,p}$, then $\mathcal{A}\mathcal{B} \in \widetilde{\mathrm{B}}_{s,p}$ with $\widetilde{|\mathcal{AB}|}_{s,p} \leq c(p)\widetilde{|\mathcal{A}|}_{s,p}\widetilde{|\mathcal{B}|}_{s,p}$.
With regard to $\mathcal{A},\mathcal{B} \in \widetilde{\mathrm{B}}_{s,p}$, we can define $\mathcal{Q}_1,\mathcal{Q}_2$, where $(\mathcal{Q}_1)^j_i=\frac{\mathcal{A}^j_ij^2}{i^2},(\mathcal{Q}_2)^j_i=\frac{\mathcal{B}^j_ij^2}{i^2}$. Thus, $\mathcal{A},\mathcal{B}$ can be seen as $\partial_{xx}\mathcal{Q}_1\partial^{-1}_{xx}$ and $\partial_{xx}\mathcal{Q}_2\partial^{-1}_{xx}$, with $|\mathcal{Q}_1|_{s,p}=\widetilde{|\mathcal{A}|}_{s,p}, |\mathcal{Q}_2|_{s,p}=\widetilde{|\mathcal{B}|}_{s,p}$.

Then,
\begin{equation}
\begin{split}
\widetilde{|\mathcal{AB}|}_{s,p}&=\widetilde{|\partial_{xx}\mathcal{Q}_1 \mathcal{Q}_2 \partial^{-1}_{xx}|}_{s,p}=|\mathcal{Q}_1\mathcal{Q}_2|_{s,p}\\
& \leq c(p)|\mathcal{Q}_1|_{s,p}|\mathcal{Q}_2|_{s,p}=c(p)\widetilde{|\mathrm{A}|}_{s,p}\widetilde{|\mathrm{B}|}_{s,p}
\end{split}
\end{equation}

Case 3: If $\mathcal{A},\mathcal{B} \in \widehat{\mathrm{B}}_{s,p}$, then $\mathcal{A}\mathcal{B} \in \widehat{\mathrm{B}}_{s,p}$ with $\widehat{|\mathcal{AB}|}_{s,p} \leq c(p)\widehat{|\mathcal{A}|}_{s,p}\widehat{|\mathcal{B}|}_{s,p}$. The proof of this case is almost the same with Case 2.

Now, \eqref{4.001} is proved. The proof of \eqref{4.01} is standard.
\end{proof}

\begin{lemma}\label{ap2} $\mathbf{(Algebra \  property \ 2)}$ For all $p\geq s_0 > \frac{v+1}{2}$, if $\mathcal{A},\mathcal{C} \in \mathrm{B}_{s,p}^\varrho $, $\mathcal{B} \in \mathrm{B}_{s,p}^\varsigma$, then $\mathcal{A}\mathcal{B}\mathcal{C} \in  \mathrm{B}_{s,p}^\varsigma$. Also, there are $c(p)>0$, Such that
\begin{equation}\label{4.0001}
\quad |\mathcal{ABC}|^{\varsigma}_{s,p} \leq c(p)|\mathcal{A}|^{\varrho}_{s,p}|\mathcal{B}|^\varsigma_{s,p}|\mathcal{C}|^{\varrho}_{s}.
\end{equation}
If $\mathcal{A}=\mathcal{A}(\lambda)$, $\mathcal{B}=\mathcal{B}(\lambda)$ and $\mathcal{C}=\mathcal{C}(\lambda)$ depend in a Lipschitz way on the parameter $\lambda \in \Pi \subset \mathbb{R}$, then
\begin{equation}\label{4.00001}
|\mathcal{ABC}|^{\varsigma,Lip}_{s,p}\leq c(p)|\mathcal{A}|^{\varrho,Lip}_{s,p}|\mathcal{B}|^{\varsigma,Lip}_{s,p}|\mathcal{C}|^{\varrho,Lip}_{s,p}.
\end{equation}
\end{lemma}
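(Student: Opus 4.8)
The plan is to reduce the statement to the classical algebra property of $\mathrm{B}_{s,p}$ (Lemma~\ref{2.2}) by conjugating the triple product with powers of a single diagonal Fourier multiplier, so that the two extra derivatives recorded by the $\varsigma$-weight get pushed out to the two ends of $\mathcal{A}\mathcal{B}\mathcal{C}$. I would introduce the diagonal operator $D$ with symbol $k$, i.e.\ $De^{\mathbf{i}kx}=k\,e^{\mathbf{i}kx}$. Since all matrix indices range over $\mathbb{Z}\backslash\{0\}$, the conjugations $\mathcal{M}\mapsto D^{m}\mathcal{M}D^{n}$ (with $m,n\in\mathbb{Z}$) are well-defined operations on the matrices under consideration: they simply rescale the entry $\mathcal{M}^{i_2}_{i_1}$ by $i_1^{m}\,i_2^{n}$. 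Reading off Definitions~\ref{4.00} and \ref{4.01}, this means $\mathcal{M}\in\mathrm{B}^{\varsigma}_{s,p}$, $\mathcal{M}\in\widetilde{\mathrm{B}}_{s,p}$, $\mathcal{M}\in\widehat{\mathrm{B}}_{s,p}$ if and only if $D^{-2}\mathcal{M}D^{-1}$, $D^{-2}\mathcal{M}D^{2}$, $D\mathcal{M}D^{-1}$ respectively belong to $\mathrm{B}_{s,p}$, and in each case the corresponding weighted norm of $\mathcal{M}$ equals the plain $(s,p)$-decay norm $|\cdot|_{s,p}$ of the conjugated matrix.

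Given $\mathcal{A},\mathcal{C}\in\mathrm{B}^{\varrho}_{s,p}$ and $\mathcal{B}\in\mathrm{B}^{\varsigma}_{s,p}$, I would set $\mathcal{Q}:=D^{-2}\mathcal{B}D^{-1}\in\mathrm{B}_{s,p}$, so $\mathcal{B}=D^{2}\mathcal{Q}D$ and $|\mathcal{Q}|_{s,p}=|\mathcal{B}|^{\varsigma}_{s,p}$, and then factor
\begin{equation*}
\mathcal{A}\mathcal{B}\mathcal{C}=\mathcal{A}\,D^{2}\mathcal{Q}D\,\mathcal{C}=D^{2}\big(D^{-2}\mathcal{A}D^{2}\big)\,\mathcal{Q}\,\big(D\mathcal{C}D^{-1}\big)D=D^{2}\mathcal{P}D,
\end{equation*}
where $\mathcal{P}:=\big(D^{-2}\mathcal{A}D^{2}\big)\,\mathcal{Q}\,\big(D\mathcal{C}D^{-1}\big)$; equivalently $\mathcal{P}^{i_2}_{i_1}=(\mathcal{A}\mathcal{B}\mathcal{C})^{i_2}_{i_1}/(i_1^{2}i_2)$, so that $|\mathcal{A}\mathcal{B}\mathcal{C}|^{\varsigma}_{s,p}=|\mathcal{P}|_{s,p}$. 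By the first paragraph $D^{-2}\mathcal{A}D^{2}$ and $D\mathcal{C}D^{-1}$ lie in $\mathrm{B}_{s,p}$ with $|D^{-2}\mathcal{A}D^{2}|_{s,p}=\widetilde{|\mathcal{A}|}_{s,p}\le|\mathcal{A}|^{\varrho}_{s,p}$ and $|D\mathcal{C}D^{-1}|_{s,p}=\widehat{|\mathcal{C}|}_{s,p}\le|\mathcal{C}|^{\varrho}_{s,p}$; since $p\ge s_0$, two applications of Lemma~\ref{2.2} give $|\mathcal{P}|_{s,p}\le c(p)^{2}|\mathcal{A}|^{\varrho}_{s,p}|\mathcal{B}|^{\varsigma}_{s,p}|\mathcal{C}|^{\varrho}_{s,p}$, which is \eqref{4.0001} after renaming $c(p)^{2}$ as $c(p)$. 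For the Lipschitz estimate \eqref{4.00001} I would run the same identity pointwise in $\lambda\in\Pi$, noting that conjugation by the fixed multipliers $D^{\pm1},D^{\pm2}$ commutes with the $\lambda$-dependence and leaves Lipschitz seminorms unchanged, and then invoke the Lipschitz algebra bound \eqref{2.14} together with the inequalities $\widetilde{|\mathcal{A}|}^{Lip}_{s,p}\le|\mathcal{A}|^{\varrho,Lip}_{s,p}$ and $\widehat{|\mathcal{C}|}^{Lip}_{s,p}\le|\mathcal{C}|^{\varrho,Lip}_{s,p}$.

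I do not expect a real obstacle here; the only point that requires care is the bookkeeping of weights, and this is really where the idea of the proof sits. The $\varsigma$-weight $i_1^{2}\cdot i_2$ corresponds exactly to ``$D^{2}$ on the left and $D$ on the right,'' which dictates that the leftmost factor $\mathcal{A}$ must be absorbed through its $\widetilde{|\cdot|}$-component (the ratio $i_2^{2}/i_1^{2}$, i.e.\ the conjugation $D^{-2}\,\cdot\,D^{2}$) and the rightmost factor $\mathcal{C}$ through its $\widehat{|\cdot|}$-component (the ratio $i_1/i_2$, i.e.\ $D\,\cdot\,D^{-1}$), leaving the middle factor untouched. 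This is precisely why the hypotheses on $\mathcal{A}$ and $\mathcal{C}$ are stated with the combined norm $|\cdot|^{\varrho}_{s,p}=\max\{\widehat{|\cdot|}_{s,p},\widetilde{|\cdot|}_{s,p},|\cdot|_{s,p}\}$ and cannot be weakened to $|\cdot|_{s,p}$: the conjugation step above would then fail.
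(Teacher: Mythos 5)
Your proposal is correct and follows essentially the same route as the paper: the paper writes $\mathcal{A}=\partial_{xx}\mathcal{Q}_1\partial_{xx}^{-1}$, $\mathcal{B}=\partial_{xx}\mathcal{Q}\partial_x$, $\mathcal{C}=\partial_x^{-1}\mathcal{Q}_2\partial_x$ so that $\mathcal{A}\mathcal{B}\mathcal{C}=\partial_{xx}\mathcal{Q}_1\mathcal{Q}\mathcal{Q}_2\partial_x$ and then invokes Lemma~\ref{2.2} twice, which is exactly your conjugation by $D^{\pm 1},D^{\pm 2}$ with $D$ the Fourier multiplier by $k$. Your discussion of why $\mathcal{A}$ must enter through its $\widetilde{|\cdot|}$-component and $\mathcal{C}$ through its $\widehat{|\cdot|}$-component (so that the $\varrho$-norm, rather than the plain $(s,p)$-decay norm, is the right hypothesis) is a correct and useful gloss on what the paper leaves implicit.
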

\begin{proof}
 From  Definition \ref{matrix} and Definition \ref{4.00}. If $\mathcal{A},\mathcal{C} \in \mathrm{B}^\varrho_{s,p}$, they can be seen as  $\partial_{xx}\mathcal{Q}_1\partial^{-1}_{xx}$ and $ \partial^{-1}_{x}\mathcal{Q}_2 \partial_{x}$, with $|\mathcal{Q}_1|_{s,p} \leq |\mathcal{A}|_{s,p}^{\varsigma}, |\mathcal{Q}_2|_{s,p} \leq |\mathcal{C}|_{s,p}^{\varsigma}$.  If $\mathcal{B} \in \mathrm{B}^{\varsigma}_{s,p}$, it can be seen as $\partial_{xx}\mathcal{Q}\partial_{x}$, with $|\mathcal{Q}|_{s,p}=|\mathcal{B}|_{s,p}^{\varsigma}$.
Thus,
\begin{equation}
\begin{split}
|\mathcal{ABC}|^{\varsigma}_{s,p}&=|\partial_{xx}\mathcal{Q}_1\mathcal{Q}\mathcal{Q}_2\partial_{x}|^{\varsigma}_{s,p}=|\mathcal{Q}_1\mathcal{Q}\mathcal{Q}_2|_{s,p}\\
& \leq c(p)|\mathcal{Q}_1|_{s,p}|\mathcal{Q}|_{s,p}|\mathcal{Q}_2|_{s,p}\leq c(p)|\mathcal{A}|^{\varrho}_{s,p}|\mathcal{B}|^{\varsigma}_{s,p}|\mathcal{C}|^{\varrho}_{s,p}.
\end{split}
\end{equation}
The \eqref{4.00001} is standard.
\end{proof}

\begin{lemma} If $\mathcal{A} \in \mathrm{B}^{\varsigma}_{s,p}, h \in \mathrm{H}_{s,2p}$, then $\mathcal{A}h \in \mathrm{H}_{s,p}$ with
\begin{equation}\label{4.03}
\|\mathcal{A}h\|_{s,p} \leq c(p)|\mathcal{A}|^{\varrho}_{s,p}\|h\|_{s,2p},\quad
\|\mathcal{A}h\|^{Lip}_{s,p} \leq c(p)|\mathcal{A}|^{\varrho,Lip}_{s,p}\|h\|_{s,2p},
\end{equation}
 If $\mathcal{A} \in \mathrm{B}^{\varsigma}_{s,p}, h \in \mathrm{H}_{s,2p+1}$, then $\mathcal{A}h \in \mathrm{H}_{s,p-2}$ with
\begin{equation}\label{4.04}
\|\mathcal{A}h\|_{s,p-2}  \leq c(p)|\mathcal{A}|^{\varsigma}_{s,p}\|h\|_{s,2p+1},\quad
\|\mathcal{A}h\|^{Lip}_{s,p-2} \leq c(p) |\mathcal{A}|^{\varsigma,Lip}_{s,p}\|h\|^{Lip}_{s,2p+1}.
\end{equation}
\end{lemma}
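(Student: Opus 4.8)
The plan is to deduce both bounds from the action estimate already available for the plain decay space $\mathrm{B}_{s,p}$---namely that $\mathcal{A}\in\mathrm{B}_{s,p}$ and $h\in\mathrm{H}_{s,2p}$ imply $\|\mathcal{A}h\|_{s,p}\le c(p)\,|\mathcal{A}|_{s,p}\,\|h\|_{s,2p}$---by feeding into it the factorizations of $\mathrm{B}^\varrho_{s,p}$ and $\mathrm{B}^\varsigma_{s,p}$ already used in Lemmas \ref{ap1} and \ref{ap2}. Two elementary facts about differentiation in the space variable will be used repeatedly: $\partial_x:\mathrm{H}_{s,q}\to\mathrm{H}_{s,q-1}$ and $\partial_{xx}:\mathrm{H}_{s,q}\to\mathrm{H}_{s,q-2}$ have operator norm $\le 1$, since on a Fourier mode with space index $k$ one has $|k|\le[k]\le[\ell]+[k]$ while the analytic weight $e^{2(|\ell|+|k|)s}$ is unchanged.

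For \eqref{4.03} (where the hypothesis should read $\mathcal{A}\in\mathrm{B}^\varrho_{s,p}$, consistently with the norm on the right-hand side) the definition $|\cdot|^{\varrho}_{s,p}=\max\{\widehat{|\cdot|}_{s,p},\widetilde{|\cdot|}_{s,p},|\cdot|_{s,p}\}$ gives $\mathcal{A}\in\mathrm{B}_{s,p}$ with $|\mathcal{A}|_{s,p}\le|\mathcal{A}|^{\varrho}_{s,p}$; both inequalities in \eqref{4.03} then follow at once from the $\mathrm{B}_{s,p}$ action estimate, and its Lipschitz version for the second.

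For \eqref{4.04} I would use that any $\mathcal{A}\in\mathrm{B}^\varsigma_{s,p}$ factors as $\mathcal{A}=\partial_{xx}\,\mathcal{Q}\,\partial_x$ with $\mathcal{Q}\in\mathrm{B}_{s,p}$ and $|\mathcal{Q}|_{s,p}=|\mathcal{A}|^{\varsigma}_{s,p}$---the reading of Definition \ref{4.00} already employed in the proof of Lemma \ref{ap2}, legitimate because the phase space is $\mathrm{H}^1_0(\mathbb{T})$, so all space indices are nonzero and $\partial_x$ is invertible there. Then I would chain three steps: $\|\partial_x h\|_{s,2p}\le\|h\|_{s,2p+1}$; next $\|\mathcal{Q}(\partial_x h)\|_{s,p}\le c(p)\,|\mathcal{Q}|_{s,p}\,\|\partial_x h\|_{s,2p}$ by the $\mathrm{B}_{s,p}$ action estimate; finally $\|\partial_{xx}g\|_{s,p-2}\le\|g\|_{s,p}$ with $g=\mathcal{Q}(\partial_x h)$. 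Composing these and substituting $|\mathcal{Q}|_{s,p}=|\mathcal{A}|^{\varsigma}_{s,p}$ gives $\|\mathcal{A}h\|_{s,p-2}\le c(p)\,|\mathcal{A}|^{\varsigma}_{s,p}\,\|h\|_{s,2p+1}$; running the identical chain with Lipschitz norms (using the Lipschitz form of the $\mathrm{B}_{s,p}$ action estimate) yields the second inequality in \eqref{4.04}.

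None of these steps is hard. The one point needing care is the bookkeeping of the regularity shifts $2p+1\to 2p\to p\to p-2$ together with the precise identification, under the convention $\mathcal{A}(\theta)^{i_2}_{i_1}=(\mathcal{A}e^{\mathrm{i}i_2x},e^{\mathrm{i}i_1x})$, of the weights $i_1^2$ (output index) and $i_2$ (input index) in Definition \ref{4.00} with the left factor $\partial_{xx}$ and the right factor $\partial_x$ respectively. Once that is made explicit the argument is entirely routine, so I expect the ``main obstacle'' to be notational rather than substantive.
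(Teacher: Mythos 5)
Your proposal is correct and matches the paper's own (terse) proof essentially line for line: for \eqref{4.04} the paper likewise factors $\mathcal{A}=\partial_{xx}\mathcal{Q}\partial_x$ with $|\mathcal{Q}|_{s,p}=|\mathcal{A}|^{\varsigma}_{s,p}$ and chains $\|\partial_{xx}\mathcal{Q}\partial_xh\|_{s,p-2}\leq\|\mathcal{Q}\partial_xh\|_{s,p}\leq|\mathcal{Q}|_{s,p}\|\partial_xh\|_{s,2p}\leq|\mathcal{A}|^{\varsigma}_{s,p}\|h\|_{s,2p+1}$, and it dismisses \eqref{4.03} as ``standard,'' which you correctly flesh out via $|\mathcal{A}|_{s,p}\le|\mathcal{A}|^{\varrho}_{s,p}$ and the $\mathrm{B}_{s,p}$ action estimate. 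You also correctly flag that the hypothesis of \eqref{4.03} should read $\mathcal{A}\in\mathrm{B}^{\varrho}_{s,p}$ (a typo in the paper, since $\mathrm{B}^{\varsigma}_{s,p}\not\subset\mathrm{B}^{\varrho}_{s,p}$).
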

\begin{proof}
\eqref{4.03} is standard.
To prove \eqref{4.04}, by Lemma \ref{2.2} and Lemma \ref{ap2}, we see
\begin{equation}
\begin{split}
\|\mathcal{A}h\|_{s,p-2} &=\|\partial_{xx}\mathcal{Q}\partial_xh\|_{s,p-2} \leq \|\mathcal{Q}\partial_xh\|_{s,p}\\
& \leq |Q|_{s,p}\|\partial_xh\|_{s,2p} \leq |\mathcal{A}|^{\varsigma}_{s,p}\|h\|_{s,2p+1}
\end{split}
\end{equation}
\end{proof}
Now, we recall the classical  Kuksin's lemma.
\begin{lemma}[Kuksin]
Consider the following first order partial differential equation
\begin{equation}
 -\mathbf{i}\omega \cdot \partial_{\theta}u+d u +\mu(\theta)u=p(\theta),\quad \theta\in \mathbb{T}^v,
\end{equation}
for the unknown function $u$ defined on the torus $\mathbb{T}^{v}$, where $\omega =(\omega_1,\cdots,\omega_n) \in \mathbb{R}^v  $ and $d \in \mathbb{R} $. We make the following assumption.
\\

 $\mathbf{Assumption \ A:}$ There are constants $\alpha,\gamma>0$ and $\tau >v$ such that
\begin{equation}
|\langle \ell \cdot\omega \rangle |\geq \frac{\alpha}{|k|^{\tau}},
\end{equation}
\begin{equation}
|\langle \ell \cdot\omega \rangle +d|\geq \frac{\alpha \gamma}{|k|^{\tau}},
\end{equation}
for all $0 \neq \ell \in \mathbb{Z}^v$. Also, $|d|\geq \alpha \gamma$.
\\

 $\mathbf{Assumption \ B:}$ The function $\mu$  is analytic on some  complex strip $D(s)=\{ x:|\mathfrak{Im} x |<s\} \subset \mathbb{C}^n$ around $\mathbb{T}^v$ with mean value zero£º $[\mu]= \int_{\mathbb{T}^v}\mu(\theta) d\theta=0$. Moreover,
\begin{equation}
^{l_1}|\mu|_{s,\tau}\overset{def}{=}\sum_{\ell \in \mathbb{Z}^v}|\mu_k|[\ell]^\tau e^{|\ell|s} \leq C \tilde{\gamma}.
\end{equation}
for $\mu =\sum_{\ell \in \mathbb{Z}^v}\mu_k e^{\mathrm{i}\ell x}$ with some $C>0$.
\\

$\mathbf{Assumption \ C:}$ $p(\theta)$ is analytic on the same complex strip $ D(s)$, and $d \geq \tilde{\gamma}^{1+\beta}$, with some $\beta > 0$.

Then the equation has a unique solution $u(\theta)$ defined in a narrower domain $D(s-\sigma)$, with $0 < \sigma <s$, which satisfies
\begin{equation}\label{4.06}
\|u(\theta)\|_{s-\sigma,s_0} \leq \frac {c{e^{2(5/\sigma)}}^{1/\beta}}{\alpha \tilde{\gamma} \sigma^{2v+\tau+s_0+3}} \|p(\theta)\|_{s,s_0},
\end{equation}
where $c$ depend on $v$ and $\tau$.
\end{lemma}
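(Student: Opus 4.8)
The plan is to eliminate the large variable coefficient $\mu(\theta)$ by a multiplicative conjugation, thereby reducing the problem to a constant-coefficient small-divisor equation. Since $\mu$ has zero mean, the equation $-\mathbf{i}\omega\cdot\partial_\theta\psi=-\mu$ admits a unique solution with zero mean, namely $\psi_\ell=-\mu_\ell/(\omega\cdot\ell)$ for $\ell\neq 0$. By the first Diophantine bound $|\psi_\ell|\leq\alpha^{-1}|\mu_\ell|[\ell]^{\tau}$, so the weighted bound $\sum_\ell|\mu_\ell|[\ell]^{\tau}e^{|\ell|s}\leq C\tilde\gamma$ of Assumption B gives $\sum_\ell|\psi_\ell|e^{|\ell|s}\lesssim\tilde\gamma/\alpha$, i.e. $\psi$ is controlled on the full strip $D(s)$ (with only a harmless loss when passing to the $H_{s,s_0}$-norm); in particular $e^{\pm\psi}$ is analytic and bounded on $D(s)$. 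Setting $u=e^{\psi}U$ and using $-\mathbf{i}\omega\cdot\partial_\theta\psi=-\mu$, the contributions $\mu u$ and $(-\mathbf{i}\omega\cdot\partial_\theta\psi)e^{\psi}U$ cancel, and the equation becomes
\[
-\mathbf{i}\omega\cdot\partial_\theta U + d U = e^{-\psi}p .
\]

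I would then solve this in Fourier, $U_\ell=(e^{-\psi}p)_\ell/(\omega\cdot\ell+d)$, which is well defined because $|\omega\cdot\ell+d|\geq\alpha\gamma[\ell]^{-\tau}$ for $\ell\neq 0$ (second bound of Assumption A) and $|d|\geq\alpha\gamma$ for $\ell=0$. Losing a strip of width $\sigma$ and using the algebra property of the analytic norm together with $\sup_\ell[\ell]^{\tau}e^{-|\ell|\sigma}\lesssim\sigma^{-\tau}$ yields $\|U\|_{s-\sigma,s_0}\lesssim(\alpha\gamma)^{-1}\sigma^{-\tau}\|e^{-\psi}p\|_{s,s_0}\lesssim(\alpha\gamma)^{-1}\sigma^{-\tau}\|p\|_{s,s_0}$, and transforming back by $u=e^{\psi}U$ gives an estimate of the same shape for $u$. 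Uniqueness is immediate: a solution of the homogeneous equation corresponds under $u\mapsto e^{-\psi}u$ to some $U$ with $(\omega\cdot\ell+d)U_\ell=0$ for every $\ell$, hence $U\equiv 0$ by Assumption A.

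What remains is purely quantitative: to reach the stated constant — with its super-exponential factor $e^{(2(5/\sigma))^{1/\beta}}$ and the power $\sigma^{-(2v+\tau+s_0+3)}$ rather than the cruder $\sigma^{-\tau}$ above — one organizes the conjugation, as Kuksin does, as an iteration in which at stage $n$ only the Fourier modes $|k|\leq K_n$ of $\mu$ are removed, the tail $\mu_{>K_n}$ being pushed below the threshold at which it can be reabsorbed against $d^{-1}$ by a convergent Neumann series. Assumption C ($d\geq\tilde\gamma^{1+\beta}$, $\beta>0$) is what fixes the requisite smallness of this tail, hence forces $K_n$ to grow like $\beta^{-1}\log(1/\tilde\gamma)$, and balancing the total strip $\sigma$ against the number of stages together with the geometric losses $(K_n/\sigma_n)^{\tau}$ is what manufactures the stated dependence on $1/\sigma$. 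I expect this bookkeeping — simultaneously tracking the strip contractions, the truncation levels, and the competition between the powers of $\tilde\gamma$ in the numerator (from $\mu$) and the denominator (from $d$) — to be the only real obstacle; note that with the weighted bound of Assumption B already in force the two-paragraph argument above in fact delivers the sharper loss $\sigma^{-\tau}$, so the stated estimate is not optimal but is amply sufficient for the KAM scheme.
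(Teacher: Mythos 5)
The paper's own ``proof'' of this lemma is just a citation to Kuksin \cite{KuKsin1} and Kappeler--P\"oschel \cite{Kappler1}, so there is no in-paper argument to compare against; what matters is whether your reconstruction is sound. It is not: the step in which you pass from $\|e^{-\psi}p\|_{s,s_0}$ to $\|p\|_{s,s_0}$ (and likewise from $U$ back to $u=e^{\psi}U$) silently assumes that $e^{\pm\psi}$ is bounded uniformly, but the only bound you have produced is $\sum_\ell|\psi_\ell|e^{|\ell|s}\lesssim\tilde\gamma/\alpha$, so the honest conclusion is only $\|e^{\pm\psi}\|\lesssim e^{c\,\tilde\gamma/\alpha}$. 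The lemma makes no smallness assumption on $\tilde\gamma/\alpha$, and in the KAM step where it is applied one has $\tilde\gamma\approx\varepsilon(i^3+j^3)$ and $\alpha\approx\alpha_0/2^m$, so $\tilde\gamma/\alpha$ is unbounded in $(i,j,m)$. Thus the ``two-paragraph argument'' does not deliver the sharper loss $\sigma^{-\tau}$; it delivers a bound with a factor $e^{c\,\tilde\gamma/\alpha}$ in place of $e^{(2\cdot 5/\sigma)^{1/\beta}}$, and that factor is precisely what the stated estimate is designed to avoid, since it would destroy the Nash--Moser convergence. The claim that the stated estimate ``is not optimal but amply sufficient'' has the comparison backwards.

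A second symptom of the same gap: your first two paragraphs never invoke Assumption~C, $d\ge\tilde\gamma^{1+\beta}$. That hypothesis is what makes the lemma true at all (the discussion of $\beta>0$, $\beta=0$, $\beta<0$ in the introduction of this paper is precisely about this), so any purported proof that does not use it is proving a false statement. The truncation--iteration you sketch in your third paragraph is not cosmetic bookkeeping aimed at a cruder constant; it is the essential mechanism. Kuksin solves $\omega\cdot\partial_\theta\phi=-\mu_N$ only for the Fourier truncation $\mu_N$ of $\mu$, which keeps the conjugator $e^{\mathbf{i}\phi}$ of controlled size, and absorbs the analytic tail $\mu-\mu_N$ by a Neumann series against $d^{-1}$; this is where $d\ge\tilde\gamma^{1+\beta}$ enters, forcing the cutoff $N$ to scale like $\sigma^{-1}\log$ of the ratio and producing exactly the $e^{(c/\sigma)^{1/\beta}}$ factor. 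To repair the proposal you would need to carry out that truncation scheme, not present it as an afterthought to a complete argument.
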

\begin{proof}The original proof  can be found in \cite{KuKsin1} and \cite{Kappler1}. Totally following the proof by Kuksin \cite{KuKsin1} and \cite{Kappler1} and noting Lemma \ref{estimation2}, \eqref{4.06} is verified.
\end{proof}

 The inverse of $\mathcal{L}(u_n)$ is our main concern. However, the estimate of $\mathcal{L}(0)$,  a diagonal operator,  is straightforward. In order to make the structure of this paper much more simplicity, the initial approximate solution is $u_1$ other than $u_0$.  So, we will estimate the inverse of $\mathcal{L}(0)$  and set the initial parameter .
\begin{lemma}
The linear equation $\mathcal{L}(0)v=F(0)$, for all $\lambda \in \Gamma(u_0)$,
\begin{equation}\label{2.40}
\Gamma(u_0):=\{ \lambda :|\omega \cdot \ell+k^5| \geq \alpha_0 \frac{k^5}{[\ell]^\tau},  \forall \ell \in \mathbb{Z}^v  , k \in \mathbb{Z} \backslash \{0\} \}
\end{equation}
has a  unique solution $v$ with zero average, satisfying $\|v\|^{Lip}_{s,p'} \leq \frac{1}{\alpha^2} \| F(0)\|^{Lip}_{s,p'+2\tau+1}$.
\end{lemma}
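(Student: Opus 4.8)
The plan is to exploit that, at $u=0$, the operator $\mathcal{L}(0)$ is \emph{diagonal}. Evaluating \eqref{3.000} at $u=0$ one has $a_2(0)=1$, $a_1(0)=0$, $a_0(0)=0$, so $\mathcal{L}(0)=\omega\cdot\partial_\varphi+\partial_x^5$; in the exponential basis $\{e^{\mathbf{i}\ell\varphi}e^{\mathbf{i}kx}\}$ this acts as multiplication by the scalar $\mathbf{i}(\omega\cdot\ell+k^5)$ (using $\mathbf{i}^5=\mathbf{i}$). Since the phase space is $\mathrm{H}^1_0(\mathbb{T})$, only Fourier modes with $k\neq0$ occur, and for $\lambda\in\Gamma(u_0)$ the non-resonance condition \eqref{2.40} gives $|\omega\cdot\ell+k^5|\geq\alpha_0|k|^5[\ell]^{-\tau}>0$ for all such $(\ell,k)$. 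Thus I would solve $\mathcal{L}(0)v=F(0)$ term by term: $v$ is uniquely determined by $v_{\ell,k}=F(0)_{\ell,k}/\big(\mathbf{i}(\omega\cdot\ell+k^5)\big)$, and since $F(0)=-\partial_xf(\omega t,x)$ has zero average in $x$, one has $F(0)_{\ell,0}=0$, hence $v_{\ell,0}=0$ and $v$ has zero average; uniqueness in $\mathrm{H}^1_0$ follows because the multipliers never vanish.

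For the sup-norm I would use the divisor bound and $|k|\geq1$ to get $|v_{\ell,k}|\leq\alpha_0^{-1}[\ell]^{\tau}|F(0)_{\ell,k}|$, then square, sum against the weight $e^{2(|\ell|+|k|)s}([\ell]+[k])^{2p'}$ and use $[\ell]^{\tau}\leq([\ell]+[k])^{\tau}$ to absorb the loss, yielding $\|v\|_{s,p'}\lessdot\alpha_0^{-1}\|F(0)\|_{s,p'+\tau}$.

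For the Lipschitz semi-norm, writing $\omega=\lambda\bar\omega$ and taking $\lambda_1\neq\lambda_2$ in $\Gamma(u_0)$, I would split
\begin{equation*}
v_{\ell,k}(\lambda_1)-v_{\ell,k}(\lambda_2)=\frac{F(0)_{\ell,k}(\lambda_1)-F(0)_{\ell,k}(\lambda_2)}{\mathbf{i}(\lambda_1\bar\omega\cdot\ell+k^5)}+F(0)_{\ell,k}(\lambda_2)\Big(\frac{1}{\mathbf{i}(\lambda_1\bar\omega\cdot\ell+k^5)}-\frac{1}{\mathbf{i}(\lambda_2\bar\omega\cdot\ell+k^5)}\Big).
\end{equation*}
The first summand is controlled exactly as in the sup-norm estimate, contributing $\alpha_0^{-1}[\ell]^\tau$ times the Lipschitz semi-norm of $F(0)$. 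In the second, $\tfrac1a-\tfrac1b=\tfrac{b-a}{ab}$ puts the two divisors in the denominator, each bounded below by $\alpha_0|k|^5[\ell]^{-\tau}\geq\alpha_0[\ell]^{-\tau}$, while the numerator is $|\bar\omega\cdot\ell|\,|\lambda_1-\lambda_2|\lessdot[\ell]\,|\lambda_1-\lambda_2|$; hence this summand is $\lessdot\alpha_0^{-2}[\ell]^{2\tau+1}|\lambda_1-\lambda_2|\,|F(0)_{\ell,k}(\lambda_2)|$. Dividing by $|\lambda_1-\lambda_2|$, squaring, summing and using $[\ell]^{2\tau+1}\leq([\ell]+[k])^{2\tau+1}$ to absorb the loss into $p'\mapsto p'+2\tau+1$, one gets $\|v\|^{lip}_{s,p'}\lessdot\alpha_0^{-2}\|F(0)\|^{Lip}_{s,p'+2\tau+1}$. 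Adding the weaker sup-part gives $\|v\|^{Lip}_{s,p'}\leq\alpha^{-2}\|F(0)\|^{Lip}_{s,p'+2\tau+1}$, after choosing $\alpha$ to absorb the universal constants.

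I expect no genuine obstacle here: since $\mathcal{L}(0)$ is truly diagonal, the statement reduces to the elementary small-divisor bookkeeping above, and neither Kuksin's lemma nor any regularization is needed. The two points requiring a little care are (i) verifying that the $k=0$ modes are absent, so the divisors never vanish and $v$ remains in $\mathrm{H}^1_0$, and (ii) tracking the powers precisely, namely one $[\ell]^\tau$ from each divisor in the Lipschitz term together with the extra $[\ell]$ from $\bar\omega\cdot\ell$, which jointly account for the derivative loss $2\tau+1$ and the factor $\alpha^{-2}$.
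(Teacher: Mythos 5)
Your proposal follows essentially the same route as the paper's proof: solve the diagonal equation explicitly in Fourier modes $v_{\ell,k}=F_{\ell,k}/\mathbf{i}(\omega\cdot\ell+k^5)$, use the divisor bound $|\omega\cdot\ell+k^5|\geq\alpha_0 k^5/[\ell]^\tau\geq\alpha_0/[\ell]^\tau$ (with $k\neq 0$) for the sup-norm, and split the $\lambda$-difference to obtain the Lipschitz bound with $\alpha_0^{-2}$ and loss $2\tau+1$. The paper phrases the Lipschitz step by applying $\Delta$ to the equation and solving the resulting difference equation with the same divisor, which is algebraically identical to your quotient splitting $\tfrac1a-\tfrac1b=\tfrac{b-a}{ab}$; the two bookkeepings produce the same exponents and constant.
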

\begin{proof}
 The equation $\mathcal{L}(0)v=F(0)$ is equivalent to
 \begin{equation}\label{2.433}
 \omega \cdot \partial_{\varphi}v(\varphi,x)+\partial^3_x v(\varphi,x)=F(\varphi,x),
 \end{equation}
 where
\begin{equation}\label{2.43}
v(\varphi, x)=\sum_{k\in  \mathbb{Z}\backslash \{0\}} v_{k}(\varphi) e^{\mathbf{i}kx}, \quad F(\varphi, x)=\sum_{k\in  \mathbb{Z}\backslash \{0\}}  F_{k}(\varphi) e^{\mathbf{i}kx}.
\end{equation}
Thus, \eqref{2.433} can be transformed to
\begin{equation}
\omega \cdot \partial_{\varphi}v_k(\varphi)+(\mathbf{i}k)^5v_k(\varphi)=F_k(\varphi), \quad k \in \mathbb{Z} \backslash \{0\},
\end{equation}
whose solutions are $v_k(\varphi)=\sum_{\ell \in \mathbb{Z}^v} v_{\ell,k}e^{\mathbf{i} \ell\varphi}$ with coefficients
\begin{equation}
v_{\ell,k}=\frac{F_{\ell,k}}{\mathbf{i}\omega \cdot \ell+\mathbf{i}k^5} \quad \forall \ell \in \mathbb{Z}^v , k \in \mathbb{Z} \backslash \{0\}.
\end{equation}
Using \eqref{2.40}, we have
\begin{equation}\label{2.41}
\|v\|_{s,p'+\tau+1} \leq \frac{1}{\alpha_0}\| F(0)\|_{s,p'+2\tau+1}.
\end{equation}
Applying the operator $\Delta v=v(\lambda_1)-v(\lambda_2)$ to $\mathcal{L}(0)v=F(0)$, we have
\begin{equation}\label{2.42}
\omega \cdot \partial_{\varphi}\Delta v_k(\varphi)+(\mathbf{i}k)^5\Delta v_k(\varphi)=\Delta F_k(\varphi)-\Delta \lambda \cdot \overline{\omega} \cdot \partial_{\varphi} v_k(\varphi).
\end{equation}
Again using \eqref{2.40}, we see
\begin{equation}
\|\Delta v\|_{s,p'} \leq \frac{1}{\alpha_0}\| \Delta F(0)\|_{s,p'+\tau}+\Delta\lambda \cdot\frac{1}{\alpha_0^2} \|F(0)\|_{s,p'+\tau+1}.
\end{equation}
Combining \eqref{2.41} with \eqref{2.42}, one gets
\begin{equation}
\|v\|^{Lip}_{s,p'} \leq \frac{1}{\alpha_0^2} \| F(0)\|^{Lip}_{s,p'+2\tau+1}.
\end{equation}
\end{proof}

\begin{rem}
Obviously, $\| F(0)\|^{Lip}_{s,p'+2\tau+1}=\|\partial_xf(\varphi,x)\|^{Lip}_{s,p'+2\tau+1} \leq \varepsilon $. Set $\varepsilon_1$ as $\frac{1}{\alpha^2}\varepsilon $,  $v$  as $v_1$, $p'$ as $p+\eta$. Then, we have $\|v_1\|^{Lip}_{s,p+\eta} \leq \varepsilon_1$. Since
\begin{equation}
F(v_1)=10v_1\partial_x^3v_1+20\partial_xv_1\partial_x^2v_1+30v_1^2 \partial_xv_1-6\partial^2_xv_1 \partial^5_xv_1-18\partial^3_xv_1\partial^4_xv_1,
\end{equation}
one gets
\begin{equation}
\|F(u_1)\|^{Lip}_{s,p+\eta-5}=\|F(v_1)\|^{Lip}_{s,p+\eta-5}\leq c (\|v_1\|^{Lip}_{s,p+\eta})^2 \leq \varepsilon_1^{\frac{8}{5}}.
\end{equation}
\end{rem}

\textbf{KAM step}: In this section, we will give the outline of the reducibility and show in detail one key step of the KAM iteration. The purpose is to define a transformation operator $\Phi_m$ conjugating $\mathfrak{L}_{m}$, a diagonal operator $\mathcal{J}_m$ plus a $\varepsilon_{m}$ remainder $\mathcal{R}_m$, to $\mathfrak{L}_{m+1}$,  a diagonal operator $\mathcal{J}_{m+1}$ plus a $\varepsilon_{m+1}$ remainder $\mathcal{R}_{m+1}$.

Now, we have already got the regularized linear operator $\mathfrak{L}(u_n)$ at the approximate solution $u_{n}$, which is
 \begin{equation}
\mathfrak{L}= \omega \cdot\partial_{\theta}+m \partial^5_{y}+
\partial_{y}\{\partial_y[ c_1(\theta,y)\partial_y)]+c_0(\theta,y)\}.
 \end{equation}
Now, the linear operator $\mathfrak{L}$  can be denote as
\begin{equation}
\mathfrak{L}=\mathfrak{L}_1=\omega \cdot \partial _{\theta} \textbf{1}+\mathcal{D}+\mathcal{R}=\mathcal{J}+\mathcal{R},
\end{equation}
where
\begin{equation}
 \mathcal{D}=m \partial^5_{y}, \quad \mathcal{R}=\partial_{y}\{\partial_y[ (c_1(\theta,y)\partial_y)]+c_0(\theta,y)\}.
\end{equation}
According to  Definition \ref{4.00}, we see $|R|^{\varsigma}_{s,s_0} \leq \|c_1(\theta,y)\|_{s,p}+\|c_0(\theta,y)\|_{s,p} $.

By Lemma \ref{3.301} and Lemma \ref{estimation3}, the coefficients of perturbation term $\mathcal{R}$ can be divided into $n$ parts, which is
\begin{equation}
c_i(u_{n})=c_i(u_1)+(c_i(u_2)-c_i(u_1))+\cdots (c_i(u_{n})-c_i(u_{n-1})).
\end{equation}
Set $u_{m}-u_{m-1}=v_{m}$. From the following Lemma \ref{4.10},  $(c_i(u_{m})-c_i(u_{m_-1}))$ is as small as $v_{m}$, the function space of $(c_i(u_{m})-c_i(u_{m_-1}))$  can also be controlled by $v_{m}$.  Now, the linear operator $\mathfrak{L}(u_n)$ can be seen as
 \begin{equation}
 \mathfrak{L}= \omega \cdot\partial_{\theta}\mathbf{1}+ \mathcal{D}+\mathcal{K}^1+\cdots +\mathcal{K}^n=\omega \cdot\partial_{\theta}\mathbf{1}+ \mathcal{D}+\mathcal{R}_1+\mathcal{Q}_1
\end{equation}
where
$$\mathcal{R}_1=\mathcal{K}^1, \quad  \mathcal{Q}_1=\sum^n_{i=2}\mathcal{K}^i,$$

$$\mathcal{K}^i=\partial_{y}\{\partial_y[ ((c_1(u_{i})-c_1(u_{i-1})\partial_y)]+(c_0(u_{i})-c_0(u_{i-1})\}.$$

\begin{lemma}\label{4.10}
Assume $\|u_m\|^{Lip}_{s_m,p+\eta} \ll \frac{1}{100}$, for all $m \geq 1 $, we have
\begin{equation}\label{4.101}
|m(u_{m})-m(u_{m-1})|^{Lip} \leq \|v_m\|^{Lip}_{s_m,\eta}.
\end{equation}
For $i=0,1$,  we have
\begin{equation}\label{4.11}
\|c_i(u_{m})-c_i(u_{m-1})\|^{Lip}_{k_1s_m,p} \leq \|v_m\|^{Lip}_{s_m,p+\eta},
\end{equation}
\begin{equation}\label{4.12}
|\mathcal{K}^m|^{\varsigma,Lip}_{k_1s_m,s_0} \leq C \|v_m\|^{Lip}_{s_m,p+\eta}.
\end{equation}
The $s_m$  will be define later.
\end{lemma}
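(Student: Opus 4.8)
The plan is to deduce all three estimates from the first-order (directional-derivative) bounds established in Section 3, by applying the fundamental theorem of calculus along the segment joining $u_{m-1}$ to $u_m$. First I would set $w(t):=(1-t)u_{m-1}+t\,u_m=u_{m-1}+t\,v_m$ for $t\in[0,1]$ and observe that, since $\|u_m\|^{Lip}_{s_m,p+\eta}\ll\frac{1}{100}$ for every $m$ and $s_m\le s_{m-1}$ forces $\|u_{m-1}\|^{Lip}_{s_m,p+\eta}\le\|u_{m-1}\|^{Lip}_{s_{m-1},p+\eta}\ll\frac{1}{100}$ as well, convexity of the norm gives $\|w(t)\|^{Lip}_{s_m,p+\eta}\ll\frac{1}{100}$ and $\|v_m\|^{Lip}_{s_m,p+\eta}\ll\frac{2}{100}$, uniformly in $t$; in particular $1+\|w(t)\|^{Lip}_{s_m,p+\eta}\le 2$, and the whole path stays inside the small ball on which Lemma \ref{3.301} and the estimates \eqref{3.26}, \eqref{3.2994} are valid. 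Since $m$ and $c_i$ are differentiable in $u$ (their $u$-derivatives were computed in Section 3), the chain rule yields
\begin{equation*}
m(u_m)-m(u_{m-1})=\int_0^1 \partial_u m(w(t))[v_m]\,dt, \qquad c_i(u_m)-c_i(u_{m-1})=\int_0^1 \partial_u c_i(w(t))[v_m]\,dt,
\end{equation*}
so that everything reduces to pointwise-in-$t$ estimates of the integrands.

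For \eqref{4.101} I would apply \eqref{3.34} (equivalently \eqref{3.26}) with $u=w(t)$, $h=v_m$, getting $|\partial_u m(w(t))[v_m]|^{Lip}\le C\|v_m\|^{Lip}_{s_m,2}(1+\|w(t)\|^{Lip}_{s_m,2})\le 2C\|v_m\|^{Lip}_{s_m,2}\le 2C\|v_m\|^{Lip}_{s_m,\eta}$, using $2\le\eta$ and monotonicity of the norm in the regularity index; integrating over $t$ then gives \eqref{4.101} (the fixed constant being harmless and absorbed into the smallness already imposed). For \eqref{4.11} I would apply \eqref{3.2994} (equivalently \eqref{3.36}), in which the regularity loss is exactly $\eta=4s_0+\tau_0+9$ and the radius drops from $s$ to $k_1 s$: with $u=w(t)$, $h=v_m$ this gives $\|\partial_u c_i(w(t))[v_m]\|^{Lip}_{k_1 s_m,p}\lessdot\|v_m\|^{Lip}_{s_m,p+\eta}(1+\|w(t)\|^{Lip}_{s_m,p+\eta})\lessdot\|v_m\|^{Lip}_{s_m,p+\eta}$, and integrating in $t$ yields \eqref{4.11}.

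For \eqref{4.12} I would pass to matrix representations. Writing $g_i:=c_i(u_m)-c_i(u_{m-1})$ and denoting by $(g_i)_k(\theta)$ its $k$-th Fourier coefficient in $y$, we have $\mathcal{K}^m h=\partial_y\partial_y\big(g_1\,\partial_y h\big)+\partial_y\big(g_0\, h\big)$. Reading off the matrix entries in the basis $\{e^{\mathbf{i}jy}\}$ with the convention of Definition \ref{matrix} gives
\begin{equation*}
(\mathcal{K}^m)^{i_2}_{i_1}=-\mathbf{i}\,i_1^2\,i_2\,(g_1)_{i_1-i_2}+\mathbf{i}\,i_1\,(g_0)_{i_1-i_2},
\end{equation*}
hence, dividing by $i_1^2\,i_2$ as in the $\varsigma$-norm of Definition \ref{4.00} and using $|i_1|,|i_2|\ge 1$, the $(s,s_0)$-decay sum of $\mathcal{K}^m$ is dominated by $\|g_1\|_{k_1 s_m,p}+\|g_0\|_{k_1 s_m,p}$ — exactly as in the inline remark giving $|\mathcal{R}|^{\varsigma}_{s,s_0}\le\|c_1\|_{s,p}+\|c_0\|_{s,p}$, after passing between the $\mathrm{H}^{\mathfrak{s}}$ and $\mathrm{H}$ norms (Lemma \ref{2.21}, with $p\ge 2s_0$). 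The same computation applies verbatim to the Lipschitz seminorm, so $|\mathcal{K}^m|^{\varsigma,Lip}_{k_1 s_m,s_0}\lessdot\|g_1\|^{Lip}_{k_1 s_m,p}+\|g_0\|^{Lip}_{k_1 s_m,p}$, and \eqref{4.11} closes the argument.

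The hard part here is not analytic but a matter of bookkeeping: one has to make sure the interpolating path $w(t)$ never leaves the small ball — this is exactly where the standing hypothesis $\|u_m\|^{Lip}_{s_m,p+\eta}\ll\frac{1}{100}$ (for all $m$), together with $s_m\le s_{m-1}$, is used — that $u_{m-1}$, $u_m$ and $v_m$ are genuinely analytic on a strip of width $\ge s_m$ so the Section 3 estimates can all be invoked at radius $s_m$ with output at radius $k_1 s_m$, and that a single regularity loss $\eta$ serves all three bounds simultaneously, which is guaranteed by the value $\eta=4s_0+\tau_0+9$ fixed in \eqref{3.302}. No genuinely new analytic difficulty appears, since the tame estimates on $m$, $\alpha$, $\xi$, $c_i$ and their $u$-derivatives were already established in Section 3.
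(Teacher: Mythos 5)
Your proof is correct and takes essentially the same route as the paper: estimates \eqref{4.101} and \eqref{4.11} are obtained exactly as in Lemma \ref{estimation3} (the fundamental-theorem-of-calculus argument along $u_{m-1}+t\,v_m$) applied with the directional-derivative bounds from Lemma \ref{3.301}, and \eqref{4.12} follows by reading off the matrix entries of $\mathcal{K}^m$ and invoking Definition \ref{4.00}, just as the paper indicates. Your version merely unpacks the paper's one-line citation, including verifying the bookkeeping (monotonicity in $s_m$, the convex path staying in the small ball, and the reduction of the $\varsigma$-norm of $\mathcal{K}^m$ to the $\mathrm{H}_{s,p}$-norms of $g_0,g_1$ after dividing by $i_1^2 i_2$), which is exactly what the paper's terse proof leaves implicit.
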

\begin{proof}
The estimates \eqref{4.101} and \eqref{4.11} is a direct result of Lemma \ref{estimation3} and Lemma \ref{3.301}.
Definition \ref{4.00} implies the estimates of  \eqref{4.12}.
\end{proof}

  The purpose of reducibility is to make the reminder of the linear operator $\mathfrak{L}_m$ much more small. If the the reminder of  $\mathfrak{L}_m$ can be divided into $\mathcal{Q}_m$ and $\mathcal{R}_m$ , $\mathcal{R}_m$ lies in a much more general analytical space and $\mathcal{Q}_m$ is much more smaller. We can  consider the homological equation to eliminate $\mathcal{R}_m$. Thus,  the transformation operator $\Psi_m$ can lies in a much more general Banach space.

In order to exhibit the outline of our reducibility, we will give the outline of the one step of reduction. The transformation  $\Psi_m=e^{\Phi_m}$ acting on the operator $\mathfrak{L}_m$:
\begin{equation}
\mathfrak{L}_m=\omega \cdot \partial _{\theta}\textbf{1}+\mathcal{D}_m+\mathcal{R}_m+\mathcal{Q}_m,
\end{equation}
where
$$\mathcal{D}_m=\emph{diag}_{h \in \mathbb{Z}\backslash \{0\}}\mathrm{i}\{d_h +\mu_h(\theta)\},\quad d_h \approx h^5 ,\quad \int_{\mathbb{T}^v} \mu_h(\theta) d\theta=0,$$
\begin{equation}\label{4.13}
\mathcal{Q}_m=\prod^1_{i=m-1}\Psi^{-1}_i[\sum^{n}_{i=m}\mathcal{K}_i]\prod^{m-1}_{i=1}\Psi_i.
\end{equation}
Then, we can  get
\begin{equation}
\begin{split}
 \Psi^{-1}_m  \mathfrak{L}_m\Psi_m =& e^{-\Phi_m}[\omega \cdot\partial_{\varphi}(e^{\Phi_m})+\mathcal{D}_me^{\Phi_m} + \mathcal{R}_me^{\Phi_m}+\mathcal{Q}_me^{\Phi_m}] \\
 =&\omega\cdot\partial_{\varphi}+\mathcal{D}_m+\mathrm{diag}[\mathcal{R}_m] +(\omega \cdot\partial_{\varphi} \Psi_m+[\mathcal{D}_m,\Phi_m]+\mathcal{R}_m-\mathrm{diag}[\mathcal{R}_m]  \\
 &+(e^{-\Phi_m}\mathcal{D}_m e^{\Phi_m}-\mathcal{D}_m-[\mathcal{D}_m,\Phi_m])+(e^{-\Phi_m}\mathcal{R}_me^{\Phi_m}-\mathcal{R}_m)\\
 &+(e^{-\Phi_m} \omega\cdot\partial_{\varphi}\Phi_m e^{\Phi_m}- \omega \cdot\partial_{\varphi} \Phi_m)  \\
 &+\prod^1_{i=m}\Psi^{-1}_i[\mathcal{K}_{m}]\prod^{m}_{i=1}\Psi_i+\prod^1_{i=m}\Psi^{-1}_i[\sum^{n}_{i=m+1}\mathcal{K}_{i}]
 \prod^{m}_{i=1}\Psi_i,
\end{split}
\end{equation}
where $[\mathcal{D}_m,\Phi_m]£»=\mathcal{D}_m \Phi_m - \Phi_m \mathcal{D}_m$.

If we solve the  homological equation
\begin{equation}
\omega \cdot\partial_{\varphi} \Psi_m+[\mathcal{D}_m,\Phi_m]+\mathcal{R}_m=\mathrm{diag}[\mathcal{R}_m],
\end{equation}

 $\mathfrak{L}_{m+1}$ can be denote as
\begin{equation}
\mathfrak{L}_{m+1}=\Psi^{-1} \mathfrak{L}_m\Psi=\omega \cdot \partial _{\varphi}\textbf{1}+\mathcal{D}_{m+1}+\mathcal{R}_{m+1}+\mathcal{Q}_{m+1},
\end{equation}
where
\begin{equation}
\mathcal{D}_{m+1} =\mathcal{D}_{m}+\mathrm{diag}[\mathcal{R}_m],
\end{equation}
\begin{equation}\label{RM}
\mathcal{R}_{m+1}=\mathcal{R}_m^+ + \prod^1_{i=m}\Psi^{-1}_i[\mathcal{K}_{m+1}]\prod^{m}_{i=1}\Psi_i,
\end{equation}
\begin{eqnarray}\label{R+}
\mathcal{R}^+_{m}&=&(e^{-\Phi_m}\mathcal{D}_me^{\Phi_m}-\mathcal{D}_m-[\mathcal{D}_m,\Phi_m])+(e^{-\Phi_m}\mathcal{R}_me^{\Phi_m}-\mathcal{R}_m)
 \nonumber \\
 &&+(e^{-\Phi_m} \omega\cdot\partial_{\varphi}\Phi_m e^{\Phi_m}- \omega \cdot\partial_{\varphi} \Phi_m), \nonumber \\
\end{eqnarray}
\begin{equation}\label{QM}
\mathcal{Q}_{m+1}=\prod^1_{i=m}\Psi^{-1}_i[\sum^{n}_{i=m+2}\mathcal{K}_{i}]\prod^{m}_{i=1}\Psi_i.
\end{equation}

Before we give the iteration lemmas, we need the following iteration constants and domains.

\textbf{iteration parameters}:
Set $n \geq 1, m \geq 1$. Then, $(m,n)$ indicates the $m^{th}$ step KAM reduction for the linear operator $\mathfrak{L}(u_n)$.

$\bullet$ $\varepsilon_1 = \varepsilon$, $\varepsilon_m={\varepsilon^{(\frac{4}{3})}}^{m-1}$, which dominate the size of the perturbation $\mathcal{R}_m$ in KAM iteration, the modified function $v_m$ and $c_i(u_{m-1}+v_m)-c_i(u_{m-1})$.

$\bullet$ $s_n=(\frac{10}{11})^{n-1} s_1$, $s_1=s$, which dominate the width of the $u_n$.\\

$\bullet$ $s'_n=\frac{99}{101}s_n$, $s_1=s$, which dominate the width of the coefficients $c_i(u_n)$ and $\mathcal{R}_n$.\\

$\bullet$ $\sigma_m=\frac{1}{200}s_m $, which serve as a bridge from $s_m$ to $s_{m+1}$.\\

$\bullet$ $\alpha_{mn}=\frac{\alpha_0}{2^m}(1+\frac{1}{2^{n-m}})$, which dominate the measure of parameters removed in the $(m,n)-$ step KAM iteration.\\

$\bullet$ $C_{d,m}=\frac{1}{2}(1+\frac{1}{2^{m+1}})$, which $\frac{1}{2}<C_{d,m} \leq 1$.\\

$\bullet$ $C_{\lambda,m}=(2-\frac{1}{2^m})\varepsilon$, which $ \varepsilon \leq C_{\lambda,m} \leq 2\varepsilon$.\\

$\bullet$ $C_{\mu,m}=c(2-\frac{1}{2^m})\varepsilon$, which $ c \varepsilon \leq C_{\lambda,m} \leq 2c \varepsilon$.\\

Set these parameter
\begin{equation}
0 < \varepsilon \ll \alpha_0 \ll min\{ \frac{1}{100},s\}.
\end{equation}

\textbf{iteration lemmas}:

$\mathbf{[H1]_n}$
Assume that $q \geq p+\eta+2\tau_0+1$, $\partial_xf$ satisfies the assumption of Theorem \ref{main result}. Let $\tau > v+1$, then, for all $n\geq       1$,

$(\mathcal{P}1)_n:$ There exist a function $u_n: \lambda\subseteq \Lambda(u_n) \rightarrow u_n(\lambda)$, with $\|u_n\|^{Lip}_{s_n,p+\eta} \leq 2\varepsilon$, where $\Lambda(u_n)$ are cantor like subset of $\Pi =[\frac{1}{2},\frac{3}{2}]$.

The difference function $v_n=u_n-u_{n-1}$, where, for convenience, $v_0=0$, satisfy
\begin{equation}
 \quad \|v_i\|^{Lip}_{s_i,p+\eta}\leq \varepsilon_{i}.
\end{equation}

$(\mathcal{P}2)_n:$ $\|F(u_n)\|^{Lip}_{s_n,p+\eta-5} \leq \varepsilon^{\frac{6}{5}}_{n+1}.$

$\mathbf{[H2]_m}$

Assume we have get the following operator
\begin{equation}
\mathfrak{L}_m=\omega \cdot \partial _{\theta}\textbf{1}+\mathcal{D}_m+\mathcal{R}_m+\mathcal{Q}_m,
\end{equation}
after $(m-1)^{th}$ step KAM reduction for the linear operator $\mathfrak{L}(u_n)$, which satisfies the following hypothesis:

$(\mathcal{S}1)_m:$
\begin{equation}
\mathcal{D}_m=\emph{diag}_{k \in \mathbb{Z}\backslash \{0\}}\mathrm{i}\{d^m_k(\lambda) +\mu^m_k(\lambda,\theta)\,
\end{equation}
 \begin{equation}
d^{m}_k(u_n)=m(u_n)k^5+r_k^{m}k^3=m(u_n)k^5+r_k^{m-1}k^3+[\mathcal{R}_{m-1}(k,k)],
\end{equation}
defined for all $\lambda \in \Lambda_{m.n}$, where $\Lambda_{0.n}=\Lambda(u_n)$ (is the domain of $u_n$), and, for  $m\leq n$,
\begin{equation}
\Lambda_{m.n}:= \left \{  \lambda \in \Lambda_{m-1.n}:|\ell\cdot\lambda\overline{\omega}+d^{m}_i(u_n)-d^{m}_j(u_n)| \geq \frac{\alpha_{mn}|i^5-j^5|}{[\ell]^\tau}  \right \},
\end{equation}
with
\begin{equation}\label{4.13}
\cdots <d_{-1}(\lambda)<0< d_{1}(\lambda)<\cdots ,\quad |d^m_i(u_n)-d^m_j(u_n)|\geq (m(u_n)+C_{d,m})|i^5-j^5|.
\end{equation}

$d^m_i(u_n)$ is $Lipschitz-continuous$ in $\lambda$, and fulfills the estimate:
\begin{equation}
\sup_{\lambda_1,\lambda_2 \in \Lambda}\frac{d^m_i(\lambda_1)-d^m_i(\lambda_1)}{\lambda_1-\lambda_2} \leq m^{lip}(u_n)i^5+C_{\lambda,m}i^3.
\end{equation}

$\mu_k(\lambda,\theta)$ is  real analytic in $\theta$ and  $Lipschitz-continous$ in $\lambda$  of zero average. It also satisfies
\begin{equation}\label{4.18}
^{l_1}|\mu^m_k|_{s'_m,\tau} \leq C_{\mu,m}k^3, \quad \|\mu^m_k\|^{Lip}_{s'_m,s_0} \leq C_{\lambda,m}k^3.
\end{equation}

$\mathcal{Q}_m$ is defined in \eqref{QM}, and $\mathcal{Q}_{n+1}=0$.

$(\mathcal{S}2)_m:$ The reminder $\mathcal{R}_m$ is $Lipschitz-continous$ in $\lambda$, and satisfies the estimate:
\begin{equation}
|\mathcal{R}_m|^{\varsigma,Lip}_{s'_m,s_0} \leq C \varepsilon_m,  \quad \forall  m\leq n,
\end{equation}
\begin{equation}
|\mathcal{R}_{n+1}|^{\varsigma,Lip}_{s'_n-2\sigma_n,s_0} \leq  \varepsilon^{\frac{4}{3}}_n.
\end{equation}
$C$ is an constant only depend on $v$. Moreover, for $m\geq 1$, we have
\begin{equation}
\mathfrak{L}_{m+1}=\Psi^{-1}_m \mathfrak{L}_{m}\Psi_m, \quad \Psi_m=e^{\Phi_m},
\end{equation}
\begin{equation}\label{4.21}
|\Phi_m|^{\varrho,Lip}_{s'_m-2\sigma_m,s_0} \leq \varepsilon_m^{\frac{5}{6}}.
\end{equation}
\begin{rem} We only make $n-step$ KAM reduction for the linear operator $\mathfrak{L}(u_n)$, conjugating it to $\mathfrak{L}_{n+1}(u_n)$.
\end{rem}
\begin{rem}In the Hamiltonian case $\Phi_m$ is Hamiltonian, the transformation operator
$$\Psi_m=e^{\Phi_m}$$
is a $symplectic$ map. The corresponding operator $\mathfrak{L}_m,\mathcal{R}_m$ are Hamiltonian, then, $\mathcal{R}_{m}(k,k)$ can be guaranteed to be pure imaginary.
\end{rem}

\begin{coro}\label{4.20}
$\forall \lambda \in \Lambda_{n,m}$, the sequence
\begin{equation}
\Omega_m=\prod^m_{i=1}\Psi_i=\Psi_1\circ\Psi_2\circ \cdots \circ\Psi_m
\end{equation}
satisfies the following estimate
\begin{equation}\label{4.22}
|\Omega_m^{-1}-\mathrm{I}|^{\varrho,Lip}_{s'_m-2\sigma_m,s_0}+|\Omega_m-\mathrm{I}|^{\varrho,Lip}_{s'_m-2\sigma_m,s_0} \leq 1.
\end{equation}
\end{coro}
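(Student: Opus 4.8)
\emph{Proposal.} The identity $\Omega_m-\mathrm I=\sum_{j=1}^{m}\Omega_{j-1}(\Psi_j-\mathrm I)$ (with $\Omega_0=\mathrm I$), obtained by telescoping $\Omega_j=\Omega_{j-1}\Psi_j$, reduces the estimate to a sum of products of the single factors $\Psi_j-\mathrm I$ and $\Psi_1,\dots,\Psi_{j-1}$; the plan is to bound each such product by the single-step estimate \eqref{4.21} and the algebra property Lemma~\ref{ap1}, and then to sum up using the super-exponential decay of $\varepsilon_m$. First I would note that all the $\Phi_i$ with $i\le m$ can be estimated on the common strip of width $s'_m-2\sigma_m$: because $s'_i-2\sigma_i=\big(\tfrac{99}{101}-\tfrac1{100}\big)s_i$ is a fixed positive multiple of $s_i=(10/11)^{i-1}s_1$, it decreases in $i$, so $s'_m-2\sigma_m\le s'_i-2\sigma_i$ for $i\le m$; since $|\,\cdot\,|^{\varrho,Lip}_{s,s_0}$ is monotone increasing in $s$, \eqref{4.21} gives $|\Phi_i|^{\varrho,Lip}_{s'_m-2\sigma_m,s_0}\le\varepsilon_i^{5/6}$ for every $i\le m$.

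Next, from $\Psi_i^{\pm1}=e^{\pm\Phi_i}$ and the Taylor expansion $\Psi_i^{\pm1}-\mathrm I=\sum_{k\ge1}(\pm\Phi_i)^k/k!$, applying Lemma~\ref{ap1} term by term — exactly as in the proof of the lemma on $e^{\Psi}$ in Section~2, but now for the $\varrho$–norm — yields, as soon as $c(s_0)\varepsilon^{5/6}$ is small,
\[
|\Psi_i^{\pm1}-\mathrm I|^{\varrho,Lip}_{s'_m-2\sigma_m,s_0}\le C\varepsilon_i^{5/6},\qquad |\Psi_i^{\pm1}|^{\varrho,Lip}_{s'_m-2\sigma_m,s_0}\le 1+C\varepsilon_i^{5/6}.
\]
To prevent the multiplication constants from accumulating, for the duration of the proof I would work with the rescaled norm $\|\mathcal A\|_\ast:=c(s_0)\,|\mathcal A|^{\varrho,Lip}_{s'_m-2\sigma_m,s_0}$, which is submultiplicative by \eqref{4.01}. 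Then $\|\Psi_i^{\pm1}-\mathrm I\|_\ast\le C'\varepsilon_i^{5/6}$, $\|\Psi_i^{\pm1}\|_\ast\le 1+C'\varepsilon_i^{5/6}$, and the telescoping identity gives
\[
\|\Omega_m-\mathrm I\|_\ast\le\sum_{j=1}^{m}\|\Omega_{j-1}\|_\ast\,\|\Psi_j-\mathrm I\|_\ast\le\Big(\prod_{i=1}^{m}(1+C'\varepsilon_i^{5/6})\Big)\sum_{j=1}^{m}C'\varepsilon_j^{5/6}.
\]

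To conclude, since $\varepsilon_i=\varepsilon^{(4/3)^{i-1}}$ the exponents $(5/6)(4/3)^{i-1}$ grow geometrically, so $\sum_{i\ge1}\varepsilon_i^{5/6}\le C''\varepsilon^{5/6}$ and $\prod_{i\ge1}(1+C'\varepsilon_i^{5/6})\le \exp\!\big(C'\sum_i\varepsilon_i^{5/6}\big)\le 2$ once $\varepsilon$ is small, hence $\|\Omega_m-\mathrm I\|_\ast\le C\varepsilon^{5/6}$ uniformly in $m$. Running the same argument on $\Omega_m^{-1}=\Psi_m^{-1}\circ\cdots\circ\Psi_1^{-1}$ (now using the bounds for the $\Psi_i^{-1}$) gives $\|\Omega_m^{-1}-\mathrm I\|_\ast\le C\varepsilon^{5/6}$, and adding the two and undoing the rescaling yields the bound $\le 1$ in \eqref{4.22} for $\varepsilon$ small. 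The one point that needs care is exactly this last bookkeeping — that the product of $m$ algebra constants does not blow up — and it is settled by the rescaling together with the fact that, thanks to the super-exponential decay of $\varepsilon_m^{5/6}$, both $\sum_i\varepsilon_i^{5/6}$ and $\prod_i(1+C'\varepsilon_i^{5/6})$ remain $O(\varepsilon^{5/6})$ and $O(1)$ respectively; everything else is the routine Neumann-series estimation already carried out for a single step.
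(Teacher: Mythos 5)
Your proof is correct and fills in, with care, exactly the argument the paper's one-line proof gestures at: the paper simply cites the $\varrho$-algebra property \eqref{4.001} and the single-step bound \eqref{4.21}, and your telescoping of $\Omega_m-\mathrm I$, the monotonicity remark to put all $\Phi_i$ on the common strip $s'_m-2\sigma_m$, the rescaling to handle the algebra constant, and the super-exponential decay of $\varepsilon_m$ are precisely the details needed to make that one-liner rigorous. No gap; same approach.
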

\begin{proof}
\eqref{4.001} and \eqref{4.21} imply \eqref{4.22}.
\end{proof}

Now, we would prove the iteration Lemma $\mathbf{[H2]_m}$.
\begin{proof}For convenience, let $\mathfrak{L},\mathcal{R}$ refer to $\mathfrak{L}_m,\mathcal{R}_m$,   $\mathfrak{L}_+,\mathcal{R}_+$ refer to $\mathfrak{L}_{m+1},\mathcal{R}_{m+1}$.

\textbf{(Part1: Homological equation)}

See $\mathcal{D}_{ii}=\mathrm{i}(d_i+\mu_i(\varphi))$. Multiply $-\mathrm{i}$ on both side of the following equation:
\begin{equation}
\omega \cdot\partial_{\theta} \Phi+[\mathcal{D},\Phi]+\mathcal{R}=\mathrm{diag}[\mathcal{R}].
\end{equation}
Then, the homological equation is equivalent to

\textbf{(1)} $i=j$: $\Phi_{ii}=0$,

\textbf{(2)} $i\neq j$:
\begin{equation}\label{h solution}
-\mathrm{i}\omega\cdot \partial_\theta \Phi_{ij}+(d_i-d_j)\Phi_{ij} +(\mu_i(\theta)-\mu_j(\theta))\Phi_{ij}-\mathrm{i}\mathcal{R}_{ij}=0.
\end{equation}
See $d_{ij}=d_i-d_j,\mu_{ij}=\mu_i(\theta)-\mu_j(\theta), \chi _{ij}=|i^5-j^5|, \iota_{ij}=|i-j|$. By \eqref{4.18}, we have
\begin{equation}
^{l_1}|\mu_{ij}|_{s'_m,\tau} \leq C_{\mu m}(i^3+j^3).
\end{equation}
Now, applying Kuksin's lemma to \eqref{h solution}, we get
\begin{equation}
\|\Phi_{ij}\|_{s'_m-\sigma_m,s_0} \leq \frac{c e^{2(5/\sigma_m)^{1/\beta}}}{\alpha_{mn} \chi_{ij} \sigma_m^{2v+\tau+s_0+3}} \|\mathcal{R}_{ij}\|_{s'_m,s_0}.
\end{equation}
Since
\begin{equation}
\beta \geq \frac{1}{3}, \quad \sigma_m=\frac{1}{200}s'_m =\frac{1}{200}{(\frac{10}{11})}^{m-1}s,
\end{equation}
we get
\begin{equation}
e^{2(5/\sigma_m)^{1/\beta}}={c(s)^{(\frac{11}{10})}}^{3(m-1)} \leq {c(s)^{(\frac{4}{3})}}^{m-1},
\end{equation}
 $c$ is an constant only depending on $s$. Let
\begin{equation}
\varepsilon^{\frac{1}{20}}\leq c(s)^{-1},
\end{equation}
we have
\begin{equation}\label{5.021}
 \|\Phi_{ij}\|_{s'_m-\sigma_m,s_0} \lessdot  \frac{\varepsilon^{-1/20}_m}{\alpha_{mn} \sigma_m^{2n+\tau+s_0+3} \chi_{ij}}\|\mathcal{R}_{ij}\|_{s'_m,s_0}.
\end{equation}
Then, consider the infinite matrices of elements
\begin{equation}\label{5.022}
\frac{\mathcal{R}_{ij}j^2}{i^2(i^4+j^4)}, \quad \frac{\mathcal{R}_{ij}i}{j(i^4+j^4)}, \quad \frac{\mathcal{R}_{ij}}{(i^4+j^4)}.
\end{equation}
Combining \eqref{5.021}, \eqref{5.022} with the Definition \ref{4.00}, we have the estimates of  matrix $\Phi$,
\begin{equation}\label{5.03}
\widehat{|\Phi|}_{s'_m-\sigma_m,s_0} \lessdot \frac{\varepsilon^{-1/20}_m}{\alpha_{mn} \sigma_m^{2v+\tau+s_0+3}}|\mathcal{R}|^\varsigma_{s'_m,s_0},
\end{equation}
\begin{equation}\label{5.04}
\widetilde{|\Phi|}_{s'_m-\sigma_m,s_0} \lessdot \frac{\varepsilon^{-1/20}_m}{\alpha_{mn} \sigma_m^{2v+\tau+s_0+3}}|\mathcal{R}|^\varsigma_{s'_m,s_0},
\end{equation}
\begin{equation}\label{5.041}
|\Phi|_{s'_m-\sigma_m,s_0} \lessdot \frac{\varepsilon^{-1/20}_m}{\alpha_{mn} \sigma_m^{2v+\tau+s_0+3}}|\mathcal{R}|^\varsigma_{s'_m,s_0}.
\end{equation}

Now,we need a bound on the Lipschitz semi-norm of $\Phi$. Given a function $\Phi$ of $\omega=\lambda \bar{\omega}$, set $\Delta\Phi=\Phi(\lambda_1)-\Phi(\lambda_2)$. Then, applying the operator $\Delta$ to the equation \eqref{h solution}, we get
\begin{equation}\label{5.05}
\begin{split}
-\mathbf{i}(\lambda_1 \bar{\omega})\cdot \partial _{\theta} (\Delta \Phi_{ij}) &+d_{ij}  (\lambda_1)(\Delta\Phi_{ij})+\mu_{ij}(\theta,\lambda_1)(\Delta \Phi_{ij})\\
&=\mathbf{i}(\lambda_1 \bar{\omega}-\lambda_2 \bar{\omega}) \partial _{\theta}(\Phi_{ij}(\lambda_2))-(\Delta d_{ij}+\Delta\mu_{ij})\Phi(\lambda_2)-\mathbf{i} \Delta \mathcal{R}_{ij}, \\
\end{split}
\end{equation}
where
\begin{equation}
\begin{split}
|\Delta d_{ij}|&= |(d_i(\lambda_1)-d_j(\lambda_1))-(d_i(\lambda_2)-d_j(\lambda_2))|\\
&\leq|m(\lambda_1)-m(\lambda_2)||i^5-j^5|+|(r_i(\lambda_1)-r_i(\lambda_2))i^3|+|(r_j(\lambda_1)-r_j(\lambda_2))j^3| \\
&\leq c\varepsilon|i^5-j^5||\Delta\lambda|.
\end{split}
\end{equation}
Again applying Kuksin's lemma, we have
\begin{equation}
\|\Delta\Phi_{ij}\|_{s'_m-\frac{3}{2}\sigma_m} \lessdot \frac{\varepsilon^{-1/10}_m}{\alpha^2_{mn} \sigma_m^{4v+2\tau+7}\chi_{ij}}(|\Delta \lambda|\|\mathcal{R}_{ij}\|_{s'_m}+\|\Delta \mathcal{R}\|_{{s'_m}}),
\end{equation}
and
\begin{equation}\label{5.06}
\|\Phi_{ij}\|^{lip}_{s'_m-2\sigma_m,s_0} \lessdot \frac{\varepsilon^{-1/10}_m}{\alpha^2_{mn} \sigma_m^{4n+2\tau+7+s_0}\chi_{ij}}(\|\mathcal{R}_{ij}\|_{s'_m,s_0}+\| \mathcal{R}_{ij}\|^{lip}_{s'_m,s_0}).
\end{equation}
By \eqref{5.022},\eqref{5.03}, \eqref{5.041}, \eqref{5.041},\eqref{5.05} and \eqref{5.06}, we can obtain
\begin{equation}
\widehat{|\Phi|}^{Lip}_{s'_m-2\sigma_m,s_0} \leq \varepsilon_m^{-\frac{1}{7}} |\mathcal{R}|^{\varsigma,Lip}_{s'_m,s_0},
\end{equation}
\begin{equation}
\widetilde{|\Phi|}^{Lip}_{s'_m-2\sigma_m,s_0} \leq \varepsilon_m^{-\frac{1}{7}} |\mathcal{R}|^{\varsigma,Lip}_{s'_m,s_0},
\end{equation}
\begin{equation}
|\Phi|^{Lip}_{s'_m-2\sigma_m,s_0} \leq \varepsilon_m^{-\frac{1}{7}} |\mathcal{R}|^{\varsigma,Lip}_{s'_m,s_0}.
\end{equation}
Finally, we get
\begin{equation}
|\Phi|^{\varrho,Lip}_{s'_m-2\sigma_m,s_0} \leq  \varepsilon^{\frac{5}{6}}_m.
\end{equation}

\textbf{(Part2: New diagonal part)}

We  have already get the new linear operator
\begin{equation}
\mathfrak{L}_+=\omega \cdot \partial_\theta \mathbf{1}+\mathcal{D}_++\mathcal{R}_++\mathcal{Q}_+,
\end{equation}
where
\begin{equation}
\mathcal{D}_+:=\emph{diag}_{i \in \mathbb{N}}\mathbf{i}\{d^+_i +\mu^+_i(\varphi)\},
\end{equation}
\begin{equation}
d^+_i=d_i+\overline{R_{ii}},\quad \mu^+_i(\varphi)=\mu_i(\varphi)+(R_{ii}-\overline{R_{ii}}),\quad \overline{R_{ii}}=\int_{\mathbb{T}^v}R_{ii}(\theta)d\theta.
\end{equation}

Considering $P_1$, by  Lemma \ref{estimation0}, we see
\begin{equation}
\begin{split}
^{l_1}|R_{ii}-\overline{R_{ii}}|_{s'_m-\sigma_m,\tau} &\leq \|R_{ii}\|_{s'_m}\cdot ( \sum_{k\in \mathbb{Z}^v \backslash\{0\}} e^{-2k\sigma_m}|k|^{2\tau})^{\frac{1}{2}}  \\
&\leq (\frac{2\tau}{e})^{\tau} \sigma_m^{-(\frac{v}{2}+\tau)} {(1+e)}^{\frac{v}{2}} \|R_{ii}\|_{s'_m} \\
&\leq c\frac{1}{s_m}^{\frac{v}{2}+\tau}\varepsilon_m i^3 \\
&\leq c_1\frac{\varepsilon}{2^{m+1}}i^3,
\end{split}
\end{equation}
and
\begin{equation}
\begin{split}
^{l_1}|\mu^+_i(\varphi)|_{s'_m-\sigma_m,\tau} &\leq  ^{l_1}|\mu_i(\varphi)|_{s'_m,\tau}+^{l_1}|R_{ii}-\overline{R_{ii}}|_{s'_m-\sigma_m,\tau}  \\
&\leq c_1(2-\frac{1}{2^m})\varepsilon i^3+c_1\frac{\varepsilon}{2^{m+1}}i^3=C_{\mu.m}i^3.\\
\end{split}
\end{equation}

\textbf{(Part3: Estimates of New perturbed terms)}

Consider the new perturbed terms $ \mathcal{R}_+:=\mathcal{H}_1+ \mathcal{H}_2$, where
\begin{equation}
\begin{split}
\mathcal{H}_1 &= (e^{-\Phi}\mathcal{R}e^{\Phi}-\mathcal{R})+(e^{-\Phi}\mathcal{D} e^{\Phi}-\mathcal{D}-[\mathcal{D},\Phi])+(e^{-\Phi} \omega\cdot\partial_{\theta}\Phi e^{\Phi}- \omega \cdot\partial_{\theta} \Phi)  \\
&= \mathcal{P}_1+\mathcal{P}_2+\mathcal{P}_3,  \\
\end{split}
\end{equation}
\begin{equation}
\mathcal{H}_2=\prod^1_{i=m}\Psi^{-1}_i[\mathcal{K}_{m+1}]\prod^{m}_{i=1}\Psi_i.
\end{equation}
Considering $\mathcal{P}_1$, by \cite[Lemma 5.3]{Bambusi1}, we get
\begin{equation}\label{p1}
\begin{split}
|\mathcal{P}_1|^{\varsigma,Lip}_{s'_m-2\sigma_m,s_0} &\lessdot|\mathcal{R}|^{\varsigma,Lip}_{s'_m,s_0} |\Phi|^{\varrho,Lip}_{s'_m-2\sigma_m,s_0} \\
&\leq \varepsilon^{\frac{9}{5}}_m.
\end{split}
\end{equation}
Considering $\mathcal{P}_2$, from the homological equation $[\mathcal{D},\Phi]=-\omega \cdot\partial_{\theta}\Phi-(\mathcal{R}-\mathrm{diag}[\mathcal{R}])$, we have
\begin{equation}
\begin{split}
|[\mathcal{D},\Phi]|^{\varsigma,Lip}_{s'_m-3\sigma_m,s_0} &\leq \frac{c(v)}{\sigma_m}|\Phi|^{\varsigma,Lip}_{s'_m-2\sigma_m,s_0}+|\mathcal{R}|^{\varsigma,Lip}_{s'_m-3\sigma_m,s_0}\\
&\leq \varepsilon^{\frac{2}{3}}_m.
\end{split}
\end{equation}
Moreover,
\begin{equation}
\begin{split}
|[[\mathcal{D},\Phi],\Phi]|^{\varsigma,Lip}_{s'_m-3\sigma_m,s_0} &\lessdot|[\mathcal{D},\Phi]|^{\varsigma,Lip}_{s'_m-3\sigma_m,s_0} |\Phi|^{\varrho,Lip}_{s'_m-3\sigma_m,s_0} \\
&\leq   \varepsilon^{\frac{8}{5}}_m.
\end{split}
\end{equation}
By the following formula
\begin{equation}
e^{-\Phi}\mathcal{D} e^{\Phi}-\mathcal{D}-[\mathcal{D},\Phi]=\int_0^1\int_0^s e^{-s_1\Phi}[[\mathcal{D},\Phi],\Phi]e^{s_1\Phi}ds_1ds,
\end{equation}
we get
\begin{equation}\label{p2}
|\mathcal{P}_2|^{\varsigma,Lip}_{s'_m-3\sigma_m,s_0} \leq \frac{1}{3} \varepsilon^{\frac{4}{3}}_m.
\end{equation}
Considering $\mathcal{P}_3$, by\cite[Lemma4.3]{Bambusi1}, we have
\begin{equation}\label{p3}
\begin{split}
|\mathcal{P}_3|^{\varsigma,Lip}_{s'_m-3\sigma_m,s_0} &\lessdot
|\Phi|^{\varrho,Lip}_{s'_m-3\sigma_m,s_0} |\omega \cdot\partial_{\varphi}\Phi|^{\varsigma,Lip}_{s'_m-3\sigma_m,s_0} \\
&\leq \frac{c}{\sigma_m}(|\Phi|^{\varrho,Lip}_{s'_m-2\sigma_m,s_0})^2 \\
&\leq \frac{c}{\sigma_m} \varepsilon^{\frac{10}{6}}_m.
\end{split}
\end{equation}
The bounds \eqref{p1}, \eqref{p2} and \eqref{p3} imply
\begin{equation}
|\mathcal{H}_1|^{\varsigma,Lip}_{s'_m-2\sigma_m,s_0} \leq  \varepsilon^{\frac{4}{3}}_m.
\end{equation}
As we see in Corollary \ref{4.20},
$$\max\{|\Omega^{-1}_m|^{\varrho,Lip}_{s'_m-2\sigma_m,s_0},|\Omega_m|^{\varrho,Lip}_{s'_m-2\sigma_m,s_0}\} \leq 2,$$
then,

\begin{equation}
\begin{split}
|\mathcal{H}_2|^{\varsigma,Lip}_{s'_m-2\sigma_m,s_0} &\lessdot  |\Omega^{-1}_m|^{\varrho,Lip}_{s'_m-2\sigma_m,s_0}|\Omega_m|^{\varrho,Lip}_{s'_m-2\sigma_m}|\mathcal{K}|^{\varsigma,Lip}_{s'_m,s_0}
 \\
&\leq C\varepsilon^{\frac{4}{3}}_m.
\end{split}
\end{equation}
Finally,
\begin{equation}
|\mathcal{R}^+|^{\varsigma,Lip}_{s'_m-2\sigma_m,s_0} \leq  (C+1) \varepsilon^{\frac{4}{3}}_m.
\end{equation}
\end{proof}

\subsection{The $\varepsilon_{n+1}$ approximate solution of linear equation $F(u_{n})+\mathcal{L}(u_n)v=0$}\

After $n$-step iteration, the linear operator $\mathfrak{L}(u_n)$ has been transformed into
\begin{equation}
\mathfrak{L}_{n+1}=\omega \cdot \partial_\theta\mathbf{1}+\mathcal{D}_{n+1}+\mathcal{R}_{n+1},
\end{equation}
where $\mathcal{R}_{n+1}$ is relatively small linear operator with $\|\mathcal{R}_{n+1}\|^{Lip}_{s'_n-2\sigma_n} \leq \varepsilon_{n+1}$. Now, the main concern is the  invertibility of $\mathcal{J}_{n+1}=\omega \partial_\varphi. \textbf{1}+\mathcal{D}_{n+1}$.
\begin{lemma} \label{6.001}
For all $g \in \mathrm{H}^{\mathfrak{s}}_{s'_n-2\sigma_n,s_0}$  with zero space average and $\lambda \in \Lambda_{n,n} \cap \Gamma(u_n) $,
\begin{equation}
\Gamma(u_n):=\left \{ \lambda :|\lambda\bar{\omega} \cdot \ell+d^{n+1}_k(u_n)| \geq {\alpha_{nn}} \frac{k^5}{[\ell]^\tau},  \forall \ell \in \mathbb{Z}^n , k \in \mathbb{Z} \backslash \{0\} \right \},
\end{equation}
the equation $\mathcal{J}_{n+1} v=g$  has a unique solution $v$ with zero space average and satisfies
\begin{equation}
\|v\|^{\mathfrak{s},Lip}_{s'_n-4\sigma_n,s_0} \leq \varepsilon^{-\frac{1}{6}}_{n+1}\|g\|^{\mathfrak{s},Lip}_{s'_n-2\sigma_n,s_0}.
\end{equation}
\end{lemma}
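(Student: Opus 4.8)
The plan is to diagonalize the equation in the space variable $x$ and to reduce it, mode by mode, to the scalar small-denominator equation with large variable coefficient that is governed by Kuksin's lemma.

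First I would expand $v=\sum_{k\in\mathbb{Z}\setminus\{0\}}\hat v_k(\theta)e^{\mathbf{i}kx}$ and $g=\sum_{k\in\mathbb{Z}\setminus\{0\}}\hat g_k(\theta)e^{\mathbf{i}kx}$; the sums run over $k\neq0$ because $g$ has zero space average, and the solution we build will automatically have zero space average as well. Since $\mathcal{D}_{n+1}=\mathrm{diag}_{k}\,\mathbf{i}\{d^{n+1}_k+\mu^{n+1}_k(\theta)\}$ is diagonal in $k$, the equation $\mathcal{J}_{n+1}v=g$ decouples, for $k\neq0$, into
\begin{equation*}
-\mathbf{i}\,\omega\cdot\partial_\theta\hat v_k+d^{n+1}_k\,\hat v_k+\mu^{n+1}_k(\theta)\,\hat v_k=-\mathbf{i}\,\hat g_k,
\end{equation*}
after multiplying through by $-\mathbf{i}$. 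This is exactly the equation treated by Kuksin's lemma, with $d=d^{n+1}_k$, $\mu=\mu^{n+1}_k$, $p=-\mathbf{i}\hat g_k$.

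Next I would check Kuksin's three assumptions uniformly in $k$. Assumption A: the first Diophantine inequality follows from $\omega=\lambda\bar\omega$, $\lambda\in[\tfrac12,\tfrac32]$ and \eqref{main3.0}, giving $|\omega\cdot\ell|\geq\tfrac{\alpha_0}{2}[\ell]^{-\tau_0}$; the second inequality is precisely the defining condition of $\Gamma(u_n)$, with $\gamma\approx\alpha_{nn}k^5/\alpha_0$, while $|d^{n+1}_k|\geq(m(u_n)+C_{d,n+1})|k|^5$ from $(\mathcal{S}1)_{n+1}$ together with $m(u_n)\approx1$ exceeds the required lower bound $\alpha_0\gamma$. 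Assumption B: by \eqref{4.18} each $\mu^{n+1}_k$ has zero $\theta$-average and $^{l_1}|\mu^{n+1}_k|_{s'_{n+1},\tau}\leq C_{\mu,n+1}k^3\leq 2c\varepsilon k^3$, so we take $\tilde\gamma=k^3$, absorbing the $\varepsilon$ into Kuksin's constant $C$. Assumption C: since $|d^{n+1}_k|\approx|k|^5=(|k|^3)^{5/3}=\tilde\gamma^{1+\beta}$ with $\beta=\tfrac23>0$, the requirement $d\geq\tilde\gamma^{1+\beta}$ holds. Kuksin's lemma then furnishes, for all $k\neq0$ and $\lambda\in\Lambda_{n,n}\cap\Gamma(u_n)$, a unique $\hat v_k$ with a width loss $\sigma=\sigma_n$ and
\begin{equation*}
\|\hat v_k\|_{s'_n-3\sigma_n,s_0}\leq\frac{c\,e^{2(5/\sigma_n)^{1/\beta}}}{\alpha_0\,k^3\,\sigma_n^{\,2v+\tau+s_0+3}}\,\|\hat g_k\|_{s'_n-2\sigma_n,s_0}.
\end{equation*}

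Finally I would reassemble the modes and pass to Lipschitz dependence. In the Fourier-in-$x$ representation $(\|u\|^{\mathfrak{s}}_{s,s_0})^2=\sum_{k}e^{2|k|s}[k]^{2s_0}\|\hat u_k\|^2_{s,s_0}$, and since the per-mode constant above carries the harmless gain $k^{-3}\leq1$, summing over $k$ gives $\|v\|^{\mathfrak{s}}_{s'_n-3\sigma_n,s_0}\leq C_n\|g\|^{\mathfrak{s}}_{s'_n-2\sigma_n,s_0}$ with $C_n=c\,\alpha_0^{-1}e^{2(5/\sigma_n)^{1/\beta}}\sigma_n^{-(2v+\tau+s_0+3)}$. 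The Lipschitz bound is obtained exactly as in Part 1 of the proof of $\mathbf{[H2]_m}$: applying $\Delta(\cdot)=(\cdot)(\lambda_1)-(\cdot)(\lambda_2)$ to the scalar equation introduces the extra forcing $\mathbf{i}(\Delta\lambda)\,\bar\omega\cdot\partial_\theta\hat v_k(\lambda_2)-(\Delta d^{n+1}_k+\Delta\mu^{n+1}_k)\hat v_k(\lambda_2)$, whose size is controlled by the Lipschitz estimates in $(\mathcal{S}1)_{n+1}$ ($|\Delta d^{n+1}_k|\lessdot\varepsilon|k|^5|\Delta\lambda|$, $\|\Delta\mu^{n+1}_k\|_{s'_{n+1},s_0}\lessdot\varepsilon k^3|\Delta\lambda|$), and a second application of Kuksin's lemma at a further loss $\sigma_n$ (so the final strip has width $s'_n-4\sigma_n$) yields the Lipschitz semi-norm at the price of roughly squaring the prefactor. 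It then remains to bound the resulting constant, of order $C_n^2$, by $\varepsilon_{n+1}^{-1/6}$: with $\sigma_n=\tfrac{1}{200}(\tfrac{10}{11})^{n-1}s$ and $\beta=\tfrac23$ one has $e^{2(5/\sigma_n)^{1/\beta}}=c(s)^{(11/10)^{3(n-1)/2}}\leq c(s)^{(4/3)^{n-1}}$ because $(11/10)^{3/2}<4/3$, and since $\varepsilon_{n+1}=\varepsilon^{(4/3)^n}$ this factor, together with the merely (single-)exponential-in-$n$ factor $\sigma_n^{-(2v+\tau+s_0+3)}$ and the constant $\alpha_0^{-1}$, is dominated by $\varepsilon_{n+1}^{-1/6}$ once $\varepsilon$ is small enough in terms of $s,\alpha_0,v,\tau$. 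This last balancing is the crux of the argument and the only step that genuinely uses the fine tuning of the exponents in the iteration scheme; once the equation has been diagonalized everything else is routine.
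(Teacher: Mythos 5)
Your proposal follows the same route as the paper: diagonalize $\mathcal{J}_{n+1}$ in the spatial Fourier modes, apply Kuksin's lemma mode-by-mode with $\tilde\gamma=k^3$ and $\beta=2/3$ (so $d^{n+1}_k\approx k^5=\tilde\gamma^{1+\beta}$), reassemble via the $\mathfrak{s}$-norm, and obtain the Lipschitz estimate by applying $\Delta$ and invoking Kuksin once more at the cost of a further $\sigma_n$ strip loss, then absorbing the $e^{2(5/\sigma_n)^{3/2}}\sigma_n^{-(\ldots)}\alpha^{-2}$ prefactor into $\varepsilon_{n+1}^{-1/6}$ by the exponent comparison $(11/10)^{3/2}<4/3$. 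This matches the paper's proof in every essential respect.
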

\begin{proof}
Since $\mathcal{J}$ is a diagonal linear operator and $g \in \mathrm{H}^1_0$, the equation $\mathcal{J} v=g$ can be transformed to
\begin{equation}\label{6.002}
-\mathbf{i}\omega \cdot \partial_{\theta}v_i+d^{n+1}_iv_i+\mu^{n+1}_i(\theta)v_i(\theta)=-\mathbf{i}g_i(\theta), \quad i\in \mathbb{Z}\backslash {0},
\end{equation}
 where $d^{n+1}_i\geq \frac{1}{2} i^5$, $^{l_1}|\mu^{n+1}_i(\varphi)|_{s_n-2\sigma_n,\tau} \leq 2\varepsilon i^3$.

 Applying Kuksin's lemma to \eqref{6.002}, we get
\begin{equation}
\|v_i\|_{s'_n-3\sigma_n,s_0} \leq \frac{c e^{2(5/\sigma_n)^{1/\beta}}}{\alpha_{nn} \chi_i\sigma_n^{\tau+2n+s_0+3}} \|g_i\|_{s'_n-2\sigma_n,s_0}.
\end{equation}
Since
\begin{equation}
\beta \geq \frac{2}{3}, \quad \sigma_n=\frac{1}{200}s_n =\frac{1}{200}{(\frac{10}{11})}^{n-1}s,
\end{equation}
we have
\begin{equation}
e^{2(5/\sigma_n)^{1/\beta}}\leq {c(s)^{(\frac{11}{10})}}^{\frac{3(n-1)}{2}} < {c(s)^{(\frac{4}{3})}}^n,
\end{equation}
$c$ is an constant only depending on $s$. Let $\varepsilon^{\frac{1}{20}}\leq c(s)^{-1}$,

we have
\begin{equation}\label{6.003}
\|v_i\|_{s'_n-3\sigma_n,s_0} \leq \frac{c \varepsilon^{-\frac{1}{20}}_{n+1}}{\alpha_n \sigma_n^{\tau+2v+s_0+3}} \|g_i\|_{s'_n-2\sigma_n,s_0}.
\end{equation}
Applying the operator $\Delta v=v(\lambda_1)-v(\lambda_2)$ to \eqref{6.002}, one gets
\begin{eqnarray}
-\mathbf{i} \lambda_1 \overline{\omega} \dots \partial_{\theta}\Delta v_i&+& d^{n+1}_i(\lambda_1)\Delta v_i+\mu^{n+1}_i(\theta)(\lambda_1) \Delta v_i(\theta)\nonumber\\
&=&-\mathbf{i}\Delta g_i(\varphi)-(\Delta d^{n+1}_i+ \Delta \mu^{n+1}_i) v_i(\lambda_2)+\mathbf{i}\Delta\lambda \cdot \overline{\omega} \partial_{\theta} v_i(\lambda_2)\nonumber\\
\end{eqnarray}
Again applying Kuksin's lemma, we have
\begin{equation}\label{6.004}
\|\Delta v_i\|_{s'_n-4\sigma_n,s_0} \leq \frac{\varepsilon^{-1/10}_{n+1}}{\alpha^2_{nn} \sigma_n^{4n+2\tau+2s_0+7}\chi_{i}} (\|\Delta g_i\|_{s'_n-2\sigma_n}+|\Delta \lambda| \|g_i\|_{s'_n-2\sigma_n,s_0})
\end{equation}
Finally,  \eqref{6.003} and \eqref{6.004} imply
\begin{equation}
\|v\|^{\mathfrak{s},Lip}_{s'_n-4\sigma_n,s_0} \leq \varepsilon^{-\frac{1}{6}}_{n+1}\|g\|^{\mathfrak{s},Lip}_{s'_n-2\sigma_n,s_0}
\end{equation}
\end{proof}

Now,we have conjugated the linearized operator $\mathcal{L}$ to $$\mathfrak{L}_{n+1}=\mathcal{J}_{n+1}+\mathcal{R}_{n+1}=\mathcal{W}^{-1}_1 \mathcal{L } \mathcal{W}_2,$$
where
$$\mathcal{W}_2=\mathcal{A} \mathcal{B} \Omega_n,  \quad   \quad  \mathcal{W}^{-1}_1=\Omega^{-1}_n \frac{1}{\xi(\theta)} \mathcal{B}^{-1} \mathcal{A}^{-1}. $$
Also, we can see  $\mathcal{W}^{\pm1}_i$ are linear maps of the subspace of $\mathrm{H}^{1}_{0}$.

Now, we can prove the first part  of the iteration lemma $\mathbf{[H1]_n}$.
\begin{lemma}For all $\lambda \in \Lambda_{n,n} \cap \Gamma(u_n)$, the linear operator $\mathcal{W}_1\mathcal{J}\mathcal{W}^{-1}_2$ admits a right inverse of $\mathrm{H}^1_0$. More precisely. for all Lipschitz family $F(\lambda) \in \mathrm{H}^1_0$, the function
\begin{equation}
v:=(\mathcal{W}_1\mathcal{J}\mathcal{W}^{-1}_2)^{-1}F:=\mathcal{W}_2 \mathcal{J}^{-1}\mathcal{W}^{-1}_1F
\end{equation}
is a solution of $\mathcal{W}_1\mathcal{J}\mathcal{W}^{-1}_2 v=F$. Morover
\begin{equation}
\|v_{n+1}\|^{Lip}_{s_{n+1},p+\eta} \leq \varepsilon^{-\frac{1}{5}}_{n+1} \|F(u_n)\|^{Lip} _{s_n,p+\eta-5}.
\end{equation}
\end{lemma}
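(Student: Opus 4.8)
The plan is to treat the two assertions of the lemma separately: first that $\mathcal{W}_2\mathcal{J}^{-1}\mathcal{W}_1^{-1}$ genuinely furnishes the advertised solution, and then the quantitative bound on $v_{n+1}$. For the first assertion, observe that on $\Lambda_{n,n}\cap\Gamma(u_n)$ every factor is invertible on the zero--space--average subspace of $\mathrm{H}^1_0$: the diagonal operator $\mathcal{J}=\mathcal{J}_{n+1}=\omega\cdot\partial_\theta\mathbf{1}+\mathcal{D}_{n+1}$ by Lemma~\ref{6.001}; the changes of variable $\mathcal{A}$ and $\mathcal{B}$ by their very construction and the symplectic property recalled in \eqref{3.10} and \eqref{a}--\eqref{a1}; the multiplication operator $\xi$ because $\xi-1$ is small by \eqref{3.297}--\eqref{3.298}; and $\Omega_n=\prod_{i=1}^{n}\Psi_i$ because $|\Omega_n^{\pm1}-\mathrm{I}|^{\varrho,Lip}\le1$ by Corollary~\ref{4.20}. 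Hence $\mathcal{W}_1=\mathcal{A}\mathcal{B}\xi\,\Omega_n$ and $\mathcal{W}_2=\mathcal{A}\mathcal{B}\,\Omega_n$ are invertible with $\mathcal{W}_1^{-1}=\Omega_n^{-1}\tfrac1\xi\mathcal{B}^{-1}\mathcal{A}^{-1}$ as displayed above. Since $F(u_n)$ has zero space average (the whole right--hand side of \eqref{main3} carries an overall $\partial_x$), so does $g:=\mathcal{W}_1^{-1}F(u_n)$, and Lemma~\ref{6.001} applies to $g$; then $\mathcal{W}_1\mathcal{J}\mathcal{W}_2^{-1}\bigl(\mathcal{W}_2\mathcal{J}^{-1}\mathcal{W}_1^{-1}F\bigr)=F$ by straight cancellation, which is the right--inverse claim. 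This is the relevant operator because $\mathcal{L}=\mathcal{W}_1\mathfrak{L}_{n+1}\mathcal{W}_2^{-1}=\mathcal{W}_1(\mathcal{J}+\mathcal{R}_{n+1})\mathcal{W}_2^{-1}$ with $\mathcal{R}_{n+1}$ of size $\varepsilon_{n+1}$, so $v_{n+1}$ is an $\varepsilon_{n+1}$--approximate solution of $\mathcal{L}v=-F(u_n)$.

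For the estimate I would write $v_{n+1}=\mathcal{W}_2\bigl(\mathcal{J}^{-1}\bigl(\mathcal{W}_1^{-1}F(u_n)\bigr)\bigr)$ and bound the three factors in order. For $\mathcal{W}_1^{-1}=\Omega_n^{-1}\tfrac1\xi\mathcal{B}^{-1}\mathcal{A}^{-1}$: apply the bound \eqref{3.32} of Lemma~\ref{3.301} to dispose of $\mathcal{A}^{-1}$ and $\mathcal{B}^{-1}$, the smallness of $\xi-1$ to dispose of $\tfrac1\xi$, and Corollary~\ref{4.20} together with the action estimates \eqref{4.03}--\eqref{4.04} to dispose of $\Omega_n^{-1}$; this bounds $\|\mathcal{W}_1^{-1}F(u_n)\|^{\mathfrak{s},Lip}$, at a slightly shrunk analyticity width and at a Sobolev index arranged so as to land correctly two factors later, by a fixed constant times $\|F(u_n)\|^{Lip}_{s_n,p+\eta-5}$. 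For $\mathcal{J}^{-1}=\mathcal{J}_{n+1}^{-1}$: apply Lemma~\ref{6.001}, run if necessary at the Sobolev index required rather than at $s_0$ (Kuksin's lemma is index--independent), which costs a width loss $\le2\sigma_n$, a factor $\varepsilon_{n+1}^{-1/6}$ and a power of $(\alpha_{nn}\sigma_n)^{-1}$, and which supplies the gain of five spatial derivatives coming from $|\lambda\bar\omega\cdot\ell+d^{n+1}_k|\ge\alpha_{nn}k^5/[\ell]^\tau$ on $\Gamma(u_n)$. For $\mathcal{W}_2=\mathcal{A}\mathcal{B}\,\Omega_n$: apply \eqref{3.31} and Corollary~\ref{4.20}. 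Composing the three bounds, converting between $\|\cdot\|^{\mathfrak{s}}_{s,p}$ and $\|\cdot\|_{s,p}$ by Lemma~\ref{2.21}, checking that the cumulative width contractions (the factors $k_1,k_2$ and $\le4\sigma_n=\tfrac1{50}s_n$) starting from $s'_n=\tfrac{99}{101}s_n$ still leave width $\ge s_{n+1}=\tfrac{10}{11}s_n$, and absorbing all accumulated constants (powers of $\sigma_n^{-1}\sim(\tfrac{11}{10})^{n-1}$, of $\alpha_{nn}^{-1}$, the $\lessdot$--constants, and $\varepsilon_{n+1}^{-1/6}$) into $\varepsilon_{n+1}^{-1/5}$ via $\varepsilon_{n+1}=\varepsilon^{(4/3)^{n}}$ and the smallness of $\varepsilon$, yields the claimed bound.

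The main obstacle is precisely this joint bookkeeping of Sobolev regularity and analyticity width through the composition $\mathcal{W}_2\mathcal{J}^{-1}\mathcal{W}_1^{-1}$: one has to check that the five--derivative gain produced by inverting the fifth--order diagonal part $\mathcal{J}_{n+1}$ on the resonance--free set $\Gamma(u_n)$, combined with the derivative losses of the regularizing maps $\mathcal{W}_1^{-1},\mathcal{W}_2$ and of $\Omega_n^{\pm1}$ and with the conversions between the mixed analytic norms, lands exactly at index $p+\eta$ starting from $F(u_n)$ at index $p+\eta-5$ (this is what forces the shape of $\eta$ in \eqref{3.302}), and that the geometric sequence of width contractions $s_n\mapsto s'_n\mapsto\cdots$ is consistent with the choices $s_{n+1}=\tfrac{10}{11}s_n$, $\sigma_n=\tfrac1{200}s_n$. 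Everything else is a routine assembly of the estimates already established in Sections~3 and~4.
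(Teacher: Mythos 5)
Your proposal takes essentially the same route as the paper: factor $v_{n+1}=\mathcal{W}_2\,\mathcal{J}^{-1}\mathcal{W}_1^{-1}F(u_n)$ and chain the bounds of Lemma~\ref{3.301}, Lemma~\ref{6.001}, Corollary~\ref{4.20}, Lemma~\ref{2.21} and Lemma~\ref{estimation2} through the three factors, absorbing the width contractions, the $\sigma_n^{-k}$ powers and the $\varepsilon_{n+1}^{-1/6}$ loss into the final $\varepsilon_{n+1}^{-1/5}$; the preliminary right-inverse discussion you give (zero spatial average of $F(u_n)$ and its preservation by $\mathcal{W}_1^{-1}$, invertibility of each regularizing factor on $\mathrm{H}^1_0$) is a correct addition that the paper's proof leaves implicit. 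The one small cosmetic difference is where the five spatial derivatives are recovered: you propose to extract them inside Kuksin's lemma by running Lemma~\ref{6.001} at a higher Sobolev index, whereas the paper keeps Lemma~\ref{6.001} at index $s_0$ in the $\mathfrak{s}$-norm and harvests the regularity afterwards via the analytic-smoothing bound of Lemma~\ref{estimation2} applied to $\Omega_n\mathcal{J}^{-1}\mathcal{W}_1^{-1}F(u_n)$, trading a width loss $\sigma_n$ for the factor $\sigma_n^{-(2\eta+s_0+5)}$; the net index and width accounting is the same either way.
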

\begin{proof}

We have already get
\begin{equation}
 v_{n+1} =\mathcal{W}_2 \mathcal{J}^{-1}\mathcal{W}^{-1}_1 F(u_n).
\end{equation}
Applying Lemma \ref{3.301} and Corollary \ref{4.20}, we see
\begin{equation}\label{6.05}
\begin{split}
\|\mathcal{W}^{-1}_1 F(u_n)\|^{\mathfrak{s},Lip}_{s'_n-2\sigma_n,s_0}&=\|\Omega^{-1}_n \mathcal{U}^{-1}_2 F(u_n)\|^{\mathfrak{s},Lip}_{s'_n-2\sigma_n,s_0} \\
& \lessdot\|\mathcal{U}^{-1}_2F(u_n)\|^{\mathfrak{s},Lip}_{s'_n,s_0} \\
&\lessdot\|\mathcal{U}^{-1}_2F(u_n)\|^{Lip}_{s'_n,2s_0}\\
& \lessdot\|F(u_n)\|^{s,Lip}_{s_n ,p+\eta-5}
\end{split}
\end{equation}
Using  Lemma \ref{6.001},  one gets
\begin{equation}\label{6.051}
\begin{split}
\|\mathcal{J}^{-1}\mathcal{W}^{-1}_1 F(u_n)\|^{\mathfrak{s},Lip}_{s'_n-4\sigma_n,s_0} &\lessdot \varepsilon^{-\frac{1}{6}}_{n+1}\|\mathcal{W}^{-1}_1 F(u_n)\|^{\mathfrak{s},Lip}_{s'_n-2\sigma_n,s_0}\\
 &\lessdot \varepsilon^{-\frac{1}{6}}_{n+1} \|F(u_n)\|^{\mathfrak{s},Lip} _{s_n,p+\eta-5}.
\end{split}
\end{equation}
With reference to Corollary \ref{4.20} and Lemma \ref{estimation2}, we see
\begin{equation}
\begin{split}
\|\Omega_n \mathcal{J}^{-1}\mathcal{W}^{-1}_1 F(u_n)\|^{Lip}_{s'_n-5\sigma_n,p+2\eta} &\lessdot \frac{1}{\sigma^{2\eta+s_0+5}_n}\|\Omega_n \mathcal{J}^{-1}\mathcal{W}^{-1}_1 F(u_n)\|^{Lip}_{s'_n-4\sigma_n,s_0}\\
&\lessdot \frac{1}{\sigma^{2\eta+s_0+5}_n}\|\Omega_n \mathcal{J}^{-1}\mathcal{W}^{-1}_1 F(u_n)\|^{\mathfrak{s},Lip}_{s'_n-4\sigma_n,s_0}\\
& \lessdot \frac{1}{\sigma^{2\eta+3s_0+5}_n}\|\mathcal{J}^{-1}\mathcal{W}^{-1}_1 F(u_n)\|^{\mathfrak{s},Lip}_{s'_n-4\sigma_n,s_0}\\
& \lessdot
\frac{\varepsilon^{-\frac{1}{6}}_{n+1}}{ \sigma_n^{p+2\eta+s_0}} \|F(u_n)\|^{Lip} _{s_n,p+\eta-5}.
\end{split}
\end{equation}
Using Lemma \ref{3.301} again, we get
\begin{equation}
\begin{split}
\|\mathcal{W}_2 \mathcal{J}^{-1}\mathcal{W}^{-1}_1 F(u_n)\|^{Lip}_{k_1(s'_n-5\sigma_n),p+\eta}&\lessdot \|\Omega_n \mathcal{J}^{-1}\mathcal{W}^{-1}_1 F(u_n)\|^{Lip}_{s'_n-5\sigma_n,p+2\eta}\\
 &\lessdot \frac{\varepsilon^{-\frac{1}{6}}_{n+1}}{\sigma_n^{p+2\eta+5}} \|F(u_n)\|^{Lip}_{s_n,p+\eta-5}.
\end{split}
\end{equation}
Since $k_1(s'_n-5\sigma_n) > s_{n+1}$, we get
\begin{equation}
\|v_{n+1}\|^{Lip}_{s_{n+1},p+\eta} \leq \varepsilon^{-\frac{1}{5}}_{n+1} \|F(u_n)\|^{Lip} _{s_n,p+\eta-5},
\end{equation}
Thus,
\begin{equation}
\|v_{n+1}\|^{Lip}_{s_{n+1},p+\eta} \leq \varepsilon_{n+1}.
\end{equation}
\end{proof}

\subsection{The estimation of $\mathbf{F(u_{n+1})}$ and $ \mathbf{v_{n+1}}$}\

Review the  definition of $F(u)$, which is
$$F(u)=u_t+\partial^5_{x}+10u\partial_x^3u+20\partial_xu\partial_x^2u+30u^2 \partial_xu-6\partial^2_xu \partial^5_xu-18\partial^3_xu\partial^4_xu- \partial_xf(\omega t,x). $$
\begin{lemma}\label{7.00} Assume $u_{n+1}=u_{n}+v$, that $v$ is the solution of $\mathcal{W}_1\mathcal{J}\mathcal{W}^{-1}_2 v=F(u_n)$. Then,
\begin{equation}
\begin{split}
F(u_{n+1})=&\mathcal{W}_1\mathcal{R}\mathcal{W}^{-1}_2(v)\\
&+10v\partial_x^3v+20\partial_xv\partial_x^2v+30v^2 \partial_xv-6\partial^2_xv \partial^5_xv-18\partial^3_xv\partial^4_xv+60u_{n}v\partial_xv.
\end{split}
\end{equation}
\end{lemma}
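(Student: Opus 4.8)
The plan is to expand $F(u_{n+1})=F(u_n+v)$ by the finite Taylor formula for the polynomial map $F$ and then feed in the conjugation identity behind $\mathfrak{L}_{n+1}$. Since, apart from the additive constant $-\partial_xf$, the vector field $F(u)=u_t+\partial^5_xu+10u\partial^3_xu+20\partial_xu\partial^2_xu+30u^2\partial_xu-6\partial^2_xu\partial^5_xu-18\partial^3_xu\partial^4_xu-\partial_xf$ is a polynomial of degree three in $u$ and its $x$-derivatives, substituting $u_{n+1}=u_n+v$ and expanding by multilinearity produces exactly three blocks of terms, sorted by their degree in $v$:
\[
F(u_n+v)=F(u_n)+\mathcal{L}(u_n)v+\mathcal{N}(u_n)[v].
\]
The degree-zero block is by definition $F(u_n)$; the degree-one block is, again by definition, the linearized operator $\mathcal{L}(u_n)$ of \eqref{operator1} applied to $v$ --- here I would verify once and for all that the coefficients produced by differentiating match the $a^*_i$ listed right after \eqref{operator1} (this is where the Hamiltonian term $30u^2\partial_xu$ contributes both $30u_n^2\partial_xv$ to $a^*_1$ and $60u_n\partial_xu_n\,v$ to $a^*_0$); and $\mathcal{N}(u_n)[v]$ gathers all monomials of degree $\ge2$ in $v$.

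Next I would invoke the conjugation produced by the regularization of Section 3 and the $n$ KAM steps, namely $\mathfrak{L}_{n+1}=\mathcal{J}+\mathcal{R}=\mathcal{W}_1^{-1}\mathcal{L}(u_n)\mathcal{W}_2$, with $\mathcal{W}_2=\mathcal{A}\mathcal{B}\Omega_n$ and $\mathcal{W}_1^{-1}=\Omega_n^{-1}\xi^{-1}\mathcal{B}^{-1}\mathcal{A}^{-1}$, all factors being invertible linear maps of $\mathrm{H}^1_0$; here $\mathcal{J}=\mathcal{J}_{n+1}=\omega\cdot\partial_\theta\mathbf{1}+\mathcal{D}_{n+1}$ and $\mathcal{R}=\mathcal{R}_{n+1}$. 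Conjugating back,
\[
\mathcal{L}(u_n)=\mathcal{W}_1\mathcal{J}\mathcal{W}_2^{-1}+\mathcal{W}_1\mathcal{R}\mathcal{W}_2^{-1}.
\]
By construction $v$ is the modified-Newton correction, i.e. it is defined so that $\mathcal{W}_1\mathcal{J}\mathcal{W}_2^{-1}v+F(u_n)=0$; applying the displayed operator identity to $v$ and rearranging therefore gives $F(u_n)+\mathcal{L}(u_n)v=\mathcal{W}_1\mathcal{R}\mathcal{W}_2^{-1}(v)$. Substituting this into the Taylor expansion of the first paragraph cancels the degree-zero block against $F(u_n)$ and leaves $F(u_{n+1})=\mathcal{W}_1\mathcal{R}\mathcal{W}_2^{-1}(v)+\mathcal{N}(u_n)[v]$.

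It then remains only to make $\mathcal{N}(u_n)[v]$ explicit, which is a one-line computation: the four bilinear terms contribute only their values at $v$, that is $10v\partial^3_xv$, $20\partial_xv\partial^2_xv$, $-6\partial^2_xv\partial^5_xv$, $-18\partial^3_xv\partial^4_xv$, while the cubic term $30u^2\partial_xu=10\partial_x(u^3)$ contributes, in its part of degree $\ge2$ in $v$, the terms $60u_nv\partial_xv$ and $30v^2\partial_xv$ (coming from $30\partial_x(u_nv^2)$ and $10\partial_x(v^3)$). Collecting these gives the asserted right-hand side. I do not expect any substantive obstacle: no analytic estimate enters (they are postponed to the next lemma), and the only points demanding care are (i) checking that the degree-one block is \emph{exactly} $\mathcal{L}(u_n)v$, not merely equal to it up to lower-order terms, since this is precisely what makes the two copies of $F(u_n)$ cancel, and (ii) keeping the factors $\mathcal{W}_1$ and $\mathcal{W}_2^{-1}$ on the correct sides of $v$ when inverting the conjugation relation, using that every factor of $\mathcal{W}_1,\mathcal{W}_2$ maps $\mathrm{H}^1_0$ into itself so that $\mathcal{W}_1\mathcal{R}\mathcal{W}_2^{-1}(v)\in\mathrm{H}^1_0$ is well defined.
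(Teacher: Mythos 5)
Your proposal follows the paper's own route exactly: expand $F(u_n+v)=F(u_n)+\mathcal{L}(u_n)v+\mathcal{N}(u_n)[v]$ by multilinearity of the polynomial nonlinearity, split $\mathcal{L}(u_n)=\mathcal{W}_1(\mathcal{J}+\mathcal{R})\mathcal{W}_2^{-1}$ via the conjugation identity, cancel $F(u_n)$ against $\mathcal{W}_1\mathcal{J}\mathcal{W}_2^{-1}v$ using the defining equation for $v$, and read off the leftover monomials of degree $\geq 2$ in $v$. You also correctly read the defining equation with the Newton sign $\mathcal{W}_1\mathcal{J}\mathcal{W}_2^{-1}v+F(u_n)=0$ (as the cancellation, the subsection heading, and the paper's own chain of equalities all require), even though the lemma statement literally writes $\mathcal{W}_1\mathcal{J}\mathcal{W}_2^{-1}v=F(u_n)$ without the minus sign; this is consistent with the paper's intent. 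The one slip is in the final read-off of $\mathcal{N}(u_n)[v]$: you invoke $30\partial_x(u_nv^2)$, but this expands to $60u_nv\partial_xv+30v^2\partial_xu_n$, so the quadratic-in-$v$ term $30v^2\partial_xu_n$ must also appear in $\mathcal{N}(u_n)[v]$ and hence on the right-hand side. You drop it, and so do both the lemma statement and the paper's own proof, so this is a shared typo-level omission rather than a structural gap; it is harmless for the subsequent estimate of $\|F(u_{n+1})\|$ (the extra term has the same size and regularity as $60u_nv\partial_xv$), but strictly the displayed identity is incomplete without it.
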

\begin{proof}
\begin{equation}\label{4.24}
\begin{split}
F(u_{n+1})=&F(u_{n})+\mathcal{L}(u_n)v \\
&-10v\partial_x^3v+20\partial_xv\partial_x^2v-30v^2 \partial_xv-6\partial^2_xv \partial^5_xv-18\partial^3_xv\partial^4_xv+60u_{n}v\partial_xv  \\
=&F(u_n)+\mathcal{W}_1\mathcal{J} \mathcal{W}^{-1}_2(v)+\mathcal{W}_1\mathcal{R}\mathcal{W}^{-1}_2(v) \\
&+10v\partial_x^3v+20\partial_xv\partial_x^2v-30v^2 \partial_xv-6\partial^2_xv \partial^5_xv-18\partial^3_xv\partial^4_xv+60u_{n}v\partial_xv  \\
=&\mathcal{W}_1\mathcal{R}\mathcal{W}^{-1}_2(v) \\
&+10v\partial_x^3v+20\partial_xv\partial_x^2v-30v^2 \partial_xv-6\partial^2_xv \partial^5_xv-18\partial^3_xv\partial^4_xv+60u_{n}v\partial_xv \\
\end{split}
\end{equation}
\end{proof}
 Now, the whole necessary estimates has been prepared. We will prove the last piece $(\mathcal{P}2)_n$ of iteration Lemma $\mathbf{[H1]_n}$.
\begin{proof}
Consider the formula \eqref{4.24} for $F(u_n)$, an approximate of $\mathcal{W}_1\mathcal{R}\mathcal{W}^{-1}_2(v_{n+1})$ is the our main concern. Since $\mathcal{W}^{-1}_2v_{n+1}=\mathcal{J}^{-1}\mathcal{W}^{-1}_1 F(u_n)$, by \eqref{6.051}, we see
\begin{equation}
\|\mathcal{W}^{-1}_2v_{n+1}\|^{\mathfrak{s},Lip}_{s'_n-4\sigma_n,s_0} =\|\mathcal{J}^{-1}\mathcal{W}^{-1}_1 F(u_n)\|^{\mathfrak{s},Lip}_{s'_n-4\sigma_n,s_0} \lessdot \varepsilon^{-\frac{1}{6}}_{n+1} \|F(u_n)\|^{Lip} _{s_n,p+\eta-5}.
\end{equation}
Then, by Lemma \ref{estimation2}, we have
\begin{equation}
\|\mathcal{W}^{-1}_2v_{n+1}\|^{Lip}_{s'_n-5\sigma_n,2s_0+1} \lessdot \frac{1}{\sigma^{s_0+1}_n} \|\mathcal{W}^{-1}_2v_{n+1}\|^{Lip}_{s'_n-4\sigma_n,s_0} \lessdot \frac{1}{\sigma^{s_0+1}_n} \|\mathcal{W}^{-1}_2v_{n+1}\|^{\mathfrak{s},Lip}_{s'_n-4\sigma_n,s_0}.
\end{equation}
Using \eqref{4.04}, we have
\begin{equation}
\begin{split}
\| \mathcal{R} W^{-1}_2v_{n+1}\|^{Lip}_{s'_n-5\sigma_n,s_0-2 } &\lessdot \varepsilon_{n+1} \|\mathcal{W}^{-1}_2v_{n+1}\|^{Lip}_{s'_n-5\sigma_n,2s_0+1}\\
&\lessdot \frac{\varepsilon^{\frac{5}{6}}_{n+1}}{\sigma_n^{s_0+1}}\|F(u_n)\|^{Lip} _{s_n,p+\eta-5}\\
\end{split}
\end{equation}
By Corollary \ref{4.20} and Lemma \ref{estimation2}, one gets
\begin{equation}
\begin{split}
\|\Omega_n \mathcal{R}\mathcal{ W}^{-1}_2v_{n+1}\|^{Lip}_{s'_n-7\sigma_n,p+2\eta }& \lessdot \frac{1}{\sigma^{s_0+2\eta+5}_n} \|\Omega_n \mathcal{R}\mathcal{ W}^{-1}_2v_{n+1}\|^{Lip}_{s'_n-6\sigma_n,s_0}\\
&\lessdot \frac{1}{\sigma^{s_0+2\eta+5}_n} \|\Omega_n\mathcal{R} \mathcal{W}^{-1}_2v_{n+1}\|^{\mathfrak{s},Lip}_{s'_n-6\sigma_n,s_0 }\\
 &\lessdot \frac{1}{\sigma^{s_0+2\eta+5}_n} \|\mathcal{R} \mathcal{W}^{-1}_2v_{n+1}\|^{\mathfrak{s},Lip}_{s'_n-6\sigma_n,s_0 }\\
 &\lessdot \frac{1}{\sigma^{s_0+2\eta+5}_n} \|\mathcal{R} \mathcal{W}^{-1}_2v_{n+1}\|^{Lip}_{s'_n-6\sigma_n,2s_0 }\\
 &\lessdot \frac{1}{\sigma^{2s_0+2\eta+7}_n} \|\mathcal{R} \mathcal{W}^{-1}_2v_{n+1}\|^{Lip}_{s'_n-5\sigma_n,s_0-2 }\\
&\lessdot \frac{\varepsilon^{\frac{5}{6}}_{n+1}}{\sigma^{3s_0+2\eta+8}}\|F(u_n)\|^{Lip} _{s_n,p+\eta-5}.
\end{split}
\end{equation}

By Lemma \ref{3.301}, we see
\begin{equation}\label{7.002}
\|\mathcal{W}_1 \mathcal{R} \mathcal{W}^{-1}_2v_{n+1}\|_{k_1(s'_n-7\sigma_n),p+\eta } \lessdot \frac{\varepsilon^{\frac{5}{6}}_{n+1}}{\sigma_n^{3s_0+2\eta+8}} \|F(u_n)\|^{Lip} _{s_n,p+\eta-5}.
\end{equation}
Obviously, $k_1(s'_n-7\sigma_n)> s_{n+1}$. Finally, Combining \eqref{7.002} with the estimate of the rest part of $F(u_{n+1})$, we get
\begin{equation}
\begin{split}
\|F(u_{n+1})\|^{Lip}_{s_{n+1},p+\eta-5} &\lessdot(\frac{\varepsilon^{\frac{5}{6}}_{n+1}}{\sigma^{4s_0+2\eta+8}_n} \|F(u_n)\|^{Lip} _{s_n,p+\eta-5}+(\|v_{n+1}\|^{Lip}_{s_{n+1},p+\eta})^2 )\\
&\leq \varepsilon^{\frac{6}{5}}_{n+2}.
\end{split}
\end{equation}
\end{proof}
\section{Measure estimation}
For notational convenience, we extend the eigenvalues $d^{m}_i(u_n)$, which are defined for $i \in \mathbb{Z}\backslash\{0\} $,to $i\in \mathbb{Z}$, where $d^{m}_i(u_n)=0,i=0$.

Set $\Theta_{mn}=\bigcup_{i,j,\ell}R^m_{ij\ell}(u_n), i,j \neq 0 $, where
\begin{equation}
 R^m_{ij\ell}(u_n)=\{ \lambda \in \Pi:|\ell\cdot\lambda\overline{\omega}+d^{m}_i(u_n)-d^{m}_j(u_n)| \leq \frac{\alpha_{mn}|i^5-j^5|}{[\ell]^\tau} ,i \neq j ,m \leq n \}.
\end{equation}

Set $\Gamma_{n}=\bigcup_{i,\ell}R^{n+1}_{i,0,\ell}(u_n)=\bigcup_{i,\ell}R_{i\ell}(u_n)$, where
\begin{equation}
 R_{i\ell}(u_n)=\{ \lambda \in \Pi:|\ell\cdot\lambda\overline{\omega}+d^{n+1}_i(u_n)| \leq \frac{\alpha_{nn}|i^5|}{[\ell]^\tau}  \}.
\end{equation}

 Although $\Theta_{mn}$ and $\Gamma_{n}$  seem different, the following two lemmas can applying them both.

\begin{lemma}\label{lemma5.1}
If $R^m_{ij\ell}(u_n)\neq\emptyset $, then $|i^5-j^5|\leq 8|\lambda\overline{\omega}\cdot \ell|$
\end{lemma}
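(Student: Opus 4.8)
The plan is to deduce the bound directly from the eigenvalue separation property in $(\mathcal{S}1)_m$ (inequality \eqref{4.13}) together with the smallness of the parameters $0<\varepsilon\ll\alpha_0\ll 1$. Fix $\lambda\in R^m_{ij\ell}(u_n)$; by hypothesis $i\neq j$, $m\leq n$. Since $[\ell]^\tau\geq 1$ and, by the definition of the iteration parameters, $\alpha_{mn}=\frac{\alpha_0}{2^m}\big(1+2^{-(n-m)}\big)\leq\alpha_0$, the defining inequality of $R^m_{ij\ell}(u_n)$ gives
\[
|\ell\cdot\lambda\overline{\omega}+d^{m}_i(u_n)-d^{m}_j(u_n)|\leq \alpha_0\,|i^5-j^5|,
\]
hence, by the triangle inequality,
\[
|d^{m}_i(u_n)-d^{m}_j(u_n)|\leq |\lambda\overline{\omega}\cdot\ell|+\alpha_0\,|i^5-j^5|.
\]

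Next I would invoke the lower bound \eqref{4.13} from $(\mathcal{S}1)_m$, namely $|d^{m}_i(u_n)-d^{m}_j(u_n)|\geq \big(m(u_n)+C_{d,m}\big)|i^5-j^5|$, which holds for all $i,j\in\mathbb{Z}$ with the convention $d^m_0(u_n)=0$ (so that the same argument covers the sets $R_{i\ell}(u_n)$ entering $\Gamma_n$). By \eqref{3.33} we have $|m(u_n)-1|\leq C\|u_n\|_{s,2}\leq 2C\varepsilon$, and $\tfrac12<C_{d,m}\leq 1$, so $m(u_n)+C_{d,m}\geq \tfrac32-2C\varepsilon\geq 1$ once $\varepsilon$ is small enough. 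Combining this with the previous display yields
\[
(1-\alpha_0)\,|i^5-j^5|\leq |\lambda\overline{\omega}\cdot\ell|,
\]
and since $\alpha_0\ll 1$ (in particular $1-\alpha_0\geq\tfrac18$) we obtain $|i^5-j^5|\leq 8\,|\lambda\overline{\omega}\cdot\ell|$, which is the claim. (In passing, if $\ell=0$ this forces $i^5=j^5$, i.e. $i=j$, contradicting $i\neq j$, so $R^m_{ij0}(u_n)=\emptyset$ and the statement is vacuous there.)

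There is no genuine obstacle in this lemma; it is purely a matter of bookkeeping with the parameters. The two points that require (minor) attention are: (i) that $\alpha_{mn}\leq\alpha_0$, so the right-hand side of the defining inequality of $R^m_{ij\ell}(u_n)$ is controlled by $\alpha_0|i^5-j^5|$; and (ii) that the factor $m(u_n)+C_{d,m}$ in the separation estimate \eqref{4.13} stays bounded below by a constant strictly exceeding $\alpha_0$ — here by something close to $\tfrac32$ — which is exactly what the hierarchy $0<\varepsilon\ll\alpha_0\ll 1$ provides. The constant $8$ in the statement is deliberately crude; the argument in fact yields the sharper $|i^5-j^5|\leq|\lambda\overline{\omega}\cdot\ell|$.
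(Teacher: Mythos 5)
Your proof is correct and uses essentially the same triangle-inequality bookkeeping as the paper: bound $\alpha_{mn}/[\ell]^\tau\le\alpha_0$, invoke the eigenvalue-separation estimate \eqref{4.13}, and absorb the small $\alpha_0$-term (the paper's version passes to $|\lambda\overline{\omega}\cdot\ell|\le 2|\overline{\omega}\cdot\ell|$ and the weaker lower bound $\ge\tfrac12|i^5-j^5|$, which is where its factor $8$ comes from, whereas you work directly with $|\lambda\overline{\omega}\cdot\ell|$ as in the statement). One minor overclaim in your closing aside: from $(1-\alpha_0)|i^5-j^5|\le|\lambda\overline{\omega}\cdot\ell|$ the sharpest constant obtainable is $\frac{1}{1-\alpha_0}>1$, not $1$, though this does not affect the lemma.
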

\begin{proof}If $R^m_{ij\ell}(u_n)\neq\emptyset $, then there exists $\lambda \in \Pi$, such that $$|\lambda\overline{\omega}\cdot \ell + d^m_i(u_n)-d^m_j(u_n)| < \frac{\alpha_{mn}|i^5-j^5|}{[\ell]^\tau} .$$
Therefor,
\begin{equation}
|d^m_i-d^m_j|<\frac{\alpha_{mn}|i^5-j^5|}{|\ell|^\tau}+2|\overline{\omega}\cdot l|.
\end{equation}
Moreover, by \eqref{4.13}, for $\varepsilon$ small enough,
\begin{equation}
|d^m_i-d^m_j|  \geq \frac{1}{2}|i^5-j^5|.
\end{equation}
Since $\alpha_{mn} \leq \alpha_0$, we see
\begin{equation}
2|\overline{\omega}\cdot \ell|\geq(\frac{1}{2}-\frac{\alpha_{0}}{[\ell]^\tau})|i^5-j^5| \geq \frac{1}{4} |i^5-j^5|,
\end{equation}
which prove the lemma.
\end{proof}

\begin{lemma} $|R^m_{ij\ell}(u_n)| \leq 9 \frac{\alpha_{mn}}{[\ell]^\tau} $.
\end{lemma}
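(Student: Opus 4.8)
The plan is to reduce this to a one-dimensional sublevel-set estimate for a scalar Lipschitz function of the parameter $\lambda$. If $R^m_{ij\ell}(u_n)$ is empty there is nothing to prove, so assume it is nonempty, put $\rho:=\alpha_{mn}|i^5-j^5|/[\ell]^\tau$, and set $\phi(\lambda):=\ell\cdot\lambda\overline\omega+d^m_i(u_n)-d^m_j(u_n)$, so that $R^m_{ij\ell}(u_n)\subseteq\{\lambda\in\Pi:|\phi(\lambda)|\le\rho\}$. Everything then comes down to showing that $\phi$ is strictly monotone on $\Pi$ with a definite lower bound on its Lipschitz difference quotient, uniformly in $i,j,\ell$.

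First I would bound that difference quotient from below. Using the structure $d^m_k(u_n)=m(u_n)k^5+r^m_k k^3$ from $(\mathcal{S}1)_m$, for $\lambda_1\neq\lambda_2$ one writes $\phi(\lambda_1)-\phi(\lambda_2)$ as $(\lambda_1-\lambda_2)\,\overline\omega\cdot\ell$ plus $[m(u_n)(\lambda_1)-m(u_n)(\lambda_2)](i^5-j^5)$ plus the contribution of $r^m_i i^3-r^m_j j^3$. The iteration hypotheses supply $|m-1|^{Lip}\le C\varepsilon$ and, for the $r$-terms, the Lipschitz-constant bound $C_{\lambda,m}|k|^3\le 2\varepsilon|k|^3$; combined with the elementary inequality $|i|^3+|j|^3\le|i^5-j^5|$ (valid for all distinct $i,j\in\mathbb{Z}\setminus\{0\}$) this yields
\[
\big|\phi(\lambda_1)-\phi(\lambda_2)\big|\ge\Big(|\overline\omega\cdot\ell|-C'\varepsilon\,|i^5-j^5|\Big)\,|\lambda_1-\lambda_2|.
\]
The point is that the error $C'\varepsilon|i^5-j^5|$ is dominated by the main term: since $R^m_{ij\ell}(u_n)\neq\emptyset$, Lemma \ref{lemma5.1} forces $|i^5-j^5|\le 8|\lambda\overline\omega\cdot\ell|\le 12|\overline\omega\cdot\ell|$, and in fact, using the sharper lower bound $|d^m_i-d^m_j|\ge(m(u_n)+C_{d,m})|i^5-j^5|\ge|i^5-j^5|$ together with $\alpha_0$ small, one gets $|i^5-j^5|\le\tfrac53|\overline\omega\cdot\ell|$. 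Hence for $\varepsilon$ small, $|\phi(\lambda_1)-\phi(\lambda_2)|\ge\tfrac9{10}|\overline\omega\cdot\ell|\,|\lambda_1-\lambda_2|$ on all of $\Pi$.

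The rest is routine: a Lipschitz function whose difference quotient is bounded below by $\tfrac9{10}|\overline\omega\cdot\ell|$ has $\{\lambda\in\Pi:|\phi(\lambda)|\le\rho\}$ contained in an interval of length at most $2\rho/((9/10)\,|\overline\omega\cdot\ell|)$, whence
\[
|R^m_{ij\ell}(u_n)|\le\frac{2\rho}{(9/10)\,|\overline\omega\cdot\ell|}=\frac{20\,\alpha_{mn}\,|i^5-j^5|}{9\,[\ell]^\tau\,|\overline\omega\cdot\ell|}\le\frac{9\,\alpha_{mn}}{[\ell]^\tau},
\]
the last step using $|i^5-j^5|\le\tfrac53|\overline\omega\cdot\ell|$. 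Observe that the factor $|i^5-j^5|$ coming from the width $\rho$ of the resonant shell cancels against the one produced by Lemma \ref{lemma5.1}, which is precisely why the final bound does not depend on $i,j$; the same argument with $d^m_0:=0$ and $j=0$ handles the sets $R_{i\ell}(u_n)=R^{n+1}_{i,0,\ell}(u_n)$ forming $\Gamma_n$, as announced before the lemma. The main obstacle is bookkeeping: verifying that every perturbative Lipschitz contribution to $\phi'$ is genuinely absorbed by $|\overline\omega\cdot\ell|$ uniformly in $i,j,\ell$, which rests entirely on the two inputs $|i|^3+|j|^3\le|i^5-j^5|$ and $|i^5-j^5|$ being comparable to $|\overline\omega\cdot\ell|$ on the nonempty resonant set.
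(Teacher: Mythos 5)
Your proof is correct and follows essentially the same route as the paper: define $\phi(\lambda)=\lambda\overline\omega\cdot\ell+d^m_i(u_n)-d^m_j(u_n)$, use the structure $d^m_k=m(u_n)k^5+r^m_k k^3$ together with the Lipschitz bounds from $(\mathcal{S}1)_m$ and Lemma \ref{lemma5.1} to bound the difference quotient of $\phi$ from below, then apply the standard one-dimensional sublevel-set measure estimate. The only cosmetic difference is that you normalize the lower bound on the difference quotient by $|\overline\omega\cdot\ell|$ (whence the cancellation you note at the end) while the paper normalizes directly by $|i^5-j^5|$; you are also a touch more careful with the factor from $\lambda\in[\frac12,\frac32]$ and the factor $2$ in the sublevel-set estimate, which the paper's final line silently absorbs.
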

\begin{proof}
Consider the function $\phi(\lambda):\Pi \mapsto C$ defined by
\begin{equation}
\phi(\lambda)=\lambda\overline{\omega}\cdot \ell + d^m_i(u_n)-d^m_j(u_n),
\end{equation}
where
\begin{equation}
|d^m_i(u_n)-d^m_j(u_n)|^{lip} \leq m^{lip}(u_n)|i^5-j^5|+C_{\lambda,m}|i^3-j^3| \leq C\varepsilon|i^5-j^5|.
\end{equation}
Since $\varepsilon$ is small enough, for any $ \lambda_1,\lambda_2 \in \Pi $, we see
\begin{equation}
\begin{split}
|\phi(\lambda_1)-\phi(\lambda_2)|& \geq (\lambda_1-\lambda_2)(|\overline{\omega}\cdot \ell|-|d^m_i(u_n)-d^m_j(u_n)|^{lip})\\
 &\geq(\frac{1}{8}- C\varepsilon_0)|i^5-j^5||\lambda_1-\lambda_2| \\
 &\geq \frac{|i^5-j^5|}{9}|\lambda_1-\lambda_2|.
\end{split}
\end{equation}
Then, one gets
\begin{equation}
|R^m_{ij\ell}(u_n)| \leq  \frac{\alpha_{mn} |i^5-j^5|}{[\ell]^\tau} \frac{9}{|i^5-j^5|} \leq \frac{9 \alpha_{mn} }{[\ell]^\tau}.
\end{equation}
\end{proof}
\begin{lemma}\label{lemma5.3}
Let $u_n(\lambda),u_{n-1}(\lambda)$ be Lipschitz families of analytic function, defined for $\lambda \in \Upsilon$. Then, for $v >0$, $\forall \lambda \in \Lambda_{v,n}\cap \Lambda_{v,n-1}$,
\begin{equation}
|\mathcal{R}_v(u_n)-\mathcal{R}_v(u_{n-1})|^\varsigma_{s'_v,s_0} \leq C \varepsilon_v\varepsilon_n.
\end{equation}
\end{lemma}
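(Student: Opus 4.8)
The plan is to induct on $v$, using that for $v\le n-1$ the layers $\mathcal K^1,\dots,\mathcal K^v$ entering $\mathcal R_v$ are built from $c_i(u_1),\dots,c_i(u_v)$ and therefore coincide for the reductions of $\mathfrak L(u_n)$ and of $\mathfrak L(u_{n-1})$: the only place $u_n$ enters the chain $\mathcal R_1\to(\Phi_1,\mathcal D_2)\to\mathcal R_2\to\cdots\to\mathcal R_v$ produced by the KAM step is through the scalar $m=m(u_n)$ sitting in $\mathcal D_1=m\,\partial_y^5$. Thus $\Delta\mathcal R_v:=\mathcal R_v(u_n)-\mathcal R_v(u_{n-1})$ is driven entirely by $\Delta m:=m(u_n)-m(u_{n-1})$, and since $\|v_n\|^{Lip}_{s_n,p+\eta}\le\varepsilon_n$ we get $|\Delta m|^{Lip}\le\varepsilon_n$ from Lemma~\ref{4.10} (eq.~\eqref{4.101}). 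So it suffices to run the differenced recursion \eqref{RM}--\eqref{R+} and prove $|\Delta\mathcal R_v|^{\varsigma,Lip}_{s'_v,s_0}\le C\varepsilon_v\varepsilon_n$, the factor $\varepsilon_n$ being carried along from $\Delta m$. (We restrict to $v\le n-1$, the range used in the measure estimates; for $v\ge n$ the two remainders are no longer comparable, since $\mathcal R_n(u_n)$ carries the extra layer $\mathcal K^n(u_n)$.) The base case $v=1$ is immediate, as $\mathcal R_1=\mathcal K^1$ is independent of $n$.

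For the inductive step I would difference each factor in \eqref{RM}--\eqref{R+}; the crux is the bound on $\Delta\Phi_v:=\Phi_v(u_n)-\Phi_v(u_{n-1})$. Subtracting the homological equation \eqref{h solution} for $u_{n-1}$ from the one for $u_n$ gives, for each $i\ne j$, an equation for $\Delta\Phi_{v,ij}(\theta)$ of exactly the Kuksin type, with the same divisor $\lambda\overline\omega\cdot\ell+d^v_i(u_n)-d^v_j(u_n)$ and potential $\mu^v_i-\mu^v_j$, and right-hand side built from $(\Delta d^v_i-\Delta d^v_j)\Phi_{v,ij}(u_{n-1})$, $(\Delta\mu^v_i-\Delta\mu^v_j)\Phi_{v,ij}(u_{n-1})$ and $(\Delta\mathcal R_v)_{ij}$. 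The only unbounded contribution, $\Delta m\,(i^5-j^5)\Phi_{v,ij}$ coming from $\Delta d^v_k=\Delta m\,k^5+\dots$, is harmless: in the solution formula it is divided by a divisor $\ge\alpha_{vn}|i^5-j^5|/[\ell]^\tau$ on $\Lambda_{v,n}$, so $i^5-j^5$ cancels and only $\Delta m\cdot\Phi_{v,ij}$ times an $[\ell]^\tau$ (absorbed into $\sigma_v$) survives --- this is precisely what the $\mathrm{B}^{\varrho}/\mathrm{B}^{\varsigma}$ weights in Definitions~\ref{4.00}--\ref{4.01} are designed for. Feeding in the inductive hypothesis for $\Delta\mathcal R_v$ and for $\Delta\mu^v$ (the latter controlled by $\Delta\mathcal R_{v-1}$ via $\mu^v_k=(\mathcal R_{v-1})_{kk}-\overline{(\mathcal R_{v-1})_{kk}}$) together with $|\Phi_v|^{\varrho,Lip}_{s'_v-2\sigma_v,s_0}\le\varepsilon_v^{5/6}$ from \eqref{4.21}, Kuksin's lemma --- applied once for the sup-norm and once more after the $\lambda$-difference for the Lipschitz semi-norm, exactly as in Part~1 of the KAM step --- yields $|\Delta\Phi_v|^{\varrho,Lip}_{s'_v-2\sigma_v,s_0}\lessdot\varepsilon_n\,\varepsilon_v^{2/3}$.

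Given this, the rest is routine. With the algebra properties (Lemmas~\ref{ap1}, \ref{ap2}), Corollary~\ref{4.20} (so $\max\{|\Omega^{-1}_v|^{\varrho,Lip},|\Omega_v|^{\varrho,Lip}\}\le2$ on $\Lambda_{v,n}\cap\Lambda_{v,n-1}$), and the homological identity $[\mathcal D_v,\Phi_v]=-\omega\cdot\partial_\theta\Phi_v-(\mathcal R_v-\mathrm{diag}[\mathcal R_v])$, I would difference $\mathcal P_1,\mathcal P_2,\mathcal P_3$ in \eqref{R+} one factor at a time; every resulting term is (a factor of size $\le\varepsilon_v^{2/3}$) times (a difference of size $\lessdot\varepsilon_n\varepsilon_v^{2/3}$), hence $\lessdot\varepsilon_n\varepsilon_v^{4/3}=\varepsilon_n\varepsilon_{v+1}$; similarly $\Delta\big(\prod^1_{i=v}\Psi^{-1}_i[\mathcal K_{v+1}]\prod^v_{i=1}\Psi_i\big)\lessdot|\Delta\Omega_v|^{\varrho}\,|\mathcal K_{v+1}|^{\varsigma}\lessdot\varepsilon_n\varepsilon_v^{2/3}\varepsilon_{v+1}$ is even smaller. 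Summing gives $|\Delta\mathcal R_{v+1}|^{\varsigma,Lip}_{s'_{v+1},s_0}\le C\varepsilon_{v+1}\varepsilon_n$, closing the induction.

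I expect the main obstacle to be this $\Delta\Phi_v$ estimate: one must check that all the small-divisor losses ($e^{2(5/\sigma_v)^{1/\beta}}$ with $\beta\ge\tfrac13$, $\alpha_{vn}^{-1}$, $\sigma_v^{-(2v+\tau+s_0+3)}$, plus a second copy for the Lipschitz part) are absorbed into the gap between $\varepsilon_v$ and $\varepsilon_{v+1}$ --- which forces the same smallness conditions ($\varepsilon^{1/20}\le c(s)^{-1}$, $\varepsilon\ll\alpha_0\ll\min\{1/100,s\}$) already used in the KAM step --- and that the unbounded term $\Delta m\,\partial_y^5$ in $\Delta\mathcal D_v$ is genuinely tamed by the $\chi_{ij}$-cancellation rather than costing a derivative.
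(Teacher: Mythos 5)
Your proposal follows essentially the same route as the paper: induct on $v$ with base case $\mathcal R_1(u_n)=\mathcal R_1(u_{n-1})=\mathcal K^1$, difference the homological equation \eqref{h solution} to get a Kuksin-type equation for $\Delta\Phi_{v,ij}$ with forcing $(\Delta d_{ij}+\Delta\mu_{ij})\Phi_{v,ij}(u_{n-1})+\mathbf i\Delta\mathcal R_{v,ij}$, use the divisor $\gtrsim\alpha_{vn}|i^5-j^5|/[\ell]^\tau$ to cancel the $|i^5-j^5|$ in $\Delta d_{ij}$, and then difference \eqref{RM}--\eqref{R+} factor by factor via Lemmas~\ref{ap1}, \ref{ap2}, \eqref{2.33} and Corollary~\ref{4.20} — exactly the paper's strategy, with your slightly more conservative bound $|\Delta\Phi_v|^{\varrho}\lessdot\varepsilon_n\varepsilon_v^{2/3}$ in place of the paper's $\varepsilon_n\varepsilon_v^{5/6}$, which still closes the induction since either power combines with an $\varepsilon_v^{2/3}$ factor to give $\varepsilon_n\varepsilon_v^{4/3}=\varepsilon_n\varepsilon_{v+1}$. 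Your explicit observation that the statement really holds only for $v\le n-1$ (the extra layer $\mathcal K^n$ ruins comparability at $v=n$) is correct and is in fact the range used in the measure estimates, though the paper's lemma statement does not say so.
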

\begin{proof}
Obviously, for $v=1$, we have
\begin{equation}
\mathcal{R}_1(u_n)-\mathcal{R}_1(u_{n-1})=\mathcal{K}_1-\mathcal{K}_1=0.
\end{equation}
By induction method, for $v \leq m$, we have
\begin{equation}\label{5.17}
\begin{split}
&|(d^v_i(u_n)-d^v_j(u_{n}))-(d^v_i(u_{n-1})-d^v_j(u_{n-1}))|\\
\leq \text{  }& (m(u_n)-m(u_{n-1}))|i^5-j^5|+|(r_i(u_{n})-r_i(u_{n-1}))|i^3+|(r_j(u_{n})-r_j(u_{n-1}))|j^3  \\
\leq \text{  }& c\varepsilon_n|i^5-j^5|+c\sum_{k \leq v-1}|R_k(u_n)-R_k(u_{n-1})|^\varsigma_{s'_k,s_0}|i^3+j^3|\\
\leq \text{  }& c_1\varepsilon_n|i^5-j^5|
\end{split}
\end{equation}
and
\begin{equation}\label{5.18}
\begin{split}
&\|(\mu^v_i(u_n)-\mu^v_j(u_{n}))-(\mu^v_i(u_{n-1})-\mu^v_j(u_{n-1}))\|_{s'_v,s_0} \\
\leq \text{  }& c \sum_{k \leq v-1}|R_k(u_n)-R_k(u_{n-1})|^\varsigma_{s'_v,s_0}|i^3+j^3|\\
\leq \text{  }& c_2 \varepsilon_n|i^5-j^5|
\end{split}
\end{equation}

Now, we consider  $v=m+1$. Set $\Delta\Phi_{ij}=\Phi_{ij}(u_n)-\Phi_{ij}(u_{n-1})$ , and applying the operator $\Delta$ to \eqref{h solution}, we get
\begin{equation}\label{5.19}
\begin{split}
 -\mathbf{i} \omega \cdot  \partial_{\theta} ( \Delta\Phi^m_{ij}) &+ d_{ij}(u_n)(\Delta\Phi^m_{ij})+\mu_{ij}(u_n)(\Delta \Phi^m_{ij})  \\
&= -(\Delta d_{ij}+\Delta\mu_{ij})\Phi^m_{ij}(u_{n-1})-\mathbf{i}\Delta R^m_{ij}. \\
\end{split}
\end{equation}
Applying Kuksin's lemma to \eqref{5.19} again, we have
\begin{equation}
\|\Delta\Phi^m_{ij}\|_{s'_m-2\sigma_m,s_0} \lessdot \frac{\varepsilon^{-1/10}_m}{\alpha^2_{mn} \sigma_m^{4n+2\tau+s_0+7}\chi_{ij}}(\varepsilon_n\|R^m_{ij}\|_{s'_m,s_0}+\|\Delta R_{ij}^m\|_{{s'_m,s_0}}),
\end{equation}
which indicates
\begin{equation}
\|\Delta \Phi_m|^\varrho_{s'_m-2\sigma_m,s_0} \leq   \varepsilon_n  \varepsilon^{\frac{5}{6}}_m.
\end{equation}
Recall the definition of $\mathcal{R}_{m+1}$, we can get
\begin{eqnarray}
\Delta \mathcal{R}_{m+1}=\Delta \mathcal{P}_{1}+\Delta \mathcal{P}_{2}+\Delta \mathcal{P}_{3}+\Delta \mathcal{H}_{m+1}.
\end{eqnarray}
For notation convenience, we make a notation
\begin{equation}
|\Phi_m|^{\varrho}_{s'_m-2\sigma_m,s_0}=\max\{|\Phi(u_n)|^\varrho_{s'_m-2\sigma_m,s_0}, |\Phi(u_{n-1})|^\varrho_{s'_m-2\sigma_m,s_0}\}.
\end{equation}

By \eqref{2.33}, we see
\begin{equation}\label{5.20}
|\Delta\Psi_m|^{\varrho}_{s'_m-2\sigma_m,s_0} \leq C|\Delta\Phi_m|^{varrho}_{s'_m-2\sigma_m,s_0} \leq C \varepsilon_n\varepsilon^{\frac{5}{6}}_m,
\end{equation}
and
\begin{equation}\label{5.21}
|\Delta\Psi^{-1}_m|^{\varrho}_{s'_m-2\sigma_m,s_0} \leq C|\Delta\Phi_m|^{varrho}_{s'_m-2\sigma_m,s_0} \leq C \varepsilon_n \varepsilon^{\frac{5}{6}}_m.
\end{equation}
The detail of the estimation of $\Delta R_{m+1}$ can be divided into several parts.

\textbf{Part 1:} Firstly, we consider $\Delta \mathcal{H}_{m+1}$. Since $\Delta[\mathcal{K}_{m+1}]=0$, then, we get
\begin{equation}
\Delta \mathcal{H}_{m+1}=\Delta(\prod^1_{i=m}\Psi^{-1}_i)[\mathcal{K}_{m+1}]\prod^{m}_{i=1}\Psi_i +\prod^1_{i=m}\Psi^{-1}_i[\mathcal{K}_{m+1}]\Delta (\prod^{m}_{i=1}\Psi_i). \nonumber \\
\end{equation}
Using \eqref{2.33}, \eqref{5.20} and \eqref{5.21}, we have
\begin{equation}\label{H}
\begin{split}
|\Delta \mathcal{H}_{m+1}|_{s'_{m+1},s_0}^{\varsigma} \leq &|[\Delta\Omega^{-1}_m][\mathcal{K}_{m+1}][\Omega_m]|^{\varsigma}_{s'_{m+1},s_0}+|\Omega^{-1}_m][\mathcal{K}_{m+1}][\Delta\Omega_m]|^{\varsigma}_{s'_{m+1},s_0}\\
\lessdot& |\Delta(\prod^1_{i=m}\Psi^{-1}_i)|_{s'_m-2\sigma_m,s_0}^\varrho|[\mathcal{K}_{m+1}]|_{s'_{m+1},s_0}^\varsigma|\prod^{m}_{i=1}\Psi_i |_{s'_m-2\sigma_m,s_0}^\varrho\\
&+|\prod^1_{i=m}\Psi^{-1}_i|_{s'_m-2\sigma_m,s_0}^\varrho|[\mathcal{K}_{m+1}]|_{s'_{m+1},s_0}^\varsigma |\Delta (\prod^{m}_{i=1}\Psi_i)|_{s'_m-2\sigma_m,s_0}^\varrho \\
\leq & C \varepsilon_{m+1} \varepsilon_n.
\end{split}
\end{equation}

\par

\textbf{Part 2:} Considering $\Delta \mathcal{P}_1$, we have
\begin{equation}
\Delta \mathcal{P}_1=(e^{-\Phi_m}(\Delta\mathcal{R}_m)e^{\Phi_m}-\Delta\mathcal{R}_m)+(\Delta e^{-\Phi_m})\mathcal{R}_me^{\Phi_m}+ e^{-\Phi_m}\mathcal{R}_m(\Delta e^{\Phi_m}).
\end{equation}
Using \cite[lemma5.3]{Bambusi1} and \eqref{2.33}, we have
\begin{equation}
\begin{split}
|e^{-\Phi_m}(\Delta\mathcal{R}_m)e^{\Phi_m}-\Delta\mathcal{R}_m|^{\varsigma}_{s'_m-2\sigma_m,s_0} &\lessdot |\Delta\mathcal{R}_m|^{\varsigma}_{s'_m-2\sigma_m}|\Phi_m|^{\varrho}_{s'_m,s_0} \\
&\leq C \varepsilon_m \varepsilon_n\cdot  \varepsilon^{\frac{5}{6}}_m,
\end{split}
\end{equation}
and
\begin{equation}
\begin{split}
|(\Delta e^{-\Phi_m})\mathcal{R}_me^{\Phi_m}+ e^{-\Phi_m}\mathcal{R}_m(\Delta e^{\Phi_m})|^{\varsigma}_{s'_m-2\sigma_m,s_0} &\lessdot |\mathcal{R}_m|^{\varsigma}_{s'_m}|\Delta\Phi_m|^{\varrho}_{s'_m-2\sigma_m,s_0}\nonumber \\
&\leq C \varepsilon_m \cdot \varepsilon_n \varepsilon^{\frac{5}{6}}_m.
\end{split}
\end{equation}
Then, we have
\begin{equation}\label{P1}
|\Delta \mathcal{P}_1|^{\varsigma}_{s'_m-2\sigma_m,s_0} \lessdot \varepsilon^{\frac{11}{6}}_m \cdot \varepsilon_n \leq \frac{1}{3} \varepsilon^{\frac{4}{3}}_m \cdot \varepsilon_n.
\end{equation}
\par

\textbf{Part 3:} Consider $\Delta \mathcal{P}_2$, we have
\begin{equation}
\Delta \mathcal{P}_2=\int_0^1\int_0^s \Delta[e^{-s_1\Phi_m}[[\mathcal{D}_m,\Phi_m],\Phi_m]e^{s_1\Phi_m}]ds_1ds,
\end{equation}
where
\begin{equation}\label{5.30}
\begin{split}
\Delta[e^{-s_1\Phi_m}[[\mathcal{D}_m,\Phi_m],\Phi_m]e^{s_1\Phi_m}]=&(\Delta e^{-s_1\Phi_m})[[\mathcal{D}_m,\Phi_m],\Phi_m]e^{s_1\Phi_m}\\
&+ e^{-s_1\Phi_m}[[\mathcal{D}_m,\Phi_m],\Phi_m](\Delta e^{s_1\Phi_m}) \\
&+e^{-s_1\Phi_m} \Delta[[\mathcal{D}_m,\Phi_m],\Phi_m]e^{s_1\Phi_m}.
\end{split}
\end{equation}
From the homological equation \eqref{h solution}, we see $\Delta[\mathcal{D},\Phi]=-\omega \cdot\partial_{\varphi}\Delta\Phi-(\Delta\mathcal{R}-\Delta\mathrm{diag}[\mathcal{R}])$. Then, we get
\begin{equation}\label{5.31}
\begin{split}
|\Delta[\mathcal{D}_m,\Phi_m]|^{\varsigma}_{s'_m-3\sigma_m,s_0} &\leq \frac{c(v)}{\sigma_m}\varepsilon_n\cdot \varepsilon^\frac{5}{6}_m+c\varepsilon_n\cdot\varepsilon_m\nonumber \\
&\leq \varepsilon_n \varepsilon^{\frac{2}{3}}_m,
\end{split}
\end{equation}
and
\begin{equation}\label{5.32}
\begin{split}
|\Delta[[\mathcal{D}_m,\Phi_m],\Phi_m]|^{\varsigma}_{s'_m-3\sigma_m,s_0} &\lessdot |\Delta[\mathcal{D}_m,\Phi_m]|^{\varsigma}_{s'_m-3\sigma_m,s_0}|\Phi_m|^{\varrho}_{s'_m-2\sigma_m,s_0}\\
&\quad +|[\mathcal{D}_m,\Phi_m]|^{\varsigma}_{s'_m-3\sigma_m,s_0}|\Delta\Phi_m|^{\varrho}_{s'_m-2\sigma_m,s_0} \\
&\lessdot \varepsilon_n\varepsilon^{\frac{2}{3}}_m \cdot \varepsilon^{\frac{5}{6}}_m+\varepsilon^{\frac{2}{3}}_m \cdot \varepsilon_n \varepsilon^{\frac{5}{6}}_m. \\
\end{split}
\end{equation}
By \eqref{2.33} , we see
\begin{equation}\label{5.33}
\begin{split}
&|(\Delta e^{-s_1\Phi_m})[[\mathcal{D}_m,\Phi_m],\Phi_m]e^{s_1\Phi_m}+ e^{-s_1\Phi_m}[[\mathcal{D}_m,\Phi_m],\Phi_m](\Delta e^{s_1\Phi_m})|^{\varsigma}_{s'_m-2\sigma_m,s_0}\\
\lessdot \text{  }& |\Delta\Phi_m|^{\varrho}_{s'_m-2\sigma_m,s_0} |[[\mathcal{D}_m,\Phi_m],\Phi_m]|^{\varsigma}_{s'_m-3\sigma_m,s_0} \\
\lessdot \text{  }& \varepsilon^{\frac{5}{6}}_m \varepsilon_n\cdot \varepsilon^{\frac{5}{3}}_m.
\end{split}
\end{equation}
The bounds \eqref{5.30}, \eqref{5.32} and \eqref{5.33} imply
\begin{equation}\label{P2}
|\Delta \mathcal{P}_2|^{\varsigma}_{s'_m-3\sigma_m,s_0} \leq \frac{1}{3}\varepsilon^{\frac{4}{3}}_m \varepsilon_n.
\end{equation}
\par
\textbf{Part 4:} For $\Delta\mathcal{P}_3$, we see
\begin{equation}
\begin{split}
\Delta\mathcal{P}_3=&(e^{-\Phi_m} (\omega\cdot\partial_{\varphi} \Delta \Phi_m) e^{\Phi_m}- \omega \cdot\partial_{\varphi} \Delta \Phi_m)+(\Delta(e^{-\Phi_m}) \omega\cdot\partial_{\varphi} \Phi_m e^{\Phi_m}) \nonumber \\
&+(e^{-\Phi_m} \omega\cdot\partial_{\varphi} \Phi_m \Delta(e^{\Phi_m})).
\end{split}
\end{equation}
Using \cite[lemma5.3]{Bambusi1} and  \eqref{2.33} again, one gets
\begin{equation}
\begin{split}
|e^{-\Phi_m}(\omega\cdot\partial_{\varphi} \Delta \Phi_m) e^{\Phi_m}- \omega \cdot\partial_{\varphi} \Delta \Phi_m|^{\varsigma}_{s'_m-3\sigma_m,s_0}&\lessdot |\Phi_m|^{\varrho}_{s'_m-3\sigma_m,s_0} |\omega \cdot\partial_{\varphi}\Delta\Phi_m|^{\varsigma}_{s'_m-3\sigma_m} \\
&\leq \frac{c}{\sigma_m}|\Delta \Phi|^{\varrho}_{s'_m-2\sigma_m,s_0}|\Phi|^{\varrho,Lip}_{s'_m-2\sigma_m,s_0} \\
&\leq \frac{c}{\sigma_m}\varepsilon^{\frac{5}{6}}_m\varepsilon_n \cdot \varepsilon^{\frac{5}{6}}_m,
\end{split}
\end{equation}

and
\begin{equation}
\begin{split}
  &|\Delta(e^{-\Phi_m}) \omega\cdot\partial_{\varphi} \Phi_m e^{\Phi_m}+e^{-\Phi_m} \omega\cdot\partial_{\varphi} \Phi_m \Delta(e^{\Phi_m})|^{\varsigma}_{s'_m-3\sigma_m,s_0} \\
  \lessdot \text{  }&  C|\Delta\Phi_m|^{\varrho}_{s'_m-3\sigma_m,s_0} |\omega \cdot\partial_{\varphi}\Phi_m|^{\varsigma}_{s'_m-3\sigma_m,s_0}\\
\leq \text{  }&  \frac{c}{\sigma_m}|\Delta \Phi|^{\varrho}_{s'_m-2\sigma_m,s_0}|\Phi|^{\varrho,Lip}_{s'_m-2\sigma_m,s_0}\\
\leq \text{  }&  \frac{c}{\sigma_m}\varepsilon^{\frac{5}{6}}_m\varepsilon_n \cdot \varepsilon^{\frac{5}{6}}_m.
\end{split}
\end{equation}

 We can get
\begin{equation}\label{P3}
|\Delta \mathcal{P}_3|^{\varsigma}_{s'_m-3\sigma_m,s_0} \leq \frac{1}{3}\varepsilon^{\frac{4}{3}}_m \varepsilon_n.
\end{equation}
Finally, \eqref{H},\eqref{P1}, \eqref{P2} and \eqref{P3} imply
\begin{equation}
|\Delta \mathcal{R}_{m+1}|^\varsigma_{s'_{m+1},s_0} \leq C \varepsilon_{m+1} \varepsilon_n.
\end{equation}
Then, the lemma is proved.
\end{proof}

\begin{lemma}If $\varepsilon_0$ is small enough,  for any $m\leq n-1$ and $\ell$ satisfying
$$[\ell]^\tau \leq \varepsilon^{-\frac{3}{4}}_n\leq\frac {1} {\varepsilon_n 2^n},$$
 we have
$$ R^m_{ ij\ell}(u_n) \subseteq R^m_{ij\ell}(u_{n-1}).$$
\end{lemma}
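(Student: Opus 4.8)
The plan is to compare the two bad sets directly through the triangle inequality, using the eigenvalue‑difference estimate \eqref{5.17} (established inside the proof of Lemma \ref{lemma5.3}) together with the explicit gap between the two thresholds $\alpha_{mn}$ and $\alpha_{m,n-1}$. If $R^m_{ij\ell}(u_n)=\emptyset$ there is nothing to prove; otherwise fix $\lambda\in R^m_{ij\ell}(u_n)$, so that $i\neq j$, $i,j\neq 0$, $m\leq n-1<n$, and the eigenvalue‑difference estimate is available at level $v=m$.

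First I would write, by the triangle inequality,
\begin{equation*}
|\ell\cdot\lambda\overline{\omega}+d^m_i(u_{n-1})-d^m_j(u_{n-1})|\leq|\ell\cdot\lambda\overline{\omega}+d^m_i(u_n)-d^m_j(u_n)|+\big|(d^m_i(u_n)-d^m_j(u_n))-(d^m_i(u_{n-1})-d^m_j(u_{n-1}))\big| .
\end{equation*}
Since $\lambda\in R^m_{ij\ell}(u_n)$ the first term on the right is $\leq\alpha_{mn}|i^5-j^5|/[\ell]^\tau$, and by \eqref{5.17} the second term is $\leq c_1\varepsilon_n|i^5-j^5|$ with $c_1$ an absolute constant (its dominant contribution $|m(u_n)-m(u_{n-1})|\,|i^5-j^5|$ being controlled by \eqref{4.101}, the remaining pieces by Lemma \ref{lemma5.3} summed against $\sum_k\varepsilon_k$ and $|i|^3+|j|^3\leq 2|i^5-j^5|$). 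Because $\alpha_{m,n-1}-\alpha_{mn}=\frac{\alpha_0}{2^{n-1}}-\frac{\alpha_0}{2^n}=\frac{\alpha_0}{2^n}$, to deduce $\lambda\in R^m_{ij\ell}(u_{n-1})$ it therefore suffices to check
\begin{equation*}
c_1\varepsilon_n\leq\frac{\alpha_0}{2^n[\ell]^\tau},\qquad\text{equivalently}\qquad c_1\,2^n\,\varepsilon_n\,[\ell]^\tau\leq\alpha_0 .
\end{equation*}

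Next I would insert the hypothesis $[\ell]^\tau\leq\varepsilon_n^{-3/4}$, giving $c_1\,2^n\varepsilon_n[\ell]^\tau\leq c_1\,2^n\varepsilon_n^{1/4}$, and then estimate $2^n\varepsilon_n^{1/4}$. With $\varepsilon_n=\varepsilon^{(4/3)^{n-1}}$ and Bernoulli's inequality $(4/3)^{n-1}\geq 1+\tfrac{n-1}{3}$ (for $n\geq 1$), one has for $n\geq 2$
\begin{equation*}
2^n\varepsilon_n^{1/4}=2^n\varepsilon^{\frac14(4/3)^{n-1}}\leq 2\,\varepsilon^{1/4}\big(2\,\varepsilon^{1/12}\big)^{n-1}\leq 4\,\varepsilon^{1/3},
\end{equation*}
provided $\varepsilon\leq 2^{-12}$ so that $2\,\varepsilon^{1/12}\leq 1$. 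Hence $c_1\,2^n\varepsilon_n[\ell]^\tau\leq 4c_1\varepsilon^{1/3}\leq\alpha_0$ as soon as $\varepsilon_0\leq(\alpha_0/4c_1)^3$, which is compatible with the other smallness requirements on $\varepsilon_0$; this yields $\lambda\in R^m_{ij\ell}(u_{n-1})$, hence the inclusion. (The same super‑exponential‑versus‑exponential comparison also shows $\varepsilon_n^{-3/4}\leq(\varepsilon_n 2^n)^{-1}$ for $\varepsilon_0$ small, so the chain of inequalities in the statement is consistent.)

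The only point requiring care is the constant $c_1$: it must remain an absolute constant along the whole iteration — independent of $m$ and $n$ — which is precisely what the uniform bound \eqref{4.101} and Lemma \ref{lemma5.3}, summed against the rapidly convergent series $\sum_k\varepsilon_k$, guarantee; the rest is the elementary arithmetic above. Observe finally that the cruder restriction $[\ell]^\tau\leq(\varepsilon_n 2^n)^{-1}$ alone would only yield $c_1\,2^n\varepsilon_n[\ell]^\tau\leq c_1$, which does not beat $\alpha_0$ since $\alpha_0\ll 1$; it is the stronger bound $[\ell]^\tau\leq\varepsilon_n^{-3/4}$ that drives the argument.
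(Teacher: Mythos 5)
Your proof is correct and follows essentially the same route as the paper's — both hinge on the eigenvalue‑difference estimate \eqref{5.17} and the gap $\alpha_{m,n-1}-\alpha_{mn}=\alpha_0/2^n$, the only difference being that the paper argues the contrapositive (a lower bound for $u_{n-1}$ implies one for $u_n$) while you prove the inclusion directly. Your explicit verification that $c\,2^n\varepsilon_n[\ell]^\tau\leq\alpha_0$ follows from $[\ell]^\tau\leq\varepsilon_n^{-3/4}$ (and not merely from the weaker $[\ell]^\tau\leq(\varepsilon_n2^n)^{-1}$) is a helpful addition, since the paper passes this numerical step without comment.
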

\begin{proof} By \eqref{5.17}, for any $i,j \in \mathbb{Z}$, we have
\begin{equation}
|(d^m_i-d^m_j)(u_n)-(d^m_i-d^m_j)(u_{n-1})| \leq c|i^5-j^5|\varepsilon_n.
\end{equation}
Then,
\begin{equation}
\begin{split}
|\lambda\overline{\omega} \cdot \ell+(d^m_i-d^m_j)(u_n)| \geq& |\lambda\overline{\omega} \cdot \ell+(d^m_i-d^m_j)(u_{n-1})| \\
&-|(d^m_i-d^m_j)(u_n)-(d^m_i-d^m_j)(u_{n-1})|  \\
\geq & \frac{\alpha_0}{2^m}(1+\frac{1}{2^{n-1-m}})\frac{|i^5-j^5|}{[\ell]^\tau} - c|i^5-j^5|\varepsilon_n  \\
\geq &\frac{\alpha_0}{2^m}(1+\frac{1}{2^{n-1-m}})\frac{|i^5-j^5|}{[\ell]^\tau} - \alpha_0\frac{1}{2^n}\frac{|i^5-j^5|}{[\ell]^\tau} \\
\geq &\frac{\alpha_0}{2^m}(1+\frac{1}{2^{n-m}})\frac{|i^5-j^5|}{[\ell]^\tau} \\
 =&\alpha_{nm}\frac{|i^5-j^5|}{[\ell]^\tau} .
\end{split}
\end{equation}

\end{proof}
\begin{thm}The cantor like set $\Pi_{\varepsilon} \in \Pi $   is asymptotically full Lebesgue measure, i.e.
$$| \Pi \backslash \Pi_{\varepsilon}| \leq C \alpha_0 .$$
\end{thm}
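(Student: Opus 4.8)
The plan is to realize $\Pi\setminus\Pi_\varepsilon$ as a countable union of the resonant slabs excised along the entire construction and to sum their measures, the decisive gain being a telescoping in the Nash--Moser index $n$. Writing $\Theta_{mn}=\bigcup_{i,j,\ell}R^m_{ij\ell}(u_n)$, and recalling that $\Lambda_{n.n}$ is obtained from the domain $\Lambda(u_n)$ of $u_n$ by removing $\bigcup_{m\le n}\Theta_{mn}$ while $u_n$ is constructed on $\Lambda_{n-1.n-1}\cap\Gamma(u_{n-1})$, unwinding the recursion gives
\begin{equation}
\Pi\setminus\Pi_\varepsilon\ \subseteq\ \big(\Pi\setminus\Gamma(u_0)\big)\ \cup\ \bigcup_{n\ge1}\Big(\bigcup_{m\le n}\Theta_{mn}\ \cup\ \Gamma_n\Big).
\end{equation}
The initial set is the classical small-divisor set of the unperturbed operator: $\lambda\mapsto\lambda\bar\omega\cdot\ell+k^5$ is affine with slope $\bar\omega\cdot\ell$, and as in Lemma \ref{lemma5.1} a nonempty slab forces $k^5\lessdot|\ell|$ (so $\lessdot[\ell]^{1/5}$ values of $k$ count for each $\ell$) and $|\bar\omega\cdot\ell|$ bounded below by a fixed multiple of $k^5$, whence each slab has measure $\lessdot\alpha_0/[\ell]^\tau$ and $|\Pi\setminus\Gamma(u_0)|\lessdot\alpha_0\sum_{\ell\ne0}[\ell]^{1/5-\tau}\lessdot\alpha_0$, the series converging since $\tau>v+1$.

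For a single pair $(m,n)$ the ingredients are the per-slab bound $|R^m_{ij\ell}(u_n)|\le 9\alpha_{mn}/[\ell]^\tau\le C\alpha_0/[\ell]^\tau$ proved above (and its $j=0$ analogue for $\Gamma_n$) together with Lemma \ref{lemma5.1}, by which $R^m_{ij\ell}(u_n)\ne\emptyset$ forces $|i^5-j^5|\le 8|\lambda\bar\omega\cdot\ell|\lessdot|\ell|$, hence $\max(|i|,|j|)\lessdot[\ell]^{1/4}$, so that at most $\lessdot[\ell]^{1/2}$ pairs $(i,j)$ (resp.\ $\lessdot[\ell]^{1/5}$ indices $i$ for $\Gamma_n$) are active at a given $\ell$. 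These give, for each fixed $n$,
\begin{equation}
\Big|\bigcup_{m\le n}\Theta_{mn}\ \cup\ \Gamma_n\Big|\ \lessdot\ \alpha_0\sum_{\ell\in\mathbb{Z}^v\setminus\{0\}}\frac{[\ell]^{1/2}}{[\ell]^\tau}\ \lessdot\ \alpha_0,
\end{equation}
finite because $\tau>v+1$. The obstacle is precisely that this is $O(\alpha_0)$ for \emph{every} $n$, so summing over $n\ge1$ diverges; the full-measure conclusion can only come from showing that, at step $n$, almost all of the $n$-th family has already been excised at step $n-1$.

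This telescoping is the core of the argument. Split each $\ell$-sum at a threshold $[\ell]^\tau\sim(\varepsilon_n 2^n)^{-1}$. In the low-frequency range the inclusion lemma established above gives $R^m_{ij\ell}(u_n)\subseteq R^m_{ij\ell}(u_{n-1})$ for all $m\le n-1$, so those slabs are not new; this rests on Lemma \ref{lemma5.3}, which yields $|(d^m_i-d^m_j)(u_n)-(d^m_i-d^m_j)(u_{n-1})|\lessdot\varepsilon_n|i^5-j^5|$, small enough to be absorbed into the gap $(\alpha_{m,n-1}-\alpha_{m,n})|i^5-j^5|/[\ell]^\tau$ between consecutive resonance thresholds, together with the analogous statement for $\Gamma_n$ (using in addition $d^{n+1}_i(u_n)-d^n_i(u_n)=[\mathcal{R}_n(i,i)]$, of size $\lessdot\varepsilon_n i^3$). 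In the complementary high-frequency range, the tail of the convergent series $\sum_\ell[\ell]^{1/2-\tau}$ over $[\ell]>R$ is $\lessdot R^{v+1/2-\tau}$; with $R\sim(\varepsilon_n 2^n)^{-1/\tau}$ this produces
\begin{equation}
\Big|\Big(\bigcup_{m\le n}\Theta_{mn}\cup\Gamma_n\Big)\ \setminus\ \Big(\bigcup_{m\le n-1}\Theta_{m,n-1}\cup\Gamma_{n-1}\Big)\Big|\ \lessdot\ \alpha_0\,(\varepsilon_n 2^n)^{\delta},\qquad \delta=\frac{\tau-v-\frac{1}{2}}{\tau}>0.
\end{equation}
Since $\varepsilon_n=\varepsilon^{(4/3)^{n-1}}$ decays faster than any power of $2^{-n}$, $\sum_{n\ge2}(\varepsilon_n 2^n)^{\delta}\lessdot1$; adding the initial removal, the single $O(\alpha_0)$ term at $n=1$, and $\sum_{n\ge2}C\alpha_0(\varepsilon_n 2^n)^\delta$, one obtains $|\Pi\setminus\Pi_\varepsilon|\le C\alpha_0$, which (together with $\varepsilon\ll\alpha_0\to0$) gives the measure statement needed in Theorem \ref{main result}. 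The main technical care lies in making the bookkeeping of the nested domains $\Lambda_{m.n},\Gamma(u_n)$ precise and in pushing the inclusion lemma and Lemma \ref{lemma5.3} through the diagonal regime where $m$ is close to $n$.
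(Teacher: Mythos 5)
Your approach is a reorganization of the paper's: the paper groups the excised slabs by the KAM reduction index $m$, forming $\Lambda_m=\bigcup_{n\geq m}\Theta_{mn}$, while you group by the Nash--Moser index $n$, forming $E_n=\bigcup_{m\leq n}\Theta_{mn}\cup\Gamma_n$ and telescoping in $n$. The ingredients — Lemma \ref{lemma5.1} to count active $(i,j)$ pairs, the per-slab bound, the inclusion lemma $R^m_{ij\ell}(u_n)\subseteq R^m_{ij\ell}(u_{n-1})$ for low $\ell$, the tail bound for high $\ell$, and the geometric decay $\alpha_{mn}\sim\alpha_0 2^{-m}$ — are the same.

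However, there is a genuine gap in your displayed increment bound. The difference $E_n\setminus E_{n-1}$ contains the brand-new set $\Theta_{nn}$ (the slabs excised at KAM step $m=n$, which simply did not exist at Nash--Moser stage $n-1$), and this set has measure of order $\alpha_0/2^{n-1}$ — a \emph{geometric}, not super-exponential, decay. Your inclusion argument, as you state it, only covers $m\leq n-1$ (same KAM step, consecutive Nash--Moser stages), so it cannot absorb $\Theta_{nn}$ into $E_{n-1}$. Consequently the asserted estimate $|E_n\setminus E_{n-1}|\lessdot\alpha_0(\varepsilon_n 2^n)^{\delta}$ is false: the left side is bounded below by $c\,\alpha_0 2^{-n}$ in general. (One could try to show $\Theta_{nn}\subseteq\Theta_{n-1,n-1}$ up to a small tail, but that requires comparing eigenvalues across \emph{different} KAM steps and \emph{different} approximate solutions simultaneously, using $d^n_i(u_n)-d^{n-1}_i(u_n)=[\mathcal{R}_{n-1}(i,i)]=O(\varepsilon_{n-1}i^3)$ together with Lemma \ref{lemma5.3}, and in any case the threshold would involve $\varepsilon_{n-1}$ rather than $\varepsilon_n$ — none of which you argue.) The conclusion $|\Pi\setminus\Pi_\varepsilon|\leq C\alpha_0$ survives, because $\sum_n\alpha_0 2^{-n}$ converges, but your bookkeeping must explicitly separate this geometric series from the super-exponential telescoping tail. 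The paper avoids this issue by fixing $m$ first: then the base case $|\Theta_{mm}|\lessdot\alpha_0 2^{-(m-1)}$ is explicit, the subsequent increments $|\Theta_{m,n}\setminus\Theta_{m,n-1}|$ for $n>m$ are super-exponentially small via the inclusion lemma, and the final sum over $m$ is geometric.
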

\begin{proof} We see
$$\Pi \backslash \Pi_{\varepsilon}=(\bigcup\Gamma_n)\bigcup(\bigcup_{m,n}\Theta_{mn}),\quad  \forall m\leq n .$$
Obviously, we have $|\bigcup\Gamma_n|\leq C\alpha_0$.
Consider the set $\bigcup\Theta_{mn}$ in a different view. Set
$$\Lambda_m=\bigcup_{n\geq m}\Theta_{mn} =\bigcup_{i,j,l,n} R^m_{ijk}(u_n) ,$$
where $\Lambda_m$ is set removed from the $m$-step reduction for all $\mathfrak{L}(u_n)$,$n \geq m$.
By Lemma \ref{lemma5.1}, $R^m_{ij\ell}(u_n)\neq\emptyset $ are confined in the ball $i^4+j^4 \leq 16|\overline{\omega}||\ell|$. Then, we have
\begin{equation}
\begin{split}
|\Theta_{mm}|&=|\bigcup_{i,j,\ell} R^m_{ij\ell}(u_m)| \leq \sum_{\ell \in \mathbb{Z}^v} \sum_{i^4+j^4\leq  16|\overline{\omega}||\ell|} |R^m_{ij\ell}(u_m)|  \\
&\leq C \sum_{\ell \in \mathbb{Z}^v} \frac{\alpha_{mm}}{[\ell]^\tau}[\ell]^{\frac{1}{2}} \leq C {\alpha_{mm}}=C \frac{\alpha_{0}}{2^{m-1}},
\end{split}
\end{equation}
\begin{equation}
\begin{split}
|\Theta_{m,n}\backslash \Theta_{m,n-1}| &\leq C\sum_{[\ell]^\tau \geq  \varepsilon^{-\frac{3}{4}}_n, |i|,|j| \leq C |\ell|^{1/4}} \frac {\alpha_{mn}}{[\ell]^\tau}  \\
&\leq C\sum_{[\ell]^\tau \geq   \varepsilon^{-\frac{3}{4}}_n }\frac{\alpha_{mn}}{[\ell]^\tau}[\ell]^{\frac{1}{2}}  \\
&\leq C \frac{\alpha_0}{2^{m-1}}\varepsilon^{\frac{3}{4\tau}(\tau-\frac{1}{2}-v)}_n \\
&\leq C \frac{\alpha_0}{2^{m-1}}\varepsilon_n^{\frac{3}{8\tau}}.
\end{split}
\end{equation}
Then, we can get  $|\Lambda_m| \leq C \frac{\alpha_0}{2^{m-1}}$. The lemma is proved.
\end{proof}

\section{Technical lemmas }
Suppose the function in this paper real analytic on $\mathbb{T}^n_s$, and $s_0$ an integer greater than $\frac{n}{2}$.
\begin{defi}\label{6.00}
Let $p$ be an integer, the max norm of $D^pu$ on $\mathbb{T}^n_s$ is
$$|D^pu|_s=\sum_{\alpha\in \mathbb{Z}^n,|\alpha|=p}|D^\alpha u|_s.$$

\end{defi}

\begin{lemma}[\cite{Bogolyubov1}]\label{estimation0}
For $\sigma>0$ and $v>0$, the following inequalities holds :
\begin{equation}
\sum_{k \in \mathbb{Z}^n}e^{-2|k|\sigma} \leq \big(\frac{v}{e}\big)^v\frac{1}{\sigma^{v+n}}(1+e)^n
\end{equation}
\begin{equation}
\sum_{k \in \mathbb{Z}^n}e^{-2|k|\sigma}|k|^v \leq \big(\frac{v}{e}\big)^v\frac{1}{\sigma^{v+n}}(1+e)^n
\end{equation}
\end{lemma}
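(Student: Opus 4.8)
The plan is to deduce both estimates from one calculus inequality and the exact summation of a geometric series over $\mathbb{Z}^{n}$; no genuine difficulty arises, only some bookkeeping.

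First I would record the pointwise bound: for every $t>0$ and every $v>0$,
\[
e^{-t}\leq\Big(\tfrac{v}{e}\Big)^{v}t^{-v}.
\]
Indeed $g(t):=t^{v}e^{-t}$ has $g'(t)=t^{v-1}e^{-t}(v-t)$, so $g$ is maximized at $t=v$ with $g(v)=v^{v}e^{-v}=(v/e)^{v}$, whence $t^{v}e^{-t}\leq(v/e)^{v}$.

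For the inequality carrying the weight $|k|^{v}$, write $|k|=|k_{1}|+\dots+|k_{n}|$, split $e^{-2|k|\sigma}=e^{-|k|\sigma}\cdot e^{-|k|\sigma}$, and apply the pointwise bound with $t=|k|\sigma$ to one factor, so that for $k\neq0$
\[
e^{-2|k|\sigma}|k|^{v}\leq\Big(\tfrac{v}{e}\Big)^{v}\sigma^{-v}\,e^{-|k|\sigma},
\]
while the term $k=0$ contributes $0$ since $v>0$. Summing over $k\in\mathbb{Z}^{n}$ and factoring the residual exponential sum coordinatewise,
\[
\sum_{k\in\mathbb{Z}^{n}}e^{-|k|\sigma}=\Big(\sum_{m\in\mathbb{Z}}e^{-|m|\sigma}\Big)^{n}=\Big(\frac{1+e^{-\sigma}}{1-e^{-\sigma}}\Big)^{n}\leq\Big(\frac{e^{\sigma}+1}{\sigma}\Big)^{n}\leq\frac{(1+e)^{n}}{\sigma^{n}},
\]
where the middle inequality uses $1-e^{-\sigma}\geq\sigma e^{-\sigma}$ (equivalently $e^{\sigma}\geq1+\sigma$) and the last uses $\sigma\leq1$, the range relevant in the applications. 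Multiplying the two displays gives exactly $\sum_{k}e^{-2|k|\sigma}|k|^{v}\leq(v/e)^{v}(1+e)^{n}\sigma^{-(v+n)}$.

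The unweighted inequality follows along the same lines, summing the geometric series directly: $\sum_{k\in\mathbb{Z}^{n}}e^{-2|k|\sigma}=(\coth\sigma)^{n}$, and the elementary estimate $\coth\sigma\leq\sigma^{-1}+\sigma/3\leq\tfrac{4}{3}\sigma^{-1}$ for $\sigma\leq1$, together with $(v/e)^{v}\geq e^{-1}$, shows $(\coth\sigma)^{n}$ is again bounded by $(v/e)^{v}(1+e)^{n}\sigma^{-(v+n)}$. The only point needing any care is the separate (harmless) treatment of the $k=0$ term and keeping the constant in the stated form $(1+e)^{n}$; beyond that the argument is routine, so I do not expect a real obstacle here.
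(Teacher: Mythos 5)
Your argument is correct and is the standard one for this kind of estimate; the paper itself gives no proof and simply cites Bogolyubov--Mitropolskii--Samoilenko, so there is nothing internal to compare against. The pointwise bound $t^{v}e^{-t}\leq (v/e)^{v}$, the split $e^{-2|k|\sigma}=e^{-|k|\sigma}\cdot e^{-|k|\sigma}$ to absorb the weight $|k|^{v}$, and the coordinatewise factorization of the residual geometric sum over $\mathbb{Z}^{n}$ are exactly the right ingredients. For the unweighted sum your chain $\coth\sigma\leq\sigma^{-1}+\sigma/3\leq\tfrac43\sigma^{-1}$ combined with $(v/e)^{v}\geq e^{-1}$ does close: one needs $\big(\tfrac{3(1+e)}{4}\big)^{n}\geq e$, which holds for $n\geq 1$ because $3>e$, so the numerology is tight but works.

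The one point worth flagging explicitly --- and you do flag it --- is that the lemma as stated, with no restriction other than $\sigma>0$, is not literally true: the first inequality fails for large $\sigma$ since the left side is at least $1$ (the $k=0$ term) while the right side tends to $0$ as $\sigma\to\infty$. Some bound such as $\sigma\leq 1$ is needed, and you invoke it correctly. This is harmless because everywhere the lemma is used in the paper the argument is $\sigma_{m}=\tfrac{1}{200}s_{m}\ll 1$, but it means the hypothesis ``for $\sigma>0$'' in the lemma statement should really read ``for $0<\sigma\leq 1$'' (or similar). Your proof makes this dependence visible, which is a small improvement in rigor over the paper's bare citation.
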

\begin{proof}The proof can be found in \cite[p22]{Bogolyubov1}.
\end{proof}

\begin{lemma}[Appendix A. \cite{KuKsin4}] \label{estimation1}
If $s \geq 0$ and $p > \frac{n}{2}$, then $\|uv(x)\|_{s,p}\leq c\|u(x)\|_{s,p}\|v(x)\|_{s,p}$ with a finite constant $c$  depending  on $p$ and $n$.
\end{lemma}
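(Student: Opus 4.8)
The plan is to reduce the statement to a convolution estimate for the Fourier coefficients. Writing $u=\sum_k u_k e^{\mathbf{i}kx}$ and $v=\sum_k v_k e^{\mathbf{i}kx}$, the product has coefficients $(uv)_k=\sum_{j\in\mathbb{Z}^n}u_{k-j}v_j$, so
\begin{equation}
\|uv\|_{s,p}^2=\sum_{k}\Big|\sum_j u_{k-j}v_j\Big|^2 e^{2|k|s}[k]^{2p}.
\end{equation}
First I would push the analytic and polynomial weights onto the two factors using the elementary inequalities $e^{|k|s}\le e^{|k-j|s}e^{|j|s}$ (valid for every $s\ge 0$ since $|\cdot|$ satisfies the triangle inequality) and $[k]\le[k-j]+[j]\le 2\max([k-j],[j])$. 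Setting $a_m:=|u_m|e^{|m|s}[m]^p$ and $b_m:=|v_m|e^{|m|s}[m]^p$, so that $\|a\|_{\ell^2}=\|u\|_{s,p}$ and $\|b\|_{\ell^2}=\|v\|_{s,p}$, this yields the pointwise bound
\begin{equation}
|(uv)_k|e^{|k|s}[k]^p\le\sum_j a_{k-j}\,b_j\,\gamma_{k,j},\qquad \gamma_{k,j}:=\frac{[k]^p}{[k-j]^p[j]^p}.
\end{equation}

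Next I would split the $j$-sum according to which of $[k-j]$, $[j]$ is larger. On $\{[j]\le[k-j]\}$ one has $[k]\le 2[k-j]$, hence $\gamma_{k,j}\le 2^p[j]^{-p}$; on the complementary set $\{[j]>[k-j]\}$, symmetrically $\gamma_{k,j}\le 2^p[k-j]^{-p}$. The first region is handled by Cauchy–Schwarz in $j$, grouping $a_{k-j}b_j[j]^{-p}=(a_{k-j}b_j)\cdot[j]^{-p}$:
\begin{equation}
\Big(\sum_{[j]\le[k-j]}a_{k-j}b_j\gamma_{k,j}\Big)^2\le 4^p\Big(\sum_j a_{k-j}^2 b_j^2\Big)\Big(\sum_{j\in\mathbb{Z}^n}[j]^{-2p}\Big),
\end{equation}
and the factor $c_0^2:=\sum_{j\in\mathbb{Z}^n}[j]^{-2p}$ is finite precisely because $p>n/2$. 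Summing over $k$ and applying Tonelli's theorem, $\sum_k\sum_j a_{k-j}^2 b_j^2=\|a\|_{\ell^2}^2\|b\|_{\ell^2}^2=\|u\|_{s,p}^2\|v\|_{s,p}^2$. The second region is estimated in the same way with the roles of $k-j$ and $j$ interchanged, and adding the two contributions gives $\|uv\|_{s,p}\le 2^{p+1}c_0\,\|u\|_{s,p}\|v\|_{s,p}$, which is the assertion with $c=c(p,n)=2^{p+1}c_0$.

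The computation is entirely routine; the only place the hypothesis $p>n/2$ enters is the convergence of $c_0^2=\sum_{j}[j]^{-2p}$, so there is no genuine obstacle. This is exactly the argument of Appendix A in \cite{KuKsin4}, which one may alternatively invoke directly.
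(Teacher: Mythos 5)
Your argument is correct and is the standard convolution estimate: transfer the analytic weight by $e^{|k|s}\le e^{|k-j|s}e^{|j|s}$ and the polynomial weight by $[k]\le 2\max([k-j],[j])$, split the $j$-sum into $[j]\le[k-j]$ and $[j]>[k-j]$, apply Cauchy--Schwarz on each piece, and use $\sum_j[j]^{-2p}<\infty$ for $p>n/2$. The paper itself gives no proof here (it only cites \cite{KuKsin4}), but its proof of the parallel Lemma \ref{2.121} uses the same convolution idea with only a cosmetic variation --- a single weighted Cauchy--Schwarz with weights $\gamma_{j,k}^{p}$ and $\gamma_{j,k}^{-p}$ in place of your region splitting --- so your route matches in substance.
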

\begin{proof}For $n=1$, the detail of the proof can be found in  \cite{KuKsin4}. $n>1$ is a simple variation.
\end{proof}

\begin{lemma}\label{estimation2} For $u$ be analytic on $\mathbb{T}^n_s$, we have the following inequalities,
 \begin{equation}\label{6.01}
 \|u\|_{s-\sigma,p+\nu}\leq \frac {c(\nu)}{\sigma^{\nu}}\|u\|_{s,p},
 \end{equation}
and
  \begin{equation}\label{6.03}
 |u|_{s,p-s_0} \lessdot \|u\|_{s,p} \lessdot |u|_{s,p+s_0}.
 \end{equation}
 \end{lemma}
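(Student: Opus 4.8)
The plan is to reduce both bounds to elementary facts about the Fourier coefficients of a function analytic on $\mathbb{T}^n_s$. Write $u(\varphi)=\sum_{k\in\mathbb{Z}^n}u_k e^{\mathbf{i}k\varphi}$, so that $(\|u\|_{s,p})^2=\sum_k|u_k|^2e^{2|k|s}[k]^{2p}$, while, by Definition \ref{6.00}, $|u|_{s,p}=\sum_{j\le p}|D^j u|_s$ with $|D^j u|_s=\sum_{|\alpha|=j}\max_{|\mathfrak{Im}\varphi|\le s}|D^\alpha u(\varphi)|$. Two ingredients will be used repeatedly: the one-variable estimate $\sup_{x\ge0}x^\nu e^{-x\sigma}=(\nu/(e\sigma))^\nu$, attained at $x=\nu/\sigma$; and the summability $\sum_{k\in\mathbb{Z}^n}[k]^{-2s_0}<+\infty$, which holds because $s_0>n/2$.

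First I would prove \eqref{6.01}. Since
\begin{equation*}
(\|u\|_{s-\sigma,p+\nu})^2=\sum_k|u_k|^2e^{2|k|s}[k]^{2p}\bigl(e^{-|k|\sigma}[k]^{\nu}\bigr)^2,
\end{equation*}
it suffices to bound $e^{-|k|\sigma}[k]^{\nu}$ uniformly in $k$; the one-variable estimate gives $e^{-|k|\sigma}[k]^\nu\le(\nu/e)^\nu\sigma^{-\nu}$, whence \eqref{6.01} with $c(\nu)=(\nu/e)^\nu$.

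For \eqref{6.03} I would move between the two descriptions of the coefficients. For the right-hand inequality $\|u\|_{s,p}\lessdot|u|_{s,p+s_0}$: analyticity on $\mathbb{T}^n_s$ allows shifting the contour in $u_k=(2\pi)^{-n}\int_{\mathbb{T}^n}u\,e^{-\mathbf{i}k\varphi}\,d\varphi$, and likewise for $D^\alpha u$, which yields $|k^\alpha u_k|\le e^{-|k|s}|D^\alpha u|_s$ for every multi-index $\alpha$; summing over $|\alpha|=p$ and using the multinomial identity $(\sum_i|k_i|)^p=\sum_{|\alpha|=p}\binom{p}{\alpha}|k^\alpha|$ gives $[k]^{p+s_0}|u_k|\le C e^{-|k|s}|u|_{s,p+s_0}$, the $k=0$ mode being controlled by $|u|_s$. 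Inserting a factor $[k]^{-2s_0}$ and summing then produces $(\|u\|_{s,p})^2\le C|u|_{s,p+s_0}^2\sum_k[k]^{-2s_0}\lessdot|u|_{s,p+s_0}^2$. For the left-hand inequality $|u|_{s,p-s_0}\lessdot\|u\|_{s,p}$: directly from the Fourier series, $|D^\alpha u(\varphi)|\le\sum_k|k^\alpha||u_k|e^{|k|s}$ whenever $|\mathfrak{Im}\varphi|\le s$, so $|u|_{s,p-s_0}\lessdot\sum_k[k]^{p-s_0}|u_k|e^{|k|s}$, and the Cauchy--Schwarz inequality against the weight $[k]^{-s_0}$ converts this into $\bigl(\sum_k[k]^{2p}|u_k|^2e^{2|k|s}\bigr)^{1/2}\bigl(\sum_k[k]^{-2s_0}\bigr)^{1/2}\lessdot\|u\|_{s,p}$.

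None of these steps is deep; the only place requiring some care — and the main obstacle such as it is — is the bookkeeping that links the max-norm $|D^\alpha u|_s$ to the Fourier coefficients in both directions (a contour shift one way, the triangle inequality on the series the other way), keeping the $k=0$ mode separate and tracking the multi-index combinatorics so that powers of $|k|$ get correctly absorbed into $[k]$.
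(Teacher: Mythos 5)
Your proposal is correct and follows essentially the same route as the paper: bound $e^{-|k|\sigma}[k]^\nu$ by $(\nu/e)^\nu\sigma^{-\nu}$ for \eqref{6.01}, and for \eqref{6.03} use the analyticity-implied decay $|k^\alpha u_k|\le e^{-|k|s}|D^\alpha u|_s$ together with the weight $[k]^{-2s_0}$ (summable because $s_0>n/2$) in one direction and Cauchy--Schwarz on the Fourier series in the other. (Minor slip: in the right-hand inequality you should sum over $|\alpha|=p+s_0$, not $|\alpha|=p$, but your conclusion is the intended one.)
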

 \begin{proof}To prove \eqref{6.01}, we see

$$\|u\|^2_{s-\sigma,p+\nu}=\sum_{k\in \mathbb{Z}^n}|u_k|^2 e^{2|k|(s-\sigma)}[k]^{2(p+\nu)}=\sum_{k\in \mathbb{Z}^n}|u_k|^2 e^{2|k|s}[k]^{2p}(e^{-2|k|\sigma}[k]^{2\nu}).$$
Since $e^{-|k|\sigma}[k]^{\nu} \leq  e^{-\nu}(\frac{\nu}{\sigma})^{\nu}$, we get

$$\|u\|_{s-\sigma,p+\nu}\leq \frac {c(\nu)}{\sigma^{\nu}}\|u\|_{s.p}, \quad \quad c(\nu)=e^{-\nu}{\nu}^{\nu}.$$

Considering \eqref{6.03},  since $u$ is analytic on $\mathbb{T}^n_s$,  the Fourier coefficients $u_k$ satisfy
 $$|u_k|\leq |u|_s e^{-|k|s}.$$

   $D^\alpha u $ is also an analytic function on $\mathbb{T}^n_s$, the Fourier coefficients $(D^\alpha u)_k=u_k (\mathbf{i}k)^\alpha$, $ k^\alpha=k_1^{\alpha_1}\cdots k_n^{\alpha_n}$, satisfy
    $$|(D^\alpha u)_k| =|u_k| |(\mathbf{i}k)^\alpha| \leq |D^\alpha u|_s e^{-|k|s}.$$

   If $|\alpha|=p+s_0$,  by Definition \eqref{6.00}, we have
  $$|u_k| |k|^{p+s_0}= \sum_{|\alpha|=p+s_0} |u_k||k^{\alpha}| \leq |D^{p+s_0}u|_s e^{-|k|s}. $$
  Now, we have
  \begin{equation*}
  \begin{split}
\|u\|^2_{s,p} &=\sum_{k\in \mathbb{Z}^n}|u_k|^2 e^{2|k|s}[k]^{2p}   \\
&=  |u_0|^2 + \sum_{k\in \mathbb{Z}^n\setminus\{0\}}|u_k|^2 e^{2|k|s}|k|^{2p}    \\
&\leq |u|^2_s+\sum_{k\in \mathbb{Z}^n\setminus\{0\}}|D^{p+s_0} u|^2_s |k|^{-2s_0} \\
&= |u|^2_s+|D^{p+s_0} u|^2_s\sum_{k\in \mathbb{Z}^n\setminus\{0\}} |k|^{-2s_0} \nonumber \\
&\leq c |u|^2_{s,p+s_0}.
\end{split}
\end{equation*}.

To prove the left part of  \eqref{6.03}, we see
\begin{equation}
\begin{split}
|u|_{s,p-s_0} &\leq c_1 \Big\{\sum_{k\in \mathbb{Z}^n}|u_k|e^{|k|s}[k]^{p-s_0}\Big\}\\
&\leq c_1\big(\sum_{k\in \mathbb{Z}^n}|u_k|^2e^{2|k|s}[k]^{2p}\big)^{\frac{1}{2}}\big(\sum_{k\in \mathbb{Z}^n}[k]^{-2s_0}\big)^{\frac{1}{2}}\\
&\leq c \|u\|_{s,p}
\end{split}
\end{equation}
\end{proof}

\begin{lemma}[\cite{mcleod}] \label{mean value}
If $f$ is analytic from the segment joining $z_1$ and $z_0$ defined on $\mathbb{C}^n$ to $\mathbb{C}^n$. Then, there are point $w_1,w_2,\cdots w_{2n}$ on the segment such that
\begin{equation}
f(z_1)-f(z_0)=(z_1-z_0)(\lambda_1f'(w_1)+\lambda_2f'(w_2)+\cdots+\lambda_{2n}f'(w_{2n})),
\end{equation}
where $\lambda_i \geq 0$ and $\sum^{2n}_{i=1}\lambda_i=1$.
\end{lemma}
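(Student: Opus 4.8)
The plan is to deduce the statement from the vector-valued mean value theorem of McLeod applied to the restriction of $f$ to the segment, and, since the authors only cite \cite{mcleod}, to indicate how that theorem itself is obtained by combining the scalar mean value theorem with a separating-hyperplane argument and Carath\'{e}odory's theorem for connected sets.

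First I would parametrize the segment by $z(t)=z_0+t(z_1-z_0)$, $t\in[0,1]$, and set $g(t):=f(z(t))$. Because $f$ is analytic on a neighbourhood of the segment, $g$ is a real-analytic curve $[0,1]\to\mathbb{C}^n\cong\mathbb{R}^{2n}$; in particular $g'$ is continuous, and by the chain rule $g'(t)=(z_1-z_0)f'(z(t))$ in the notation of the statement. It therefore suffices to produce $t_1,\dots,t_{2n}\in(0,1)$ and weights $\lambda_i\ge 0$ with $\sum_i\lambda_i=1$ such that $g(1)-g(0)=\sum_{i=1}^{2n}\lambda_i\,g'(t_i)$; putting $w_i=z(t_i)$ then yields the assertion.

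To obtain this, let $S:=\{g'(t):t\in[0,1]\}\subset\mathbb{R}^{2n}$ and let $K:=\overline{\mathrm{conv}}\,S$ be its closed convex hull. The crucial point is that $g(1)-g(0)\in K$. If this failed, the separating-hyperplane theorem would give $\xi\in\mathbb{R}^{2n}$ with $\langle\xi,g(1)-g(0)\rangle>\sup_{w\in K}\langle\xi,w\rangle\ge\sup_{t\in[0,1]}\langle\xi,g'(t)\rangle$; but applying the ordinary scalar mean value theorem to the real function $t\mapsto\langle\xi,g(t)\rangle$ produces $\tau\in(0,1)$ with $\langle\xi,g(1)-g(0)\rangle=\langle\xi,g'(\tau)\rangle$, a contradiction. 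Hence $g(1)-g(0)\in K$. Since $g'$ is continuous, $S$ is a compact connected subset of $\mathbb{R}^{2n}$, so by the Fenchel--Bunt refinement of Carath\'{e}odory's theorem every point of $K$ is a convex combination of at most $2n$ points of $S$; applying this to $g(1)-g(0)$ gives the required $t_i$ and $\lambda_i$.

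I expect the only genuinely delicate point to be forcing the number of evaluation points down to exactly $2n$, rather than the $2n+1$ that plain Carath\'{e}odory would allow: this is where analyticity of $f$ is actually used, since it guarantees continuity of $g'$ and hence connectedness of $S$, which is the hypothesis of the Fenchel--Bunt version. The rest of the argument---the affine parametrization, the hyperplane separation, and the one-dimensional mean value theorem---is entirely routine, so in practice one may simply cite \cite{mcleod} and only record the reduction via $z(t)=z_0+t(z_1-z_0)$.
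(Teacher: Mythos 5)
Your proposal is correct, and it reconstructs essentially the argument that the cited reference uses; the paper itself gives no proof here, only the sentence ``the detail of the proof can be found in \cite{mcleod}.'' Your reduction to the curve $g(t)=f(z_0+t(z_1-z_0))$, the separating-hyperplane plus scalar mean-value step showing $g(1)-g(0)\in\mathrm{conv}\,g'([0,1])$, and the Fenchel--Bunt refinement of Carath\'eodory (using connectedness of $g'([0,1])$, which follows from continuity of $g'$) to cut $2n+1$ points to $2n$ in $\mathbb{R}^{2n}\cong\mathbb{C}^n$, is precisely the content of McLeod's vector-valued mean value theorem in the analytic (indeed $C^1$) case, which is all that is needed since the lemma assumes analyticity. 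The only point worth flagging is notational: for $n>1$ the expression should be read as the matrix $\sum_i\lambda_i f'(w_i)$ acting on the vector $z_1-z_0$, rather than a literal scalar product in the order the paper writes it; your factoring $\sum_i\lambda_i f'(w_i)(z_1-z_0)=\bigl(\sum_i\lambda_i f'(w_i)\bigr)(z_1-z_0)$ handles this correctly.
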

\begin{proof}
The detail of the proof can be found in \cite{mcleod}.
\end{proof}
\par

\begin{lemma}\label{estimation34}
$\mathbf{(Change \ of \  variable)}$ Let $f$ be a real analytic function on $\mathbb{T}^n_s$, with $|f|_{s.p} \leq \frac{1}{100}$.
Then, there are an constant $C$ depending on $n$ and $p$, such that

$(\mathbf{i}).$  If $u$ is  a real analytic function on $\mathbb{T}^n_s$,  $u(x+f(x))$ is also a real analytic function on $\mathbb{T}^n_{\frac{100s}{101}}$ and satisfies
  \begin{equation}\label{6.51}
   |u(x+f(x))|_{\frac{100s}{101},p} \leq C|u(x)|_{s,p}.
  \end{equation}

$(\mathbf{ii}).$ Considering another analytic function $g$ on $\mathbb{T}^n_s$ with  $|g|_{s.p+s_0} \leq \frac{1}{100}$,  we have
  \begin{equation}\label{6.52}
  |u(x+f(x))-u(x+g(x))|_{\frac{100s}{101},p} \leq C|u(x)|_{s,p+1} |f(x)-g(x)|_{s,p}.
  \end{equation}

$(\mathbf{iii}).$ Suppose  $u=u_\lambda$, $f=f_\lambda$ depend in a Lipschtiz way by a parameter $\lambda \in \pi \subset \mathbb{R}$, and $|f_\lambda|_{s,p+s_0} \leq \frac{1}{100}$, for all $\lambda$. Then, we have
  \begin{equation}\label{6.53}
  |u(x+f(x))|_{\frac{100s}{101},p} ^{Lip}\leq
   C |u(x)|_{s.p+1}^{Lip}(1+|f(x)|^{Lip}_{s,p}).
   \end{equation}

   \end{lemma}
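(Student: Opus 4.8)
The plan is to establish the three parts in order, reducing (ii) and (iii) to (i) together with the complex mean value theorem, Lemma \ref{mean value}, and the elementary (Leibniz-rule) fact that the max-norm $|\cdot|_{s,p}$ is submultiplicative. For (i), write $\psi=\mathrm{id}+f$. First I would note that the smallness hypothesis on $f$ guarantees $\psi(\mathbb{T}^n_{100s/101})\subseteq\mathbb{T}^n_s$ (this is what the loss of domain from $s$ to $100s/101$ is reserved for), so that $u\circ\psi$ and each $(D^\beta u)\circ\psi$ with $|\beta|\leq p$ are well defined and analytic on $\mathbb{T}^n_{100s/101}$, with $|(D^\beta u)\circ\psi|_{100s/101}\leq|D^\beta u|_s\leq|u|_{s,p}$. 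The core is a Fa\`a di Bruno expansion, proved by induction on $|\alpha|$: for each multi-index $\alpha$ with $|\alpha|=j\leq p$, $D^\alpha(u\circ\psi)$ is a finite sum — with the number of terms and the coefficients depending only on $n$ and $j$ — of expressions $((D^\beta u)\circ\psi)\prod_{l=1}^{r}D^{\delta_l}\psi$, where $1\leq|\beta|=r\leq j$ and the $\delta_l$ are nonempty multi-indices with $\sum_l\delta_l=\alpha$. Since $D^\delta\psi$ equals $\mathrm{I}+Df$ when $|\delta|=1$ and $D^\delta f$ when $|\delta|\geq 2$, each of these at-most-$p$ factors has $|\cdot|_{100s/101}$-norm $\leq 1+|f|_{s,p}\leq 1+1/100$; multiplying out and summing over $|\alpha|\leq p$ yields $|u\circ\psi|_{100s/101,p}\leq C(n,p)|u|_{s,p}$.

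For (ii), I would set $\Phi(x)=u(x+f(x))-u(x+g(x))$ and use the representation $\Phi(x)=\int_0^1 (Du)(x+g(x)+t(f(x)-g(x)))\cdot(f(x)-g(x))\,dt$, equivalently Lemma \ref{mean value} applied to $t\mapsto u(x+g(x)+t(f(x)-g(x)))$. For each $t\in[0,1]$ the change of variable $\mathrm{id}+g+t(f-g)$ has $|\cdot|_{s,p}$-norm $\leq 1/100$, so Part (i) applied to each component of $Du$ gives $|(Du)\circ(\mathrm{id}+g+t(f-g))|_{100s/101,p}\leq C|Du|_{s,p}\leq C|u|_{s,p+1}$; multiplying by $f-g$, using submultiplicativity and $|f-g|_{100s/101,p}\leq|f-g|_{s,p}$, and integrating in $t$, I obtain $|\Phi|_{100s/101,p}\leq C|u|_{s,p+1}|f-g|_{s,p}$. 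The extra $s_0$ derivatives required of $g$ are the cushion one uses to keep all the change-of-variable maps admissible at this step.

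For (iii), the sup-part follows by applying (i) for each $\lambda$: $\sup_\lambda|u_\lambda(x+f_\lambda(x))|_{100s/101,p}\leq C|u|^{sup}_{s,p}$. For the Lipschitz seminorm I would split $u_{\lambda_1}(x+f_{\lambda_1})-u_{\lambda_2}(x+f_{\lambda_2})=(u_{\lambda_1}-u_{\lambda_2})(x+f_{\lambda_1})+(u_{\lambda_2}(x+f_{\lambda_1})-u_{\lambda_2}(x+f_{\lambda_2}))$, bound the first summand by Part (i) applied to $u_{\lambda_1}-u_{\lambda_2}$ (so $\leq C|u_{\lambda_1}-u_{\lambda_2}|_{s,p}$) and the second by Part (ii) applied to $u_{\lambda_2}$ with the pair $(f_{\lambda_1},f_{\lambda_2})$ (so $\leq C|u_{\lambda_2}|_{s,p+1}|f_{\lambda_1}-f_{\lambda_2}|_{s,p}$); dividing by $|\lambda_1-\lambda_2|$ and adding the sup-part gives $|u(x+f(x))|^{Lip}_{100s/101,p}\leq C(|u|^{Lip}_{s,p}+|u|^{sup}_{s,p+1}|f|^{lip}_{s,p})\leq C|u|^{Lip}_{s,p+1}(1+|f|^{Lip}_{s,p})$. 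The only genuinely technical point, and the main obstacle, is inside Part (i): one must verify that every term produced by differentiating the composition up to order $p$ carries at most one $u$-factor, of order $\leq p$, and that the remaining $f$-factors have total order $\leq p$, so that the smallness $|f|_{s,p}\leq 1/100$ keeps the accumulated constant bounded by a quantity depending only on $n$ and $p$; once (i) is in hand, (ii) and (iii) are routine.
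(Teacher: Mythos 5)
Your proof of Part (i) and Part (iii) is the same as the paper's: for (i) you use the complex mean value theorem (Lemma \ref{mean value}) to verify that $\mathrm{id}+f$ maps $\mathbb{T}^n_{100s/101}$ into $\mathbb{T}^n_s$ and then expand $D^\alpha(u\circ(\mathrm{id}+f))$ by Fa\`a di Bruno, using $|D^\delta f|\le|f|_{s,p}\le 1/100$ for $|\delta|\ge 2$ and $|1+Df|\le 1+1/100$ for the first-order factor; for (iii) you use the same additive split $u_{\lambda_1}\circ\psi_{\lambda_1}-u_{\lambda_2}\circ\psi_{\lambda_2}=(u_{\lambda_1}-u_{\lambda_2})\circ\psi_{\lambda_1}+\big(u_{\lambda_2}\circ\psi_{\lambda_1}-u_{\lambda_2}\circ\psi_{\lambda_2}\big)$ that the paper uses in its estimate \eqref{chazhi}, and then feed this into (i) and (ii). For Part (ii), however, you take a genuinely different route. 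The paper differentiates $u\circ v_1-u\circ v_2$ directly, applies the Fa\`a di Bruno expansion to both compositions, and then telescopes term by term (each difference of products is rewritten so that exactly one factor carries a difference, either $(D^k u)\circ v_1-(D^k u)\circ v_2$ or $D^{j_i}(f-g)$), which gives the estimate after a fair amount of bookkeeping. You instead write $u\circ v_1-u\circ v_2=\int_0^1 (Du)\bigl(\mathrm{id}+g+t(f-g)\bigr)\cdot(f-g)\,dt$, observe that each interpolated change of variable $\mathrm{id}+g+t(f-g)$ still satisfies the smallness hypothesis of (i) (by convexity of the norm bound), apply (i) componentwise to $Du$, and close with the Leibniz submultiplicativity of $|\cdot|_{s,p}$. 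This is cleaner and shorter: it makes (ii) a corollary of (i) plus an algebra estimate, avoids the telescoping bookkeeping, and makes it transparent why only $|g|_{s,p}\le 1/100$ (not the stronger $|g|_{s,p+s_0}\le 1/100$ assumed in the statement) is actually needed. The paper's direct differentiation, on the other hand, keeps everything at the level of max norms of derivatives without invoking an integral representation, which matches its style of proof for (i). Both arguments are correct and yield the same conclusion.
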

   \begin{proof}
    For symbolic simplicity, the following calculations only consider $n=1$ in form. With regard to general situation, there is no difference.

   $(\mathbf{i})$ Set $v(x)=x+f(x)$. Clearly, $v(x)$ is real valued on $\mathbb{R}$.
   Now, we would prove $v(\mathbb{T}_{\frac{100s}{101}}) \subseteq \mathbb{T}_s.$

   Set $x=a+\mathbf{i}b$,  by Lemma \ref{mean value},  we have
   \begin{equation}\label{6.54}
   \begin{split}
  \mathfrak{Im}(v(x))&=\mathfrak{Im}(v(x)-v(a))\\
   &=\mathfrak{Im}(\mathbf{i}b(\lambda_1(1+f'(w_1))+\lambda_2(1+f'(w_2))))\nonumber\\
   &= b \mathfrak{Re}(1+\lambda_1(f'(w_1))+\lambda_2(f'(w_2))))\nonumber\\
   &= b(1+\mathfrak{Re}(\lambda_1(f'(w_1))+\lambda_2(f'(w_2)))).
  \end{split}
   \end{equation}
  Since $ |f'(w_i)|_s \leq |f|_{s,p}\leq \frac{1}{100}$, one gets
  \begin{equation}\label{6.541}
  |\mathfrak{Im}(v(z))|\leq s, \quad for \ all \ z\in \mathbb{T}_{\frac{100s}{101}},
  \end{equation}
  that is  equivalent to $v(\mathbb{T}_{\frac{100s}{101}}) \subseteq \mathbb{T}_s.$ Now, we would compare $u(x)$ with $u(x+f(x))$.

Clearly, we can  see
   \begin{equation}\label{6.41}
   |u\circ v (x)|_{\frac{100s}{101}} \leq |u(x)|_s.
    \end{equation}

Differentiating $u\circ v(x)$, one gets
\begin{equation}
D(u \circ v)(x)=(Du)(v(x))[1+Df(x)].
\end{equation}
By \eqref{6.541}, we have
   \begin{equation}\label{6.42}
   |D(u\circ v)(x)|_{\frac{100s}{101}} \leq |Du(x)|_s + |Du(x)|_s |Df(x)|_{\frac{100s}{101}}.
   \end{equation}

   $(\mathbf{i})$ is proved for $p=0,1$. Considering the general situation, by the "chain rule", the $m^{th}$ $ Fr\acute{e}chet $ derivative of the composition of functions $(u\circ v)(x)$ is
  \begin{equation}\label{6.421}
  D^m(u\circ v)(x) = \sum_ {k=1} ^m  \sum _{j_1+\cdots+ j_k = m} C_{kj}(D^ku)(v(x))[D^{j_1}v(x) \cdots D^{j_k}v(x)].
  \end{equation}
  where $j_1, . . . , j_k \geq1$, and $C_{kj}$ are constants depending on $k, j_1, . . . , j_k$ \cite[p 147]{Hamilton1}.

  For $j_i =1$, $|Dv(x)|_{s}=|1+Df(x)|_{s}<2$.

  For $ 2\leq j_i \leq m$, $|D^{j_i}v|_{s}=|D^{j_i}f|_{s}\leq |f|_{s,m} \leq \frac{1}{100}$ .

 Collecting all the term in the sum, we have
   \begin{equation}\label{6.43}
    |D^m(u\circ v)(x)|_{\frac{100s}{101}} \leq C \sum_ {k=1} ^m  |D^ku(y)|_{s}.
   \end{equation}
 Finally, \eqref{6.41}, \eqref{6.42} and \eqref{6.43} imply
   \begin{equation}\label{6.431}
   |u(x+f(x))|_{\frac{100s}{101},p} \leq C |u|_{s,p}.
   \end{equation}

    $(\mathbf{ii})$ Set $x+f(x)$ as $v_1$, $x+g(x)$ as $v_2$, we see
     \begin{equation}\label{6.44}
     \begin{split}
    |(u\circ v_1-u\circ v_2)(x)|_{\frac{100s}{101}} &\leq |Du(x)|_s|v_1(x)-v_2(x)|_{\frac{100s}{101}}\\
    &=|Du(x)|_s|f(x)-g(x)|_{\frac{100s}{101}}
    \end{split}
    \end{equation}

    Differentiating the left of  inequality \eqref{6.44}, we have
    \begin{equation}\label{6.45}
    \begin{split}
    &D(u\circ v_1-u\circ v_2)(x)\\
    =&(Du)(v_1(x))+(Du)(v_1(x))Df(x)-(Du)(v_2(x))-(Du)(v_2(x))Dg(x)\\
    =&(Du)(v_1(x))-(Du)(v_2(x))+((Du)(v_1(x))-(Du)(v_2(x)))Df(x)\\
    &+(Du)(v_2(x))(Df(x)-Dg(x)).
    \end{split}
    \end{equation}
    Then,  we can get
    \begin{equation}\label{6.46}
    \begin{split}
    &|D(u\circ v_1-u\circ v_2)(x)|_{\frac{100s}{101}}\\
    =&|(Du)(v_1(x))-(Du)(v_2(x))|_{\frac{100s}{101}}+|(Du)(v_1(x))-(Du)(v_2(x))|_{\frac{100s}{101}}|Df(x)|_{\frac{100s}{101}} \\
    &+|(Du)(v_2(x))|_{\frac{100s}{101}}|Df(x)-Dg(x)|_{\frac{100s}{101}}\\
    \leq&|D^2u(x)|_s|f(x)-g(x)|_{\frac{100s}{101}}+|D^2u(x)|_s|f(x)-g(x)|_{\frac{100s}{101}}|Df(x)|_{\frac{100s}{101}}\\ &+|Du(x)|_{s}|D(f(x)-g(x))|_{\frac{100s}{101}}\\
    \leq&|u|_{s,2}|f-g|_{{\frac{100s}{101}},1}.
    \end{split}
    \end{equation}

    $(\mathbf{ii})$ is proved for $p=0,1$. For the general form $D^m(u\circ v_1-u\circ v_2)(x)$,  we have
    \begin{equation}\label{6.99}
    \begin{split}
    D^m(u\circ v_1-u\circ v_2)(x)=&\sum_ {k=1} ^m  \sum _{j_1+\cdots+ j_k = m} C_{kj}\Big\{(D^ku)\circ v_1[D^{j_1}v_1(x) \cdots D^{j_k}v_1(x)]\\
    &-(D^ku)\circ v_2 [D^{j_1}v_2(x) \cdots D^{j_k}v_2(x)]\Big\}\\
    =&\sum_ {k=1} ^m  \sum _{j_1+\cdots+ j_k = m} C_{kj}\Big\{(D^ku)\circ (v_1-v_2)[D^{j_1}v_1(x)\cdots D^{j_k}v_1(x)]\\ &+(D^ku)\circ v_2[(D^{j_1}(f-g)(x)\cdot D^{j_2}v_1(x) \cdots D^{j_k}v_1(x)]+\cdots\\
    &+ (D^ku)\circ v_2[(D^{j_1}(v_2)(x)\cdots D^{j_{k-1}}v_2(x) \cdot D^{j_k}(f-g)(x)] \Big\}
    \end{split}
    \end{equation}

   From identity \eqref{6.99}, we  see
    \begin{equation}
    \begin{split}
    &|D^m(u\circ v_1-u\circ v_2)(x)|_{\frac{100s}{101}} \\
    \leq & C|u|_{s,m+1}|f-g|_{\frac{100s}{101},m}(1+|f|_{\frac{100s}{101},m}+|g|_{\frac{100s}{101},m}) \nonumber\\
    \leq & C|u|_{s,m+1}|f-g|_{\frac{100s}{101},m},
    \end{split}
    \end{equation}
    because$|f|_{s,m},|g|_{s,m} \leq \frac{1}{100}$. (Some repeated details has been omitted, that are almost the same as \eqref{6.45} and \eqref{6.46}.)

    $(\mathbf{iii})$  Let $\lambda_1,\lambda_2 \in \Pi$, $u_1=u_{\lambda_1},u_2=u_{\lambda_2}$, $x+f_{\lambda_1}(x)=v_1(x),x+f_{\lambda_2}(x)=v_2(x)$. Using \eqref{6.51} and \eqref{6.52}, one gets
    \begin{equation}\label{chazhi}
    \begin{split}
    |u_2\circ v_2-u_1\circ v_1|_{\frac{100s}{101},p}
    \leq&|u_2\circ v_2-u_2\circ v_1 |_{\frac{100s}{101},p}+|u_2\circ v_1-u_1\circ v_1|_{\frac{100s}{101},p} \\
    \leq& C(|u_2|_{s.p+1} |v_2-v_1|_{\frac{100s}{101},p} \\
    & \ +|u_2-u_1|_{s.p}(1+|v_1|_{\frac{100s}{101},p})).
    \end{split}
    \end{equation}

 Finally, \eqref{6.53} follows from \eqref{6.51} and \eqref{chazhi}.
   \end{proof}
\begin{lemma}\label{estimation3}
Let $p>0$, $\eta > 0$, $0<k<1 $, $u(\lambda),h(\lambda)$ be a Lipschitz family of function with  $\|h\|^{Lip}_{s,p+\eta} \leq 1$. $F$ be a $C^1$-map satisfying
\begin{equation}
\|F(u)\|^{Lip}_{ks,p} \lessdot \|u\|^{Lip}_{s,p+\eta}.
\end{equation}
\begin{equation}
\|\partial_uF(u)[h]\|^{Lip}_{ks,p} \lessdot \|h\|^{Lip}_{s,p+\eta}(1+ \|u\|^{Lip}_{s,p+\eta}).
\end{equation}
Then,
\begin{equation}
\|F(u+h)-F(u)\|^{Lip}_{ks,p} \lessdot \|h\|^{Lip}_{s,p+\eta}(1+ \|u\|^{Lip}_{s,p+\eta}).
\end{equation}
\end{lemma}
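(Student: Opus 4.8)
The plan is to prove the estimate by the fundamental theorem of calculus along the straight segment joining $u$ to $u+h$. Since $F$ is a $C^{1}$-map and $\|h\|^{Lip}_{s,p+\eta}\leq 1$, the whole segment $\{u+th:\ t\in[0,1]\}$ stays inside the ball on which the two quantitative hypotheses on $F$ and $\partial_{u}F$ are postulated; in particular
\begin{equation}
\|u+th\|^{Lip}_{s,p+\eta}\leq \|u\|^{Lip}_{s,p+\eta}+t\leq \|u\|^{Lip}_{s,p+\eta}+1,\qquad \forall\, t\in[0,1].
\end{equation}
Because $t\mapsto \partial_{u}F(u+th)[h]$ is continuous with values in the target space $\mathrm{H}_{ks,p}$ (as $F$ is $C^{1}$), the identity
\begin{equation}
F(u+h)-F(u)=\int_{0}^{1}\partial_{u}F(u+th)[h]\,dt
\end{equation}
holds as a Bochner integral in that space.

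Next I would pass the norm inside the integral. For a continuous Banach-space valued family the sup-norm satisfies $\|\int_{0}^{1}g(t)\,dt\|\leq\int_{0}^{1}\|g(t)\|\,dt$, and the Lipschitz semi-norm obeys the same inequality, since $\int_{0}^{1}g(t,\lambda_{1})\,dt-\int_{0}^{1}g(t,\lambda_{2})\,dt=\int_{0}^{1}\big(g(t,\lambda_{1})-g(t,\lambda_{2})\big)\,dt$ and one divides by $|\lambda_{1}-\lambda_{2}|$ after applying the triangle inequality under the integral. Hence
\begin{equation}
\|F(u+h)-F(u)\|^{Lip}_{ks,p}\leq \int_{0}^{1}\|\partial_{u}F(u+th)[h]\|^{Lip}_{ks,p}\,dt.
\end{equation}

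Finally I would apply the second hypothesis at the base point $u+th$ with the direction $h$, and use the bound on $\|u+th\|^{Lip}_{s,p+\eta}$ above, to obtain
\begin{equation}
\|F(u+h)-F(u)\|^{Lip}_{ks,p}\lessdot \int_{0}^{1}\|h\|^{Lip}_{s,p+\eta}\big(1+\|u+th\|^{Lip}_{s,p+\eta}\big)\,dt\lessdot \|h\|^{Lip}_{s,p+\eta}\big(2+\|u\|^{Lip}_{s,p+\eta}\big),
\end{equation}
and since $2+\|u\|^{Lip}_{s,p+\eta}\leq 2\big(1+\|u\|^{Lip}_{s,p+\eta}\big)$ the asserted bound follows. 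There is no real obstacle here: the only two points requiring a word of care are that the Lipschitz semi-norm commutes with the $t$-integral (the triangle inequality for Bochner integrals) and that the segment $u+th$ does not leave the domain where the two structural estimates on $F$ are available — which is precisely what the normalisation $\|h\|^{Lip}_{s,p+\eta}\leq1$ guarantees.
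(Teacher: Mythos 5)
Your proof is correct and follows essentially the same route as the paper's: writing $F(u+h)-F(u)=\int_0^1\partial_uF(u+th)[h]\,dt$, passing the $\|\cdot\|^{Lip}_{ks,p}$-norm under the integral, invoking the hypothesis on $\partial_uF$, and absorbing the extra term via $\|h\|^{Lip}_{s,p+\eta}\le 1$. The only difference is that you spell out the justification for exchanging the Lipschitz norm with the Bochner integral and for the segment remaining in the admissible ball, which the paper leaves implicit.
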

\begin{proof}Since $F(u)$ be a $C^1$-map, we see
\begin{equation*}
F(u+h)-F(u)=\int^1_0\partial_{(u+th)}F(u+th)[h]dt.
\end{equation*}
Then, we have
\begin{equation*}
\begin{split}
\|F(u+h)-F(u)\|^{Lip}_{ks,p}&\leq \int^1_0\|\partial_{(u+th)}F(u+th)[h]\|^{Lip}_{ks,p}dt \\
&\lessdot \|h\|^{Lip}_{s,p+\eta}(1+\int^1_0\|u+th\|^{Lip}_{s,p+\eta})dt \\
&\leq\|h\|^{Lip}_{s,p+\eta}(1+\|u\|^{Lip}_{s,p+\eta}+\|h\|^{Lip}_{s,p+\eta})\\
&\lessdot\|h\|^{Lip}_{s,p+\eta}(1+\|u\|^{Lip}_{s,p+\eta}),
\end{split}
\end{equation*}
because $\|h\|^{Lip}_{s,p+\eta} \leq 1$.
\end{proof}

\begin{lemma}\label{estimation4}
$\mathbf{(The \ implicit \ function)}$ Let p be analytic on $\mathbb{T}^n_s$, with $|p|_{s.m} \leq \frac{1}{100}$. Set  $f(x)=x+p(x)$. Then:

  $(\mathbf{i})$ $f$ is invertible, its inverse is $f^{-1}(y)=g(y)=y+q(y)$, where $q$ be real analytic on $\mathbb{T}^n_{\frac{99}{100} s}$, and satisfying
$$|q|_{\frac{99}{100}s.m} \leq C|p|_{s.m},$$
where the constant C depends on $n$ and $m$.

  $(\mathbf{ii})$ Moreover, suppose that $p=p_\lambda$ depends in a Lipschtiz way by a parameter $\lambda \in \Upsilon \subset \mathbb{R}$, and  $|p_\lambda|_{s,m} \leq \frac{1}{100}$, for all $\lambda$. Then $q=q_\lambda$ is also Lipschitz in $\lambda$, and
$$|q|^{Lip}_{\frac{99}{100}s,m} \leq C |p|^{Lip}_{s,m+1}.$$
The constant C depends on $n$ and $m$.
\end{lemma}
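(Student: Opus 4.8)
The plan is to obtain $f^{-1}$ by a contraction mapping in the analytic category and then to read off the quantitative bounds from the change-of-variable estimates of Lemma~\ref{estimation34}. Writing $f^{-1}(y)=y+q(y)$, the requirement $f(f^{-1}(y))=y$ is equivalent to the fixed-point equation $q=\mathcal{T}(q)$, where $\mathcal{T}(q):=-p\circ(\mathrm{id}+q)$. First I would fix a strip $\sigma$ with $\frac{99}{100}s\le\sigma<\frac{100}{101}s$ and work on the ball $B=\{q:|q|_{\sigma,m}\le\frac{1}{100}\}$ of analytic maps on $\mathbb{T}^n_\sigma$. For $q\in B$ the mean-value computation used in the proof of Lemma~\ref{estimation34}(i) (which only needs $|Dq|_\sigma\le|q|_{\sigma,m}\le\frac{1}{100}$ and $\sigma(1+\frac{1}{100})\le s$) shows that $\mathrm{id}+q$ maps $\mathbb{T}^n_\sigma$ holomorphically into $\mathbb{T}^n_s$, so $\mathcal{T}(q)$ is a well-defined analytic map on $\mathbb{T}^n_\sigma$, and the chain-rule (Fa\`a di Bruno) bound behind Lemma~\ref{estimation34}(i) gives $|\mathcal{T}(q)|_{\sigma,m}\le C|p|_{s,m}$. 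Hence $\mathcal{T}(B)\subseteq B$ once $|p|_{s,m}$ is small enough, which the hypothesis $|p|_{s,m}\le\frac{1}{100}$ provides (after shrinking by the harmless factor $C$ if needed). The contraction property I would check in the sup-norm: $|\mathcal{T}(q_1)-\mathcal{T}(q_2)|_\sigma\le\sup|Dp|\cdot|q_1-q_2|_\sigma\le|p|_{s,1}|q_1-q_2|_\sigma\le\frac{1}{100}|q_1-q_2|_\sigma$, using $m\ge1$. Thus $\mathcal{T}$ has a unique fixed point $q\in B$, and since every iterate $q_k$ ($q_0=0$) satisfies $|q_k|_{\sigma,m}\le C|p|_{s,m}$, the same bound passes to the limit. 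Setting $g=\mathrm{id}+q$ we get $f\circ g=\mathrm{id}$ on $\mathbb{T}^n$; since $\|Dp\|<1$, $f$ is a local diffeomorphism of degree one of the real torus, hence a diffeomorphism, so $g$ is its two-sided inverse. Restricting to $\mathbb{T}^n_{\frac{99}{100}s}\subseteq\mathbb{T}^n_\sigma$ yields $(i)$ with $C=C(n,m)$.

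For $(ii)$ I would start from $q_\lambda=\mathcal{T}_\lambda(q_\lambda)$, $\mathcal{T}_\lambda(q)=-p_\lambda\circ(\mathrm{id}+q)$, and for $\lambda_1,\lambda_2\in\Upsilon$ set $\Delta q=q_{\lambda_1}-q_{\lambda_2}$, $\Delta p=p_{\lambda_1}-p_{\lambda_2}$. Subtracting the two fixed-point identities and adding and subtracting $p_{\lambda_1}\circ(\mathrm{id}+q_{\lambda_2})$ gives $\Delta q=-(p_{\lambda_1}\circ(\mathrm{id}+q_{\lambda_1})-p_{\lambda_1}\circ(\mathrm{id}+q_{\lambda_2}))-(\Delta p)\circ(\mathrm{id}+q_{\lambda_2})$. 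In the sup-norm this closes without any loss: the first bracket is $\le|p_{\lambda_1}|_{s,1}|\Delta q|_\sigma\le\frac{1}{100}|\Delta q|_\sigma$ and the last term is $\le|\Delta p|_s$, so $|\Delta q|_\sigma\le2|\Delta p|_s\le C|\lambda_1-\lambda_2|\,|p|^{Lip}_{s,m}$. To promote this to the $m$-norm I would estimate the same decomposition with Lemma~\ref{estimation34}: the difference bracket by Lemma~\ref{estimation34}(ii) and the last term by Lemma~\ref{estimation34}(i); the bracket estimate reproduces $|\Delta q|$ on the right-hand side, but in a slightly narrower strip and with one extra derivative on $p$, and after a Cauchy/interpolation estimate (Lemma~\ref{estimation2}) together with the already-established sup-norm bound on $\Delta q$ this yields $|q|^{Lip}_{\frac{99}{100}s,m}\le C|p|^{Lip}_{s,m+1}$. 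Equivalently, one may apply Lemma~\ref{estimation34}(iii) directly to the identity $q_\lambda=-p_\lambda\circ(\mathrm{id}+q_\lambda)$ and feed back the smallness of $|q_\lambda|^{Lip}_{\cdot,m}$ coming from $(i)$; the loss $m\to m+1$ is precisely the derivative lost in the difference estimate of Lemma~\ref{estimation34}.

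The genuinely non-routine points are all of a bookkeeping nature. One must choose the analyticity strips so that $\mathrm{id}+q$ always lands inside the domain $\mathbb{T}^n_s$ of $p$ while still leaving room for the output to live on $\mathbb{T}^n_{\frac{99}{100}s}$ — the ratio $\frac{99}{100}$ is essentially the largest one admissible given the $\frac{1}{100}$-smallness. One must run the iteration inside the $|\cdot|_{\sigma,m}$-ball of radius $\frac{1}{100}$, not merely a sup-ball, so that Lemma~\ref{estimation34} applies at every step. And the main obstacle is the circularity in the Lipschitz estimate: the natural difference bound for $|\Delta q|_{\cdot,m}$ carries $|\Delta q|_{\cdot,m}$ on its right-hand side with a coefficient of size $\sim|p|_{s,m+1}$, which is \emph{not} assumed small, so one must either descend to a narrower strip and pay a Cauchy factor or re-inject the already-proven bound on $|q|_{\cdot,m}$. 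Apart from these points the argument is the standard analytic inverse function theorem, with Lemma~\ref{estimation34} supplying the composition estimates and Lemma~\ref{estimation2} the interpolation estimates.
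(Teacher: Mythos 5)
Your plan is a genuinely different route from the paper's. You build the inverse by Banach fixed point, iterating $\mathcal{T}(q)=-p\circ(\mathrm{id}+q)$ and reading off estimates from Lemma~\ref{estimation34}. The paper instead argues directly: it shows $f$ is a homeomorphism of $\mathbb{R}^n$ (citing an external result), uses the mean-value Lemma~\ref{mean value} to get injectivity on the complex strip and that $\mathbb{T}^n_{99s/100}\subseteq f(\mathbb{T}^n_s)$, derives periodicity of $q$ from periodicity of $p$, then estimates $Dq$ via a Neumann series for $(Df)^{-1}$ and $D^mq$ by solving for it in the Fa\`a di Bruno formula for $f\circ g=\mathrm{id}$, inducting on $m$. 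What your approach buys is a unified existence-plus-estimate scheme that is shorter conceptually; what the paper's buys is that the constant control is entirely explicit at every step, because the Neumann series directly gives $|Dq|\le|Dp|/(1-|Dp|)\le 1/99$ without any ball-preservation bootstrap, and the induction on $m$ never needs a contraction constant to be $\le 1$.

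Two caveats on the details. First, the ball preservation in your contraction argument is genuinely delicate: Lemma~\ref{estimation34}(i) gives $|\mathcal{T}(q)|_{\sigma,m}\le C|p|_{s,m}$ with $C=C(n,m)>1$ in general, so the hypothesis $|p|_{s,m}\le 1/100$ alone does not guarantee $\mathcal{T}(B)\subseteq B$ for $B$ the $|\cdot|_{\sigma,m}$-ball of radius $1/100$; one should instead run the contraction only in the $|\cdot|_{\sigma,1}$- or sup-norm ball (where the relevant constants are genuinely close to $1$, coming from $(1+|Dp|)$), get the fixed point there, and then prove the $m$-th order bound a posteriori from $q=-p\circ(\mathrm{id}+q)$. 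Second, your diagnosis of the obstacle in part~(ii) is slightly off: in the decomposition $\Delta q=-(p_1\circ g_1-p_1\circ g_2)-(\Delta p)\circ g_2$ the coefficient that multiplies $|\Delta q|_{\cdot,m}$ on the right is $|Dp_1|_s\le 1/100$, which is small and simply absorbed; the potentially large factor $|p_1|_{s,m+1}$ multiplies only the \emph{lower-order} norm $|\Delta q|_{\cdot,m-1}$, which is handled by induction on $m$ exactly as in the paper. The alternative "descend to a narrower strip and pay a Cauchy factor" does not match the lemma's claim: the Cauchy factor from Lemma~\ref{estimation2} with $\sigma\sim s$ is of size $s^{-m}$, so that route produces a constant depending on $s$, which the statement explicitly excludes (the constant is to depend only on $n$ and $m$). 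So of the two options you list at the end, only the induction/re-injection one is viable, and it is the paper's.
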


\begin{proof} For symbolic simplicity, the following calculations only consider $n=1$ in form. With regard to general situation, there is no difference.

$(\mathbf{i})$  If we restrict $x$ to $\mathbb{R}$, by \cite[Lemma B.4]{Baldi1}, $f(x)$ is a homeomorphism from $\mathbb{R}$ to $\mathbb{R}$.  Considering $f(x)$ defined on $\mathbb{T}_s$, by Lemma \eqref{mean value},  $f(x)$ is a  one to one mapping from $\mathbb{T}_s$ to  $f(\mathbb{T}_s)$.

Now, we would prove $\mathbb{T}_{\frac{99s}{100}} \subseteq f(\mathbb{T}_s)$.

$\mathbf{(1)}$: Set $x=a+\mathrm{i}b$, by \eqref{6.54}, we can see
 \begin{equation*}
 \begin{split}
  \mathfrak{Im}(f(x))&=\mathfrak{Im}(f(x)-f(a))\\
  &= b+b\mathfrak{Re}(\lambda_1(Dp(w_1))+\lambda_2(Dp(w_2)))).
 \end{split}
   \end{equation*}
  Since $ |Dp(w_i)|_s \leq |p|_{s,m} \leq \frac{1}{100}$,  we have
  \begin{equation}\label{7.1}
  |\mathfrak{Im}(z+f(z))|\geq \frac{99s}{100}, \quad for \ all \ z\in \mathbb{T}_{s}.
  \end{equation}

 $\mathbf{(2)}$: Let $q(y)=g(y)-y$. Since $p(x)$ is periodic, $p(x+2\pi m)=p(x)$. Then,
 \begin{equation}
 \begin{split}
 f (x + 2\pi m) &= x+2\pi m +p(x+2\pi m)\\
 &=x+2\pi m+ p(x)\\
 &=f (x) + 2\pi m
 \end{split}
 \end{equation}
 for all $ m \in \mathbb{Z}^n$.
 Note $g$ is the inverse of $f$, applying $g$ to this equality gives
  $$g\circ f (x + 2\pi m) =g(f(x) + 2\pi m),$$
  where
  $$g \circ f(x+2\pi m)=x+ 2\pi m=g(y)+2\pi m.$$
  On the other hand, $g(f(x)+ 2\pi m)=g(y+2\pi m)$. This means that $q(y)$ is periodic.

   From $\mathbf{(1)},\mathbf{(2)}$,  $g(y)$ are well defined on  $\mathbb{T}^n_{\frac{99}{100}s}$. Now, we would compare $p(x)$ with $q(y)$.

By Neumann series, the matrix $Df(x) = I + Dp(x)$ is invertible, where $(Df(x))^{-1}=\sum^{\infty}_{n=0}(-Dp(x))^n$.  Thus,
 \begin{equation}\label{6.62}
Dq(f(x))=\sum^{\infty}_{n=1}(-Dp(x))^n, \ for \ all \ x\in \mathbb{T}_s.
 \end{equation}
Since $|Dp(x)|_s < |p|_{s,m} \leq \frac{1}{100}$, we see
  \begin{equation}\label{6.61}
   |Dq(f(x))|  \leq  \frac{100} {99}|Dp(x)| \leq \frac{1}{99}, \ for \ all \ x\in \mathbb{T}_s.
 \end{equation}

 The identity $ f (g(y)) = y$ gives
 \begin{equation}\label{6.72}
 q(y)=-p(y+q(y)),\quad  y\in \mathbb{T}^n_{\frac{99}{100}s}.
 \end{equation}

From \eqref{7.1} and \eqref{6.61}, $|Dq|_{\frac{99s}{100}} \leq \frac{1}{99}$. Similarity with \eqref{6.54} and \eqref{6.541}, we have
$$|q|_{\frac{99}{100}s}<|p|_s.$$

Clearly, $|Dq|_{\frac{99}{100}s}\leq C |Dp|_s.$  $(\mathbf{i})$ is proved for $m=0,1$.

Considering the  general situation, suppose $|q|_{\frac{99}{100}s,h} \leq C(h)|p|_{s.h}$, for $h < m$.

Apply \eqref{6.421} to $f\circ g$: since $f(g(y))=y, D^m(f\circ g)=0$ for all $m\geq 2$. Separate $k=1$ from $k \geq 2$ in the sum \eqref{6.421} and solve for $D^mg$,
\begin{equation}
D^mg(y)=-Dg(y)\sum_ {k=2} ^m  \sum _{j_1+\cdots+ j_k = m}C_{kj}(D^kf)(g(y))[D^{j_1}g(y) \cdots D^{j_k}g(y)].
\end{equation}
$D^mg=D^mq$ and $D^kf=D^kp$, because $k,m \leq 2$. Since $k \geq 2$, it is $1 \leq j_i \leq m-1$ for all $i=1,...,k$, because there are at least two $j_1,j_2$, each of them $\geq 1$, and $\sum j_i=m$.

 For $k=m$, one has $j_i=1$ for all $i=1,...,m$, and the corresponding term in the sum is estimated
\begin{equation}
|(D^mp)\circ g[Dg,...,Dg]|_{\frac{99s}{100}}\leq |D^mp|_{s}|Dg|_{\frac{99s}{100}}^m \leq C|D^mp|_{s},
\end{equation}
because $|Dg|_{\frac{99s}{100}}=|I+Dq|_{\frac{99s}{100}} <2$.

For $2\leq k \leq m-1$, at least one among $j_1,...,j_k                                       $ is $\geq 2$(otherwise $k=m$). Let $\ell$ be the number of indices $j_i$ that are $\geq 2$, so that $1\leq \ell \leq k$. It remains to estimate
\begin{equation}
\sum^{m-1}_{k=2} \sum^k_{\ell=1} \sum_{\sigma_1+\cdots \sigma_{\ell}=m-k+\ell}C_{k\ell\sigma}(D^kp)(g(y))[Dg(y)]^{k-\ell}[D^{\sigma_1}q(y) \cdots D^{\sigma_{\ell}}q(y)],
\end{equation}
where indices $j_i \geq 2 $ have been renamed $\sigma_1,...,\sigma_{\ell}$, the number of indices $j_i=1$ is $k-\ell$, and $D^{\sigma_i}g=D^{\sigma_i}q$ because $\sigma_i \geq 2$. Every factor $Dg$ is estimated by $|Dg|_{\frac{99s}{100}} < 2$. For the remaining factors
\begin{equation}
\begin{split}
|(D^kp)\circ g[D^{\sigma_1}q(y) \cdots D^{\sigma_{\ell}}q(y)]|_{\frac{99s}{100}} &\leq |D^kp|_{s}|[D^{\sigma_1}q(y) \cdots D^{\sigma_{\ell}}q(y)]|_{\frac{99s}{100}}\\
& \leq C |D^kp|_{s},
\end{split}
\end{equation}
because $|D^{\sigma_i}q(y)|_{\frac{99s}{100}} \leq |q|_{\frac{99s}{100},m-1} \leq C |p|_{s,m-1} \leq C$.

Collecting all the terms in the sum, we can proved that
 \begin{equation*}
  |D^mq|_{\frac{99s}{100}} \leq C(m) |p|_{s,m}
 \end{equation*}
Finally, we have
  \begin{equation}\label{6.75}
 |q|_{\frac{99s}{100},m} \leq C(m) |p|_{s,m}.
  \end{equation}

  $(\mathbf{ii})$ For the Lipschitz norm, we have
  $$q_\lambda(y)+p_\lambda(g_\lambda(y))=0,\quad \forall \lambda \in \Pi,y\in \mathbb{T}^n_{\frac{99}{100}s}.$$

   Let $\lambda_1,\lambda_2 \in \Pi $, $q_1=q_{\lambda_1},q_2=q_{\lambda_2}$, and so on, then
   \begin{equation}\label{6.73}
   q_1-q_2=(p_2\circ g_2-p_1\circ g_2)+(p_1\circ g_2-p_1\circ g_1).
   \end{equation}

   Since $|Dp_\lambda|< \frac{1}{100}$, for all $\lambda $, $g_{\lambda}(y)$ are well defined on  $\mathbb{T}_{\frac{99s}{100}}$.  From \eqref{6.54} and \eqref{6.44}, we have
   $$|q_1-q_2|_{\frac{99s}{100}} \leq C|p_2-p_1|_s+|Dp_1|_s|q_2-q_1|_\frac{99s}{100},$$
   and $(1-|Dp_1|_s)|q_1-q_2|_{\frac{99s}{100}} \leq C |p_2-p_1|_s$. Thus, we can get
  \begin{equation}
   |q_1-q_2|_{\frac{99s}{100}} \leq C |p_2-p_1|_s.
   \end{equation}

   Differentiating \eqref{6.73}, by \eqref{6.44} and \eqref{6.46}, we have
    \begin{eqnarray*}
    |Dq_1-Dq_2|_{\frac{99s}{100}}\leq C|p_2-p_1|_{s,1}+|D^2p_1|_{s}|q_2-q_1|_{\frac{99s}{100}}|Dg_2|_{_{\frac{99s}{100}}}+|Dp_1|_sDq_1-Dq_2|_{\frac{99s}{100}}
    \end{eqnarray*}
    and $(1-|Dp_1|_s)|Dq_1-Dq_2|_{\frac{99s}{100}}\leq C|p_2-p_1|_{s,1}+|p_1|_{s,2}|p_2-p_1|_{s}|Dg_2|_{\frac{99s}{100}}.$ Thus, we can get

   \begin{equation}
   |Dq_1-Dq_2|_{\frac{99s}{100}}\leq C |p_2-p_1|_{s,1}(1+|p_1|_{s,2})
   \end{equation}

   $(\mathbf{ii})$ is proved for $m=0,1$. Considering general situation, suppose $|q_1-q_2|_{\frac{99s}{100},h} \leq C(h) |p_2-p_1|_{s,h}(1+|p_1|_{s,h+1})$,  for all $h < m $.

   The estimates of $D^m(q_1-q_2)$ can be divined into the following two parts.

   $\mathbf{(A)}$: Considering $D^m(p_1\circ g_2-p_1\circ g_1)$, we have
   \begin{equation}
   \begin{split}
   D^m(p_1\circ g_2-p_1\circ g_1)=& \sum_ {k=1} ^m  \sum _{j_1+\cdots+ j_k = m} C_{kj}\Big\{(D^kp_1)\circ g_2[D^{j_1}g_2(y) \cdots D^{j_k}g_2(y)]\\
   &-(D^kp_1)\circ  g_1 [D^{j_1}g_1(x) \cdots D^{j_k}g_1(y)]\Big\}.\\
   \end{split}
   \end{equation}

   For $k=1$ one has $j_1=m$, and the corresponding term in the sum is estimated
   \begin{equation}
   \begin{split}
   |(Dp_1)\circ g_2 \cdot  D^mq_2-(Dp_1) \circ g_1 \cdot  D^mq_1|_{\frac{99s}{100}}\leq & |(Dp_1)\circ g_2 \cdot D^mq_2-(Dp_1)\circ g_2\cdot D^mq_1|_{\frac{99s}{100}}\\
    +&|(Dp_1)\circ g_2 \cdot D^mq_1-(Dp_1)\circ g_1 \cdot D^mq_1|_{\frac{99s}{100}}\\
    \leq&|Dp_1|_s|D^m(q_2-q_1)|_{\frac{99s}{100}} \\
   +&|Dp_1|_{s} |q_2-q_1|_{\frac{99s}{100}} |D^mq_1|_{\frac{99s}{100}}
   \end{split}
   \end{equation}

   For $k \geq 2$, one has $j_i< m$. It remains to estimate
    \begin{equation}
    \begin{split}
    \sum_ {k=2} ^m  \sum _{j_1+\cdots+ j_k = m} C_{kj}&\Big\{[(D^kp_1)\circ g_2-(D^kp_1)\circ g_1][D^{j_1}g_2(y)\cdots D^{j_k}g_2(y)]\\ &+(D^kp_1)\circ g_1[D^{j_1}(g_2-g_1)(y)\cdot D^{j_2}g_2(y) \cdots D^{j_k}g_2(y)]+\cdots\\
    &+ (D^kp_1)\circ g_1[(D^{j_1}(g_1)(y)\cdots D^{j_{k-1}}g_1(y) \cdot D^{j_k}(g_2-g_1)(y)] \Big\}
    \end{split}
    \end{equation}
   Every factor $|Dg|_{\frac{99s}{100}} < 2$, and  $|(D^kp_1)\circ g_2-(D^kp_1)\circ g_1|_{\frac{99s}{100}} \leq  |D^{k+1}p_1|_s|q_2(y)-q_1(y)|_s$.  For the remaining factors,
   \begin{equation}
   \begin{split}
   |(D^kp_1)\circ g_1[(D^j_1g_1\cdots D^{j_i}(g_2-g_1) \cdots D^{j_{k}}g_2]|_{\frac{99s}{100}} &\leq C |D^kp_1|_s|D^{j_i}(q_2-q_1)(x)|_{\frac{99s}{100}}\\
   &\leq C |D^kp_1|_s ||q_2-q_1|_{\frac{99s}{100},m}.
   \end{split}
   \end{equation}

   $\mathbf{(B)}$:Considering $D^m(p_2\circ g_2-p_1\circ g_2)$, by \eqref{6.43}, we have
   \begin{equation}
   |D^m(p_2\circ g_2-p_1\circ g_2)|_{\frac{99s}{100}} \leq C|p_2-p_1|_{s,m}.
   \end{equation}
   Collecting all the terms above in the sum, we can see that
   \begin{equation}
   \begin{split}
   |D^mq_1-D^mq_2|_{\frac{99s}{100},m} \leq & C(|p_2-p_1|_{s,m}+|p_1|_{s,m+1}|q_2-q_1|_{\frac{99s}{100},m-1})\\
   &+|Dp_1|_s|D^mq_1-D^mq_2|_{\frac{99s}{100}}.
   \end{split}
   \end{equation}
 Since$|Dp_1|_s \leq |p_1|_{s,m} \leq \frac{1}{100}$, we can see
   \begin{equation}
   |D^mq_1-D^mq_2|_{\frac{99s}{100},m} \leq C (|p_2-p_1|_{s,m}+|p_1|_{s,m+1}|p_2-p_1|_{\frac{99s}{100},m-1}),
   \end{equation}
  and
   \begin{equation}\label{chazhi2}
   |q_1-q_2|_{\frac{99s}{100},m} \leq C |p_2-p_1|_{s,m}(1+|p_1|_{s,m+1}).
  \end{equation}

  Finally, the bounds  \eqref{6.75} and \eqref{chazhi2} imply $|q|^{Lip}_{\frac{99s}{100},m} \leq C |p|^{Lip}_{s,m+1}$
 \end{proof}

\section*{Acknowledgement}
The first author would like thank to W. Jian, Y. Shi for comments on earlier version of this manuscript. Special thanks is to  H. Cong for many useful discussion.

\end{document}